\documentclass[11pt,reqno]{amsart}
\usepackage{hyperref}
\usepackage{amsmath}
\usepackage{geometry}
\usepackage{amssymb, latexsym}
\usepackage{verbatim}
\usepackage[T1]{fontenc}
\usepackage{enumerate}

\usepackage{mathtools}
\usepackage[tableposition=top]{caption}
\usepackage{booktabs,dcolumn}

\usepackage[skins]{tcolorbox}
\usepackage{lipsum}

\newtheorem{theorem}{Theorem}[section]
\newtheorem{proposition}[theorem]{Proposition}
\newtheorem{proposition1}[theorem]{Proposition}

\newtheorem{lemma}[theorem]{Lemma}
\newtheorem{corollary}[theorem]{Corollary}

\newtheorem{definition}[theorem]{Definition}
\newtheorem{remark}[theorem]{Remark}

\newtheorem{example}[theorem]{Example}

\tcolorboxenvironment{lemma}{blanker,before skip=10pt,after skip=10pt}

\newcommand\E{\mathbb{E}}
\newcommand\R{\mathbb{R}}
\newcommand\Z{\mathbb{Z}}
\newcommand\N{\mathbb{N}}
\newcommand\C{\mathbb{C}}

\newcommand\n{\mathrm{n}}
\newcommand\x{\mathrm{x}}
\newcommand\Poly{\mathrm{Poly}}

\newcommand\diam{\mathrm{diam}}
\newcommand\eps{\varepsilon}
\newcommand\m{\mathbf{m}}
\renewcommand\mod{\ \mathrm{mod}\ }

\renewcommand\P{\mathbb{P}}

\begin{document}

\title[Higher uniformity of multiplicative functions]{Higher uniformity of bounded multiplicative functions in short intervals on average}

\author[Matom\"aki]{Kaisa Matom\"aki}
\address{Department of Mathematics and Statistics \\
University of Turku, 20014 Turku\\
Finland}
\email{ksmato@utu.fi}

\author[Radziwi{\l}{\l}]{Maksym Radziwi{\l}{\l}}
\address{ Department of Mathematics,
  Caltech, 
  1200 E California Blvd,
  Pasadena, CA, 91125 \\
	USA}
\email{maksym.radziwill@gmail.com}

\author[Tao]{Terence Tao}
\address{Department of Mathematics, UCLA\\
405 Hilgard Ave\\
Los Angeles, CA, 90095\\
USA}
\email{tao@math.ucla.edu}

\author[Ter\"av\"ainen]{Joni Ter\"av\"ainen}
\address{Mathematical Institute, University of Oxford \\
Woodstock Road \\
Oxford OX2 6GG \\
United Kingdom}
\address{Department of Mathematics and Statistics \\
University of Turku, 20014 Turku\\
Finland}
\email{joni.p.teravainen@gmail.com}

\author[Ziegler]{Tamar Ziegler}
\address{Einstein Institute of Mathematics, Givaat Ram The Hebrew University of Jerusalem \\
Edmond J. Safra Campus \\
Jerusalem 91904 \\
Israel}
\email{tamarz@math.huji.ac.il}

\begin{abstract} Let $\lambda$ denote the Liouville function. We show that, as $X \rightarrow \infty$, 
  $$
  \int_{X}^{2X} \sup_{\substack{P(Y)\in \mathbb{R}[Y]\\\deg{P}\leq k}} \left | \sum_{x \leq n \leq x + H} \lambda(n) e(-P(n)) \right |\ dx = o ( X H) 
  $$
  for all fixed $k$ and $X^{\theta} \leq H \leq X$ with $0 < \theta < 1$ fixed but arbitrarily small. Previously this was only established for $k \leq 1$.  We obtain this result as a special case of the corresponding statement for (non-pretentious) $1$-bounded multiplicative functions that we prove.
	
	In fact, we are able to replace the polynomial phases $e(-P(n))$ by degree $k$ nilsequences $\overline{F}(g(n) \Gamma)$. By the inverse theory for the Gowers norms this implies the higher order asymptotic uniformity result
  $$
  \int_{X}^{2X} \| \lambda \|_{U^{k+1}([x,x+H])}\ dx = o ( X )
  $$
	in the same range of $H$.
	
We present applications of this result to patterns of various types in the Liouville sequence. Firstly, we show that the number of sign patterns of the Liouville function is superpolynomial, making progress on a conjecture of Sarnak about the Liouville sequence having positive entropy. Secondly, we obtain cancellation in averages of $\lambda$ over short polynomial progressions $(n+P_1(m),\ldots, n+P_k(m))$, which in the case of linear polynomials yields a new averaged version of Chowla's conjecture.

We are in fact able to prove our results on polynomial phases in the wider range $H\geq \exp((\log X)^{5/8+\varepsilon})$, thus strengthening also previous work on the Fourier uniformity of the Liouville function. 

\end{abstract}

\maketitle
\setcounter{tocdepth}{1}
\tableofcontents

\section{Introduction}

Let $\lambda \colon \N \to \{-1,+1\}$ denote the Liouville function, that is to say the completely multiplicative function with $\lambda(p)=-1$ for all primes $p$; we extend $\lambda$ by zero to the integers.  In~\cite{mr} it was shown that this function exhibited cancellation on almost all short intervals $[x,x+H]$ in the sense that\footnote{See Section~\ref{notation-sec} for our asymptotic notation conventions.}
\begin{equation}\label{mr-eq}
 \int_X^{2X} \left | \sum_{x \leq n \leq x + H} \lambda(n) \right |\ dx = o ( H X )
\end{equation}
as $X \to \infty$, whenever $H = H(X)$ was a function of $X$ that went to infinity as $X \to \infty$; see also~\cite{mr-p} for a simpler proof of \eqref{mr-eq} in the case of ``polynomially large intervals'', in which $H = X^\theta$ for a fixed $0 < \theta < 1$.  In~\cite{mr}, \cite{mr-p} the qualitative gain $o(HX)$ over the trivial bound $O(HX)$ was improved to a more quantitative bound, but in this paper we will focus only on qualitative estimates.  The bounds for $\lambda$ also extend to the closely related M\"obius function $\mu$, but for the sake of discussion we shall restrict attention initially to the Liouville function $\lambda$.

In~\cite{MRT} the estimate \eqref{mr-eq} was generalized to
\begin{equation}\label{mrq-eq}
\sup_{\alpha \in \R} \int_X^{2X} \left | \sum_{x \leq n \leq x + H} \lambda(n) e(-\alpha n) \right |\ dx = o ( H X )
\end{equation}
as $X \to \infty$, for any $H = H(X)$ that went to infinity as $X \to \infty$, where we adopt the usual notation $e(\alpha) \coloneqq e^{2\pi i \alpha}$.  Informally, this asserts that $\lambda$ does not asymptotically exhibit any correlation with a \emph{fixed} linear phase $n \mapsto e(\alpha n)$ in short intervals on average.  The question was then raised in~\cite[Section 4]{Tao} as to whether the stronger estimate
\begin{equation}\label{mrq-conj}
\int_X^{2X} \sup_{\alpha \in \R} \left | \sum_{x \leq n \leq x + H} \lambda(n) e(-\alpha n) \right |\ dx = o ( H X )
\end{equation}
could be established.  This is not known unconditionally, although as observed in~\cite{TaoEq} it can be deduced from the Chowla conjecture~\cite{Chowla-book}.  However, in a recent paper~\cite{mrt-fourier} the bound \eqref{mrq-conj} was established in the regime where $H = X^\theta$ for a fixed $0 < \theta < 1$; the case $\theta > 5/8$ without needing the $x$-average was previously established by Zhan in~\cite{Zhan} (and Zhan's result was recently improved to $\theta > 3/5$ in~\cite{MatoTera}).

For any non-negative integer $k \geq 0$, any interval $[x,x+H]$, and any function $f \colon \Z \to \C$, define the \emph{weak Gowers uniformity norm}
\begin{equation}\label{wgow}
 \| f \|_{u^{k+1}([x,x+H])} \coloneqq \sup_{P \in \mathrm{Poly}_{\leq k}(\R \to \R)} \frac{1}{H} \left| \sum_{x \leq n \leq x+H} \lambda(n) e(-P(n)) \right|
\end{equation}
where $\mathrm{Poly}_{\leq k}(\R \to \R)$ is the $k+1$-dimensional vector space of polynomial maps\footnote{In the sum in \eqref{wgow}, only the values of $P$ on the integers $\Z$ are relevant, but in our later analysis it will be convenient to evaluate such polynomials at non-integer values as well.} $P \colon \R \to \R$ of degree at most $k$.   This norm is indeed much weaker than the usual Gowers norm, in the sense that it is well known (see~\cite[\S 4]{gowers-long-aps}) that it does not control linear patterns of complexity $\geq 2$. Nevertheless, we will need the weak Gowers uniformity result in Theorem~\ref{mult-poly} below in order to establish the strong Gowers uniformity result in Theorem~\ref{mult-pret}.

The result in~\cite{mrt-fourier} is then equivalent to the bound
$$ \int_X^{2X} \| \lambda \|_{u^2([x,x+H])}\ dx = o(X)$$
as $X \to \infty$, with $H = X^\theta$ for a fixed $0 < \theta < 1$; the corresponding (and weaker) bound for the $u^1$ norm follows from the earlier result in~\cite{mr} or~\cite{mr-p}.  Our first main result extends these bounds to higher orders of uniformity:

\begin{corollary}[Liouville does not correlate with polynomial phases on short intervals on average]\label{lam-poly} Let $k \geq 0$ be a non-negative integer, and let $0 < \theta < 1$ be fixed.  Then we have
\begin{align}\label{eqtn1}
\int_X^{2X} \| \lambda \|_{u^{k+1}([x,x+H])}\ dx = o(X)
\end{align}
as $X \to \infty$, where $H \coloneqq X^\theta$.
\end{corollary}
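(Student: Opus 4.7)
The plan is to derive this corollary from the general weak Gowers uniformity theorem (Theorem~\ref{mult-poly}) for non-pretentious $1$-bounded multiplicative functions, which is proved later in the paper. Unpacking the definition \eqref{wgow}, the corollary is equivalent to
\[
\int_X^{2X} \sup_{\substack{P\in\R[Y]\\ \deg P \leq k}} \left| \sum_{x \leq n \leq x+H} \lambda(n) e(-P(n)) \right| dx = o(XH).
\]
The Liouville function is $1$-bounded, completely multiplicative, and non-pretentious---it has no correlation with any Dirichlet character, as a standard consequence of the prime number theorem in arithmetic progressions---so Theorem~\ref{mult-poly} applies directly. The substance of the argument thus lies in proving that general theorem, which I sketch next.

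I would proceed by induction on the degree $k$. The base case $k=0$ is the Matom\"aki--Radziwi\l\l\ theorem of~\cite{mr} and~\cite{mr-p}, extended to arbitrary $1$-bounded multiplicative functions; the case $k=1$ is essentially contained in~\cite{mrt-fourier}. For the inductive step, I would first reduce the supremum to a maximum over a discrete $\eps$-net on the coefficient space of $\mathrm{Poly}_{\leq k}$, allowing one to measurably choose a near-optimal polynomial $P_x$ for each $x$. Next, I would apply a Heath-Brown or Ramar\'e-style combinatorial identity to decompose the multiplicative function into Type~I and Type~II pieces. The Type~I sums are controlled by Weyl's inequality, which yields cancellation unless the leading coefficient of $P_x$ is close to a rational with small denominator. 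The Type~II sums, after Cauchy--Schwarz, introduce a shift that discretely differentiates the polynomial phase, lowering its degree by one.

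Iterating the Cauchy--Schwarz step $k$ times reduces the polynomial phase to a trivial factor and leaves a multilinear expression controlled by an averaged short-interval $U^{k+1}$-norm. At this stage, the inverse theorem for the Gowers $U^{k+1}$-norm converts any remaining mass into correlation with a degree-$k$ nilsequence $\overline{F}(g(n)\Gamma)$ on some nilmanifold $G/\Gamma$. The main obstacle is then to rule out such correlation in short intervals on average: quantitative equidistribution of polynomial orbits on nilmanifolds (of Green--Tao type) provides a factorization dichotomy whereby either $g(n)\Gamma$ is equidistributed on a proper sub-nilmanifold---in which case a Leibman-type descent reduces matters to the inductive hypothesis in degree $k-1$---or the orbit is essentially periodic modulo a bounded modulus, in which case non-pretentiousness together with a Hal\'asz-type cancellation estimate suffices. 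Coordinating the error terms from the $\eps$-net selection, the combinatorial decomposition, and the nilmanifold factorization within the short-interval window $H = X^\theta$ will be the most delicate technical step.
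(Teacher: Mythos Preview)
Your reduction of the corollary to Theorem~\ref{mult-poly} via the non-pretentiousness of $\lambda$ (which follows from the Vinogradov--Korobov zero-free region, cf.~\cite[(1.12)]{MRT}) is correct and matches the paper.

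Your sketch for Theorem~\ref{mult-poly} itself, however, has two genuine gaps. First, the Type~I/Type~II route via Heath-Brown or Ramar\'e identities plus Weyl's inequality is the approach of \cite{Zhan} and \cite{matomaki-shao}, and it is only known to work for $\theta$ above a fixed threshold (roughly $2/3$); for small $\theta$ the intervals are too short for Weyl to bite on the Type~I sums, and the bilinear Type~II sums are likewise intractable. Second, and more seriously, your Cauchy--Schwarz iteration followed by an appeal to the inverse theorem is circular: once the polynomial phase has been differenced away you are left bounding the genuine Gowers norm $\|\lambda\|_{U^{k+1}[x,x+H]}$, and the inverse theorem converts this into correlation with \emph{nilsequences}. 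That is Theorem~\ref{mult-pret}, which is strictly harder than the polynomial-phase statement you are trying to prove---indeed the paper establishes Theorem~\ref{mult-poly} first and uses it as an ingredient in Theorem~\ref{mult-pret}, not the other way around. Your inductive hypothesis covers only polynomial phases, so a descent from a general nilsequence does not land back in the inductive framework.

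The paper's method is entirely different: it exploits multiplicativity through the scaling $f(pn)=f(p)f(n)$ (rather than a combinatorial identity) to relate the maximizing phases $P_x$ on different intervals, builds a dense graph on intervals from these relations, and extracts via cycle-counting an approximate dilation invariance $P_x(a\,\cdot\,) \sim P_x(b\,\cdot\,)$ for many nearby integer pairs $(a,b)$. Solving this functional equation forces $e(P_x(n))$ to behave like $n^{iT}$ times a periodic factor of bounded modulus, which reduces everything to the $k=0$ case handled by \cite{mr, MRT}. No Weyl differencing, Type~I/II decomposition, or nilmanifold theory enters the polynomial-phase argument.
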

\begin{remark}
In Theorem~\ref{lower} below we show that Corollary~\ref{lam-poly} holds for $H$ as small as $\exp((\log X)^{5/8 + \varepsilon})$ for any fixed $\varepsilon > 0$. 
\end{remark}

We remark that previously this was known in the $k\geq 1$ cases for $\theta>2/3$ by~\cite[Theorem 1.4]{matomaki-shao}.  In fact, in this regime a uniform bound $\sup_{x \in [X,2X]} \| \lambda \|_{u^{k+1}([x,x+H])} = o(1)$ is established.  It is natural to conjecture that such uniform bounds extend to all $\theta>0$, but this seems well beyond the reach of the methods in this paper.

In fact (as in~\cite{mrt-fourier}), we can generalize Corollary~\ref{lam-poly} to the case where the Liouville function $\lambda$ is replaced by a more general ``non-pretentious'' $1$-bounded multiplicative function.  Recall that a multiplicative function $f \colon \N \to \C$ is said to be $1$-bounded if $|f(n)| \leq 1$ for all $n \in \N$.  To motivate the ``non-pretentiousness'' hypothesis, we consider (as was done in~\cite{mrt-fourier} in the $k=1$ case) the character
\begin{equation}\label{fchar}
 f(n) \coloneqq n^{it} \chi(n),
\end{equation}
formed by multiplying an ``Archimedean character'' $n \mapsto n^{it}$ for some real number $t$ with $|t| \leq \eps X^{k+1} / H^{k+1}$ for some small $\eps>0$, and a Dirichlet character $\chi$ of some bounded conductor $q$. Observe that $f$ is completely multiplicative and $1$-bounded, and a Taylor expansion with remainder of the phase $n \mapsto \frac{t}{2\pi} \log n$ of the Archimedean character $n^{it} = e(\frac{t}{2\pi} \log n)$ around a given point $x \in [X,2X]$ yields a decomposition of the form
\begin{equation}\label{fapprox}
 n^{it} = e(P_x(n)) + O_k(\eps)
 \end{equation}
for all $n \in [x,x+H]$ and some polynomial $P_x \in \mathrm{Poly}_{\leq k}(\R \to \R)$ depending on $x$ (and $t$). This together with the $q$-periodicity of $\chi$ can be used to imply that
$$ \int_X^{2X} \| f \|_{u^{k+1}([x,x+H])}\ dx \gg_{k,q} X$$
if $1 \leq H \leq X$ are sufficiently large.

Our next result asserts that this is essentially the only obstruction to extending Corollary~\ref{lam-poly} to more general $1$-bounded multiplicative functions.  Following Granville and Soundararajan~\cite{GS}, we define the distance function
$$
\mathbb{D}(f,g;X):=\Big(\sum_{p \leq X} \frac{1 - \mathrm{Re}(f(p)\overline{g(p)})}{p}\Big)^{1/2},
$$
and further define the quantity
$$
  M(f; X, Q) \coloneqq \inf_{|t|\leq X}\,\inf_{\substack{\chi \mod{q} \\ q \leq Q }} \mathbb{D}(f,n\mapsto \chi(n)n^{it};X).
$$
Informally, $M(f; X, Q)$ is small whenever $f$ is close to a function of the form \eqref{fchar} with $|t| \leq X$ and $\chi$ of conductor at most $Q$.  We then have

\begin{theorem}[Non-pretentious multiplicative functions do not correlate with polynomial phases on short intervals on average]\label{mult-poly}  Let $k \geq 0$ be a non-negative integer, and let $0 < \theta < 1/2$.  Suppose that $f\colon \N \to \C$ is a multiplicative $1$-bounded function, and suppose that $X\geq 1$, $X^\theta \leq H \leq X^{1-\theta}$, and $\eta > 0$ are such that
$$ \int_X^{2X} \| f \|_{u^{k+1}([x,x+H])}\ dx \geq \eta X.$$
Then one has
$$ M(f; C X^{k+1} / H^{k+1}, Q ) \ll_{k,\eta,\theta} 1$$
for some $Q, C \ll_{k,\eta,\theta} 1$.
\end{theorem}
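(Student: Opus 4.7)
My plan is to induct on $k$, with the base cases $k=0$ (the Matom\"aki--Radziwi{\l}{\l} theorem~\cite{mr}) and $k=1$ (the Fourier uniformity result of~\cite{mrt-fourier}) in hand, and with the inductive step carried by a contagion-style argument that uses the multiplicative structure of $f$ to force the polynomials $P_x$ appearing in the weak Gowers norm to be the expected Taylor expansions of $\frac{t}{2\pi}\log n$ for a common Archimedean parameter $t$.

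First, I would unpack the hypothesis: on a set $\mathcal X \subset [X,2X]$ of measure $\geq \eta X/2$ and for each $x\in\mathcal X$ there is a polynomial $P_x \in \Poly_{\leq k}(\R\to\R)$ with $\bigl|\sum_{x \leq n \leq x+H} f(n) e(-P_x(n))\bigr| \geq \tfrac{\eta}{2} H$. After centering at $x$ and discarding the constant term, write $P_x(n)=\sum_{j=1}^k \alpha_j(x)(n-x)^j/j!$. The target regime, matching \eqref{fapprox}, is $\alpha_j(x) \asymp X^{k+1-j}/H^{k+1}$, and the goal is to show that $\alpha_k(x)$, and then inductively each $\alpha_j(x)$, concentrates on a positive-measure subset of $\mathcal X$ near $t/x^{\,j}$ for a common $|t| \ll X^{k+1}/H^{k+1}$ modulo rationals of bounded denominator; combining with a Dirichlet character then yields $M(f; CX^{k+1}/H^{k+1},Q) \ll 1$ as required.

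Next, I would run a minor/major arc dichotomy on the leading coefficient $\alpha_k(x)$. In the \emph{major arc} case $\alpha_k(x) = a(x)/q(x) + \beta(x)$ with $q(x) \ll 1$ and $\beta(x)$ tiny, restricting $n$ to residue classes mod $q(x)$ (twisting $f$ by a Dirichlet character) reduces $P_x$ to a polynomial of degree $\leq k-1$ plus a negligible perturbation, and the inductive hypothesis on $k$ closes the loop. In the \emph{minor arc} case, standard Weyl equidistribution for polynomial sequences combined with a Type I/II decomposition of $f$ (as in the bilinear step of the Matom\"aki--Radziwi{\l}{\l} argument and its polynomial-phase refinements) should yield cancellation strictly below $\eta H$, contradicting the hypothesis; the upper bound $H \leq X^{1-\theta}$ is used here to guarantee usable prime factorisations $n=pm$ with $p$ in a dyadic range well inside $[1,X/H]$.

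The heart of the proof, and the step I expect to be the main obstacle, is the contagion argument needed both in the minor-arc case and to pin down $t$ across different values of $x$. Following~\cite{MRT, mrt-fourier}, I would apply a Ramar\'e / Tur\'an--Kubilius identity to rewrite
\[
\sum_{x \leq n \leq x+H} f(n) e(-P_x(n)) \approx \frac{1}{|\mathcal P|}\sum_{p\in\mathcal P} f(p) \sum_{x/p \leq m \leq (x+H)/p} f(m) e(-P_x(pm))
\]
for a dyadic prime set $\mathcal P$, pigeonhole in $p$, and exploit the resulting correlations at the distinct scales $x/p_1$ and $x/p_2$ to impose that $P_{x_1}$ and $P_{x_2}$ are dilations of one another by $p_2/p_1$ modulo rationals of bounded denominator. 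For $k=1$ this is a one-parameter linear relation handled cleanly in~\cite{mrt-fourier}, but for $k\geq 2$ the substitution $P_x(n)\mapsto P_x(pn)$ nontrivially couples the top coefficient $\alpha_k$ to all the lower coefficients. Untangling this requires running the contagion on $\alpha_k$ first, feeding the resulting rigidity of $\alpha_k$ into a secondary contagion for $\alpha_{k-1}$, and so on down through the coefficients; ensuring that the losses in $\eta$ at each layer remain absorbed into $O_{k,\eta,\theta}(1)$ constants is the delicate bookkeeping that must be navigated.
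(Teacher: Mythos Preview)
Your proposal has a genuine gap in the minor-arc step. The Type~I/II plus Weyl argument you sketch would yield a bound for each individual $x$, i.e.\ the uniform statement $\sup_{x\in[X,2X]}\|f\|_{u^{k+1}([x,x+H])}=o(1)$; but such pointwise bounds are only known for $\theta>2/3$ (see the discussion after Corollary~\ref{lam-poly}), and the whole content of the theorem is to reach arbitrarily small $\theta>0$ by exploiting the average in $x$. Concretely, a bilinear sum over $n=pm$ with $p\in[P,2P]$ leaves an $m$-interval of length $H/p$, which for $H=X^\theta$ with $\theta$ small is too short for Weyl differencing or large-sieve bilinear estimates to bite, for any useful range of $P$. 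Averaging over $x$ does not rescue this, because the polynomial $P_x$ varies with $x$ in an a~priori uncontrolled way; the average cannot be pushed inside a minor-arc estimate that is applied phase by phase.

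The paper's argument uses neither a major/minor dichotomy on $\alpha_k(x)$ nor an induction on $k$. Instead, after a Tur\'an--Kubilius rescaling (your last paragraph is in the right spirit here) and a Chinese remainder step that boosts the implicit modulus past any fixed power of $X$, a graph-theoretic count of long cycles in an auxiliary graph on intervals produces, for each relevant interval $I$, an \emph{approximate dilation invariance} $P_I(a\,\cdot)\sim P_I(b\,\cdot)$ for coprime integers $a,b$ with $a/b-1\asymp H/X$. One then \emph{solves} this functional equation directly: differentiating $k$ times collapses it to a relation for the top coefficient, and iterating down the degree forces $P_I(t)=\eps(t)+\tfrac{T_I}{2\pi}\log t+\gamma_I(t)$ with $\eps$ smooth, $\gamma_I$ rational of polynomial-size denominator, and $|T_I|\ll (X/H)^{k+1}$. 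A mixing lemma then pins $T_I$ to a common $T_0+O(X/H)$, a second cycle count forces the denominator of $\gamma_I$ down to $O(1)$, and the problem reduces to the $k=0$ input from~\cite{mr,MRT}. Thus the coefficient coupling you correctly flag as the main obstacle is handled not by layered contagion on $\alpha_k,\alpha_{k-1},\dots$ but by solving a single dilation-invariance equation; no appeal to the degree-$(k{-}1)$ theorem is made.
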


The upper bound $H \leq X^{1-\theta}$ here is for minor technical reasons and it is likely that one can replace it with $H \leq X$; however our main interest is in the opposite regime when $H$ is as small as possible.
Standard calculations regarding the ``non-pretentious'' nature of the Liouville function (using the Vinogradov--Korobov zero-free region for $L$-functions) allow one to deduce Corollary~\ref{lam-poly} from Theorem~\ref{mult-poly}; see for instance~\cite[(1.12)]{MRT}.  The $k=0$ case of this theorem follows from the results in~\cite{mr}, and the $k=1$ case is established\footnote{In that paper the constant $C$ appearing in the above theorem was worsened to $H^\rho$ for some arbitrarily small constant $\rho>0$, but we have found a way to modify the arguments to eliminate that power loss in this result. In fact, it will be important in the induction arguments used to establish Theorem~\ref{mult-pret} below that such losses are avoided.} in~\cite[Theorem 1.4]{mrt-fourier}. Our focus here shall accordingly be on the higher order case $k \geq 2$, which we will establish by generalizing the techniques in~\cite{mrt-fourier} to the polynomial phase setting (and in fact further to nilsequences, which are needed in proving our Theorem~\ref{mult-pret} on genuine Gowers norms of multiplicative functions).

As a corollary of Theorem~\ref{mult-poly} and the decomposition \eqref{fapprox} we can also control the correlation of non-pretentious multiplicative functions with Archimedean characters on short intervals on average:

\begin{corollary}[Non-pretentious multiplicative functions do not correlate with Archimedean characters  on short intervals on average]\label{mult-poly-it}  Let $k \geq 0$ be a non-negative integer, and let $0 < \theta < 1/2$.  Suppose that $f\colon \N \to \C$ is a multiplicative $1$-bounded function, and suppose that $X\geq 1$, $\varepsilon>0$, $X^\theta \leq H \leq X^{1-\theta}$, and $\eta > 0$ are such that
$$ \int_X^{2X}  \sup_{|t| \le \varepsilon X^{k+1}/H^{k+1}}\left| \sum_{x\leq  n \leq x+H} f(n)n^{it}\right|\, dx \geq  \eta HX.$$
Then one has
$$ M(f; C X^{k+1} / H^{k+1}, Q ) \ll_{k,\eta, \varepsilon, \theta} 1$$
for some $Q, C \ll_{k,\eta,\varepsilon, \theta} 1$.
\end{corollary}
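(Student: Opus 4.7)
The plan is to reduce Corollary~\ref{mult-poly-it} to Theorem~\ref{mult-poly} via the Taylor approximation \eqref{fapprox}, after a preliminary subdivision of the $t$-range that handles the case when $\varepsilon$ is not small. Write $T \coloneqq X^{k+1}/H^{k+1}$ and choose $\varepsilon_0 \coloneqq c_k \eta$ with $c_k > 0$ small enough (depending only on $k$) that the error $O_k(\varepsilon_0)$ produced by \eqref{fapprox} at parameter $\varepsilon_0$ is at most $\eta/10$. Subdivide $[-\varepsilon T, \varepsilon T]$ into $J = O_{k,\eta,\varepsilon}(1)$ intervals $I_j = [t_j - \varepsilon_0 T, t_j + \varepsilon_0 T]$ with $|t_j| \leq \varepsilon T$. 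For each $j$, set $g_j(n) \coloneqq f(n) n^{it_j}$, which is again a $1$-bounded (completely) multiplicative function. If $t = t_j + s \in I_j$ then $|s| \leq \varepsilon_0 T$, so applying \eqref{fapprox} to $n^{is}$ with parameter $\varepsilon_0$ in place of $\varepsilon$ gives $n^{is} = e(P_{x,s}(n)) + O_k(\varepsilon_0)$ for $n \in [x, x+H]$ and some $P_{x,s} \in \mathrm{Poly}_{\leq k}(\R \to \R)$. Writing $n^{it} = n^{it_j} n^{is}$ and using $|g_j| \leq 1$, we obtain (after absorbing the sign of the polynomial phase into the supremum defining the $u^{k+1}$ norm)
$$\left|\sum_{x \leq n \leq x+H} f(n) n^{it}\right| \leq H \|g_j\|_{u^{k+1}([x,x+H])} + O_k(\varepsilon_0 H).$$

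Taking the maximum over $j \in \{1, \ldots, J\}$ and integrating the hypothesis over $x$ yields
$$\int_X^{2X} \max_{1 \leq j \leq J} \|g_j\|_{u^{k+1}([x,x+H])} \, dx \geq \eta X - O_k(\varepsilon_0 X) \geq \eta X / 2$$
by the choice of $c_k$. Partitioning $[X, 2X]$ into level sets $X_j \coloneqq \{x : j(x) = j\}$ of the argmax $j(x)$ and pigeonholing over $j$, we produce some $j^*$ with
$$\int_X^{2X} \|g_{j^*}\|_{u^{k+1}([x,x+H])} \, dx \geq \int_{X_{j^*}} \|g_{j^*}\|_{u^{k+1}([x,x+H])} \, dx \geq \frac{\eta X}{2J} \gg_{k,\eta,\varepsilon} X.$$
Theorem~\ref{mult-poly} applied to the $1$-bounded multiplicative function $g_{j^*}$ then gives $M(g_{j^*}; C'T, Q) \ll_{k,\eta,\varepsilon,\theta} 1$ for suitable $C', Q \ll_{k,\eta,\varepsilon,\theta} 1$.

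To transfer this bound back to $f$, one checks directly from the definition that $\mathbb{D}(g_{j^*}, \chi n^{it}; Y) = \mathbb{D}(f, \chi n^{i(t - t_{j^*})}; Y)$ for every Dirichlet character $\chi$ and every real $t, Y$; the substitution $s = t - t_{j^*}$ then converts the constraint $|t| \leq C'T$ into $|s| \leq (C' + \varepsilon) T$ (since $|t_{j^*}| \leq \varepsilon T$), yielding $M(f; (C' + \varepsilon)T, Q) \leq M(g_{j^*}; C' T, Q) \ll 1$. Setting $C \coloneqq C' + \varepsilon$ completes the proof. The only nontrivial obstacle is calibrating the subdivision scale $\varepsilon_0 \asymp \eta$ correctly: the approximation \eqref{fapprox} is only useful in a band of $t$ of width $O_k(T)$, so handling large $\varepsilon$ forces the splitting above, at the cost of a factor $J \ll_{k,\eta,\varepsilon} 1$ in the pigeonhole — acceptable since the final constants are permitted to depend on $\varepsilon$.
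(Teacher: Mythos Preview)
Your argument is correct and follows exactly the route the paper indicates (Taylor expand via \eqref{fapprox} and invoke Theorem~\ref{mult-poly}); the paper gives no further details, and your subdivision into $O_{k,\eta,\varepsilon}(1)$ bands $|t-t_j|\le \varepsilon_0 T$ is the natural device to make the $O_k(\varepsilon)$ error in \eqref{fapprox} harmless when $\varepsilon$ is not small relative to $\eta$.

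Two minor points. First, $g_j(n)=f(n)n^{it_j}$ is multiplicative but not necessarily \emph{completely} multiplicative (since $f$ is only assumed multiplicative); this is irrelevant, as Theorem~\ref{mult-poly} only requires multiplicativity. Second, the displayed inequality $M(f;(C'+\varepsilon)T,Q)\le M(g_{j^*};C'T,Q)$ is not literally true: in the definition of $M(\cdot\,;Y,Q)$ the parameter $Y$ appears both as the range for $t$ \emph{and} as the truncation in $\mathbb{D}$, and enlarging the latter can increase the distance. The fix is immediate: for any $1$-bounded $h$ one has
\[
\mathbb{D}(f,h;(C'+\varepsilon)T)^2-\mathbb{D}(f,h;C'T)^2 \le 2\sum_{C'T<p\le (C'+\varepsilon)T}\frac{1}{p}\ll \log\Bigl(1+\frac{\varepsilon}{C'}\Bigr)\ll_{\varepsilon}1,
\]
so $M(f;(C'+\varepsilon)T,Q)\le \bigl(M(g_{j^*};C'T,Q)^2+O_\varepsilon(1)\bigr)^{1/2}\ll_{k,\eta,\varepsilon,\theta}1$, which is what you need.
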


We also note that He and Wang~\cite{HeWang} recently proved that 
\begin{align*}
\sup_{P\in \mathrm{Poly}_{\leq k}(\mathbb{R}\to \mathbb{R})}\int_{X}^{2X}\left|\sum_{x\leq n\leq x+H}\lambda(n)e(-P(n))\right|\, dx=o(HX)   
\end{align*}
for any $H$ tending to infinity with $X$, and they also proved an analogous estimate for nilsequences. This statement with the supremum \emph{outside} the integral unfortunately does not lead to control on Gowers norms (or weak Gowers norms) of $\lambda$ over short intervals and is accordingly closer in spirit to~\cite{MRT} than to the current paper. It is the case with the supremum \emph{inside} the integral (as in Theorems~\ref{mult-poly} and~\ref{mult-nil}) that we need for the applications in this paper, and such estimates would lead to a proof of the logarithmically averaged Chowla and Sarnak conjectures (via~\cite[Theorem 1.8]{TaoEq}) if one was able to take the interval length $H$ to grow sufficiently slowly in them; see Proposition~\ref{entropy}.

As indicated above, we can strengthen Theorem~\ref{mult-poly} further.  For any non-negative integer $k \geq 0$, and any function $f \colon \Z \to \C$ with finite support, define the (unnormalized) Gowers uniformity norm
$$ \| f \|_{U^{k+1}(\Z)} \coloneqq \left( \sum_{y,h_1,\dots,h_{k+1} \in \Z} \prod_{\omega \in \{0,1\}^{k+1}} \mathcal{C}^{|\omega|} f(y+\omega_1 h_1+\dots+\omega_{k+1} h_{k+1}) \right)^{1/2^{k+1}}$$
where $\omega = (\omega_1,\dots,\omega_{k+1})$, $|\omega| \coloneqq \omega_1+\dots+\omega_{k+1}$, and $\mathcal{C} \colon z \mapsto \overline{z}$ is the complex conjugation map. Then for any interval $[x,x+H]$ with $H \geq 1$ and any $f \colon \Z \to \C$ (not necessarily of finite support), define the \emph{Gowers uniformity norm over} $[x,x+H]$ by 
\begin{equation}\label{gow}
 \| f \|_{U^{k+1}([x,x+H])} \coloneqq \| f 1_{[x,x+H]} \|_{U^{k+1}(\Z)} /  \| 1_{[x,x+H]} \|_{U^{k+1}(\Z)}
\end{equation}
where $1_{[x,x+H]} \colon \Z \to \C$ is the indicator function of $[x,x+H]$.  We then have

\begin{theorem}[Non-pretentious multiplicative functions are Gowers uniform on short intervals on average]\label{mult-pret}  Let $k \geq 0$ be a non-negative integer, and let $0 < \theta < 1/2$.  Suppose that $f\colon \N \to \C$ is a multiplicative $1$-bounded function (extended by zero to the remaining integers), and suppose that $X \geq 1$, $X^\theta \leq H \leq X^{1-\theta}$, and $\eta > 0$ are such that
$$ \int_X^{2X} \| f \|_{U^{k+1}([x,x+H])}\ dx \geq \eta X.$$
Then one has
\begin{align}\label{MfC}
 M(f; C X^{k+1} / H^{k+1}, Q ) \ll_{k,\eta,\theta} 1
\end{align}
for some $Q, C \ll_{k,\eta,\theta} 1$.
\end{theorem}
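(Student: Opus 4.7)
The plan is to reduce Theorem~\ref{mult-pret} to the nilsequence generalization of Theorem~\ref{mult-poly} (Theorem~\ref{mult-nil}) by invoking the inverse theorem for the Gowers uniformity norms due to Green, Tao, and Ziegler. The hypothesis of Theorem~\ref{mult-pret} controls the averaged $U^{k+1}$ norm, while Theorem~\ref{mult-nil} controls averaged suprema over degree $k$ nilsequences, and the inverse theorem is the standard bridge between these two notions. Once this reduction is in place, all of the analytic number theory is packaged inside Theorem~\ref{mult-nil}.

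First I would pigeonhole in the hypothesis: the inequality $\int_X^{2X}\|f\|_{U^{k+1}([x,x+H])}\,dx \geq \eta X$ together with the trivial bound $\|f\|_{U^{k+1}([x,x+H])} \leq 1$ forces a set $E \subseteq [X,2X]$ of measure $\gg_\eta X$ on which $\|f\|_{U^{k+1}([x,x+H])} \geq \eta/2$. For each $x \in E$, the interval form of the Green--Tao--Ziegler inverse theorem, applied to $f 1_{[x,x+H]}$ (a $1$-bounded function of length $H \geq X^\theta$, comfortably longer than the scale required to run the inverse theorem), produces a filtered nilmanifold $G_x/\Gamma_x$ of degree $k$ and complexity $O_{k,\eta}(1)$, a polynomial sequence $g_x \colon \Z \to G_x$ adapted to the filtration, and a Lipschitz function $F_x \colon G_x/\Gamma_x \to \C$ of Lipschitz norm $O_{k,\eta}(1)$ such that
\[
\frac{1}{H}\left|\sum_{x \leq n \leq x+H} f(n)\,\overline{F_x}(g_x(n)\Gamma_x)\right| \gg_{k,\eta} 1.
\]

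Since filtered nilmanifolds of bounded dimension and complexity form, up to isomorphism, a finite family depending only on $k$ and $\eta$, I would pigeonhole on the isomorphism class to extract a subset $E' \subseteq E$ of measure $\gg_{k,\eta} X$ on which a single pair $(G/\Gamma, F)$ works with only $g_x$ varying in $x$; a further pigeonhole over a finite net in the Lipschitz ball reduces to a fixed $F$. Integrating over $x \in E'$ then yields
\[
\int_X^{2X}\sup_{g}\frac{1}{H}\left|\sum_{x \leq n \leq x+H} f(n)\,\overline{F}(g(n)\Gamma)\right|\,dx \gg_{k,\eta,\theta} HX,
\]
where the supremum ranges over polynomial sequences $g \colon \Z \to G$ adapted to the filtration. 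This is precisely the hypothesis required to invoke Theorem~\ref{mult-nil}, whose conclusion is the desired pretentiousness bound \eqref{MfC}.

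The essential difficulty is therefore concentrated in Theorem~\ref{mult-nil} itself rather than in this more formal reduction: one must extend the polynomial-phase arguments of Theorem~\ref{mult-poly} to genuine, possibly non-abelian, nilsequences. Building on the type-I/type-II sum decomposition and Fourier-analytic inputs used for $k \leq 1$ in~\cite{mrt-fourier}, one needs to invoke the Green--Tao factorization theorem to split the orbit $(g(n)\Gamma)_n$ into smooth, rational, and equidistributed components, control the equidistributed piece through quantitative equidistribution on nilmanifolds, and reassemble the resulting multiplicative correlations without incurring any polynomial loss in $H$. Avoiding that polynomial loss is crucial for the induction structure alluded to in the footnote following Theorem~\ref{mult-poly}, and is where I expect the main technical obstacle to lie.
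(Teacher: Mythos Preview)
Your reduction is correct and is exactly the paper's approach: the paper states (just before Theorem~\ref{mult-nil}) that ``applying the inverse conjecture for the Gowers norms as in~\cite[\S 4]{TaoEq}, \cite[\S C]{gtz} we see that Theorem~\ref{mult-pret} follows from (and is in fact equivalent to)'' Theorem~\ref{mult-nil}, and then uses Arzel\`a--Ascoli (your finite-net argument) to pigeonhole to a fixed Lipschitz function $F$. Two minor remarks: first, as the paper notes in a footnote, the inverse theorem already supplies a \emph{single} nilmanifold $G_\eta/\Gamma_\eta$ depending only on $k,\eta$, so your pigeonholing over isomorphism classes is unnecessary (though harmless); second, your displayed integral inequality has a normalization slip (with the $1/H$ present the right-hand side should be $\gg_{k,\eta} X$, not $HX$). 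Your closing sketch of how Theorem~\ref{mult-nil} itself is proved (``type-I/type-II sum decomposition'') does not match the paper's actual method, which instead proceeds via Tur\'an--Kubilius scaling, a large sieve for nilsequences, graph-theoretic cycle counting to produce approximate dilation invariances, and quantitative equidistribution---but that concerns a different theorem than the one under discussion.
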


The corresponding statement on correlations of $f$ with nilsequences $n \mapsto F(g(n)\Gamma)$ on intervals $[x,x+H]$, which we will use to derive Theorem~\ref{mult-pret} (and which in fact is equivalent to it), is given as Theorem~\ref{mult-nil}. 

In particular, using the non-pretentious nature of the Liouville function, this theorem yields the following corollary.
\begin{corollary}[Gowers uniformity of Liouville on short intervals on average]\label{cor: mult-pret}
Let an integer $k\geq 0$ and $0 < \theta \leq 1$ be fixed. Then for $H\geq X^{\theta}$ we have
\begin{equation}\label{fox}
 \int_X^{2X} \| \lambda \|_{U^{k+1}([x,x+H])}\ dx = o(X).
\end{equation}
\end{corollary}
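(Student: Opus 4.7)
The plan is to derive Corollary~\ref{cor: mult-pret} from Theorem~\ref{mult-pret} specialized to $f=\lambda$, using the well-known quantitative non-pretentiousness of the Liouville function.

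Arguing by contradiction, I would suppose that \eqref{fox} fails along some sequence $X\to\infty$, so that
$$\int_X^{2X}\|\lambda\|_{U^{k+1}([x,x+H])}\,dx\geq \eta X$$
for some fixed $\eta>0$ and some $H=H(X)\geq X^{\theta}$. Set $\theta_1:=\min(\theta,1/4)/2$, and first restrict attention to the range $H\leq X^{1-\theta_1}$. Theorem~\ref{mult-pret} applied with this $\theta_1$ then yields $C,Q\ll_{k,\eta,\theta}1$ for which
$$M(\lambda;\, CX^{k+1}/H^{k+1},\, Q)\ll_{k,\eta,\theta}1,$$
i.e.\ there exist a Dirichlet character $\chi$ of conductor at most $Q$ and a real number $t$ with $|t|\leq CX^{k+1}/H^{k+1}$ such that $\mathbb{D}(\lambda,\,n\mapsto\chi(n)n^{it};\,X)\ll 1$.

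I would then invoke the quantitative non-pretentiousness of the Liouville function: via the Vinogradov--Korobov zero-free region for Dirichlet $L$-functions and the explicit formula (as used e.g.\ in \cite[(1.12)]{MRT}), one has $\mathbb{D}(\lambda,\chi(n)n^{it};X)\to\infty$ as $X\to\infty$, uniformly over Dirichlet characters $\chi$ of bounded conductor and real $t$ with $|t|$ polynomially bounded in $X$. Since $H\geq X^{\theta}$ forces $CX^{k+1}/H^{k+1}\leq CX^{(k+1)(1-\theta)}$, this applies and contradicts the bound just obtained, finishing the contradiction for this range of $H$. The remaining edge range $H\in(X^{1-\theta_1},X]$ is handled separately by the pointwise global Gowers uniformity bound $\|\lambda\|_{U^{k+1}([N,2N])}=o(1)$ for $N\asymp X$, a known Green--Tao--Ziegler-type result.

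The real obstacle is packaged inside Theorem~\ref{mult-pret} itself (and the comparably difficult zero-free region input); the deduction of Corollary~\ref{cor: mult-pret} from these ingredients is essentially routine, the only bookkeeping issue being the handling of $H$ close to $X$ where the hypothesis $H\leq X^{1-\theta_1}$ of Theorem~\ref{mult-pret} is violated, which is disposed of as above.
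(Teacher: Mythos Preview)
Your main argument for the range $X^\theta \leq H \leq X^{1-\theta_1}$ is correct and agrees with the paper's intended deduction: apply Theorem~\ref{mult-pret} and contradict the non-pretentiousness of $\lambda$ (via Vinogradov--Korobov, as in \cite[(1.12)]{MRT}).

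The handling of the edge range $H \in (X^{1-\theta_1}, X]$, however, has a gap. The global bound $\|\lambda\|_{U^{k+1}([N,2N])} = o(1)$ applies only when the interval length is comparable to its endpoint. With your fixed choice $\theta_1 = \min(\theta, 1/4)/2 \leq 1/8$, the edge range contains $H$ as small as $X^{7/8}$ while $x \in [X, 2X]$, so $H/x$ can be of order $X^{-1/8}$. Attempting to use long-sum nilsequence orthogonality by subtraction yields only $\sum_{n \in [x, x+H]} \lambda(n) \overline{F}(g(n)\Gamma) \ll_A X(\log X)^{-A}$, which is not $o(H)$ when $H = X^{7/8}$; so the Green--Tao--Ziegler input does not close the argument here.

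The paper treats this edge range differently (see the remark immediately following the corollary): rather than invoking a separate pointwise result, one uses the inverse theorem to convert \eqref{fox} into a nilsequence correlation on $[x, x+H]$, and then partitions $[x, x+H]$ into subintervals of length $\asymp X^{1-\varepsilon}$, so that the long-interval correlation becomes an average of correlations on these subintervals. Averaging also over $x$, one lands on a statement of the form covered by Theorem~\ref{mult-nil} at scale $H' \asymp X^{1-\varepsilon}$, which is already within the range handled by Theorem~\ref{mult-pret}. This self-contained reduction avoids any reliance on pointwise short-interval Gowers uniformity.
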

Note that in the corollary above the case of larger values of $H\geq X^{1-o(1)}$ follows from the case $H=X^{\theta}$ by a simple averaging argument (by first using the inverse theorem for the Gowers norms to express \eqref{fox} in terms of the correlation of $\lambda$ with nilsequences on $[x,x+H]$, and then partitioning this interval into subintervals of length $\asymp X^{1-\varepsilon}$).  This partially verifies~\cite[Conjecture 1.6]{TaoEq}, which asserted that this estimate (or more precisely, a slightly weaker logarithmically averaged version of this estimate) held whenever $H = H(X)$ went to infinity as $X \to \infty$.  Fully resolving this conjecture would have many implications, including the (logarithmically averaged) Chowla and Sarnak conjectures; see~\cite{Tao}, \cite{TaoTeravainenGeneral} and~\cite{fh-sarnak} for the best currently known results in this direction).  Correspondingly, the partial result \eqref{fox} allows us to make progress on some problems concerning the Liouville function, including its word complexity and an averaged version of Chowla's conjecture, which we discuss in Subsection~\ref{sub:apps}. 

Regarding previous results on Gowers norms of non-pretentious multiplicative functions, a result of Frantzikinakis and Host~\cite{FH-Fourier} (generalizing work of Green and Tao~\cite{gt-mobius}) establishes the ``long sum'' endpoint case of Theorem~\ref{mult-pret} (corresponding to the case $H=X$, which is strictly speaking not covered by the above theorem), showing that $\|f\|_{U^{k+1}[1,X]}=o(1)$ under the assumption that $\mathbb{D}(f,n\mapsto \chi(n)n^{it};X)\to \infty$ as $X\to \infty$ for any fixed real number $t$ and Dirichlet character $\chi$. 

It is not difficult to establish a general estimate of the form
$$ \| f \|_{u^{k+1}([x,x+H])} \ll_k \| f \|_{U^{k+1}([x,x+H])}$$
for any $f\colon \Z \to \C$; this can be established for instance by a minor modification of the arguments in~\cite[\S 11.2]{tao-vu}. Thus Theorem~\ref{mult-pret} implies Theorem~\ref{mult-poly}.  The converse implication is also routine for $k=0,1$, but as is now well known (see e.g.,~\cite[Proposition 11.8]{tao-vu}), for higher $k$ the polynomial phases $n \mapsto e(P(n))$ appearing in the definition of the weak Gowers norms \eqref{wgow} are insufficient to control the full Gowers norms \eqref{gow}.  To bridge the gap, one needs to replace these polynomial phases by more general \emph{nilsequences} $n \mapsto F(g(n) \Gamma)$. The polynomial phases correspond to nilsequences on filtered nilmanifolds $G/\Gamma$ with $G$ abelian. We will thus first prove Theorem~\ref{mult-poly} in Section~\ref{sec: polyphases} to treat the case of abelian $G$, and then use a 
different and more delicate argument (presented in Section~\ref{nilseq} and outlined in Subsection~\ref{sub: sketch}) to handle the non-abelian case.

\subsection{Connection with the Chowla and Sarnak conjectures}

As already mentioned, estimates such as \eqref{fox} with slowly growing $H$ are closely tied to the Chowla and Sarnak conjectures. The logarithmically averaged Chowla conjecture states that
\begin{align*}
\sum_{n\leq x}\frac{\lambda(a_1n+b_1)\cdots \lambda(a_kn+b_k)}{n}=o(\log x)    
\end{align*}
whenever $a_i,b_i$ are natural numbers\footnote{In this paper the natural numbers $\N = \{1,2,3,\dots\}$ begin with $1$.} with $a_ib_j\neq a_jb_i$ for $i\neq j$. The logarithmically averaged Sarnak conjecture in turn is the statement that
\begin{align*}
\sum_{n\leq x}\frac{\lambda(n)a(n)}{n}=o(\log x)
\end{align*}
for every bounded, deterministic sequence $a \colon \N \to \C$ (in the sense that $a$ has zero topological entropy). See~\cite{sarnaksurvey} for a survey of previous work on these two conjectures.

In~\cite{TaoEq}, it was shown that the logarithmically averaged Chowla conjecture and the logarithmically averaged Sarnak conjecture are equivalent, and that both would also follow from \eqref{fox} being true for every $H=H(X)$ tending to infinity with $X$.  In fact these two conjectures are equivalent to the logarithmic version of \eqref{fox} in this regime, which states that
\begin{align}\label{log-liouville}
\int_{1}^{X}\frac{\|\lambda\|_{U^{k+1}[x,x+H]}}{x}\, dx=o(\log X)    
\end{align}
whenever $H = H(X)$ goes to infinity with $X$.  Thus, a potential strategy towards proving the logarithmic Chowla and Sarnak conjectures emerges from the possibility of lowering the value of $H=H(X)$ in Theorem~\ref{mult-pret}. We observe in Section~\ref{uss} that we in fact do not need \eqref{log-liouville} for arbitrarily slowly growing $H$ to deduce the logarithmic Chowla conjecture; it instead suffices to prove it for $H\geq (\log X)^{\eta}$ for every $\eta>0$.

\begin{proposition}\label{entropy} Suppose that for every natural number $k$ and every $\eta>0$ for $H=H(X)=(\log X)^{\eta}$ we have
\begin{align*}
\int_{1}^{X}\frac{\|\lambda\|_{U^{k+1}[x,x+H]}}{x}\,dx=o(\log X).    
\end{align*}
Then the logarithmic Chowla conjecture holds.
\end{proposition}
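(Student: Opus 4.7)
The plan is to show that Tao's entropy decrement argument~\cite{Tao, TaoEq}, which relates the logarithmic Chowla conjecture to short-interval Gowers uniformity of $\lambda$, can be carried out using intervals of polylogarithmic length. While~\cite{TaoEq} establishes the reduction only for $H = H(X) \to \infty$ arbitrarily slowly, I will revisit the argument to verify that the scale $H$ output by the entropy decrement can in fact be taken within the polylogarithmic window $[(\log X)^\eta, (\log X)^{K\eta}]$ for some fixed $K$ depending on $k$. Recall the structure of Tao's argument: to establish the $k$-point logarithmic Chowla conjecture for $\lambda$, one exploits complete multiplicativity via the dilation identity $\lambda(pn) = -\lambda(n)$, averaging over primes $p$ in a dyadic range $[P_1, P_2]$, to reduce a $k$-point correlation sum into an average of $\lambda$-correlations over short intervals of length $\sim X/P_1$. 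An entropy decrement step is then invoked to select a dyadic range of primes where the remaining shift-dependent error terms can be absorbed.

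The key step is to rerun the entropy decrement of~\cite{TaoEq} with the prime ranges restricted so that the associated interval length $H \sim X/P_1$ lies in the polylogarithmic window $[(\log X)^\eta, (\log X)^{K\eta}]$. The entropy decrement consumes only $O(1)$ entropy per dyadic step, and the total entropy of the relevant $\sigma$-algebra-valued stochastic process is at most $O(\log X)$. Since the number of dyadic scales in this window is only $O(\log\log X)$, the entropy budget is amply sufficient, and a pigeonhole step produces some scale $H_0 \in [(\log X)^\eta, (\log X)^{K\eta}]$ at which the $k$-point logarithmic correlation sum is bounded by an average Gowers norm of $\lambda$ on intervals of length $H_0$, modulo an $o(\log X)$ error.

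Applying the hypothesis (with $\eta$ replaced by $K\eta$ if needed) then shows that this average Gowers norm is $o(\log X)$, and hence the $k$-point logarithmic Chowla correlation sum $\sum_{n\leq X} \lambda(n+h_1)\cdots\lambda(n+h_k)/n$ is $o(\log X)$ for each fixed tuple of shifts $h_1,\dots,h_k$. Since the shifts and $k$ are arbitrary, the full logarithmic Chowla conjecture follows.

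The main obstacle is the entropy decrement step: one must carefully verify that the argument in~\cite{TaoEq} does indeed operate within a polylogarithmic window of scales, rather than needing arbitrarily slow ones as stated there. This amounts to tracking the quantitative parameters (entropy budget, entropy consumed per scale, and implied constants in the pigeonhole) through Tao's argument, and confirming that the window $[(\log X)^\eta, (\log X)^{K\eta}]$ is wide enough for a favorable scale to exist. Once this bookkeeping is done, the remainder of the proof proceeds essentially verbatim from~\cite{TaoEq}.
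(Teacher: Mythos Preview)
Your overall strategy---use an entropy decrement to locate a prime scale $P$ in a polylogarithmic window, then apply the generalized von Neumann theorem and the hypothesis---matches the paper's. But there is a substantive gap in the entropy decrement step, and one minor error.

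The minor error: the short intervals on which one applies the Gowers norm bound have length $\asymp kP$, not $\asymp X/P$ as you write. After the dilation $n \mapsto pn$ the shifts become $ph_1,\dots,ph_k$, and one partitions the $n$-average into blocks of length $\asymp kP$ before applying von Neumann.

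The real gap is your justification that a good scale exists in the polylogarithmic window. The entropy decrement of \cite{Tao} (and as invoked in \cite{TaoEq}) only shows that the exceptional set $\mathcal{M}$ of bad scales $m = \log_2 P$ satisfies $\sum_{m \in \mathcal{M}} \frac{1}{m\log m} \ll_\eps 1$. Over a polylogarithmic window $m \in [\eta\log\log X, C\log\log X]$ the full harmonic-log sum $\sum_m \frac{1}{m\log m}$ is $\approx \log\log(C\log\log X) - \log\log(\eta\log\log X) = o(1)$, so this bound does \emph{not} preclude the entire window from being bad. Your heuristic (``$O(1)$ entropy per step, total entropy $O(\log X)$, $O(\log\log X)$ scales in the window'') does not yield anything: even if at most $O(\log X)$ scales were bad, all $O(\log\log X)$ scales in the window could be among them.

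What is needed (and what the paper uses) is the \emph{refined} entropy decrement of Tao--Ter\"av\"ainen \cite[Theorem 3.1]{tt-chowla}, which improves the control to $\sum_{m \in \mathcal{M}} \frac{1}{m} \ll \eps^{-3}$. Now the full sum over the window $m \in [\eps'\log\log X, \tfrac{1}{10}\log\log X]$ is $\approx \log(1/(10\eps'))$, and choosing $\eps' = \exp(-\eps^{-10})$ makes this exceed $\eps^{-3}$, guaranteeing a good scale $P \geq (\log X)^{\eps'/2}$. After this, one passes to the von Mangoldt weight, applies the $W$-trick and the Gowers uniformity of $\Lambda_{W,b}$ (from \cite{green-tao,gt-mobius,gtz}), and then the generalized von Neumann theorem---steps you correctly subsume under ``the remainder proceeds from \cite{TaoEq}''---to arrive at a bound in terms of $\E_{n\leq x}^{\log} \|\lambda\|_{U^k[n,n+3kP]}$, to which the hypothesis applies.
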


This proposition will be proved in Subsection~\ref{uss}.

Thus, in order to prove the logarithmic Chowla conjecture, it would suffice to bridge the gap between $H\geq X^{\eta}$ (which is the range where Corollary~\ref{cor: mult-pret} is valid) and $H\leq (\log X)^{\eta}$ in Proposition~\ref{entropy}. In Section~\ref{sec: lowering}, we already show that, at least in the case of our result on the weak Gowers norms (Theorem~\ref{mult-poly}), we may lower the admissible $H$ to $H\geq \exp((\log X)^{c})$ for some $c>0$.

\begin{theorem}[Shortening the intervals]\label{lower}
Let $k$ be a natural number, and let $\theta > 5/8$ and $\rho > 0$ be fixed.  Suppose that $f\colon \N \to \C$ is a multiplicative $1$-bounded function (extended by zero to the remaining integers), and suppose that $X\geq 1$, $X^{\theta} \geq H \geq \exp((\log X)^\theta) $, and $\eta > 0$ are such that
\begin{equation}
\label{eq:lowerMassump}
\int_X^{2X} \| f \|_{u^{k+1}([x,x+H])}\ dx \geq \eta X.
\end{equation}
Then one has
\begin{equation}
\label{eq:lowerMClaim} 
M(f; X^{k+1} / H^{k+1-\rho}, Q ) \ll_{k,\eta,\theta} 1
\end{equation}
for some $Q \ll_{k,\eta, \theta, \rho} 1$.
\end{theorem}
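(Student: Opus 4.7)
The plan is to combine the higher-degree extension of the Matom\"aki--Radziwi\l\l\ machinery developed for Theorem~\ref{mult-poly} with the zero-density and Vinogradov--Korobov-based ingredients introduced in~\cite{mrt-fourier} to handle the $k=1$ analogue of a short-interval shortening result of this kind. The $5/8+\varepsilon$ threshold is the same one appearing in Zhan's prime number theorem in short intervals, and enters via Huxley's zero-density estimate $N(\sigma,T)\ll T^{12(1-\sigma)/5+o(1)}$ for Dirichlet $L$-functions together with the Vinogradov--Korobov zero-free region; this pair of ingredients is what allows one to descend from $H\ge X^{\theta}$ down to $H\ge \exp((\log X)^{\theta})$ with $\theta>5/8$. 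The genuinely new content for $k\ge 2$ is handling polynomial phases of higher degree throughout this framework.

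Concretely, starting from \eqref{eq:lowerMassump}, I would first apply Ramar\'e's identity to factor $n=pm$ with $p$ prime in a carefully chosen range, then use Perron's formula (or a Parseval-type identity) to convert the $L^{1}$-average
\begin{equation*}
\int_{X}^{2X}\sup_{P}\Bigl|\sum_{x\le n\le x+H}f(n)e(-P(n))\Bigr|\,dx
\end{equation*}
into a large-values problem for Dirichlet polynomials of the form $\sum_{m}f(m)e(-Q(m))m^{-s}$ on vertical segments of length $\asymp X/H$. Next, to remove the higher-degree phase, I would apply an iteration of van der Corput/Weyl differencings in $m$ (or, alternatively, localize $x$ to a short sub-window of $[x,x+H]$ on which $e(-P(n))$ is essentially a linear phase); either strategy reduces the task to the linear-phase version handled in~\cite{mrt-fourier}, applied to a modified multiplicative function. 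Finally, I would apply Huxley's zero-density estimate together with Vinogradov--Korobov to conclude that the resulting Dirichlet polynomials are small on the relevant vertical segments, unless $f$ is pretentious in the precise sense that \eqref{eq:lowerMClaim} fails; the contrapositive then gives the theorem.

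The main obstacle will be the phase-handling step: maintaining control of the polynomial phase of degree $k$ under the Ramar\'e factorization $n=pm$. The phase $e(-P(pm))$ does not split multiplicatively in $p$ and $m$, unlike the linear case, so the separable structure exploited for $k=1$ is lost and must be recovered by Weyl differencing or by fine partitioning. This strategy inevitably introduces a polynomial loss, which is precisely reflected in the $H^{k+1-\rho}$ appearing in \eqref{eq:lowerMClaim} in place of the sharper $H^{k+1}$ that Theorem~\ref{mult-poly} achieves at larger $H$; thus one should not expect to eliminate $\rho$ with this approach. A secondary technical point is calibrating the Ramar\'e prime range and the differencing scales so that Huxley's density estimate remains effective in the sub-polynomial regime $H=\exp((\log X)^{\theta})$, where standard mean-value theorems for Dirichlet polynomials alone no longer suffice and one must rely on the zero-free-region savings in a genuinely essential way.
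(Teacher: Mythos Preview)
Your proposal has a genuine gap in the ``reduce to linear phases'' step. Van der Corput/Weyl differencing applied to $\sum_{m} f(m)e(-Q(m))$ produces sums of the shape $\sum_{m} f(m)\overline{f(m+h)}\,e(-\Delta_h Q(m))$, and the weight $m\mapsto f(m)\overline{f(m+h)}$ is not multiplicative; so you cannot feed it back into the $k=1$ machinery of~\cite{mrt-fourier}, which requires a $1$-bounded \emph{multiplicative} function. The alternative ``localize to a sub-window on which $e(-P(n))$ is linear'' also fails: the supremum in $\|f\|_{u^{k+1}}$ ranges over all degree-$k$ polynomials, so there is no a priori upper bound on the higher coefficients that would force the phase to linearize on any window long enough for short-interval input to apply. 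In short, there is no reduction to the linear case here; the higher-degree phase must be handled structurally.

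The paper does something quite different. It reruns the entire framework of Section~\ref{sec: polyphases} (local polynomial phases, the equivalence $\sim_\delta$, the scaling-down and graph-theoretic arguments leading to the approximate dilation invariance and its solution $P_{I''}(t)\approx \frac{T}{2\pi}\log t+\gamma(t)$), now with $P',P''\in[H^{\eps^2/2},H^\eps]$. The new difficulty is that the cycle length $\ell\asymp \log N/\log d\asymp (\log X)^{1-\theta}$ is no longer $O(1)$, so losses of size $C^\ell$ and $\ell!$ must be tracked explicitly (Lemma~\ref{le:countNPLowH}, Proposition~\ref{prop41-analogLowerH}). The $5/8$ threshold does \emph{not} come from Huxley's zero-density estimate; it arises in Lemma~\ref{le: 5/8} from a Hal\'asz--Montgomery-type large-values bound for $\sum_{p\sim P}p^{it}$ using Ford's Vinogradov-strength bound for $\zeta$, which is what guarantees that the set of bad frequencies in the mixing lemma (Lemma~\ref{le:mixlemShH}) is of size $P^{o(1)}$. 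Finally, because the family of good intervals is now only of size $\gg \exp(-(\log X)^{\theta-\eps})X/H$ and $q_0$ is only $\ll(\log X)^{O(\ell)}$, the endgame requires the power-saving exceptional-set version of Matom\"aki--Radziwi\l\l\ from~\cite{mrPart2} (together with a character-sum bound to handle the $q$-aspect), not merely~\cite{mr} or~\cite{mrt-fourier}. None of this structure is visible in your Ramar\'e/Perron/differencing outline.
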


It is conceivable that a careful reworking of the nilsequence part of our arguments in Section~\ref{nilseq} would yield a similar regime $H\geq \exp((\log X)^{1-\delta})$ for Theorem~\ref{mult-pret}; we do not pursue this here, however (see however Remark~\ref{rem: loweringH}).

The exponent $5/8$ appearing in Theorem~\ref{lower} is significant as it shows that it is possible to control $\| f \|_{u^{k + 1}[x, x + H]}$ on average over $x$ without establishing cancellations in short sums over primes of the form $\sum_{H \leq p \leq 2H} p^{it}$ (with $t$ of size $X^{k}$). Instead, we show using general Dirichlet polynomial techniques that the set of points $t$ at which the above Dirichlet polynomial exhibits no cancellation is sparse. We note that the smallest $H$ for which $\sum_{H \leq p \leq 2H} p^{it}$ is known to exhibit cancellations for $t$ of size $X^k$ is $H = \exp((\log X)^{2/3 + \varepsilon})$. We also note that the proof of Theorem~\ref{lower} crucially relies on cancellation in short sums of multiplicative functions outside a power-saving exceptional set, proved in~\cite{mrPart2} as an improvement to~\cite{mr}. See Remark~\ref{rem:specShorter} on how in the case of $f=\lambda$ the weaker range $H\geq \exp((\log X)^{2/3+\varepsilon})$ and  can be obtained using only the method of~\cite{mr-p}.

It seems nonetheless that the lower bound for $H$ in Theorem~\ref{lower} is close to the breaking point of several arguments in our proof. Firstly, for $H$ much smaller than $\exp((\log X)^c)$ with $c > 0$ it appears difficult to show (using general Dirichlet polynomial techniques) that for a large proportion of values $|t| \leq X^{O(1)}$ the sum $\sum_{H \leq p \leq 2H} p^{it}$ exhibits cancellations. Secondly, in the graph-theoretic part of our arguments factors of the type $\ell!$ with $\ell \asymp \frac{\log X}{\log H}$ arise, and while these are harmless for $H\geq X^{\eta}$, they become problematic in the regime $H\leq \exp((\log X)^{\theta})$, in particular if $\theta < 1/2$. Despite these limitations, at least if one works with certain model cases of the problem (such as a ``99\% version'' of Theorem~\ref{mult-pret}, where $\eta$ is very close to $1$) and assumes GRH, then one should be able to push $H$ further down. 

Handling the regime $H\in [(\log X)^{\eta}, (\log X)^{\eta^{-1}}]$, at the very least, would likely necessitate an entirely new idea for several reasons. Firstly, even under GRH cancellation in the Dirichlet polynomials $\sum_{H^{\varepsilon}\leq p\leq 2H^{\varepsilon}}\chi(p)p^{it}$ is known essentially only for $H\gg_{\varepsilon,\kappa} (\log X)^{(2+\kappa)\varepsilon^{-1}}$. Secondly, the arguments for solving ``approximate functional equations'' involving phase functions that are used in this paper do not seem to work (even in model cases) for such $H$, as such arguments rely on the ``modulus'' $\prod_{H^{\varepsilon}\leq p\leq 2H^{\varepsilon}}p$ being much larger than $X$ (see footnote~\ref{foot1}). Thirdly, the entropy decrement argument (which is applied to prove Proposition~\ref{entropy} that the $H\geq (\log X)^{\eta}$ range of \eqref{log-liouville} implies the logarithmic Chowla conjecture) is restricted to the regime $H\leq (\log X)^{\eta}$, as it is based on equidistribution of the integers in $[1,X]$ modulo $\prod_{H^A\leq p\leq 2H^A}p$ for $A\geq 1$ large enough (see however the recent work~\cite{helfgott-radziwill} for a quantitatively stronger alternative replacement to the entropy decrement method in the case of two-point correlations).

\subsection{Applications}\label{sub:apps}

\subsubsection{Sign patterns of the Liouville function}

Let 
\begin{equation}\label{sk-def}
s(k) \coloneqq |\{v\in \{-1,+1\}^k:\,\, v=(\lambda(n+1),\ldots, \lambda(n+k))\,\,\textnormal{for some}\,\, n\in \mathbb{N}\}|    
\end{equation}
be the number of sign patterns of length $k$ in the Liouville sequence. A direct consequence of Chowla's conjecture (or its logarithmic version) is that $s(k)=2^k$ for all $k$ and that each pattern of length $k$ occurs with positive lower density; yet, this remains unknown (apart from the $k\leq 4$ cases handled in~\cite{TaoTeravainenGeneral}). In fact, known lower bounds on $s(k)$ are far from exponential; Frantzikinakis and Host~\cite{fh-sarnak} proved that $s(k)/k\to \infty$ as $k\to \infty$, and recently this was improved by McNamara~\cite{mcnamara} to $s(k)\gg k^2$. In fact, both in~\cite{mcnamara} and~\cite{fh-sarnak} a stronger result was proved, namely that $\lambda$ is orthogonal (with logarithmic averages) to any sequence having $o(k^2)$ (respectively $O(k)$) sign patterns of length $k$. Let us also remark that the validity of the $2j$-point Chowla conjecture for any fixed $j$ implies by a simple moment computation that there are $\gg k^j$ sign patterns of length $k$ that occur with positive density (so, in particular, $s(k)\gg k^j$). As an application of Theorem~\ref{mult-pret}, we prove a superpolynomial lower bound on $s(k)$.

\begin{theorem}[The Liouville function has superpolynomial number of patterns]\label{superpolynomial} We have $s(k)\gg_A k^A$ for every $A\geq 1$.
\end{theorem}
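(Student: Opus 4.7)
The plan is to argue by contradiction, supposing that $s(k)\leq k^A$ holds along an infinite sequence of values of $k$ for some fixed integer $A\geq 1$. This is a polynomial word-complexity hypothesis on the Liouville subshift $X_\lambda\subset\{-1,+1\}^{\Z}$, i.e.\ the orbit closure of $x_\lambda=(\lambda(n))_{n\in\Z}$ under the shift. Generalizing McNamara's strategy, which used the Fourier-uniformity case of~\cite{mrt-fourier} to deduce $s(k)\gg k^2$, I aim to establish the black-box proposition that if $a\colon\N\to\{-1,+1\}$ has word complexity $p_n(a)=o(n^{A+1})$, then $\sum_{n\leq X}\lambda(n)a(n)/n=o(\log X)$. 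Applying this with $a=\lambda$ under the assumption $s(k)\leq k^A$ would give $\log X+O(1)=\sum_{n\leq X}\lambda(n)^2/n=o(\log X)$, yielding the desired contradiction.

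To prove the black-box proposition I would pass to a logarithmic Furstenberg joining of $\lambda$ and $a$, obtaining a shift-invariant system $(X,\mu,T)$ with observables $F_\lambda,F_a\in L^\infty(\mu)$. The polynomial-complexity hypothesis on $a$ should imply, via structure theory for zero-entropy / polynomial-complexity topological dynamical systems (results of Huang--Shao--Ye, Host--Kra--Maass, or their descendants), that $F_a$ is measurable with respect to the Host--Kra factor $Z_d$ for some $d=d(A)$. On the Liouville side, I would combine Corollary~\ref{cor: mult-pret} for the $U^{d+1}$ norm with an entropy-decrement transfer in the spirit of Proposition~\ref{entropy} and~\cite{TaoEq} to conclude that the Host--Kra seminorm $\|F_\lambda\|_{U^{d+1}(\mu)}$ vanishes. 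The Host--Kra inverse theorem then yields $F_\lambda\perp Z_d$ and consequently $F_\lambda\perp F_a$, which unwinds to the sought correlation bound.

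The main obstacle is the transfer step: Corollary~\ref{cor: mult-pret} only delivers Gowers uniformity on short intervals of length $H\geq X^\theta$, while the entropy-decrement machinery of~\cite{TaoEq} morally requires $H$ growing arbitrarily slowly, and Proposition~\ref{entropy} illustrates that $H=(\log X)^\eta$ would already be enough to deduce the full logarithmic Chowla conjecture. The shortening in Theorem~\ref{lower} to $H\geq\exp((\log X)^{5/8+\varepsilon})$ narrows but does not close this gap. What makes the black box plausibly reachable is that the competing sequence $a$ has complexity bounded by a \emph{single fixed} polynomial $n^{A+1}$, which is a significant restriction: I expect a bespoke variant of the entropy-decrement argument, adapted to the finite-dimensional family of observables that arises in this restricted class, to deliver the required transfer directly from the $H\geq X^\theta$ range. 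A secondary difficulty is carrying out the structural step for $F_a$ precisely enough in the non-minimal Furstenberg-joining setting; the nilpotency step $d(A)$ need only be finite (since Corollary~\ref{cor: mult-pret} covers every fixed $k$), but establishing this finiteness in full generality may require extra topological-dynamical input.
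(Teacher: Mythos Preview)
Your proposal has a genuine gap at exactly the point you flag as the ``main obstacle'': the transfer from Corollary~\ref{cor: mult-pret} (Gowers uniformity on intervals of length $H\geq X^\theta$) to the vanishing of $\|F_\lambda\|_{U^{d+1}(\mu)}$ in a Furstenberg joining. You acknowledge this and appeal to a hoped-for ``bespoke variant of the entropy-decrement argument'', but this is precisely the step the paper must---and does---circumvent, and your outline gives no mechanism for it. In your setup the polynomial-complexity hypothesis is being spent entirely on the \emph{structural} side (forcing $F_a$ into a Host--Kra factor), so it is not available to help on the $\lambda$ side where the scale problem lives. There is also a second unresolved issue: the claim that polynomial word complexity of $a$ forces $F_a$ to lie in some $Z_d$ is not a known theorem in this generality (zero entropy alone certainly does not imply it), and the references you cite do not supply it.

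The paper's approach is quite different and avoids Furstenberg systems altogether. It works directly with the correlation $C=\mathbb{E}^{\log}_{n\leq x}\lambda(n+\ell_1)\cdots\lambda(n+\ell_i)$ arising from a missing sign pattern, rewrites it via complete multiplicativity as an average over primes $p\in[P,2P]$ involving the weight $p\,1_{p\mid n}$, and then---this is the key idea you are missing---uses the small sign pattern hypothesis to \emph{replace the entropy-decrement step by a moment computation}. Concretely, with $m=3kP$ chosen so that $s(m)<x^{\varepsilon^3}$, the summand $\lambda(n+j+p\ell_1)\cdots\lambda(n+j+p\ell_i)$ depends only on the sign pattern $(\lambda(n+1),\dots,\lambda(n+m))$, of which there are at most $x^{\varepsilon^3}$. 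One freezes the pattern, takes a $2r$th moment with $r\asymp \varepsilon^2\log x/\log P$, and shows the resulting probability is $\ll x^{-2\varepsilon^3}$; a union bound over the $x^{\varepsilon^3}$ patterns then gives $\mathbb{E}_p\mathbb{E}^{\log}_n[\cdots](p1_{p\mid n}-1)\ll\varepsilon$ directly at the polynomially large scale $P\asymp x^{\varepsilon^3}$. This is exactly the range where Corollary~\ref{cor: mult-pret} applies after a $W$-trick and the generalized von Neumann theorem, and no entropy decrement (bespoke or otherwise) is needed.
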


In fact, we prove a more general result (Theorem~\ref{theo-signs}), which shows that any improvement in the range of validity of \eqref{fox} leads to an improvement in the lower bound on $s(k)$. In particular, if \eqref{fox} holds for $H\geq \exp((\log X)^{1-\delta})$, then $s(k)\gg_{\varepsilon} k^{(\log k)^{\delta/(1-\delta)-\varepsilon}}$. See also Theorem~\ref{theo-multsigns} for a generalization to multiplicative functions other than the Liouville function. 

Theorem~\ref{superpolynomial} can be viewed as progress towards a conjecture of Sarnak in~\cite{sarnak} that the Furstenberg systems of the Liouville function have positive entropy (so that in particular $s(k)\gg c^k$ for some $c>1$). Sarnak highlighted this as a key special case of his M\"obius randomness conjecture. It is worth noting that, as was observed in~\cite{sarnak}, one easily sees that the M\"obius system has positive entropy, but this amounts solely to the fact that the distribution of squarefree numbers is very well understood and therefore this does not imply anything about the Liouville system (indeed, Sarnak says in~\cite{sarnak} that it appears ``pretty hard to show that $\lambda$ is not deterministic''). In this connection, it would be very interesting to say more about the frequency of the superpolynomially many patterns produced by Theorem~\ref{superpolynomial}.

The proof of Theorem~\ref{superpolynomial} involves a different approach than the previous sign pattern arguments, utilizing a type of ``structure and randomness'' dichotomy (meaning that if there are few sign patterns, then the Liouville function is easier to understand, and we can leverage this to eventually get a contradiction); see Section~\ref{sec: signpatterns} for the proof and Subsection~\ref{sss:pfSignPatSkecth} for its outline.

\subsubsection{Polynomial averages of the Liouville function}

As another application of Theorem~\ref{mult-pret}, we use it to establish cancellation in averages 
\begin{align*}
\mathbb{E}_{n\leq X}\mathbb{E}_{m\leq X^{1/d}}\lambda(n+P_1(m))\cdots \lambda(n+P_k(m))    
\end{align*}
of the Liouville function along polynomial progressions $(n+P_1(m),\ldots, n+P_k(m))$ (with $d$ being the maximum degree of the polynomials $P_i$). Averages along polynomial progressions are natural objects in additive combinatorics and ergodic theory, and a particularly important result concerning them is the polynomial Szemer\'edi theorem of Bergelson and Leibman~\cite{bl} that guarantees for any non-constant polynomials $P_i(x)\in \mathbb{Z}[x]$ with $P_i(0)=0$ the existence of a polynomial progression $n+P_1(m),\ldots, n+P_k(m)$ inside any positive density subset of the integers. This was generalized to polynomial progressions inside the primes in~\cite{tao-ziegler1}. However, when one is considering polynomial progressions weighted by an oscillating function (such as $\lambda$), these results do not apply (as they are lower bound results). 

It was later shown in~\cite[Theorem 1.4]{tao-ziegler2} that if the assumption $P_i(0)=0$ for all $i$ is replaced with the polynomials $P_i-P_j$ having degree $d$ for all $i\neq j$ (where $d$ is the maximum of the degrees of $P_l$) one has an asymptotic for polynomial patterns $(n+P_1(m),\ldots, n+P_k(m))$ weighted by the von Mangoldt function (and the same argument works for the Liouville function). Here we remove this assumption on the degree $d$ coefficients of the $P_i$ being distinct in the case of the Liouville weight, thus obtaining a result that works for any polynomial patterns (that are not of ``infinite complexity'', such as the pattern $(n+1,n+2,\ldots, n+k)$). Moreover, we can take the $m$ average in our results to be of subpolynomial size, which is important for Corollary~\ref{cor_chowla} below.  

\begin{theorem}[Polynomial averages of the Liouville function]\label{poly1}
Let $k,r\geq 1$ be integers, and let $P_1,\ldots, P_k$ be polynomials in $\mathbb{Z}[x_1,\ldots, x_r]$ with degrees $\leq d$. Suppose that $P_i-P_j$ is nonconstant for all $i\neq j$. Then for any fixed $0<\varepsilon< 1/d$ we have
\begin{align*}
\mathbb{E}_{\m\in [X^{\varepsilon}]^r}|\mathbb{E}_{n\leq X}\lambda(n+P_1(\m))\cdots \lambda(n+P_k(\m))|=o(1).    
\end{align*}
Here, $[N]^{r}$ stands for the $r$-dimensional discrete box $\{1,\ldots, N\}^r$.
\end{theorem}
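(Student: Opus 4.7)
The plan is to shorten the $n$-average to windows of length $H=X^\theta$ with $\theta>0$ chosen just above $d\varepsilon$ (admissible because $\varepsilon<1/d$), apply a polynomial generalized von Neumann theorem on each short window, and then conclude via Corollary~\ref{cor: mult-pret}.

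Concretely, by duality (taking a $1$-bounded weight $c(\m)$ equal to the conjugate phase of the inner $n$-average), it suffices to show that, uniformly over $1$-bounded $c\colon[X^\varepsilon]^r\to\C$,
$$\mathbb{E}_{\m\in[X^\varepsilon]^r}\, c(\m)\,\mathbb{E}_{n\leq X}\prod_{i=1}^{k}\lambda(n+P_i(\m))=o(1).$$
Rewriting the $n$-average as an average over windows $[x,x+H]$ (at negligible cost $O(H/X)$) and noting that $|P_i(\m)|\leq C_k X^{d\varepsilon}=o(H)$ confines every shifted argument $n+P_i(\m)$ to $[x-H',x+H']$ with $H'=H(1+o(1))$, the task reduces to
$$\frac{1}{X}\int_0^X \Bigl|\mathbb{E}_\m c(\m)\,\mathbb{E}_{x\leq n<x+H}\prod_{i=1}^k\lambda(n+P_i(\m))\Bigr|\,dx=o(1).$$

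The heart of the argument is a short-interval polynomial generalized von Neumann theorem: by PET induction (following the scheme of~\cite{tao-ziegler2}, but carried out on intervals of length $H$ rather than on a long interval), the hypothesis that each difference $P_i-P_j$ is nonconstant produces an integer $s=s(k,r,d)$ such that, uniformly in $x$ and in $1$-bounded $c$,
$$\Bigl|\mathbb{E}_\m c(\m)\,\mathbb{E}_{x\leq n<x+H}\prod_{i=1}^k\lambda(n+P_i(\m))\Bigr|\ll \|\lambda\|_{U^{s+1}([x-H',x+H'])}+o(1).$$
Each Cauchy--Schwarz step in the PET induction acts on coordinates of $\m$ (of range $X^\varepsilon$) and introduces auxiliary shifts of size $O(X^{d\varepsilon})=o(H)$, so every intermediate system remains localized inside $[x-H',x+H']$; the weight $c(\m)$ is harmlessly absorbed after each Cauchy--Schwarz duplication since $|c|\leq 1$. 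Integrating the above display over $x\in[1,X]$ and invoking Corollary~\ref{cor: mult-pret} with Gowers index $s+1$ and scale $H=X^\theta$ delivers the desired $o(1)$ bound.

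The main obstacle is the rigorous execution of PET induction on short intervals: on long intervals the Cauchy--Schwarz shifts are harmless, whereas on an interval of length $H$ one must verify that all intermediate shifts, as well as the boundary contributions from the slight enlargement $[x-H',x+H']\setminus[x,x+H]$, remain $o(H)$. This is ultimately why the hypothesis $\varepsilon<1/d$ is needed, as it provides room to choose $\theta\in(d\varepsilon,1)$ that accommodates every such auxiliary shift. Once this bookkeeping is completed, the proof is driven entirely by Corollary~\ref{cor: mult-pret}.
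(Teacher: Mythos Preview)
Your outline has the right architecture (dualize to a weighted sum, control by a short-interval Gowers norm of $\lambda$, invoke Corollary~\ref{cor: mult-pret}), but the displayed inequality
\[
\Bigl|\mathbb{E}_\m c(\m)\,\mathbb{E}_{x\leq n<x+H}\prod_{i=1}^k\lambda(n+P_i(\m))\Bigr|\ll \|\lambda\|_{U^{s+1}([x-H',x+H'])}+o(1)
\]
is not what PET induction delivers. Each Cauchy--Schwarz in the PET scheme is taken in coordinates of $\m$, which live at scale $X^\varepsilon$; the output is therefore an \emph{averaged Gowers box norm} with shifts of the form $Q_j(\mathbf{t})$ for polynomials $Q_j$ and parameters $\mathbf{t}\in[X^\varepsilon]^{O(1)}$, i.e.\ shifts confined to a structured set of size $O(X^{d\varepsilon})\ll H$. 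A box norm with such short, polynomially-parametrized shifts is \emph{not} controlled by the ordinary $U^{s+1}$ norm on the window of length $H$: a function that is locally constant at scale $X^{d\varepsilon}$ but globally irregular can have large box norm and small $U^{s+1}([x,x+H])$ norm. Passing from the box norm to a genuine Gowers norm is exactly the content of the concatenation theorem of Tao--Ziegler, and it is the step you have omitted.

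The paper's proof makes this explicit. It works on $\Z/N\Z$ with $N\asymp X$ (no preliminary windowing is needed), first applies van der Corput in $n$ to introduce a full-range shift $h$, then invokes \cite[Theorem~13]{tao-ziegler2} to bound the resulting polynomial average of $\Delta_h\tilde f_1$ by an averaged box norm, and then invokes the concatenation theorem \cite[Theorem~9]{tao-ziegler2} to convert this into a Gowers norm along short arithmetic progressions of length $\asymp X^\varepsilon$. Averaging over $h$ and splitting $\Z/N\Z$ into intervals of that length yields $\mathbb{E}_{n}\|\lambda\|_{U^{D''+1}[n,n+M]}$ with $M\asymp X^\varepsilon$, to which Theorem~\ref{mult-pret} applies. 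Note in particular that the short intervals that emerge have length $\asymp X^\varepsilon$, not $X^\theta$ for some $\theta>d\varepsilon$; your choice of $\theta$ plays no role once concatenation is in place. If you insert the van der Corput and concatenation steps into your argument, it collapses to the paper's proof.
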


Specializing to linear polynomials, the following result on Chowla's conjecture with a short one-variable average is an immediate corollary (in fact, this corollary could also be obtained more directly from our Gowers uniformity result, Corollary~\ref{cor: mult-pret}; see footnote~\ref{foot:polypattern}). 

\begin{corollary}[Chowla's conjecture with a short average]\label{cor_chowla}
Let $k\geq 1$ be an integer, and let $a_1,\ldots, a_k\geq 0$ be distinct. Let $\varepsilon>0$ be arbitrary. Then we have
\begin{align*}
\mathbb{E}_{h\leq X^{\varepsilon}}|\mathbb{E}_{n\leq X}\lambda(n+a_1h)\cdots \lambda(n+a_kh)|=o(1).    
\end{align*}
\end{corollary}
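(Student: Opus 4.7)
The plan is to derive Corollary~\ref{cor_chowla} as a direct specialization of Theorem~\ref{poly1}. I take $r = 1$ and choose the linear polynomials $P_i(m) \coloneqq a_i m$ for $i = 1, \ldots, k$, so that $d = \max_i \deg P_i = 1$. Since the $a_i$ are distinct nonnegative integers, each difference $P_i - P_j = (a_i - a_j)m$ is a nonconstant polynomial, verifying the nonconstancy hypothesis of Theorem~\ref{poly1}. That theorem then yields
\[
\mathbb{E}_{h \leq X^{\varepsilon}} \left|\mathbb{E}_{n \leq X} \lambda(n + a_1 h) \cdots \lambda(n + a_k h)\right| = o(1)
\]
for any fixed $0 < \varepsilon < 1/d = 1$, which is precisely the conclusion of the corollary.

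As indicated in the footnote, an alternative and more direct derivation proceeds from the Gowers uniformity result Corollary~\ref{cor: mult-pret}, bypassing the full strength of Theorem~\ref{poly1}. The plan there is to fix $\delta$ with $\varepsilon < \delta < 1$, set $H \coloneqq X^{\delta}$, and partition $[1, X]$ into intervals $I_x = [x, x+H]$. For each $h \leq X^{\varepsilon}$ and each such $I_x$, the shifts $n + a_i h$ all lie in $[x, x + H + O(X^{\varepsilon})]$. A standard generalized von Neumann bound (see, e.g., \cite[\S 11]{tao-vu}) then controls the $k$-point correlation on $I_x$ by $\|\lambda\|_{U^{s+1}([x,\, x + H + O(X^{\varepsilon})])}$ for an appropriate $s$ depending only on $k$ (concretely $s \leq k-1$ since the forms $n+a_i h$ are distinct of joint complexity at most $k-1$), uniformly in $h$. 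Averaging in $x$ and $h$ and applying Corollary~\ref{cor: mult-pret} with its parameter $k$ replaced by this $s$ then produces the desired $o(1)$ bound.

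The main (and essentially only) point requiring care in this alternative route is the generalized von Neumann reduction on short intervals: one has to check that the $O(X^{\varepsilon})$ boundary perturbation coming from the shifts $a_i h$ is absorbed harmlessly by the short-interval Gowers norm defined in \eqref{gow}. This is routine since $X^{\varepsilon} = o(H)$ and the normalization $\|1_{[x, x+H]}\|_{U^{k+1}(\Z)}$ changes only by a factor $1 + o(1)$ under perturbations of the interval by $o(H)$. Because this bookkeeping has already been packaged into the proof of Theorem~\ref{poly1}, the cleanest presentation is simply to invoke Theorem~\ref{poly1} as a black box, as in the first paragraph; I do not anticipate any nontrivial obstacle beyond the specialization of coefficients.
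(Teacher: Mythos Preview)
Your proposal is correct and matches the paper's approach exactly: the paper states that Corollary~\ref{cor_chowla} is the special case of Theorem~\ref{poly1} obtained by taking $P_i(m)=a_i m$, and your verification of the hypotheses (distinct $a_i$ giving nonconstant $P_i-P_j$, $d=1$ so $\varepsilon<1$) is precisely what is needed. Your second paragraph also correctly elaborates the alternative route via Corollary~\ref{cor: mult-pret} that the paper alludes to in its footnote.
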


We remark that the Theorem~\ref{poly1} (and hence Corollary~\ref{cor_chowla}) continues to hold, with essentially the same proof, if $k-1$ of the $k$ occurrences of $\lambda$ in the correlation average are replaced with arbitrary fixed $1$-bounded sequences.

Taking $h$ bounded in Corollary~\ref{cor_chowla} would amount to settling Chowla's conjecture. Previously, the result of Corollary~\ref{cor_chowla} was only known for $k\leq 2$ (using the main result of~\cite{MRT}), and for $k=3$ without the absolute values around the $n$ average (using~\cite[Corollary 1.5]{mrt-fourier}). Note that for $k\geq 3$ the averaged Chowla conjecture of~\cite{MRT}  is not applicable in the setting above, since that result requires averaging over $k-1$ independent short variables. 

We can also prove an asymptotic similar to the one in Theorem~\ref{poly1} for averages of the von Mangoldt function if one of the terms in the progression is assigned the Liouville weight (but perhaps surprisingly the proof does not apply if the weight $\lambda$ is replaced with the constant weight $1$).

\begin{theorem}[Polynomial averages of the von Mangoldt function with Liouville twist]\label{poly2}
Let $k,r\geq 1$ be integers, and let $P_1,\ldots, P_k$ be non-constant  polynomials in $\mathbb{Z}[x_1,\ldots, x_r]$ with degrees $\leq d$. Suppose that $P_i-P_j$ is nonconstant for all $i\neq j$. Let $\Lambda$ be the von Mangoldt function. Then for any fixed $0<\varepsilon< 1/d$ we have
\begin{align*}
\mathbb{E}_{\m\in [X^{\varepsilon}]^r}|\mathbb{E}_{n\leq X} \lambda(n+P_1(\m)) \Lambda(n+P_2(\m)) \dotsm \Lambda(n+P_k(\m))|=o(1).    
\end{align*}
\end{theorem}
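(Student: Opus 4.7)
The plan is to adapt the strategy used in the proof of Theorem~\ref{poly1}---namely, a PET (polynomial Cauchy--Schwarz) induction that ultimately reduces to short-interval Gowers uniformity of $\lambda$---to the mixed setting here by first decomposing the $\Lambda$ factors in the style of Green--Tao--Ziegler~\cite{gt-mobius}. Write
\[ S(\m) \coloneqq \mathbb{E}_{n\leq X}\lambda(n+P_1(\m))\prod_{i=2}^{k}\Lambda(n+P_i(\m)), \]
and use the standard trick $\mathbb{E}_{\m}|S(\m)| = \mathbb{E}_{\m} b(\m)S(\m)$ with $|b(\m)|=1$ to remove the inner absolute value; as $b(\m)$ depends only on $\m$, it will be carried harmlessly through all subsequent Cauchy--Schwarz steps. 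After a $W$-trick with $W=\prod_{p\leq w}p$ for $w = w(X)\to\infty$ sufficiently slowly, split $\Lambda = \Lambda^{\sharp} + \Lambda^{\flat}$, where $\Lambda^{\sharp}$ is a bounded structured approximation varying only on residue classes modulo $W$ (possibly twisted by a bounded-complexity nilsequence), and $\Lambda^{\flat}$ is majorised by a Green--Tao type pseudorandom measure $\nu$ and has negligible $U^s$ norm for every fixed $s$.

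Expanding $\prod_{i\geq 2}\Lambda(n+P_i(\m)) = \sum_{\omega\in\{\sharp,\flat\}^{k-1}}\prod_{i\geq 2}\Lambda^{\omega_i}(n+P_i(\m))$, every term containing at least one $\Lambda^{\flat}$ contributes $o(1)$ by a polynomial generalised von Neumann argument in the spirit of~\cite{tao-ziegler2}: iterated Cauchy--Schwarz applied to the polynomial system (which is non-degenerate because $P_i-P_j$ is non-constant for $i\neq j$) yields a bound by a positive power of $\|\Lambda^{\flat}\|_{U^s}$ relative to the pseudorandom majorant, and this vanishes as $w\to\infty$. The main term then involves $\Lambda^{\sharp}$ in every slot $i\geq 2$, so after passing to residue classes modulo $W$ it takes the form
\[ \mathbb{E}_{\m\in[X^\varepsilon]^r} b(\m)\, \mathbb{E}_{n\leq X}\lambda(n+P_1(\m))\, G(n,\m), \]
where $G$ is a bounded, slowly varying weight built from the $\Lambda^{\sharp}$'s.

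The final step is to apply PET induction to this expression, exploiting Cauchy--Schwarz in the short variable $\m\in[X^\varepsilon]^r$. Since the polynomial differences $P_i(\m)-P_i(\m+\vec{h})$ take values in an interval of length $O(X^{d\varepsilon})$, iterating until all $G$-weights are eliminated reduces the problem to bounding an average of the form $\mathbb{E}_{x\in[X,2X]}\|\lambda\|_{U^{k'+1}([x,x+H])}$ for some $H\asymp X^{d\varepsilon}$ and some $k'=k'(k,d,r)$; since $d\varepsilon\in(0,1)$ is fixed, Corollary~\ref{cor: mult-pret} delivers the desired $o(1)$ bound. The main obstacle is executing the PET induction and tracking scales carefully so that (i) the relevant intervals have length $\gg X^{d\varepsilon}$, enabling Corollary~\ref{cor: mult-pret}, (ii) the linear-forms and correlation conditions required of the pseudorandom majorant $\nu$ survive the PET steps on the $\Lambda^{\flat}$ side, and (iii) the bounded phase $b(\m)$ is absorbed into the bookkeeping at each Cauchy--Schwarz iteration without degrading the final Gowers-norm bound. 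This explains why the argument collapses if $\lambda$ is replaced by $1$: it is precisely the cancellation from $\lambda$ against the nilsequence-type weight $G(n,\m)$ that drives the main term to zero.
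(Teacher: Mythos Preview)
Your decomposition strategy has a genuine gap at the $\Lambda^{\flat}$ step. When you apply a polynomial generalised von Neumann argument ``in the spirit of~\cite{tao-ziegler2}'' to a term containing a factor $\Lambda^{\flat}(n+P_j(\m))$ with $\m\in[X^{\varepsilon}]^r$, the PET/concatenation machinery of that paper does \emph{not} output the full-interval norm $\|\Lambda^{\flat}\|_{U^s[1,X]}$; because the $\m$-variable lives in a box of side $X^{\varepsilon}$, the output (after the van der Corput step and concatenation) is an averaged \emph{short-interval} Gowers norm of the form $\mathbb{E}_{n\leq X}\|\Lambda^{\flat}\|_{U^{s}[n,n+M]}$ with $M\asymp X^{d\varepsilon}$. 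The statement that this vanishes as $w\to\infty$ is precisely short-interval higher uniformity of $\Lambda_{W,b}-1$ on intervals of length $X^{\theta}$ for every $\theta>0$, which is an open problem (results such as~\cite{matomaki-shao} only reach $\theta>5/8$). So the phrase ``this vanishes as $w\to\infty$'' is unjustified, and your route stalls here.

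The paper's proof sidesteps this entirely, and the mechanism is worth noting. One does \emph{not} decompose $\Lambda$. Instead, after the $W$-trick one takes $f_1=\lambda_{b_1,W}$ (bounded) and $f_i=\Lambda_{b_i,W}$ for $i\geq 2$ (bounded only by the pseudorandom majorant $\Lambda_{b_i,W}+1$), applies van der Corput in $n$, and then invokes~\cite[Theorem~13]{tao-ziegler2} together with the concatenation theorem~\cite[Theorem~9]{tao-ziegler2}. The crucial asymmetry is that this machinery bounds the polynomial average by the averaged short-interval Gowers norm of \emph{whichever single function one chooses}; the remaining functions only need to be dominated by a pseudorandom weight. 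Choosing the distinguished function to be $f_1=\lambda_{b_1,W}$, one reduces to $\mathbb{E}_{n}\|\lambda\|_{U^{D''+1}[n,n+M]}=o(1)$ with $M\sim X^{\varepsilon}$, which is exactly Corollary~\ref{cor: mult-pret}. No uniformity of $\Lambda$ on any scale is ever required; this is the content of the remark after the theorem that it continues to hold with the $\Lambda$'s replaced by arbitrary sequences bounded by $\Lambda$.
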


We remark that the theorem continues to hold, with essentially the same proof, when the occurrences of $\Lambda$ in the correlation average are replaced with arbitrary fixed sequences that are bounded by $\Lambda$ in modulus.

These results will be established in Section~\ref{sec:pattern}.

\subsection{Overview of proofs}\label{sub: sketch}

\subsubsection{The higher order uniformity theorem}

Let us outline the proof of Corollary~\ref{cor: mult-pret}; the proof of the more general Theorem~\ref{mult-pret} follows along similar lines. By the inverse theorem for the Gowers norms,  Corollary~\ref{cor: mult-pret} is equivalent to a discorrelation estimate between the Liouville function and nilsequences; more precisely
\begin{equation}\label{discor}
 \int_X^{2X} \sup_{g \in \Poly( \Z \to G )} \left| \sum_{n \in [x,x+H]} \lambda(n) \overline{F}(g(n) \Gamma)\right| \ dx =o(H X),
\end{equation}
where $G/\Gamma$ is any fixed\footnote{We note that the notion of ``complexity'' of nilmanifolds plays no role in this paper, unlike in e.g.~\cite{gt-mobius}, since the inverse theorem supplies us with a single nilmanifold $G_{\eta}/\Gamma_{\eta}$ such that $\|f\|_{U^k[N]}\geq \eta$ with $f$ $1$-bounded implies that $f$ correlates with a nilsequence on $G_{\eta}/\Gamma_{\eta}$.} degree $k$ filtered nilmanifold, $F:G/\Gamma\to \mathbb{C}$ is any fixed Lipschitz function, and the supremum is over all polynomial sequences $g(n)$ taking values in the Lie group $G$ (for all the relevant definitions and for the precise statement, see Section~\ref{nilseq}).

By using an induction on the dimension of $G$ we may assume that the function $F$ is ``irreducible'' in a certain technical sense, which roughly means that the nilsequences $n \mapsto F(g(n) \Gamma)$ are ``orthogonal'' to all lower dimensional nilsequences. We split the proof of this estimate \eqref{discor} into two cases that are analyzed separately, the case of abelian $G$ and the case of non-abelian $G$.

For abelian $G$, the nilsequences that arise on the filtered nilmanifold $G/\Gamma$ are (Lipschitz functions of) polynomial phases $n\mapsto e(P(n))$ with $\deg(P)\leq k$, so this case reduces to the polynomial phase case. This case is handled in Section~\ref{sec: polyphases} and is already sufficient for proving Corollary~\ref{lam-poly}. Here the task is to establish structure in phase functions $P_x\in \Poly_{\leq k}( \Z \to \R)$ satisfying
\begin{align}\label{eq12}
\left| \sum_{n \in [x,x+H]} \lambda(n) e(P_x(n))\right| \ dx \gg H    
\end{align}
for $\gg X$ choices of $x\in [X,2X]\cap\mathbb{Z}$, and eventually to exploit that structure to show that such functions do not exist. In order to talk about polynomials being equal up to negligible contributions, we introduce an equivalence relation on them; in this sketch, we say that $P_x\sim Q_x$ if\footnote{The actual equivalence relation used in Section~\ref{sec: polyphases} is slightly more elaborate; it also allows for a factor $\gamma(n)$ which is a rational polynomial. To show ideas, let us work with this slightly simpler equivalence in which we allow the ``Archimedean'' error $\varepsilon$ but not the ``non-Archimedean'' error $\gamma$.} $P_x(n)=\varepsilon(n)Q_x(n)$ holds on the underlying interval $[x,x+H]$ for some polynomial $\varepsilon(n)$ which is ``smooth'' in the sense that $|\varepsilon^{(\ell)}(n)|\ll H^{-\ell}$ for all $\ell\leq k$. Note that if we can show that
\begin{align}\label{eq14}
e(P_x(n))\approx e\left(\frac{T}{2\pi}\log n+\gamma(n)\right),\quad n\in [x,x+H],    
\end{align}
with $\gamma\in \Poly_{\leq k}(\R\to \R)$ being a $O(1)$-integral polynomial (that is, it maps from $q\Z$ to $\Z$ for some $q=O(1)$) and $T$ independent of $x$ and of polynomial size in $X$,
then $e(P_x(n))$ is essentially a twist of the Archimedean character $n\mapsto n^{iT}$, so we can use the results from~\cite{mr}, \cite{MRT} to obtain the desired contradiction. 

As in the linear phase case handled in~\cite{mrt-fourier}, we begin by establishing an ``approximate functional equation''\footnote{Our use of the term approximate functional equation of course differs from its meaning in the context of $L$-functions.} for the polynomial function $P_x(n)$ in \eqref{eq12}. Note that if $p\leq H^{\varepsilon}$ is a prime, then if $\lambda$ correlates with $e(P_x(n))$ on $[x,x+H]$, then $\lambda$ correlates with $e(P_{x}(pn))$ on $[x/p, (x+H)/p]$, for ``most'' choices of $p$ (this is a standard Tur\'an--Kubilius argument; see Proposition~\ref{Scaling-down}). Similarly, for ``most'' $y\in [X,2X]$ and primes $q\leq H^{\varepsilon}$, we must have that $\lambda$ correlates with $e(P_{y}(qn))$ on $[y/q, (y+H)/q]$. Now, if $|x/p- y/q|\leq H/(2\max\{p,q\})$, then the intervals $[x/p, (x+H)/p], [y/q,(y+H)/q]$ have large intersection, and since by the large sieve for polynomial phases (Proposition~\ref{lsieve}) there can only be boundedly many polynomial phases that $\lambda$ correlates with on an interval, we can say that
\begin{align*}
e(P_{x}(pn))\approx e(P_{y}(qn)),\quad n\in [x/p,(x+H)/p]
\end{align*}
for ``most'' $p,q\in [P,2P]$ and $x,y\in [X,2X]$ with $x/p=y/q+O(H/P)$ for some $P\leq H^{\varepsilon}$. This corresponds to an approximate equality of polynomials modulo $1$, but using a suitable version of the Chinese remainder theorem (Proposition~\ref{chinese}), and shifting $P_x, P_y$ by integer amounts, which is always allowable, we can eventually upgrade this to an equality modulo the product $\prod_{p'\in \mathcal{P}}p'$, where $\mathcal{P}$ is a ``large'' set of primes in $[P,2P]$, and thus with our choice of $H$ the modulus is enormous compared to $X$, so we can essentially treat this as a genuine equality in $\mathbb{R}$. In this way, we can essentially pass to the approximate functional equation
\begin{align}\label{eq15}
P_{x}(pn)\sim P_y(qn)    
\end{align}
for ``most'' $p,q\in [P,2P]$ and $x,y\in [X,2X]$ with $x/p= y/q+O(H/P)$.

 If we now form a graph $\mathcal{G}$ on $[X,2X]\cap \mathbb{Z}$ by connecting $x,y$ whenever $x/p=y/q+O(H/P)$ and $x,y,p,q$ are satisfying the above conditions, we obtain a graph whose structure governs the solutions to \eqref{eq15}. In particular, (a known case of) Sidorenko's conjecture tells us that $\mathcal{G}$ contains many configurations $\mathcal{C}$ consisting of two $\ell$-cycles and an edge between them, for $\ell> \log X/\log P$. When we unwrap what this means in terms of approximate functional equations, we obtain (in Proposition~\ref{prop41-analog}) the approximate dilation invariance
\begin{align}\label{eq13}
P_x(a_xn)\sim P_x(b_x n)    
\end{align}
for many pairs $(a_x,b_x)$ that are of polynomial size in $X$ (more precisely, products of $\ell$ primes from $[P,2P]$), and relatively close to each other (with $\frac{a_x-b_x}{a_x} \asymp \frac{H}{X}$).

We then ``solve'' the approximate equation \eqref{eq13} using properties of the underlying polynomial algebra, with the conclusion that 
$P_x$ must locally ``pretend'' to be a character:
\begin{align*}
e(P_x(n))\approx e\left(\frac{T_x}{2\pi}\log n+\gamma_x(n)\right),
\end{align*}
where $\gamma_x$ is $O(X^{O(1)})$-rational (in a sense specified in Section~\ref{nilseq}) and $T_x=O(X^{k+1}/H^{k+1})$; see Proposition~\ref{sold} for a precise statement. Moreover, the quantities $T_x$ can now be shown to satisfy the approximate functional equation
\begin{align*}
T_x=T_y+O(X/H)    
\end{align*}
when $x/p=y/q+O(\frac{H}{PX})$, for ``most'' $x,y,p,q$. As in~\cite{mrt-fourier}, using mixing properties of the graph $\mathcal{G}$ arising from cancellation in $\sum_{P\leq p\leq 2P}p^{it}$ for $|t|\ll X^{O(1)}$, we may deduce from this that $T_x=T_0+O(X/H)$ for some $T_0$ of polynomial size and for a ``most'' values of $x$. Further, we also have (modulo integer-valued polynomials) the relation 
\begin{align*}
\gamma_x(pn)=\gamma_y(qn)  
\end{align*}
for the same tuples $(x,y,p,q)$, and solving this eventually leads to $\gamma_x(n)$ being $O(1)$-rational (with a bit more work than in~\cite{MRT}, where $\gamma_x(n)$ was just of the form $\frac{a}{q'}n$). Putting everything together, we reach the relation \eqref{eq14}, which was enough for finishing the proof.

For $G$ non-abelian, we can use some of the above arguments, but certain additional difficulties (indicated below) arise that necessitate a more involved analysis involving quantitative nilalgebra and some refinements on the graph theory side. Note that by the factorization theorem for nilsequences~\cite{gt-nil}, we have a similar splitting of polynomials $g:\mathbb{Z}\to G$ to a smooth part, an equidistributed part and a rational part, so we may define a similar equivalence relation for these sequences as for polynomial phases. Moreover, we can make sense of the sequence $g(n)$ evaluated at real $n$ and we can define the size of an element of $G$; see Section~\ref{nilseq} for details. 

Up until the approximate functional equation \eqref{eq15} (now with $g_x(n)$ in place of $P_x(n)$), the arguments in the polynomial phase case are sufficiently general to work equally well for nilsequences. We can also obtain the analogue of \eqref{eq13} similarly but, perhaps surprisingly, in the nilsequence setting the solutions to \eqref{eq13} for a given pair $(a_x,b_x)$ are \emph{not} all approximate characters (see \eqref{far} for a counterexample). We thus must proceed more carefully and extract more information from the fact that \eqref{eq13} holds for an extremely large family of pairs $(a_x,b_x)$. It turns out that the pathological solutions to \eqref{eq13} for a given $(a_x,b_x)$ generally do not obey \eqref{eq13} for other pairs $(a'_x,b'_x)$, but demonstrating that requires some work. 

The way we obtain the required extra information is by generalizing the graph theory argument from~\cite{mrt-fourier} a bit (to configurations of two cycles of unequal length connected by an edge), and this extra flexibility allows us to obtain
\begin{equation}\label{gmix}
g_x((1+\theta)t)\sim g_x(t)\gamma_{x,\theta}(t),\quad t\in [x,x+H]   
\end{equation}
for $t\in \mathbb{R}$ and for a ``very dense'' set of real numbers  $\theta=O(H/X)$ (as opposed to just a few such numbers), where $\gamma_{x,\theta}$ is $Q$-rational with $Q\gg \prod_{p\in [P,2P]}p^{\varepsilon}$ (this notion makes sense in Lie algebras; see Section~\ref{nilseq}). This is the outcome of Proposition~\ref{adi-dense}. 

\begin{remark} As indicated above, while in the case of polynomial phases it suffices to have equation \eqref{gmix} hold for a single $\theta$, in the more general nilsequence case this condition is insufficient due to the existence of exotic "approximately multiplicative" nilsequences. Consider for example $\phi(n)=F(g(n)\Gamma)$ where 
$$g(n) = e_1^{T_1\log n }e_2^{T_2\log n}e_{12}^{-\frac{T_1T_2}{2}(\log n)^2}$$ where here $e_1, e_2, e_{12}$ are the generators of the free $2$-step $3$ dimensional nilpotent Lie group, $\Gamma$ the standard lattice.  By Taylor approximation of the logarithm function, $g(n)$ differs from a polynomial sequence by a negligible amount. Moreover,
$g((1+\theta)t)=g(1+\theta)g(t)$ so that so one would get $\phi((1+\theta)n) \sim \phi(n)$ if $g(1+\theta)$ is very close to $\Gamma$, independent of $n$. 
\end{remark}

It is a fact (following from the Baker--Campbell--Hausdorff formula) that if $n\mapsto \gamma_{x,\theta}(n)$ is simultaneously very rational and of polynomial size, then it is a constant; thus, $\gamma_{x,\theta}(n)=:\gamma_{x,\theta}$. Make in \eqref{gmix} the change of variables $1+\theta_x=e^{\alpha/N}$ with $\alpha \sim 1$ restricted to a very dense set of numbers and with $N=X/H$. Then 
\begin{align*}
g_x(e^{\alpha/N}t)\sim g_x(t)\gamma_{x,\alpha},\quad t=x+O(H),    
\end{align*}
so by iterating 
\begin{align*}
g_x(e^{n\alpha/N} t)\sim g_x(t)\gamma_{x,\alpha}^n
\end{align*}
for all integers $n=O(1)$. In fact, by an interpolation lemma (Lemma~\ref{ber-exp}), we will be able to boost this to real $n$ as well. Now we essentially have a two-variable functional equation for $g_x$, which after some manipulation gives us
\begin{equation}\label{tmix}
g_x(y)\sim T^{N\log(y/x)},\quad y=x+O(H),    
\end{equation}
and for some $T=T_x\in G$ of polynomial size. Here, $T$ is given by the relation
\begin{align*}
T^{\alpha s}\sim \gamma_{x,\alpha}^{s}    
\end{align*}
for $s=O(1)$ and for a dense set of $\alpha\sim 1$ (cf. Proposition~\ref{prop1}). This is still not enough for us, since when $G$ is non-abelian, $y\mapsto F(T^{N\log(y/x)}\Gamma)$ need not resemble a character at all. With some extra work, which involves quantitative equidistribution theory of nilsequences and the mixing lemma to carefully analyze the compatibility between \eqref{gmix} and \eqref{tmix}, we eventually show that $T=O(1)T_0$, where $T_0$ is of polynomial size and lies either in the center of $G$ or in a proper rational subgroup of $G$.  In the case that $G$ is non-abelian, the former case is contained in the latter.  This is then finally enough, since the $O(1)$ error turns out to be negligible by Taylor expansion, and if $T$ lies in a proper rational subgroup, we ascend to a group of lower dimension, so we can apply induction to conclude. Thus $n\mapsto T^{N\log(n/x)}$ must essentially be a polynomial function on an \emph{abelian} nilmanifold, meaning that it is a classical polynomial. This reduces us back to the polynomial phase case, whose proof we outlined above. 

\subsubsection{The sign patterns result}
\label{sss:pfSignPatSkecth}
We then sketch the proof of Theorem~\ref{superpolynomial}. Suppose for the sake of contradiction that $s(k)\ll k^{A}$ for some $A$ and for $k$ belonging to an infinite set $\mathcal{K}$. Then, expanding the (logarithmic) density of each sign pattern of length $k$ as a correlation, we must have 
\begin{align*}
C \coloneqq \frac{1}{\log x}\sum_{n\leq x}\frac{\lambda(n+h_1)\cdots \lambda(n+h_j)}{n}\gg_k 1    
\end{align*}
for $k\in \mathcal{K}$ and for some distinct $h_1,\ldots, h_j\in [1,k]$. The entropy decrement argument developed in~\cite{Tao} (see also~\cite{TaoTeravainenGeneral}), allows one to write $C$ as a double average:
\begin{align}\label{doublecorr}
C=(-1)^k\frac{\log P}{P}\sum_{P\leq p\leq 2P}\frac{1}{\log x}\sum_{n\leq x}\frac{\lambda(n+ph_1)\cdots \lambda(n+ph_j)}{n}+o(1),
\end{align}
where $P=P(x)$ is suitable. However, $P$ has to be very small here (namely $P\ll (\log x)^{o(1)}$), which is by far too small in order to apply Corollary~\ref{cor: mult-pret}. Instead, we leverage the assumption that $\lambda$ is assumed to have few sign patterns to show that the entropy decrement argument can be replaced with a quantitatively much stronger method of moments computation, and this eventually allows us to obtain \eqref{doublecorr} for $P\gg X^{\varepsilon}$ (along a suitable sequence of values of $X$ depending on $\mathcal{K}$). Then we are in a position to apply Corollary~\ref{cor: mult-pret}, and we conclude from the generalized von Neumann theorem that actually $C=o(1)$, which is the desired contradiction.

\subsection{Acknowledgments}

This work was initiated at the American Institute of Mathematics workshop on Sarnak's conjecture in December 2018. KM was supported by Academy of Finland grant no. 285894. MR acknowledges the support of NSF grant DMS-1902063 and a Sloan Fellowship. TT was supported by a Simons Investigator grant, the James and Carol Collins Chair, the Mathematical Analysis \& Application Research Fund Endowment, and by NSF grant DMS-1764034. JT was supported by a Titchmarsh Fellowship. TZ was  supported by ERC grant ErgComNum 682150. 

We are grateful to the anonymous referees for their extremely careful reading of the paper and for numerous helpful comments and remarks that improved the presentation of this paper. We thank Amita Malik, Redmond McNamara and Peter Sarnak for helpful discussions.

\section{Notation and preliminaries}\label{notation-sec}

We use the asymptotic notation $X \ll Y$, $X = O(Y)$ or $Y \gg X$ to denote the estimate $|X| \leq CY$ for some absolute constant $C$ (in case of $Y \gg X$ we also require that $X \geq 0$). If we allow the constant $C$ to depend on parameters, we will indicate this by subscripts unless otherwise specified, thus for instance $X = O_k(Y)$ denotes the estimate $|X| \leq C_k Y$ for some $C_k$ depending on $k$.  We also write $X \asymp Y$ for $X \ll Y \ll X$.

Several of the concepts defined in this paper (e.g., ``large family'', ``smooth polynomial'', ``comparable interval'', etc.) will rely on the above notation, and thus involve some unspecified implicit constants.  If a proposition involves such notation in both its hypotheses and conclusion, then the implied constants in the conclusions are always permitted to depend on the implied constants in the hypotheses.

All intervals in this paper will be closed. If $I$ is an interval, we use $|I|$ to denote its Lebesgue measure and $x_I$ to denote its midpoint, thus $I = [x_I-\frac{|I|}{2}, x_I+\frac{|I|}{2}]$.  For any $x \in \R$, we define the normalized distance
\begin{equation}\label{idist}
 \langle x \rangle_I \coloneqq \frac{\diam(I \cup \{x\})}{|I|}
\end{equation}
and similarly for an interval $J$
\begin{equation}\label{jdist}
 \langle J \rangle_I \coloneqq \frac{\mathrm{diam}(I \cup J)}{|I|}.
\end{equation}
We say that two intervals $I,J$ are \emph{comparable}\footnote{Here and throughout the paper, definitions such as this one that depend on an implicit asymptotic parameter are only called in the presence of such parameters (which will be the parameters in Theorem~\ref{mult-pret}).}, and write $I \sim J$, if we have $\langle I \rangle_J, \langle J \rangle_I \ll 1$, or equivalently if $|I| \asymp |J| \asymp \mathrm{diam}(I \cup J)$. Note that this is an equivalence relation up to modification of the implied constants; for instance if $I \sim J$ and $J \sim K$ then $I \sim K$, where the implied constants in the latter relation can differ from those in the former.

If $F$ is a finite set, we use $\# F$ to denote its cardinality.  If $E$ is a set, we use $1_E$ to denote its indicator function, thus $1_E(n) = 1$ when $n \in E$ and $1_E(n)=0$ otherwise. Similarly, for any statement $S$, we define the indicator $1_S$ to equal $1$ when $S$ is true and $0$ otherwise.

For any subset $E$ of the real line, we use $a+E \coloneqq \{ a+x: x \in E \}$ to denote the translation of $E$ by a shift $a \in \R$, and $\lambda E \coloneqq \{ \lambda x: x \in E \}$ to denote the dilation of $E$ by a factor $\lambda>0$.  For instance if $I,J$ are intervals, then $I \sim J$ if and only if $\lambda I \sim \lambda J$.  If $f \colon \R \to S$ is any function taking values in some set $S$, we use $f(\lambda \cdot) \colon \R \to S$ to denote the dilated function $t \mapsto f(\lambda t)$. For an interval $I$ and function $g$, we also use the pushforward notation $\lambda_{*} (I,g) \coloneqq \left(\lambda I, g\left(\frac{1}{\lambda} \cdot\right) \right)$.

If $a, b$ are elements of an additive group $(G,+)$, and $H$ is a subgroup of $G$, we write $a = b \mod H$ to denote the claim that $a-b \in H$; by abuse of notation we also use $a \mod H$ to denote the element $a+H$ of the quotient group $G/H$.  Similarly, if $G = (G,\cdot)$ is a multiplicative group and $H$ is a normal subgroup, we write $a = b \mod H$ to denote the claim that $ab^{-1} \in H$.

Summations and products over the symbol $p$ (or $p'$, etc.) are always understood to be over primes unless otherwise specified, and similarly sums over $n$ are understood to be over integers unless otherwise specified.

In Section~\ref{sec: signpatterns}, we will need some averaging notation. For a function $f:A\to \mathbb{C}$ defined on a set $A$ with $A\subset \mathbb{N}$ nonempty, define its unweighted and logarithmic average over $A$ by
\begin{align*}
\mathbb{E}_{n\in A}f(n):=\frac{1}{|A|}\sum_{n\in A}f(n)\quad \textnormal{and}\quad \mathbb{E}_{n\in A}^{\log}f(n):=\frac{1}{\sum_{n\in A}\frac{1}{n}}\sum_{n\in A}\frac{f(n)}{n},    
\end{align*}
respectively. Thus in particular for a bounded function $f:\mathbb{N}\to \mathbb{C}$ we have
\begin{align*}
\mathbb{E}_{n\leq x}^{\log}f(n)=\frac{1}{\log x} \sum_{n\leq x}\frac{f(n)}{n}+o(1),\quad \textnormal{and}\quad  \mathbb{E}_{x\leq p\leq 2x}f(p)=\frac{1}{x/\log x}\sum_{x\leq p\leq 2x}f(p)+o(1).  
\end{align*}

If ${\mathcal P}$ is a collection of prime numbers, we use $\prod {\mathcal P}$ to denote the product of its elements:
$$\prod {\mathcal P} \coloneqq \prod_{p \in {\mathcal P}} p.$$
For any $P \geq 2$, we let $\pi_0(P)$ denote the quantity
$$ \pi_0(P) \coloneqq \frac{P}{\log P}.$$
Note that from the prime number theorem, we see that for sufficiently large $P$, the number of primes in $[P,2P]$ or $[P/2,P]$ is comparable to $\pi_0(P)$.  Accordingly, we say that a set of primes in $[P,2P]$ or $[P/2,P]$ is \emph{large} if its cardinality is $\gg \pi_0(P)$.  Observe that if ${\mathcal P}$ is a large set of primes in $[P,2P]$ or $[P/2,P]$, then we have an exponential lower bound
\begin{equation}\label{exp-lower}
\prod {\mathcal P} \gg \exp( c P )
\end{equation}
for some $c \gg 1$.  In practice, this lower bound means that $\prod {\mathcal P}$ is so large compared with the many ``polynomial size'' quantities we will encounter in the course of our arguments that this modulus is effectively infinite.

For a smooth function $f \colon \R \to \C$, we use $f^{(j)}$ to denote the $j^{\mathrm{th}}$ derivative for $j \geq 0$.  We recall the \emph{Bernstein inequality} (see e.g.,~\cite[p. 146]{Prasolov})
\begin{equation}\label{ber}
 \sup_{t \in I} |f^{(1)}(t)| \ll_k |I|^{-1} \sup_{t \in I} |f(t)|
\end{equation}
for all polynomials $f \in \Poly_{\leq k}(\R \to \R)$, and hence on iteration
\begin{equation}\label{ber-2}
 \sup_{t \in I} |f^{(j)}(t)| \ll_k |I|^{-j} \sup_{t \in I} |f(t)|
\end{equation}
for any $j \geq 0$ (note that $f^{(j)}$ vanishes for $j > k$).  From Taylor expansion we then also have
\begin{equation}\label{ber-3}
 |f^{(j)}(t')| \ll_{k} |I|^{-j} \langle t' \rangle_I^{k-j} \sup_{t \in I} |f(t)|
\end{equation}
for any $t' \in \R$ and $j \geq 0$, using the notation \eqref{idist}.

If $\delta > 0$, we use $\Poly_{\leq k}(\delta \Z \to \Z)$ to denote the subgroup of the additive group $\Poly_{\leq k}(\R \to \R)$ consisting of polynomials $\gamma$ such that $\gamma(\delta\Z) \subset \Z$; we refer to these polynomials as \emph{$\frac{1}{\delta}$-integral} polynomials.  We have the following explicit description of these groups:

\begin{lemma}[Discrete Taylor expansion]\label{dte}  For any $\delta > 0$ and $k \geq 0$, the space $\Poly_{\leq k}(\delta \Z \to \Z)$ consists precisely of those functions $\gamma: \R \to \R$ of the form
$$ \gamma(t) \coloneqq \sum_{j=0}^k c_j \binom{t/\delta}{j}$$
for some integers $c_0,\dots,c_k$, where $\binom{x}{j} \coloneqq \frac{x(x-1)\dots(x-j+1)}{j!}$.
\end{lemma}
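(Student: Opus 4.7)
The plan is to verify the two inclusions separately. The direction that functions of the claimed form lie in $\Poly_{\leq k}(\delta\Z \to \Z)$ is immediate: each $\binom{t/\delta}{j}$ is a polynomial in $t$ of degree exactly $j$, so any integer linear combination $\sum_{j=0}^k c_j \binom{t/\delta}{j}$ is a polynomial of degree at most $k$, and for any $n \in \Z$ the value $\binom{n}{j}$ is an integer, so evaluation at $t \in \delta\Z$ produces an integer.

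For the reverse direction, let $\gamma \in \Poly_{\leq k}(\delta\Z \to \Z)$. Since the polynomials $\binom{t/\delta}{0}, \binom{t/\delta}{1}, \dots, \binom{t/\delta}{k}$ have strictly increasing degrees $0,1,\dots,k$, they form a basis of the real vector space $\Poly_{\leq k}(\R \to \R)$. Hence there is a unique expansion $\gamma(t) = \sum_{j=0}^k c_j \binom{t/\delta}{j}$ with $c_j \in \R$, and the task reduces to showing that each $c_j$ is in fact an integer.

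To extract the coefficients, the key step is to apply the forward difference operator $(\Delta f)(t) \coloneqq f(t+\delta) - f(t)$. Pascal's identity gives $\Delta \binom{t/\delta}{j} = \binom{t/\delta}{j-1}$ for $j\geq 1$ and $\Delta \binom{t/\delta}{0} = 0$, so iterating the operator $j$ times and evaluating at $t=0$ kills the terms with index $i < j$ (they become zero) and the terms with index $i > j$ (they become $\binom{0}{i-j} = 0$), leaving $(\Delta^j \gamma)(0) = c_j$. On the other hand, unfolding $\Delta^j$ yields
\[
(\Delta^j \gamma)(0) = \sum_{m=0}^{j} (-1)^{j-m} \binom{j}{m} \gamma(m\delta),
\]
which is an integer linear combination of the values $\gamma(m\delta) \in \Z$, hence an integer. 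Therefore $c_j \in \Z$ for each $j$, completing the proof. No real obstacle arises here; the only point one needs to be careful about is verifying the identity $\Delta \binom{t/\delta}{j} = \binom{t/\delta}{j-1}$ as an identity of polynomials in $t$, but this follows directly from Pascal's rule applied to the polynomial factorization of $\binom{x}{j}$.
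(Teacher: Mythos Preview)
Your proof is correct and uses essentially the same idea as the paper's: both extract the coefficients via forward differences, observing that iterated differences of integer-valued sequences remain integer-valued. The only cosmetic difference is that the paper organizes the argument as an induction on $k$, peeling off the top coefficient $c_k$ at each step, whereas you extract all the $c_j$ simultaneously via $(\Delta^j\gamma)(0)$; the underlying computation is the same.
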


In some parts of the paper we will also use a non-abelian version of  Lemma~\ref{dte} (see Lemma~\ref{na-dte}).

\begin{proof}  By rescaling we may take $\delta=1$.  The claim is trivial for $k=0$, so suppose inductively that $k \geq 1$ and that the claim has already been proven for $k-1$.  The polynomials $\binom{\cdot}{j}$ for $j=0,\dots,k$ all lie in $\Poly_{\leq k}(\Z \to \Z)$, and hence so do all integer linear combinations $\sum_{j=0}^k c_j \binom{\cdot}{j}$.  Conversely, suppose that $\gamma \in \Poly_{\leq k}(\Z \to \Z)$.  On taking $k^{\mathrm{th}}$ divided differences, we see that the $k^{\mathrm{th}}$ derivative $\gamma^{(k)}$ (which is a constant) is equal to an integer $c_k$.  Thus the polynomial $\gamma - c_k \binom{\cdot}{k}$ has vanishing $k^{\mathrm{th}}$ derivative and thus lies in $\Poly_{\leq k-1}(\Z \to \Z)$.  The claim now follows from the induction hypothesis. 
\end{proof}

We will need the following application of Bezout's identity:

\begin{lemma}[Bezout identity]\label{bezout}  Let $a,b$ be coprime natural numbers, and let $k \geq 0$.  Then for any $\lambda>0$ we have
$$ \Poly_{\leq k}\left(\frac{\lambda}{a} \Z \to \Z\right) + \Poly_{\leq k}\left(\frac{\lambda}{b} \Z \to \Z\right) = \Poly_{\leq k}(\lambda\Z \to \Z)$$
and
$$ \Poly_{\leq k}\left(\frac{\lambda}{a} \Z \to \Z\right) \cap \Poly_{\leq k}\left(\frac{\lambda}{b} \Z \to \Z\right) = \Poly_{\leq k}\left(\frac{\lambda}{ab} \Z \to \Z\right).$$
\end{lemma}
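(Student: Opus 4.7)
By the rescaling $t \mapsto \lambda t$, it suffices to treat the case $\lambda = 1$. We will then work with the explicit basis provided by Lemma~\ref{dte}: for any positive integer $n$, the group $\Poly_{\leq k}(\frac{1}{n}\Z \to \Z)$ consists exactly of the $\Z$-linear combinations of $\binom{nt}{0}, \binom{nt}{1}, \dots, \binom{nt}{k}$. Three easy containments are immediate from the nesting $\Z \subset \frac{1}{a}\Z, \frac{1}{b}\Z \subset \frac{1}{ab}\Z$: namely $\Poly_{\leq k}(\frac{1}{a}\Z \to \Z) + \Poly_{\leq k}(\frac{1}{b}\Z \to \Z) \subseteq \Poly_{\leq k}(\Z \to \Z)$ and $\Poly_{\leq k}(\frac{1}{ab}\Z \to \Z) \subseteq \Poly_{\leq k}(\frac{1}{a}\Z \to \Z) \cap \Poly_{\leq k}(\frac{1}{b}\Z \to \Z)$. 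So it remains to prove the two reverse inclusions, and for both I would induct on $k$.

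For the first identity, I would argue by induction on the degree $j$ that each basis element $\binom{t}{j}$ of $\Poly_{\leq k}(\Z \to \Z)$ lies in the sum. The base case $j = 0$ is trivial. For the inductive step, since $\gcd(a,b)=1$ implies $\gcd(a^j, b^j) = 1$, Bezout's identity gives integers $u, v$ with $u a^j + v b^j = 1$. Comparing leading coefficients, the polynomial
\[
u\binom{at}{j} + v\binom{bt}{j} - \binom{t}{j}
\]
has degree strictly less than $j$, and it lies in $\Poly_{\leq j-1}(\Z \to \Z)$ since each of its three summands is in $\Poly_{\leq k}(\Z \to \Z)$. By the induction hypothesis applied in degree $j-1$, this remainder belongs to $\Poly_{\leq j-1}(\frac{1}{a}\Z \to \Z) + \Poly_{\leq j-1}(\frac{1}{b}\Z \to \Z)$, and therefore $\binom{t}{j}$ does too.

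For the second identity, I would again induct, this time on $k$. The $k=0$ case is trivial. For the inductive step, take any $\gamma$ in the intersection. By Lemma~\ref{dte} we may write $\gamma(t) = \sum_{j=0}^{k} c_j \binom{bt}{j}$ with $c_j \in \Z$ and also $\gamma(t) = \sum_{j=0}^{k} e_j \binom{at}{j}$ with $e_j \in \Z$. Matching the leading coefficients of these two representations yields $c_k b^k = e_k a^k$, and since $\gcd(a^k, b^k) = 1$ we conclude that $a^k$ divides $c_k$, so $c_k = c'_k a^k$ for some $c'_k \in \Z$. Then $c'_k \binom{abt}{k}$ has the same leading coefficient as $\gamma$, so $\gamma(t) - c'_k \binom{abt}{k}$ is a polynomial of degree $\leq k-1$; moreover it still lies in both $\Poly_{\leq k-1}(\frac{1}{a}\Z \to \Z)$ and $\Poly_{\leq k-1}(\frac{1}{b}\Z \to \Z)$ since $\binom{abt}{k}$ belongs to both. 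The induction hypothesis applied at degree $k-1$ puts this remainder in $\Poly_{\leq k-1}(\frac{1}{ab}\Z \to \Z)$, and hence $\gamma \in \Poly_{\leq k}(\frac{1}{ab}\Z \to \Z)$, as required.

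Nothing here is genuinely difficult; the main point to keep straight is that the two coprimality applications take slightly different forms — Bezout to produce $u a^j + v b^j = 1$ for the sum identity, and the divisibility $a^k \mid c_k b^k$ (forcing $a^k \mid c_k$) for the intersection identity — and that both arguments hinge on the fact that in the Lemma~\ref{dte} basis the leading coefficient calculation exactly decouples the top-degree contribution from the lower-order terms that the induction handles.
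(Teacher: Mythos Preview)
Your proof is correct and follows essentially the same approach as the paper's: normalize $\lambda=1$, observe the trivial inclusions, and handle the two reverse inclusions by induction on the degree, peeling off the top-degree term via Bezout on $a^k,b^k$ (for the sum) and via the coprimality divisibility $a^k\mid c_k$ (for the intersection). The only cosmetic difference is that you work with basis elements $\binom{t}{j}$ and match leading coefficients of the two expansions, whereas the paper decomposes an arbitrary $\gamma$ directly and reads off the top coefficient as the $k$th derivative; these are equivalent formulations of the same argument.
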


Thus for instance every $1$-integral polynomial can be decomposed as the sum of an $a$-integral and a $b$-integral polynomial, and a polynomial is $ab$-integral if and only if it is both $a$-integral and $b$-integral.

\begin{proof} 
See Appendix~\ref{app:b}.
\end{proof}

We will need a variant of the Bernstein inequality for exponential polynomials, that is to say real linear combinations of exponential monomials $t \mapsto t^j \exp(\alpha t)$ for some non-negative integers $j$ and real numbers $\alpha$:

\begin{lemma}[Bernstein inequality for exponential polynomials]\label{ber-exp}  Let $d_1,\dots,d_k$ be non-negative integers, and let $N_0$ be a sufficiently large natural number depending on $k,d_1,\dots,d_k$.  Let $\alpha_1,\dots,\alpha_k$ be real numbers whose absolute values are sufficiently small depending on $k,d_1,\dots,d_k,N_0$.  Let $P: \R \to \R$ be a real linear combination of the exponential monomials $t \mapsto t^j \exp(\alpha_i t)$ for $i=1,\dots,k$ and $0 \leq j \leq d_i$. Then for any interval $I$ and any non-negative integer $m$ one has, for all $t \in I$,
\begin{equation}\label{pmt}
 |P^{(m)}(t)| \ll_{k,d_1,\dots,d_k,m,N_0,I} \sup_{n=1,\dots,N_0} |P(n)|.
\end{equation}
\end{lemma}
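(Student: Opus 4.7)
The approach is to exploit the finite-dimensionality of the space of exponential polynomials of the prescribed form combined with a compactness argument in $\alpha$. Let $V_\alpha \subseteq C^\infty(\R)$ denote the linear span of the functions $t \mapsto t^j e^{\alpha_i t}$ for $1 \le i \le k$, $0 \le j \le d_i$, so that $\dim V_\alpha \le D := \sum_{i=1}^k (d_i + 1)$. Throughout, take $N_0 \ge D$ and let $\epsilon > 0$ be a small quantity to be chosen depending on the other parameters; the hypothesis will be $|\alpha_i| \le \epsilon$ for all $i$.

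First I would verify that the discrete evaluation seminorm $\|P\|_{\rm disc} := \max_{n=1,\dots,N_0}|P(n)|$ is in fact a norm on $V_\alpha$. This follows from the classical fact (proved by induction on $D$ via Rolle's theorem applied after factoring out one exponential and differentiating sufficiently many times) that a nonzero element of $V_\alpha$ has at most $D-1$ real zeros; hence with $N_0 \ge D$ no such element can vanish at all of $1,\dots,N_0$. Since $V_\alpha$ is finite-dimensional, equivalence of norms yields a finite constant $C(\alpha,I,m,N_0)$ with $\sup_{t\in I}|P^{(m)}(t)| \le C(\alpha,I,m,N_0)\|P\|_{\rm disc}$ for every $P\in V_\alpha$. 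This already proves the lemma for each fixed $\alpha$, and the task is then to show $C(\alpha,I,m,N_0)$ stays bounded as $\alpha$ ranges over the ball $\{|\alpha_i|\le\epsilon\}$.

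For that uniformity I would argue by compactness and contradiction: if the lemma failed, there would exist sequences $\alpha^{(n)} \to \alpha^*$ (with $|\alpha_i^*|\le\epsilon$) and $P_n \in V_{\alpha^{(n)}}$ with $\sup_I|P_n^{(m)}| = 1$ and $\|P_n\|_{\rm disc}\to 0$. The bound on $P_n^{(m)}$, together with the vanishing-in-the-limit of the values $P_n(j)$, yields (via iterated integration of $P_n^{(m)}$ and polynomial interpolation of the degree $< m$ Taylor part of $P_n$ at a fixed interior point of $I$, using $N_0$ points to pin down the low-degree polynomial) a uniform sup-norm bound for $P_n$ on any fixed compact set of $\R$. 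Cauchy estimates on a suitable complex disc then upgrade this to a uniform complex-analytic bound, making $\{P_n\}$ a normal family of entire functions. Extract a subsequential limit $P^*$, uniform on compacts along with all derivatives; then $\sup_I|(P^*)^{(m)}| = 1$ and $P^*(j) = 0$ for $j = 1,\dots,N_0$. Once one shows $P^* \in V_{\alpha^*}$, the injectivity from the previous paragraph forces $P^* = 0$, contradicting $\sup_I|(P^*)^{(m)}| = 1$.

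The hard part will be establishing the membership $P^* \in V_{\alpha^*}$ when some of the $\alpha_i^*$ collide: in that degenerate case, the coefficients of $P_n$ in the naive basis $\{t^j e^{\alpha_i^{(n)}t}\}$ can diverge with cancellation, so one cannot directly extract convergent coefficients to recognize the limit. The standard remedy is to pass to a \emph{divided-difference basis} near $\alpha^*$: for each cluster of indices $i$ whose $\alpha_i^{(n)}$ tend to a common value, replace the corresponding basis functions by appropriate divided differences such as $(e^{\alpha_i t}-e^{\alpha_{i'}t})/(\alpha_i-\alpha_{i'})$, which depend continuously on $\alpha$ and converge as $\alpha_i,\alpha_{i'}\to\alpha_i^*$ to monomials of the form $t\cdot e^{\alpha_i^* t}$. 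In this adapted basis one can choose representatives of $P_n$ with bounded coefficients, pass to the limit, and identify $P^*$ as an element of $V_{\alpha^*}$, closing the argument.
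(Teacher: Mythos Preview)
Your overall strategy---finite-dimensionality plus compactness in $\alpha$---is a legitimate alternative to the paper's argument, and you correctly identify the two key ingredients (zero-counting for injectivity, and a basis varying continuously through collisions of the $\alpha_i$). The paper instead proceeds constructively: after normalizing $\sup_n|P(n)|=1$, it uses zero-counting to find a full unit interval $[n,n+1]\subset\{|P|\le 1\}$, writes the ODE $\prod_i(\partial_t-\alpha_i)^{d_i+1}P=0$ as a first-order system $v'=(U+E)v$ with $U$ the nilpotent shift and $\|E\|=O(\epsilon)$, and combines the approximation $v(t)\approx e^{(t-n)U}v(n)$ with the classical polynomial Bernstein inequality on $[n,n+1]$ to bound $v(n)=(P(n),\dots,P^{(D-1)}(n))$; the ODE then propagates these bounds to any $t$.

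Your contradiction argument, however, has a gap at the a priori bound step. From $\sup_I|P_n^{(m)}|=1$ and $\|P_n\|_{\rm disc}\to 0$, iterated integration plus interpolation bounds $P_n$ only on $I$ (after enlarging $I$ to contain $\{1,\dots,N_0\}$): for $t$ outside $I$ the Taylor remainder involves $P_n^{(m)}$ on a path leaving $I$, which you do not control. And Cauchy estimates run in the wrong direction here---they convert complex-disc bounds into derivative bounds, not real-interval bounds into complex ones---so the normal-families step is unjustified as written. The clean fix is to deploy your continuous basis at the outset rather than only for identifying the limit: let $b_l(\,\cdot\,;\alpha)$, $0\le l<D$, be the fundamental solutions of the order-$D$ ODE with $b_l^{(j)}(0)=\delta_{lj}$ (this solution space is always $D$-dimensional, absorbing collisions, and the $b_l$ depend smoothly on $\alpha$ by standard ODE theory). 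The evaluation map $c\mapsto(\sum_l c_l\,b_l(j;\alpha))_{1\le j\le N_0}$ is then continuous in $\alpha$ and injective for each $\alpha$ by your zero-count, hence has uniformly bounded left inverse on the compact box $\{|\alpha_i|\le\epsilon\}$; this bounds the coordinates by $\|P\|_{\rm disc}$ uniformly in $\alpha$ and finishes the proof directly, with no contradiction argument needed.
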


\begin{proof} 
See Appendix~\ref{app:a}.
\end{proof}

\section{Local correlations with polynomial phases}\label{sec: polyphases}

In this section, we establish Theorem~\ref{mult-poly}, which implies Corollary~\ref{lam-poly} as a special case.  Our arguments shall follow those in~\cite{mrt-fourier} (although they will be reformulated in a more general and algebraic setting that applies to relevant collections of phase functions, such as polynomial phases and later to nilsequences in Section~\ref{nilseq}). Some familiarity with the arguments in~\cite{mrt-fourier} will be presumed in this section.

Let $k,\theta,f,X,\eta,H$ be as in Theorem~\ref{mult-poly}.  To simplify the notation we now allow all implied constants in the asymptotic notation to depend on $k,\theta,\eta$, thus for instance
\begin{equation}\label{f-uk-large}
 \int_X^{2X} \| f\|_{u^{k+1}([x,x+H])}\ dx \gg X.
\end{equation}
We can assume that $X$ is sufficiently large depending on $k,\theta,\eta$, since the claim is trivial otherwise.  We can also assume\footnote{Indeed, from the results in~\cite{mrt-fourier} we can almost assume $k \geq 2$, except for the problem that those results contain an additional loss of $H^\rho$ in the conclusion that is not conceded here.  In any case, the arguments here will also recover the $k=1$ case without difficulty.} $k \geq 1$, since the $k=0$ case follows similarly to~\cite[Theorem A.1]{MRT}\footnote{The only difference is that one needs to, in the formula below~\cite[Theorem A.2]{MRT}, treat the integral over $|t| \geq CX/(2H)$ by the mean value theorem to be able to work with $M(f; CX/H,Q)$ instead of $M(f; X,Q)$.}.

It will be convenient to abstract the properties of the polynomial phases one is testing against, as this will allow us to easily generalize many of the arguments in this section to the case of nilsequence correlations in Section~\ref{nilseq}.  Define a \emph{local polynomial phase} to be a pair $\phi = (I, P)$, where $I$ is an interval in $\R$ and $P \in \Poly_{\leq k}(\R \to \R)$ is a polynomial.  We let $\Phi$ denote the set of all local polynomial phases $(I,P)$, and $\Phi_I$ the set of local polynomial phases $(I,P)$ with a given $I$.  Intuitively, $(I,P)$ should be viewed as an abstraction of the phase function $t \mapsto e(P(t))$ on the interval $I$.  If $\phi = (I,P)$ is a local polynomial phase and $f: \Z \to \C$ is a function, we define the correlation
\begin{equation}\label{corr-def}
 \langle f, \phi \rangle \coloneqq \frac{1}{|I|} \sum_{n \in I} f(n) e(-P(n)).
\end{equation}
Thus we have
$$ \| f\|_{u^{k+1}([x,x+H])} = \sup_{\phi \in \Phi_{[x,x+H]}} |\langle f, \phi \rangle|$$
and thus from \eqref{f-uk-large}
\begin{equation}\label{fow}
 \int_X^{2X} \sup_{\phi \in \Phi_{[x,x+H]}} \left|\langle f, \phi \rangle\right|\ dx \gg X.
\end{equation}
Recall from Section~\ref{notation-sec} that given any local polynomial phase $\phi = (I,P) \in \Phi$ and a scaling factor $\lambda > 0$, we define the rescaling (or pushforward) $\lambda_{*} \phi \in \Phi$ by the formula
$$ \lambda_{*} \phi \coloneqq \left(\lambda I, P\left( \frac{1}{\lambda} \cdot\right)\right).$$
Note that this gives a multiplicative action on $\Phi$, in the sense that
$$ (\lambda_1)_{*} ((\lambda_2)_{*} \phi) = (\lambda_1 \lambda_2)_{*}\phi$$
whenever $\phi \in \Phi$ and $\lambda_1,\lambda_2 > 0$.

Following~\cite[\S 2]{mrt-fourier}, we perform a convenient discretization.  Define an \emph{$(X,H)$-family of intervals} to be a finite collection ${\mathcal I}$ of intervals of length $H$ contained in $[X/10,10X]$ such that any pair of intervals in ${\mathcal I}$ are separated by a distance at least $500H$.  We say that such a family ${\mathcal I}$ is \emph{large} if $\# {\mathcal I} \gg X/H$.
 By repeating the proof of~\cite[Lemma 2.1]{mrt-fourier} (which is a pigeonholing argument) using \eqref{fow} as a starting point, one obtains a large $(X,H)$-family of intervals ${\mathcal I}$, such that for each $I \in {\mathcal I}$ one can find $\phi_I \in \Phi_I$ such that
\begin{equation}\label{gii}
| \langle f, \phi_I \rangle| \gg 1.
\end{equation}
We remark that this step does not require any properties of the polynomial space $\Poly_{\leq k}(\R \to \R)$, as it only uses the fact that $e(P(n))$ is $1$-bounded for every $P$ in this space.

The next step is to use the multiplicativity of $f$ to relate the various $\phi_I$ to each other.  We need a key definition, given as Definition~\ref{poly-comp} below.  Given an interval $I$ in $\R$, we say that a map $\eps \in \Poly_{\leq k}(\R \to \R)$ is \emph{smooth} on $I$ if one has the bound
$$|\eps(t)| \ll 1$$
for all $t \in I$, which by \eqref{ber-3} also implies that
$$\left|\frac{d^j}{dt^j} \eps(t)\right| \ll |I|^{-j} \langle t \rangle_I^{k-j}$$
for all $j \geq 0$ and $t \in \R$.  In particular, if $\eps$ is smooth on $I$, then it is also smooth on $I'$ for any $I' \sim I$.

\begin{definition}[Comparability of polynomial phases]\label{poly-comp} Given two local polynomial phases $\phi_1 = (I_1,P_1), \phi_2 = (I_2,P_2)$ of $\Phi$ and a scaling factor $\delta>0$, we define the relation
$$ \phi_1 \sim_{\delta} \phi_2$$
to hold if $I_1 \sim I_2$, and we have a splitting
$$ P_1 = \eps + P_2 + \gamma,$$
where $\eps, \gamma \in \Poly_{\leq k}(\R \to \R)$ are polynomials obeying the following axioms:
\begin{itemize}
\item[(i)]  ($\eps$ smooth)  $\eps$ is smooth on $I_1$.
\item[(ii)]  ($\gamma$ is $\frac{1}{\delta}$-integral)  $\gamma \in \Poly_{\leq k}(\delta \Z \to \Z)$.
\end{itemize}
\end{definition}

Informally, the relation $\phi_1 \sim_{\delta} \phi_2$ asserts that $\phi_1$ ``pretends to be'' $\phi_2$ on the discrete set $I_1 \cap \delta \Z$.  Technically, this is not a single relation, but a family of relations, depending on the choices of implied constants appearing in (i), but we shall abuse notation by referring to $\sim_{\delta}$ as a single relation.  It obeys the following basic properties:

\begin{proposition}[Basic properties of $\sim_{\delta}$]\label{basic}  Let $\delta > 0$, and let $\phi, \phi', \phi'' \in \Phi$.
\begin{itemize}
\item[(i)] (Equivalence relation)  We have $\phi \sim_{\delta} \phi$, and if $\phi \sim_{\delta} \phi'$ then $\phi' \sim_{\delta} \phi$.  Finally, if $\phi \sim_{\delta} \phi'$ and $\phi' \sim_{\delta} \phi''$ then $\phi \sim_{\delta} \phi''$, where we allow the implied constants in the latter relations to depend on the implied constants in the former relations.
\item[(ii)]  (Dilation invariance)  If  $\phi \sim_{\delta} \phi'$ and $\lambda > 0$, then $\lambda_{*} \phi \sim_{\lambda \delta} \lambda_{*} \phi'$.
\item[(iii)]  (Sparsification)  If $\phi \sim_{\delta} \phi'$, then $\phi \sim_{\ell\delta} \phi'$ for any natural number $\ell$.
\end{itemize}
\end{proposition}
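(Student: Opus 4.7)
The proposition asserts three structural properties of the comparability relation $\sim_{\delta}$. All three follow from elementary manipulations of the defining splitting $P_1 = \epsilon + P_2 + \gamma$ together with the observation that the classes of smooth polynomials and $\tfrac{1}{\delta}$-integral polynomials are closed under natural operations. I will verify each property in turn.

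For the equivalence relation in (i), reflexivity $\phi \sim_{\delta} \phi$ comes from taking $\epsilon = 0$ and $\gamma = 0$, both of which trivially satisfy the smoothness and $\tfrac{1}{\delta}$-integrality axioms. For symmetry, given $P_1 = \epsilon + P_2 + \gamma$ one rearranges to $P_2 = (-\epsilon) + P_1 + (-\gamma)$; the polynomial $-\epsilon$ is still smooth on $I_1$, hence on $I_2$ since $I_1 \sim I_2$ (with a possible change of implied constants absorbed into the relation), and $-\gamma \in \Poly_{\leq k}(\delta \Z \to \Z)$ because this space is a subgroup of $\Poly_{\leq k}(\R \to \R)$. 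Transitivity combines two splittings $P_1 = \epsilon_1 + P_2 + \gamma_1$ and $P_2 = \epsilon_2 + P_3 + \gamma_2$ to obtain $P_1 = (\epsilon_1 + \epsilon_2) + P_3 + (\gamma_1 + \gamma_2)$; since $I_1 \sim I_2$ the polynomial $\epsilon_2$ remains smooth when restricted to $I_1$, the sum of two smooth polynomials on $I_1$ is smooth on $I_1$ (bounds add), and the sum of two $\tfrac{1}{\delta}$-integral polynomials is $\tfrac{1}{\delta}$-integral. Finally $I_1 \sim I_3$ follows from transitivity of $\sim$ on intervals noted in Section~\ref{notation-sec}.

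For dilation invariance (ii), write $\lambda_{*}\phi_i = (\lambda I_i, P_i(\lambda^{-1}\cdot))$ and apply the substitution $t \mapsto \lambda^{-1} t$ to the splitting $P_1 = \epsilon + P_2 + \gamma$, yielding $P_1(\lambda^{-1}\cdot) = \epsilon(\lambda^{-1}\cdot) + P_2(\lambda^{-1}\cdot) + \gamma(\lambda^{-1}\cdot)$. The bound $|\epsilon(s)| \ll 1$ for $s \in I_1$ is equivalent to $|\epsilon(\lambda^{-1} t)| \ll 1$ for $t \in \lambda I_1$, so the smoothness condition transfers. The identity $\gamma(\lambda^{-1}(\lambda \delta \Z)) = \gamma(\delta \Z) \subset \Z$ shows $\gamma(\lambda^{-1}\cdot) \in \Poly_{\leq k}(\lambda \delta \Z \to \Z)$. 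Finally $\lambda I_1 \sim \lambda I_2$ is immediate from the scale-invariance of $\sim$ on intervals. For sparsification (iii), the inclusion $\ell \delta \Z \subset \delta \Z$ gives $\Poly_{\leq k}(\delta \Z \to \Z) \subset \Poly_{\leq k}(\ell \delta \Z \to \Z)$, so the same splitting that witnessed $\phi \sim_{\delta} \phi'$ witnesses $\phi \sim_{\ell \delta} \phi'$.

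There is no substantive obstacle here; the only care required is that the implied constants in the smoothness bound $|\epsilon(t)| \ll 1$ are permitted to vary between the hypothesis and conclusion of each assertion (this is allowed since $\sim_{\delta}$ is a family of relations parametrized by implicit constants, as stated in the definition). In particular, the passage from smoothness on $I_1$ to smoothness on a comparable interval $I_2$ or $I_3$ only worsens the constants by a multiplicative factor depending on the constants in $I_1 \sim I_2$, and similarly the sum of two smooth polynomials doubles the constant. These adjustments are exactly what the convention of allowing conclusion constants to depend on hypothesis constants is designed to accommodate.
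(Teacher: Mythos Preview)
Your proof is correct and follows the same approach as the paper, which simply states that the claims are ``immediate from Definition~\ref{poly-comp}, together with the previously made observation that a polynomial smooth on an interval $I$ is automatically smooth on all comparable intervals $I' \sim I$.'' You have supplied the routine verification that the paper omits, and every step is valid.
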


\begin{proof} These are immediate from Definition~\ref{poly-comp}, together with the previously made observation that a polynomial smooth on an interval $I$ is automatically smooth on all comparable intervals $I' \sim I$.
\end{proof}

The relevance of this relation to the correlations \eqref{gii} comes from the following lemma.

\begin{proposition}[Large sieve]\label{lsieve}  Let $I$ be an interval of some length $|I| \geq 1$, and let $f: \Z \to \C$ be a function bounded in magnitude by $1$.  Suppose that for each $i=1,\dots,K$ there is an interval $I_i \sim I$ and a local polynomial phase $\phi_i \in \Phi_{I_i}$ such that
$$ |\langle f, \phi_i \rangle| \gg 1.$$
Then either
$$ K \ll 1$$
or there exists $1 \leq i < j \leq K$ such that
$$ \phi_i \sim_{1} \phi_j.$$ 
\end{proposition}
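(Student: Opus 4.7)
The plan is a large-sieve $TT^{*}$ argument combined with a structured Weyl-type inequality for polynomial phase sums. Set $g_i(n) \coloneqq |I_i|^{-1} e(-P_i(n)) 1_{I_i}(n)$, so that $\langle f, \phi_i \rangle = \sum_n f(n) g_i(n)$, and form the $K \times K$ positive semidefinite Gram matrix
\[
 M_{ij} \coloneqq \sum_n g_j(n) \overline{g_i(n)} = \frac{1}{|I_i|\,|I_j|} \sum_{n \in I_i \cap I_j \cap \Z} e\bigl(P_i(n) - P_j(n)\bigr).
\]
Since all $I_i$ are comparable to $I$ they lie in a common interval $J_0$ of length $\ll |I|$, and as $f$ is $1$-bounded we have $\|f 1_{J_0}\|_{\ell^2}^2 \ll |I|$. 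By $TT^{*}$ duality, $\sum_i |\langle f, \phi_i\rangle|^2 \leq \lambda_{\max}(M)\,\|f 1_{J_0}\|_{\ell^2}^2 \ll \lambda_{\max}(M)\,|I|$, while the hypothesis $|\langle f, \phi_i\rangle| \gg 1$ yields $\sum_i |\langle f, \phi_i\rangle|^2 \gg K$, so $\lambda_{\max}(M) \gg K/|I|$.

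Next, apply the Gershgorin bound $\lambda_{\max}(M) \leq \max_i \sum_j |M_{ij}|$ and note that $M_{ii} \asymp 1/|I|$; this produces an index $i^{*}$ with $\sum_{j \neq i^{*}} |M_{i^{*}j}| \gg K/|I|$ once $K$ exceeds an absolute constant. The proof will then be completed by the following structured Weyl-type claim: for every $c > 0$, if $i \neq j$ and $|M_{ij}| \geq c/|I|$, then $\phi_i \sim_1 \phi_j$, with comparability constants depending only on $c$ and $k$. Granting this and choosing $c$ smaller than the implied constant in $\lambda_{\max}(M) \gg K/|I|$, the contradiction hypothesis that no two $\phi_i$'s are $\sim_1$-related forces $|M_{i^{*}j}| < c/|I|$ for every $j \neq i^{*}$, contradicting the Gershgorin lower bound once $K$ is large; hence $K \ll 1$.

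The structured Weyl-type claim is the principal technical ingredient. Observe first that $|M_{ij}| \geq c/|I|$ automatically forces $|I_i \cap I_j| \gg |I|$, so the interval $J \coloneqq I_i \cap I_j$ satisfies $J \sim I_i \sim I_j$, and the claim reduces to: if $J$ is an interval of length $\asymp |I|$ and $Q \in \Poly_{\leq k}(\R \to \R)$ satisfies $|\sum_{n \in J \cap \Z} e(Q(n))| \geq c|J|$, then $Q = \eps + \gamma$ with $\eps$ smooth on $J$ and $\gamma \in \Poly_{\leq k}(\Z \to \Z)$; by \eqref{ber-3} the smoothness of $\eps$ then transfers from $J$ to $I_i$ and to $I_j$, which is what Definition~\ref{poly-comp} of $\sim_1$ demands. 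This structured Weyl claim is proved by induction on $k$ via standard Weyl differencing: after $k-1$ rounds of the squaring--averaging inequality, the leading coefficient $a_k$ of $Q$ must lie within $O_c(|J|^{-k})$ of a rational $p/(k!\,q)$ with $q \ll_{k,c} 1$, and this rational part can be matched (via Lemma~\ref{dte} after clearing $q$ by Lemma~\ref{bezout}) to an integer-valued polynomial of degree $k$, while the residue $O_c(|J|^{-k}) t^k$ is smooth on $J$. Subtracting off the leading term yields a polynomial of degree $\leq k-1$ whose exponential sum on $J \cap \Z$ remains $\gg |J|$, and the inductive hypothesis concludes. The bookkeeping required to split $Q$ cleanly into the smooth and integer-valued pieces, as opposed to merely asserting that the exponential sum is small, is the main obstacle, but this is a standard output of the classical Weyl analysis.
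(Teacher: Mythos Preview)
Your $TT^{*}$/Gershgorin setup is fine and is essentially a repackaging of the paper's Cauchy--Schwarz step: both produce an index $i$ such that $\bigl|\sum_{n\in I_i\cap I_j} e(P_i(n)-P_j(n))\bigr| \gg |I|$ for $\gg K$ indices $j$.  The gap is in your ``structured Weyl-type claim'' that a single large exponential sum $\bigl|\sum_{n\in J} e(Q(n))\bigr|\geq c|J|$ forces a decomposition $Q=\eps+\gamma$ with $\eps$ smooth on $J$ and $\gamma\in\Poly_{\leq k}(\Z\to\Z)$.  This is false already for $k=2$: take $Q(t)=t^2/3$.  Since $n^2\bmod 3$ runs through $1,1,0$ periodically, one has $\bigl|\sum_{n\in J} e(n^2/3)\bigr|=\frac{\sqrt{3}}{3}|J|+O(1)$, yet $t^2/3$ cannot be written as a bounded function plus a $\Z$-valued polynomial on any long interval (the second difference of any $\gamma\in\Poly_{\leq 2}(\Z\to\Z)$ is a constant integer, whereas the second difference of $t^2/3$ is $2/3$).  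Weyl's inequality only tells you that each coefficient of $Q$ is within $O(|J|^{-l})$ of a rational with \emph{bounded} denominator $q$, not with denominator $1$; your appeal to Lemma~\ref{bezout} to ``clear $q$'' does not apply, since that lemma decomposes $1$-integral polynomials, it does not upgrade $q$-integral ones to $1$-integral.

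The paper's proof inserts exactly the missing step.  After isolating $\gg K$ indices $j$ with large $|M_{ij}|$, it applies Weyl to each difference $P_i-P_j$ and records the bounded tuple $(q_j,a_{j,1},\dots,a_{j,k})$ of rational approximations to the coefficients.  There are only $O(1)$ possible such tuples, so either $K\ll 1$ or the pigeonhole principle yields two indices $j<j'$ with identical tuples.  For this pair the rational parts of $P_i-P_j$ and $P_i-P_{j'}$ coincide and hence cancel in $(P_i-P_j)-(P_i-P_{j'})=P_{j'}-P_j$, leaving only the smooth errors plus an integer-valued polynomial, which is precisely $\phi_j\sim_1\phi_{j'}$.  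Your argument can be repaired by inserting this pigeonhole step in place of the false pairwise claim.
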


\begin{proof}  Write $\phi_i = (I_i,P_i)$ and $H = |I|$. By \eqref{corr-def}, for each $1 \leq i \leq K$, we can find a real number $\theta_i$ such that
$$ \mathrm{Re}\left( e(\theta_i) \sum_{n \in I_i} f(n) e(-P_i(n)) \right) \gg H$$
and hence on summing in $i$ and rearranging
$$ \mathrm{Re}\left( \sum_{n \in I} f(n) \sum_{i=1}^K 1_{I_i}(n) e(\theta_i) e(-P_i(n)) \right)\gg H K.$$
By Cauchy-Schwarz we conclude that
$$ \sum_{n \in I} \left| \sum_{i=1}^K 1_{I_i}(n) e(\theta_i) e(-P_i(n))  \right|^2 \gg H K^2.$$
The left-hand side can be rearranged as
$$ \sum_{i=1}^K \sum_{j=1}^K e(\theta_j-\theta_i) \sum_{n \in I_i \cap I_j} e(P_i(n) - P_j(n)).$$
Thus, by the pigeonhole principle and triangle inequality, there exists $1 \leq i \leq K$ such that
$$ \sum_{j=1}^K \left| \sum_{n \in I_i \cap I_j} e(P_i(n) - P_j(n))\right| \gg HK,$$
and hence
\begin{equation}
\label{eq:Pi-Pjest}
\left| \sum_{n \in I_i \cap I_j} e(P_i(n) - P_j(n))\right| \gg H
\end{equation}
for $\gg K$ choices of $j=1,\dots,K$. Fix this choice of $i$.

Let $n_I$ denote an integer point in $I$.  For each $j$ such that~\eqref{eq:Pi-Pjest} holds, we write
$$P_i(t) - P_j(t) = \sum_{l=0}^k \alpha_{j,l} (t-n_I)^l$$
for some real coefficients $\alpha_{j,l}$. Then we have
$$ \left| \sum_{n \in (I_i-n_I) \cap (I_j - n_I)} e\left( \sum_{l=0}^k \alpha_{j,l} n^l \right)\right| \gg H$$
Applying Weyl sum estimates such as~\cite[Lemma 1.1.16]{tao-higher}, we conclude that there exists a natural number $1 \leq q_j \ll 1$ such that
$$ \| q_j \alpha_{j,l} \|_{\R/\Z} \ll H^{-l}$$
for $l=0,\dots,k$, where $\|x\|_{\R/\Z}$ denotes the distance of $x$ to the nearest integer.  In particular there exist natural numbers $1 \leq a_{j,l} \leq q_j$ such that
$$\left\| \alpha_{j,l} - \frac{a_{j,l}}{q_j} \right\|_{\R/\Z} \ll H^{-l}.$$
The total number of tuples $(q_j,a_{j,1},\dots,a_{j,k})$ is $O(1)$.  Thus by the pigeonhole principle, either $K \ll 1$, or else there exist $1 \leq j < j' \leq K$ such that $q_j = q_{j'}$ and $a_{j,l} = a_{j',l}$ for all $l=0,\dots,K$.  In particular, by the triangle inequality we have
$$\| \alpha_{j,l} - \alpha_{j',l} \|_{\R/\Z} \ll H^{-l}$$
for $l=0,\dots,K$, so we can write $\alpha_{j',l} = \eps_{j, j', l} + \alpha_{j,l} + \gamma_{j, j', l}$ for some integer $\gamma_{j,j', l}$ and some real number $\eps_{j,j',l} = O(H^{-l})$. This gives the decomposition
$$ P_j(t) = \sum_{l=0}^k \eps_{j,j',l} (t-n_I)^l + P_{j'}(t) +  \sum_{l=0}^k \gamma_{j,j',l} (t-n_I)^l.$$
Comparing this with Definition~\ref{poly-comp}, we see that
$$ \phi_j \sim_{1} \phi_{j'},$$
and the proposition follows.
\end{proof}

Using this proposition, we can obtain

\begin{proposition}[Scaling down]\label{Scaling-down} Let $2 \leq P \leq Q \leq H \leq X$ and let $f: \N \to \C$ be a $1$-bounded multiplicative function.   Suppose there exists a large $(X,H)$-family ${\mathcal I}$ and a local polynomial phase $\phi_I \in \Phi_I$ associated to each interval $I \in {\mathcal I}$ such that
$$ |\langle f, \phi_I \rangle| \gg 1$$
for all $I \in {\mathcal I}$. Assuming that $P, \frac{\log Q}{\log P}$ are sufficiently large (depending on the implied constants in the above hypotheses), there exist $P' \in [P,Q/2]$, a large $(\frac{X}{P'}, \frac{H}{P'})$-family ${\mathcal I}'$, and a function $\phi'_{I'} \in \Phi_{I'}$ associated to each $I' \in {\mathcal I}'$, such that
$$ |\langle f, \phi'_{I'} \rangle| \gg 1$$
for all $I' \in {\mathcal I}'$.  Furthermore, for each $I' \in {\mathcal I}'$, one can find $\gg \pi_0(P')$ pairs $(I, p')$, where $I \in {\mathcal I}$ and $p'$ is a prime in $[P', 2P']$, such that the rescaled interval $\frac{1}{p'} I$ lies within $3\frac{H}{P'}$ of $I'$, and such that
\begin{equation}\label{peo}
(\frac{1}{p'})_{*} \phi_I \sim_1 \phi'_{I'}.
\end{equation}
\end{proposition}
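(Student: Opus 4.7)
The plan is to follow the scheme of~\cite[Lemma 2.2]{mrt-fourier}, adapted to this abstract setting of local polynomial phases. There are four moves: (1) a Tur\'an--Kubilius identity converts each correlation $|\langle f,\phi_I\rangle| \gg 1$ into an average of correlations on rescaled intervals $(1/p')I$; (2) a dyadic pigeonhole over $p'$ selects a single scale $P' \in [P,Q/2]$; (3) the rescaled intervals are clustered into candidate intervals $I'$ of length $H/P'$; (4) Proposition~\ref{lsieve} collapses the many phases landing in each cluster into a single $\sim_1$-equivalence class, yielding $\phi'_{I'}$.

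For step (1), set $L \coloneqq \log(\log Q/\log P)$, which is large under our hypotheses. The Tur\'an--Kubilius inequality together with Mertens' theorem ensures that, apart from an exceptional set of $o(X)$ integers, $\#\{p\in[P,Q]:p\mid n\} = (1+o(1))L$. Inserting this approximate identity into $\langle f,\phi_I\rangle$, swapping the sums, writing $n=p'm$, and using $f(p'm)=f(p')f(m)$ when $(m,p')=1$ (the complementary case contributes an acceptable $O(H/P^2)$ error per $p'$), one arrives, after first rotating $\langle f,\phi_I\rangle$ by a phase so that it becomes positive-real and then applying the triangle inequality, at
\begin{equation*}
\sum_{p'\in[P,Q]} \frac{1}{p'}\,\bigl|\langle f,(1/p')_*\phi_I\rangle\bigr| \gg L
\end{equation*}
for each $I$ in a positive proportion of $\mathcal{I}$.

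Dyadically decompose $[P,Q]$ into $\ll L$ ranges $[P_j,2P_j]$ and pigeonhole jointly over $I$ and $j$ to fix a single $P'\in[P,Q/2]$ with $\sum_{p'\in[P',2P']}(1/p')|\langle f,(1/p')_*\phi_I\rangle|\gg 1$ for $\gg\#\mathcal{I}$ intervals $I$. Since $\sum_{p'\in[P',2P']}1/p'\asymp 1$ and each summand is $\leq 1$, a further pigeonhole produces $\gg\pi_0(P')$ primes $p'\in[P',2P']$ with $|\langle f,(1/p')_*\phi_I\rangle|\gg 1$. Altogether this yields $\gg(X/H)\pi_0(P')$ pairs $(I,p')$; each rescaled interval $(1/p')I$ has length $\asymp H/P'$ and sits inside $[X/(4P'),4X/P']$. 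Cover this region by a maximal $500(H/P')$-separated family of candidate intervals of length $H/P'$, of which there are $\ll X/H$. A standard counting argument (discard candidates receiving fewer than $c\pi_0(P')$ pairs for a small absolute $c$) then extracts a large $(X/P',H/P')$-family $\mathcal{I}'$ such that every $I'\in\mathcal{I}'$ receives $\gg\pi_0(P')$ pairs with $(1/p')I$ lying within $3H/P'$ of $I'$.

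For each fixed $I'\in\mathcal{I}'$, the $\gg\pi_0(P')$ associated phases $(1/p')_*\phi_I$ live on intervals comparable to $I'$ and each correlates with $f$ by $\gg 1$. Proposition~\ref{lsieve} applied iteratively forces these phases into only $O(1)$ classes under $\sim_1$, so a final pigeonhole selects one class containing $\gg\pi_0(P')$ pairs; choosing $\phi'_{I'}$ to be any representative of this class gives~\eqref{peo} for all the other pairs in the class and $|\langle f,\phi'_{I'}\rangle|\gg 1$. The principal technical obstacle is step~(1): one must absorb both the exceptional set where $\#\{p\in[P,Q]:p\mid n\}$ deviates from its mean and the $(m,p')=1$ multiplicativity error into genuinely negligible losses, which is exactly what forces the quantitative assumptions that $P$ and $\log Q/\log P$ be sufficiently large, as in~\cite[Lemma 2.2]{mrt-fourier}.
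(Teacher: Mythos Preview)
Your proposal is correct and follows essentially the same route as the paper. The paper's proof likewise invokes the Tur\'an--Kubilius argument (via \cite[Proposition~2.5]{mrt-fourier}) to pass to rescaled correlations, pigeonholes to fix $P'$ and the family $\mathcal{I}'$, and then uses Proposition~\ref{lsieve} together with a greedy algorithm to show that only $O(1)$ $\sim_1$-classes of phases can correlate at each $I'$; this is exactly your step~(4). One minor point: your final sentence cites \cite[Lemma~2.2]{mrt-fourier} as the source of the Tur\'an--Kubilius machinery, but in that paper Lemma~2.2 is the large sieve (the analogue of Proposition~\ref{lsieve} here), while the Tur\'an--Kubilius input is \cite[Proposition~2.5]{mrt-fourier} and the overall scaling-down argument is \cite[Proposition~3.1]{mrt-fourier}.
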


\begin{proof}  From Proposition~\ref{lsieve} and the greedy algorithm, we can associate to each interval $I$ of length $H \geq 1$ and any $\eta' > 0$ a family $\phi_1,\dots,\phi_K \in \Phi_I$ of local polynomial phases with $K = O_{\eta'}(1)$ such that whenever one has
$$ | \langle f, \phi \rangle| \geq \eta'$$
for some $\phi \in \Phi_J$ with $J \subset I$ and $|J| \geq \eta' |I|$, then one has
$$ \phi \sim_{1} \phi_i$$ 
for some $i=1,\dots,K$ (if we permit implied constants in the $\sim_1$ notation to depend on $\eta'$).  The claim now follows by repeating the proof of~\cite[Proposition 3.1]{mrt-fourier} (which is a Tur\'an--Kubilius argument), using the above claim as a substitute for~\cite[Lemma 2.2]{mrt-fourier}.  For the convenience of the reader we sketch the main ideas of this argument as follows.  First, by using~\cite[Proposition 2.5]{mrt-fourier} and the multiplicative nature of $f$, one can deduce that
$$ |\langle f, (\frac{1}{p'})_* \phi_I \rangle| \gg 1$$
for many $I \in {\mathcal I}$ and many primes $p' \in [P,Q]$, and thence (by the pigeonhole principle) for many $I \in {\mathcal I}$ and $p' \in [P',2P']$ for a suitable $P'$.  By further pigeonholing, we may arrange matters so that the intervals $\frac{1}{p'} I$ lie close to intervals $I'$ in a suitable large $(\frac{X}{P'}, \frac{H}{P'})$-family ${\mathcal I}'$.  Using the previously mentioned claim, one can then show that many of the $(\frac{1}{p'})_* \phi_I$ associated to a given interval $I'$ are related via the $\sim_1$ relation to a suitable phase $\phi'_{I'}$, which will give the claim.
\end{proof}

We also need the following version of the Chinese remainder theorem\footnote{The reason we call this a Chinese remainder theorem is that it allows us to combine $\mod p$ conditions for different primes $p$.}. This proposition turns out to be very useful in what follows, since it allows us to upgrade equivalences between different exponential phases up to the point where the modulus is so large that we must have a genuine equality in $\mathbb{R}$.

\begin{proposition}[Chinese remainder theorem]\label{chinese} Let $I$ be an interval of some length $|I| \geq 1$, and let ${\mathcal P}$ be a finite collection of primes.
\begin{itemize}
\item[(i)]  Suppose that $\phi \in \Phi_I$, and that for each $p \in {\mathcal P}$ there exists $\phi_p \in \Phi$ such that
$$ \phi_p \sim_{1} \phi.$$
Then there exists $\tilde \phi \in \Phi_I$ such that
$$ \phi_p \sim_{\frac{1}{p}} \tilde \phi$$
for all $p \in {\mathcal P}$, and furthermore $\langle f, \phi \rangle = \langle f, \tilde \phi \rangle$ for all $f: \Z \to \C$.
\item[(ii)]  Suppose that $\phi \in \Phi_I$ and $\phi' \in \Phi$ are such that
$$ \phi \sim_{\frac{1}{p}} \phi'$$
for all $p \in {\mathcal P}$, and suppose $|I|$ is sufficiently large (depending on the implied constants in the $\sim_{\frac{1}{p}}$ notation). Then
$$ \phi \sim_{\frac{1}{\prod {\mathcal P}}} \phi'.$$
\end{itemize}
\end{proposition}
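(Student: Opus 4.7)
The plan is to handle the two parts separately, unified by the algebraic observation that the subgroups $G_p := \Poly_{\leq k}(\frac{1}{p}\Z \to \Z)$ of $G := \Poly_{\leq k}(\Z \to \Z)$ satisfy a Chinese remainder principle arising from Lemma~\ref{bezout}: for distinct primes $p, q$ one has $G_p + G_q = G$ and $G_p \cap G_q = G_{pq}$, and iterating these gives $\bigcap_{p \in \mathcal{P}} G_p = G_{\prod \mathcal{P}}$ together with the pairwise-comaximality-style identities $G_p + \bigcap_{q \neq p} G_q = G_p + G_{\prod_{q\neq p} q} = G$. A standard iterated two-group CRT argument then shows that the natural map $G \to \prod_{p \in \mathcal{P}} G/G_p$ is surjective.

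For Part~(i), I would start from the decompositions $P_p = \eps_p + P + \gamma_p$ given by $\phi_p \sim_1 \phi$, in which $\gamma_p \in G$. Using the surjectivity just noted, I can choose a single $\gamma \in G$ representing $\gamma_p \pmod{G_p}$ simultaneously for all $p \in \mathcal{P}$. Setting $\tilde\phi := (I, P+\gamma)$, the identity $P_p - (P+\gamma) = \eps_p + (\gamma_p - \gamma)$ gives a valid decomposition witnessing $\phi_p \sim_{1/p} \tilde\phi$, since $\eps_p$ remains smooth on $I_p \sim I$ and $\gamma_p - \gamma \in G_p$. The correlation identity $\langle f, \tilde\phi \rangle = \langle f, \phi\rangle$ is immediate because $\gamma(n) \in \Z$ for $n \in \Z$ makes $e(-\gamma(n)) = 1$.

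For Part~(ii), I would fix an arbitrary $p_1 \in \mathcal{P}$ and compare the two decompositions $P - P' = \eps_{p_1} + \gamma_{p_1} = \eps_p + \gamma_p$ arising from $\phi \sim_{1/p_1} \phi'$ and $\phi \sim_{1/p} \phi'$. The difference $\delta_p := \gamma_p - \gamma_{p_1} = \eps_{p_1} - \eps_p$ then simultaneously lies in $G$ (as a difference of elements of $G$, using $G_p \subset G$) and is bounded by $O(1)$ on $I$ in view of the smoothness of $\eps_{p_1}$ and $\eps_p$. Expanding $\delta_p = \sum_{j=0}^k c_{p,j} \binom{t}{j}$ with $c_{p,j} \in \Z$ via Lemma~\ref{dte}, I would iterate the Bernstein inequality \eqref{ber-2} to obtain $|\delta_p^{(j)}| \ll |I|^{-j}$ on $I$; reading off the top coefficient first, $\delta_p^{(k)} \equiv c_{p,k}$ gives $|c_{p,k}| \ll |I|^{-k}$ and hence $c_{p,k}=0$ for $|I|$ large enough, at which point the argument repeats on the truncated polynomial $\sum_{j<k} c_{p,j}\binom{t}{j}$, ultimately yielding $c_{p,k}=c_{p,k-1}=\dots=c_{p,1}=0$ and leaving $\delta_p = c_{p,0}$ a bounded integer constant. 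Since integer constants lie in $G_p$, I conclude $\gamma_{p_1} = \gamma_p - c_{p,0} \in G_p$ for every $p \in \mathcal{P}$, hence $\gamma_{p_1} \in \bigcap_p G_p = G_{\prod \mathcal{P}}$, and the decomposition $P - P' = \eps_{p_1} + \gamma_{p_1}$ witnesses $\phi \sim_{1/\prod \mathcal{P}} \phi'$.

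The main obstacle is the rigidity step in Part~(ii): showing that a polynomial which is simultaneously $1$-integral (hence integer in the binomial basis) and uniformly $O(1)$ on a long interval must reduce to a bounded integer constant. This is precisely where the hypothesis that $|I|$ is sufficiently large enters the proof, and it is the only place any analytic input (Bernstein's inequality) is needed, the rest being purely algebraic manipulation of the CRT structure.
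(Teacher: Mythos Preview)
Your proof is correct and, for Part~(i), essentially identical to the paper's: the paper states exactly your surjectivity claim as an ``auxiliary claim'' (proved by reducing to two primes via Lemma~\ref{bezout}) and then defines $\tilde\phi = (I, P+\gamma)$ just as you do.

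For Part~(ii), both arguments reach the same conclusion---that the difference of any two smooth parts is an integer constant, whence all the $\gamma_p$ coincide up to a constant and lie in $\bigcap_p G_p = G_{\prod\mathcal P}$---but execute it differently. The paper first normalizes so that the $\eps_p(n_I)$ agree at one integer point, then uses the fact that $\eps_p \equiv \eps_{p'} \bmod 1$ on $\Z$ together with the Lipschitz bound $\eps_p(n+1)-\eps_p(n)=O(1/|I|)$ to propagate equality along the integers in $I$, and concludes via Lagrange interpolation that the $\eps_p$ (and hence the $\gamma_p$) are literally all equal. Your route---expanding $\delta_p$ in the binomial basis with integer coefficients and killing them top-down via Bernstein---is a cleaner and more direct packaging of the same rigidity: it avoids the pointwise induction along integers and uses only that an integer-valued polynomial bounded on a long interval must be constant. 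Both approaches use Lemma~\ref{bezout} to collapse $\bigcap_p G_p$ to $G_{\prod\mathcal P}$, and both need $|I|$ large only to force the integer coefficients to vanish.
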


\begin{proof}  
See Appendix~\ref{app:b}.
\end{proof}

One can now conclude

\begin{proposition}[Building a family of related local polynomial phases]\label{prop32-analog}  Let the hypotheses be as in Theorem~\ref{mult-poly}.  Let $\eps > 0$ be sufficiently small depending on $k,\theta,\eta$, and suppose that $X$ is sufficiently large depending on $\theta,\eta,\eps,k$.  Then there exist $P', P'' \in [X^{\eps^2/2},X^\eps]$, a large $(\frac{X}{P'P''}, \frac{H}{P'P''})$-family ${\mathcal I}''$, and local polynomial phases $\phi''_{I''} \in \Phi_{I''}$ for each $I'' \in {\mathcal I}''$ such that
\begin{equation}\label{falp}
|\langle f, \phi''_{I''} \rangle| \gg 1
\end{equation}
for all $I'' \in {\mathcal I}''$.  Furthermore, there exists a collection ${\mathcal Q}$ of $\gg \pi_0(P')^2 \frac{X}{H}$ quadruples $(I''_1,I''_2, p'_1,p'_2)$ with $I''_1, I''_2$ distinct intervals in ${\mathcal I}''$ and $p'_1,p'_2$ distinct primes in $[P',2P']$, such that $I''_1$ lies within $50 \frac{H}{P'P''}$ of $\frac{p'_2}{p'_1} I''_2$ (so in particular $\frac{1}{p'_2} I''_1 \sim \frac{1}{p'_1} I''_2$), and such that
\begin{equation}\label{phip}
(\frac{1}{p'_2})_{*} \phi''_{I''_1} \sim_{\frac{1}{p''}} (\frac{1}{p'_1})_{*} \phi''_{I''_2}
\end{equation}
for a large set of primes $p''$ in $[P''/2, P'']$. (The implied constants in the conclusions may depend on the implied constants in the hypotheses.)
\end{proposition}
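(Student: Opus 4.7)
The plan is to apply Proposition~\ref{Scaling-down} twice in succession, upgrade the resulting comparability relations via Proposition~\ref{chinese}(i), and extract the quadruple collection $\mathcal Q$ by a Cauchy--Schwarz / pigeonhole argument. A first application of Proposition~\ref{Scaling-down} with $P = X^{\eps^2/2}$, $Q = X^\eps$ applied to the family $(\mathcal I, \phi_I)$ supplied by \eqref{gii} produces $P' \in [X^{\eps^2/2}, X^\eps/2]$, an intermediate large $(X/P', H/P')$-family $\mathcal I_\dagger$ with phases $\phi_{\dagger, I_\dagger}$ satisfying $|\langle f, \phi_{\dagger, I_\dagger}\rangle|\gg 1$, and for each $I_\dagger$ a collection of $\gg \pi_0(P')$ \emph{level-one pairs} $(I,p')$ with $(1/p')_*\phi_I \sim_1 \phi_{\dagger, I_\dagger}$ and $(1/p')I$ within $3H/P'$ of $I_\dagger$. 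A second application of Proposition~\ref{Scaling-down} with the same parameters to $(\mathcal I_\dagger, \phi_{\dagger, I_\dagger})$ then yields $P'' \in [X^{\eps^2/2}, X^\eps/2]$, the desired large family $\mathcal I''$ at scale $X/(P'P'')$ with phases $\phi''_{I''}$ satisfying \eqref{falp}, and for each $I''$ a collection of $\gg \pi_0(P'')$ \emph{level-two pairs} $(I_\dagger, p'')$ with $(1/p'')_*\phi_{\dagger, I_\dagger} \sim_1 \phi''_{I''}$ and $(1/p'')I_\dagger$ within $3H/(P'P'')$ of $I''$.

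For each $I''\in\mathcal I''$, select one level-two pair $(I_\dagger(I'', p''), p'')$ per prime $p''$ in a large subset $\mathcal P_{I''}\subseteq [P''/2, P'']$ of distinct primes, and apply Proposition~\ref{chinese}(i) to the resulting phases to produce a single $\tilde\phi''_{I''}\in\Phi_{I''}$ with $\langle f, \tilde\phi''_{I''}\rangle = \langle f, \phi''_{I''}\rangle$ satisfying
$$(1/p'')_*\phi_{\dagger, I_\dagger(I'', p'')}\sim_{1/p''}\tilde\phi''_{I''}\quad\text{for every } p''\in\mathcal P_{I''}.$$
After redefining $\phi''_{I''} := \tilde\phi''_{I''}$, dilating the level-one relation $(1/p')_*\phi_I\sim_1\phi_{\dagger, I_\dagger(I'', p'')}$ by $(1/p'')_*$ using Proposition~\ref{basic}(ii) and chaining with this upgraded relation gives the key composite identity
$$(1/(p'p''))_*\phi_I\sim_{1/p''}\phi''_{I''}$$
for every level-one pair $(I, p')$ at $I_\dagger(I'', p'')$ and every $p''\in \mathcal P_{I''}$; the combined modulus remains $1/p''$ because the sum of two $1/p''$-integral polynomials is $1/p''$-integral.

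To produce $\mathcal Q$, count \emph{admissible chains} $(I, p', I_\dagger, p'', I'')$, where $(I,p')$ is a level-one pair for $I_\dagger$, $(I_\dagger, p'')$ is a level-two pair for $I''$, and $p''\in\mathcal P_{I''}$: summing $\gg \pi_0(P')\pi_0(P'')$ chains per $I''$ over the $\asymp X/H$ intervals in $\mathcal I''$ yields a total of $\gg (X/H)\pi_0(P')\pi_0(P'')$. Projecting onto the coordinate $(I, p'')$ (of which there are $\asymp (X/H)\pi_0(P'')$) and invoking Jensen's inequality for $x\mapsto \binom{x}{2}$ (using that the average chain count per $(I,p'')$ is $\asymp \pi_0(P') \gg 1$) produces $\gg (X/H)\pi_0(P')^2\pi_0(P'')$ ordered pairs of distinct chains sharing a common $(I, p'')$. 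The remaining data is a $5$-tuple $(I''_1, I''_2, p'_1, p'_2, p'')$ with $p'_1\neq p'_2$ and $I''_1\neq I''_2$, since the spacing of $\mathcal I_\dagger$ precludes $I/p'_1$ and $I/p'_2$ from landing in the same $I_\dagger$ unless $p'_1 = p'_2$. For each such $5$-tuple, dilating the composite identity $(1/(p'_i p''))_*\phi_I\sim_{1/p''}\phi''_{I''_i}$ by $(1/p'_{3-i})_*$ ($i=1,2$) and applying transitivity gives a relation between $(1/p'_2)_*\phi''_{I''_1}$ and $(1/p'_1)_*\phi''_{I''_2}$ whose associated $\gamma$ lies in $\Poly_{\leq k}((1/(p'_2 p''))\Z\to\Z)+\Poly_{\leq k}((1/(p'_1 p''))\Z\to\Z)$; Lemma~\ref{bezout} with $\lambda = 1/p''$ and coprime $a = p'_2$, $b = p'_1$ identifies this sum as $\Poly_{\leq k}((1/p'')\Z\to\Z)$, establishing \eqref{phip}. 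The required closeness $I''_1$ within $50H/(P'P'')$ of $(p'_2/p'_1)I''_2$ follows by chaining the level-one and level-two closeness bounds through the common value $I/p''\approx I_{\dagger, i}\approx p'_i I''_i$. A final pigeonhole on the $p''$ coordinate collapses these $5$-tuples to $\gg \pi_0(P')^2 (X/H)$ distinct quadruples $(I''_1, I''_2, p'_1, p'_2)$, each completable by a large set of primes $p''$. The main technical obstacle is this double-counting step, analogous to the corresponding argument in~\cite[Proposition~3.2]{mrt-fourier} but complicated by the two-fold scaling structure that requires careful bookkeeping of the family sparsity.
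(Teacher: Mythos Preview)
Your proposal is correct and follows essentially the same route as the paper's proof: two applications of Proposition~\ref{Scaling-down}, an upgrade of the second-level $\sim_1$ relations to $\sim_{1/p''}$ via Proposition~\ref{chinese}(i), a Cauchy--Schwarz count over chains sharing $(I,p'')$ to produce the octuplets/5-tuples, and a final averaging to extract $\mathcal Q$. The one substantive variation is that, to obtain \eqref{phip}, you combine the two dilated relations (with moduli $1/(p'_1 p'')$ and $1/(p'_2 p'')$) directly via Lemma~\ref{bezout}, whereas the paper instead invokes the sparsification property Proposition~\ref{basic}(iii) to coarsen each modulus to $1/p''$ before applying transitivity; both are valid and essentially equivalent. (There is a harmless typo in your closeness chain: ``$I/p'' \approx I_{\dagger,i}$'' should read ``$I/p'_i \approx I_{\dagger,i}$''.)
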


\begin{proof}  One basically repeats~\cite[Proof of Proposition 3.2]{mrt-fourier} more or less verbatim, but replacing~\cite[Proposition 3.1]{mrt-fourier} by Proposition~\ref{Scaling-down}.  For the convenience of the reader we now outline some more details of the argument.  By two applications of Proposition~\ref{Scaling-down} (arguing exactly as in the proof of~\cite[Proposition 3.2]{mrt-fourier} down to~\cite[(41)]{mrt-fourier}), we can find $P' \in [X^{\eps^2},X^{\eps}]$ and $P'' \in [(X/P')^{\eps^2},(X/P')^\eps] \subset [X^{\eps^2/2},X^{\eps}]$, an $(X/P',H/P')$-family ${\mathcal I}'$ of intervals, an $(X/P'P'',H/P'P'')$-family ${\mathcal I}''$ of intervals, and functions $\phi'_{I'}, \phi''_{I''} \in \Phi$ associated to each $I' \in {\mathcal I}', I'' \in {\mathcal I}''$ with the following properties:
\begin{itemize}
\item  One has \eqref{falp} for all $I'' \in {\mathcal I}''$.
\item For each $I' \in {\mathcal I}'$, there are $\gg \pi_0(P')$ pairs $(I,p')$ with $I \in {\mathcal I}$ and $p'$ a prime in $[P',2P']$ such that $I/p'$ lies within $3H/P'$ of $I'$ and
\begin{equation}\label{phi-1}
 (\frac{1}{p'})_{*} \phi_I \sim_{1} \phi'_{I'}.
\end{equation}
\item  For each $I'' \in {\mathcal I}''$, there are $\gg \pi_0(P'')$ pairs $(I',p'')$ with $I' \in {\mathcal I}'$ and $p''$ a prime in $[P''/2, P'']$ such that $I'/p''$ lies within $3\frac{H}{P'P''}$ of $I''$, and
\begin{equation}\label{phi-2}
 (\frac{1}{p''})_{*} \phi_{I'} \sim_{1} \phi''_{I''}.
\end{equation}
\end{itemize}
Note that the property \eqref{falp} only depends on the values of $\phi''_{I''}$ on the integers.  Thus, by Proposition~\ref{chinese}(i), we may without loss of generality upgrade \eqref{phi-2} to
\begin{equation}\label{phi-3}
 (\frac{1}{p''})_{*} \phi_{I'} \sim_{\frac{1}{p''}} \phi''_{I''}
\end{equation}
without impacting \eqref{falp} or any of the other properties listed above.  Henceforth we shall assume that \eqref{phi-3} holds.  Applying Cauchy-Schwarz (as in the continuation of the proof of~\cite[Proposition 3.2]{mrt-fourier} down to~\cite[(43)]{mrt-fourier}), we can now find $\gg \pi_0(P')^2 \pi_0(P'') \frac{X}{H}$ octuplets\footnote{For a visualization of the dependencies between the intervals $I$, $I_1'$, $I_1''$, $I_2'$ and $I_2''$, we refer to~\cite[Figure 8]{mrt-fourier}.} $(I,I'_1,I'_2,I''_1,I''_2,p'_1,p'_2,p'')$ where
\begin{itemize}
\item $I \in {\mathcal I}$, $I'_1,I'_2 \in {\mathcal I}'$, $I''_1, I''_2 \in {\mathcal I}''$;
\item $p'_1,p'_2$ are primes in $[P', 2P']$, and $p''$ is a prime in $[P''/2, P'']$, with $p'_1 \neq p'_2$;
\item For $i=1,2$, $\frac{1}{p'_i} I$ lies within $3\frac{H}{P'}$ of $I'_i$, and $\frac{1}{p''} I'_i$ lies within $3\frac{H}{P'P''}$ of $I''_i$.
\item For each $i=1,2$, we have
\begin{equation}\label{phi-4}
 (\frac{1}{p'_i})_{*} \phi_I \sim_{1} \phi'_{I'_i}
\end{equation}
and
\begin{equation}\label{phi-5}
 (\frac{1}{p''})_{*} \phi'_{I'_i} \sim_{\frac{1}{p''}} \phi''_{I''_i}.
\end{equation}
\end{itemize}

From \eqref{phi-4} and Proposition~\ref{basic}(ii) we have for $i=1,2$ that
$$(\frac{1}{p'_i p''})_{*} \phi_I \sim_{\frac{1}{p''}} (\frac{1}{p''})_{*} \phi'_{I'_i}$$
and hence by \eqref{phi-5} and Proposition~\ref{basic}(i)
$$(\frac{1}{p'_i p''})_{*} \phi_I \sim_{\frac{1}{p''}} \phi''_{I''_i}$$

and thus by Proposition~\ref{basic}(ii), (iii)
$$ (\frac{1}{p'_1 p'_2 p''})_{*} \phi_I \sim_{\frac{1}{p''}} (\frac{1}{p'_{3-i}})_{*} \phi''_{I''_i}$$
and thus by Proposition~\ref{basic}(i) we obtain \eqref{phip}.  The proposition now follows by repeating the remainder of the proof of~\cite[Proposition 3.2]{mrt-fourier} (where one estimates how many quadruples arise from these octuplets).
\end{proof}

One should think of the set ${\mathcal Q}$ of quadruples $e = (I''_1,I''_2, p'_1, p'_2)$ produced by the above proposition as a family of ``edges'' of a certain graph with vertex set ${\mathcal I}''$. Now, we adapt the graph-theoretic arguments in~\cite[\S 4]{mrt-fourier} to locate lots of quadruples $e = (I''_1,I''_2, p'_1, p'_2)$ in ${\mathcal Q}$ for which one has a lot of structural control on the local polynomial phases $\phi''_{I''_1}, \phi''_{I''_2}$, and their relationship to each other.
For the rest of this section we introduce the quantities
\begin{equation}\label{N-def}
 N \coloneqq \# {\mathcal I}'' \asymp \frac{X}{H}\quad \textnormal{and}\quad d \coloneqq \pi_0(P')^2.
\end{equation}
We say that a quantity $a$ is of \emph{polynomial size} if one has $a = O(X^{O(1)})$.  For instance, $P', P'', H, X, N, d$ are all of polynomial size.

\begin{proposition}[Local structure of $\phi''$]\label{prop41-analog}  Let the hypotheses be as in Theorem~\ref{mult-poly}, and let $\eps,X,P',P'',{\mathcal I}'', \phi''_{I''}, {\mathcal Q}$ be as in Proposition~\ref{prop32-analog}.  
Let $\ell_1$, $\ell_2$ be even integers such that
\begin{equation}\label{dop}
d^{\ell_1}, d^{\ell_2} \geq N^2 d^{10}.
\end{equation}
(Note from the lower bound on $P'$ that we can choose $\ell_1,\ell_2 = O_\eps(1)$). We allow implied constants to depend on $\eps, \ell_1, \ell_2$.  Then, for a subset ${\mathcal Q}'$ of the quadruples $e = (I''_1, I''_2, p'_1, p'_2)$ in ${\mathcal Q}$ of cardinality $\gg dN$, one can find a collection ${\mathcal A}_e$ of quadruples $\vec a = (a_1,a_2,b_1,b_2)$ of natural numbers of cardinality $\asymp d^{\ell_1 + \ell_2} / N^2$, and a large collection ${\mathcal P}_{e,\vec a}$ of primes in $[P''/2,P'']$ associated to each $\vec a \in {\mathcal A}_e$, with the following properties:
\begin{itemize}
\item[(i)] One has
\begin{equation}\label{phip-big}
(\frac{1}{p'_2})_{*} \phi''_{I''_1} \sim_{\frac{1}{\prod {\mathcal P}_{e,\vec a}}} (\frac{1}{p'_1})_{*} \phi''_{I''_2}.
\end{equation}
Here the implied constants in the equivalence relation do not depend on $\ell_1$ or $\ell_2$.
\item[(ii)]  For $i=1,2$, $a_i,b_i$ are products of $\ell_i$ primes in $[P',2P']$; in particular
\begin{equation}\label{ab0}
a_i, b_i \asymp (P')^{\ell_i},
\end{equation}
so $a_i,b_i$ are of polynomial size.  Furthermore, we have
\begin{equation}\label{ab3}
 a_i - b_i \asymp \frac{1}{N} a_i.
\end{equation}
\item[(iii)]  For $i=1,2$, we have the approximate dilation invariance
\begin{equation}\label{adi}
(\frac{1}{a_i})_{*} \phi''_{I''_i} \sim_{\frac{1}{\prod {\mathcal P}_{e,\vec a}}} (\frac{1}{b_i})_{*} \phi''_{I''_i}.
\end{equation}
Here the implied constants in the equivalence relation may depend on $\ell_i$, but not on the complementary parameter $\ell_{3-i}$.
\end{itemize}
\end{proposition}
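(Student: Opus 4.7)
The plan is to generalize the graph-theoretic analysis of~\cite[Proposition 4.1]{mrt-fourier} to the abstracted setting of local polynomial phases developed so far. We view the collection ${\mathcal Q}$ produced by Proposition~\ref{prop32-analog} as the edge multiset of an auxiliary graph ${\mathcal G}$ on vertex set ${\mathcal I}''$, each edge $e = (I''_1, I''_2, p'_1, p'_2)$ labelled by its prime pair; since $|{\mathcal Q}| \gg dN$, $|{\mathcal I}''| \asymp N$, and there are $\asymp d = \pi_0(P')^2$ label choices per vertex pair, ${\mathcal G}$ has natural density $\asymp d/N$. The approximate equivalence~\eqref{phip} packages, along each edge, an approximate equivalence between the two rescaled phases modulo $1/p''$ for a large set of primes $p''$.

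The first step is to apply a known case of Sidorenko's inequality (valid for even-length closed walks, which is exactly why we require $\ell_1,\ell_2$ to be even) to see that ${\mathcal G}$ contains $\gg d^{\ell_i} N$ closed walks of length $\ell_i$, hence on average $\gg d^{\ell_i}$ based at any fixed vertex. The slack in hypothesis~\eqref{dop}, i.e., $d^{\ell_i} \geq N^2 d^{10}$, provides a $d^{10}$ factor of room that will absorb all subsequent Markov-type pigeonholings; it lets us pass to a subset ${\mathcal Q}' \subset {\mathcal Q}$ of cardinality $\gg dN$ such that for every $e = (I''_1, I''_2, p'_1, p'_2) \in {\mathcal Q}'$ each endpoint $I''_i$ is the basepoint of $\gg d^{\ell_i}/N$ closed walks of length $\ell_i$. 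For each such closed walk at $I''_i$, concatenating the equivalence~\eqref{phip} along the walk (using the chaining axioms of Proposition~\ref{basic} together with the dilation action) produces
$$(\tfrac{1}{a_i})_* \phi''_{I''_i} \sim_{1/p''} (\tfrac{1}{b_i})_* \phi''_{I''_i},$$
where $a_i$ (resp.\ $b_i$) is the product of the $\ell_i$ ``forward'' (resp.\ ``backward'') primes traversed, so that~\eqref{ab0} holds immediately, delivering (ii) and most of (iii).

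The positional constraint $I''_1 \approx (p'_2/p'_1)I''_2$ with error $O(H/(P'P''))$, accumulated around a walk of bounded length $\ell_i = O(1)$, forces $|a_i/b_i - 1| \ll 1/N$, yielding the upper bound in~\eqref{ab3}; the matching lower bound comes from restricting to walks whose ``forward'' and ``backward'' prime multisets differ nontrivially, which discards only an acceptable fraction of walks in view of the abundance provided by~\eqref{dop}. Pairing closed walks at the two endpoints independently produces the collection ${\mathcal A}_e$ of $\asymp d^{\ell_1+\ell_2}/N^2$ quadruples $(a_1,a_2,b_1,b_2)$ per $e \in {\mathcal Q}'$. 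Finally, for each such $\vec a$, every individual equivalence in the construction (both the central edge $e$ and each edge along the two closed walks) is only valid for large sets of primes $p'' \in [P''/2,P'']$; intersecting these $O(1)$-many large sets yields a common large set ${\mathcal P}_{e,\vec a}$, and a single application of Proposition~\ref{chinese}(ii) across this set upgrades every $\sim_{1/p''}$ relation to the single $\sim_{1/\prod {\mathcal P}_{e,\vec a}}$ relation needed in~\eqref{phip-big} and~\eqref{adi}. The main technical obstacle lies in the bookkeeping of the pigeonholing step: one must simultaneously pigeonhole over vertices with many $\ell_1$-closed-walks, vertices with many $\ell_2$-closed-walks, and edges whose endpoints are good in both senses, while also pigeonholing within each walk so that a common prime set ${\mathcal P}_{e,\vec a}$ works across all the constituent equivalences, all without losing more than a constant proportion of ${\mathcal Q}$. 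The $d^{10}$ of slack engineered into~\eqref{dop} is precisely what makes these successive pigeonholes close.
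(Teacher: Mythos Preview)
Your overall strategy matches the paper's: view $\mathcal{Q}$ as a dense graph, use Blakley--Roy/Sidorenko to find many closed walks of lengths $\ell_1,\ell_2$ attached to a central edge, and chain the relations~\eqref{phip} along the walks via Proposition~\ref{basic}. Two steps, however, are genuine gaps rather than bookkeeping.

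\textbf{The common prime set.} The assertion that ``intersecting these $O(1)$-many large sets yields a common large set ${\mathcal P}_{e,\vec a}$'' is false as stated: two subsets of $[P''/2,P'']$ each of density $\geq c$ can be disjoint. The slack $d^{10}$ in~\eqref{dop} does not touch this issue at all, since it lives on the $P'$ side of the argument, not the $P''$ side. What the paper imports from~\cite[Proposition~4.1]{mrt-fourier} (specifically the argument around~\cite[(52)]{mrt-fourier}) is a random-selection / first-moment argument: one assigns to each edge a random prime from its large set and shows that, in expectation, $\gg d^{\ell_1+\ell_2+1}/N$ configurations have all their edges simultaneously compatible with a large set of primes $p''$. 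Only after this step can one invoke Proposition~\ref{chinese}(ii). You need to replace the bare intersection by this averaging argument.

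\textbf{The lower bound in~\eqref{ab3}.} Restricting to closed walks whose forward and backward prime multisets differ only yields $a_i\neq b_i$, i.e.\ $|a_i-b_i|\geq 1$; it does not give $a_i-b_i\gg a_i/N$. The paper obtains the quantitative lower bound separately (the improvement~\eqref{nondeg-better}): one supposes $|a_i-b_i|\leq c(P')^{\ell_i}/N$ and uses~\cite[Lemma~2.6]{mrt-fourier} (together with the automatic bound~\eqref{3i} for the other index) to show such tuples number only $O(c\,d^{\ell_1+\ell_2+1}/N)$, hence are negligible for small $c$. The same lemma is also what gives the upper bound $\#\mathcal{A}_e\ll d^{\ell_1+\ell_2}/N^2$, which your proposal asserts but does not justify.
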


For the arguments in this section, one could take the parameters $\ell_1,\ell_2$ to be equal to each other, but in the next section it will be convenient to allow $\ell_1,\ell_2$ to be distinct (in fact in that section we will take $\ell_1$ to be very large compared to $\ell_2$).  The specified dependence of parameters in \eqref{phip-big}, \eqref{adi} on $\ell_1,\ell_2$ will be of no relevance in the current arguments, but will be crucially exploited in the next section.

\begin{proof}  Running the proof of~\cite[Proposition 4.1]{mrt-fourier} all the way down to~\cite[(53)]{mrt-fourier} (with the role of $k$ replaced by $\ell_1$ and $\ell_2$, noting that the argument works perfectly well when the two cycles in the graph have different length), with Proposition~\ref{prop32-analog} playing the role of~\cite[Proposition 3.2]{mrt-fourier}, we conclude that we can find $\gg d^{\ell_1+\ell_2+1}/N$ $(\ell_1+\ell_2)$-tuples
$$ \vec I'' \coloneqq (I''_{j,i})_{i=1,2; j \in \{0,1,\dots,\ell_i-1\}} \in ({\mathcal I}'')^{\ell_1+\ell_2}$$
which are ``non-degenerate and very good'' in the sense that they obey the following axioms:
\begin{itemize}
\item[(i)]  If $i=1,2$ and $j=0,\dots,\ell_i-1$ then there exist (uniquely determined) distinct primes $p'_{1,j,i}, p'_{2,j,i} \in [P',2P']$ such that $I''_{j+1,i}$ lies within $100 \frac{H}{P'P''}$ of $\frac{p'_{1,j,i}}{p'_{2,j,i}} I''_{j,i}$ (with the cyclic convention $I''_{\ell_i,i} = I''_{0,i}$).  In particular $\frac{1}{p'_{1,j,i}} I''_{j+1,i} \sim \frac{1}{p'_{2,j,i}} I''_{j,i}$.
\item[(ii)]  There also exist distinct primes $p'_1,p'_2 \in [P',2P']$ such that $(I''_{0,1},I''_{0,2}, p'_1, p'_2) \in {\mathcal Q}$.  In particular, $I''_{0,2}$ lies within $100 \frac{H}{P'P''}$ of $\frac{p'_1}{p'_2} I''_{0,1}$ and hence $\frac{1}{p'_1} I''_{0,2} \sim \frac{1}{p'_2} I''_{0,1}$.
\item[(iii)]  For $i=1,2$, the primes $p'_{1,j,i}$, $j=0,\dots,\ell_i-1$ are distinct from the primes $p'_{2,j,i}$, $j=0,\dots,\ell_i-1$.  In particular we have the non-degeneracy condition
\begin{equation}\label{nondeg}
 \prod_{j=0}^{\ell_i-1} p'_{2,j,i} - \prod_{j=0}^{\ell_i-1} p'_{1,j,i} \neq 0
\end{equation}
for $i=1,2$.
\item[(iv)]  There exists a large collection ${\mathcal P}(\vec I'')$ of primes in $[P''/2,P'']$
such that
\begin{equation}\label{pyong}
(\frac{1}{p'_{2,j,i}})_{*} \phi''_{I''_{j,i}} \sim_{\frac{1}{Q}} (\frac{1}{p'_{1,j,i}})_{*} \phi''_{I''_{j+1,i}}
\end{equation}
for all $j=0,\dots,\ell_i-1$ and $i=1,2$, and similarly
\begin{equation}\label{p21}
(\frac{1}{p'_2})_{*} \phi''_{I''_{0,1}} \sim_{\frac{1}{Q}} (\frac{1}{p'_1})_{*} \phi''_{I''_{0,2}},
\end{equation}
where $Q$ is the modulus
\begin{equation}\label{Q-def}
Q \coloneqq \prod {\mathcal P}(\vec I'').
\end{equation}
\end{itemize}

The relationships between the intervals $I''_{j,i}$ can be schematically described by an $\ell_1$-cycle and an $\ell_2$-cycle linked by an edge; see~\cite[Figure 10]{mrt-fourier} for an example of this diagram in the case $\ell_1=\ell_2=4$.

We note that in~\cite{mrt-fourier} the distinctness of the primes $p'_{1,j,i}$ and the primes $p'_{2,j,i}$ in (iii) was not established.  However one can obtain this reduction as follows.  For the sake of notation we eliminate the contribution of the case when one has a collision $p'_{1,0,1}=p'_{2,0,1}$; the other cases are treated similarly.  Firstly observe that from iterating axiom (i) using the equivalence relation and dilation invariance properties of $\sim$, we have
$$ \frac{\prod_{j=0}^{\ell_{i}-1} p'_{1,j,i}}{\prod_{j=0}^{\ell_{i}-1} p'_{2,j,i}} I''_{0,i} \sim I''_{0,i}$$
and hence
\begin{equation}\label{3i0}
\left|  \prod_{j=0}^{\ell_{i}-1} p'_{2,j,i} - \prod_{j=0}^{\ell_{i}-1} p'_{1,j,i} \right| \lesssim \frac{1}{N} (P')^{\ell_{i}}
\end{equation}
for $i=1,2$.  If $p'_{1,0,1} = p'_{2,0,1}$, we can cancel one factor in the $i=1$ case and conclude that
$$
\left|  \prod_{j=1}^{\ell_{1}-1} p'_{2,j,1} - \prod_{j=1}^{\ell_{1}-1} p'_{1,j,1} \right| \lesssim \frac{1}{N} (P')^{\ell_{1}-1}.$$
Using~\cite[Lemma 2.6]{mrt-fourier}, the number of primes $p'_{1,j,i}, p'_{2,j,i}$ that can obey all these constraints is bounded by
$$ \ll \pi_0(P') \frac{d^{\ell_1-1}}{N} \frac{d^{\ell_2}}{N} \ll \frac{d^{\ell_1+\ell_2-1/2} }{N^2}.$$
Since the tuple $\vec I''$ is determined by the quadruple $(I''_{0,1},I''_{0,2}, p'_1, p'_2)  \in {\mathcal Q}$ and the above primes, and since $I''_{0,1}, I''_{0,2}$ uniquely determine $p'_1,p'_2$, we conclude that the number of tuples of this type is bounded by $O( d^{\ell_1+\ell_2+1/2} / N )$, and so these tuples can be removed without significantly affecting the total number of tuples.  Similarly for other collisions.

In a similar spirit, we may improve the non-degeneracy bound property \eqref{nondeg} to
\begin{equation}\label{nondeg-better}
\left|  \prod_{j=0}^{\ell_i-1} p'_{2,j,i} - \prod_{j=0}^{\ell_i-1} p'_{1,j,i} \right| \gg \frac{1}{N} (P')^{\ell_i}
\end{equation}
by the following argument.  Suppose that we had
$$
\left|  \prod_{j=0}^{\ell_i-1} p'_{2,j,i} - \prod_{j=0}^{\ell_i-1} p'_{1,j,i} \right| \leq c \frac{1}{N} (P')^{\ell_i}$$
for some $i=1,2$, and some $c>0$ to be chosen later.  From \eqref{3i0} with $i$ replaced by $3-i$ we also have
\begin{equation}\label{3i}
\left|  \prod_{j=0}^{\ell_{3-i}-1} p'_{2,j,3-i} - \prod_{j=0}^{\ell_{3-i}-1} p'_{1,j,3-i} \right| \lesssim \frac{1}{N} (P')^{\ell_{3-i}}
\end{equation}
By two applications of~\cite[Lemma 2.6]{mrt-fourier}, the number of tuples $(p'_{l,j,i})_{l,i=1,2; j=0,\dots,\ell_i-1}$ of primes in $[P',2P']$ with these properties is $O( c d^{\ell_1+\ell_2} / N^2 )$.  Since the tuple $\vec I''$ is determined by $(I''_{0,1},I''_{0,2}, p'_1, p'_2)$ and the above primes, we conclude that the number of tuples $\vec I''$ arising in this fashion is at most $O( c d^{\ell_1+\ell_2+1}/N )$.  For $c$ small enough, this is less than (say) half of the tuples of $\vec I''$ currently under consideration, so on removing those tuples we obtain the bound \eqref{nondeg-better}.

If we apply Proposition~\ref{basic}(ii) to \eqref{pyong} with the dilation factor
$$ \left( \prod_{0 \leq j' < j} p'_{1,j',i} \right) \left( \prod_{j < j' < \ell_i} p'_{2,j',i} \right) $$
we conclude that
$$ (\frac{1}{a_{j,i}})_{*} \phi''_{I''_{j,i}} \sim_{\frac{1}{Q}} (\frac{1}{a_{j+1,i}})_{*} \phi''_{I''_{j+1,i}}$$
for $j=0,\dots,\ell-1$, where
$$ a_{j,i} \coloneqq \left( \prod_{0 \leq j' < j} p'_{1,j',i} \right) \left( \prod_{j \leq j' < \ell_i} p'_{2,j',i} \right) $$
for $j=0,\dots,\ell_i$.  Observe that the intervals $\frac{1}{a_{j,i}} I''_{j,i}$ all have length $\asymp (P')^{-\ell_i} \frac{H}{P'P''}$ and are comparable to each other in the sense of the relation $\sim$.  Applying Proposition~\ref{basic}(i), (iii) $O(\ell_i)$ times, we conclude that
$$ (\frac{1}{a_{0,i}})_{*} \phi''_{I''_{0,i}} \sim_{\frac{1}{Q}} (\frac{1}{a_{\ell_i,i}})_{*} \phi''_{I''_{0,i}}.$$
Since $a_{0,i}, a_{\ell_i,i}$ are the product of $\ell_i$ distinct primes in $[P',2P']$, we have
\begin{equation}\label{aoli}
 a_{0,i}, a_{\ell_i,i} \asymp (P')^{\ell_i}.
\end{equation}
Also, from the fundamental theorem of arithmetic, once one fixes $I''_{0,1}, I''_{0,2}$, each quadruplet $(a_{0,1},a_{\ell_1,1}, a_{0,2},a_{\ell_2,2})$ is associated to at most $O(1)$ tuples $\vec I''$ (note that from the above axiom (i) that $I''_{j+1,i}$ is uniquely determined by $I''_{j,i}$, $p'_{1,j,i}, p'_{2,j,i}$).  

On the other hand, since $\frac{1}{a_{0,i}} I''_{0,i} \sim \frac{1}{a_{\ell_i,i}} I''_{0,i}$, we have
$$ \left(\frac{1}{a_{\ell_i,i}} - \frac{1}{a_{0,i}}\right) (P')^{-\ell_i} \frac{X}{P'P''} \ll (P')^{-\ell_i} \frac{H}{P'P''} $$
which simplifies using \eqref{aoli}, \eqref{N-def} to
$$ a_{\ell_i,i} - a_{0,i} \ll \frac{(P')^{\ell_i}}{N} .$$
From~\eqref{nondeg-better} we get the corresponding lower bound.  If we set $a_i$ to be the larger of $a_{\ell_i,i}, a_{0,i}$ and $b_i$ to be the smaller, then we have the properties claimed in (ii), (iii) of the proposition, while (i) follows from \eqref{p21}.  

The counting argument at the end of the proof of~\cite[Proposition 4.1]{mrt-fourier} (which is based on the estimate in~\cite[Lemma 2.6]{mrt-fourier}) shows that each quadruple $e$ in ${\mathcal Q}$ is associated to at most $O( d^{\ell_1+\ell_2} / N^2 )$ tuples $\vec I''$ of the above form, and ${\mathcal Q}$ has cardinality $O( dN)$, hence there is a subset ${\mathcal Q}'$ of ${\mathcal Q}$ of cardinality $\gg dN$ such that each $e \in {\mathcal Q}'$ is associated to $\asymp d^{\ell_1+\ell_2}/N^2$ tuples $\vec I''$, which by the previous discussion generates $\asymp d^{\ell_1+\ell_2}/N^2$ quadruples $(a_1,b_1,a_2,b_2)$ obeying the required properties (i), (ii), (iii). The claim follows.
\end{proof}

In this section the precise values of $\ell_1,\ell_2$ are not important; we can select them to be any bounded even integers obeying \eqref{dop}.  In~\cite{mrt-fourier}, $\ell_1,\ell_2$ were essentially chosen to be the minimal even integer obeying \eqref{dop}, so that one could make $a_i-b_i$ as small as possible; however this will convey no significant advantage in our current arguments.

While the above proposition produces a large family ${\mathcal A}_e$ of quadruples $\vec a$ associated to each $e \in {\mathcal Q}'$, in the argument below it will suffice to just use a single such quadruple $\vec a$; this was also the case in the previous paper~\cite{mrt-fourier}.  However, when we work with nilsequences in the next section, it will become necessary to use multiple quadruples $\vec a$ for each $e \in {\mathcal Q}'$.

Thus far we have not exploited the polynomial phase structure of functions in ${\mathcal P}$ beyond the properties in Proposition~\ref{basic} and Proposition~\ref{lsieve}.  Now we make heavier use of this structure in order to ``solve'' the approximate dilation invariance relation \eqref{adi} produced by Proposition~\ref{prop41-analog}, using just a single quadruple from ${\mathcal A}_e$.  The following proposition asserts, roughly speaking, that this equation is only solvable when the local polynomial phases $\phi''_{I''_i}(t)$ ``pretend'' to be like the character $t^{iT}$ on $I''_i$ for some real number $T= T_{I''_1,I''_2} $.  Let us say that a polynomial $\gamma \in \Poly_{\leq k}(\R \to \R)$ is \emph{$Q$-rational} for some $Q$ if it lies in $\Poly_{\leq k}(\frac{q}{Q} \Z \to \Z)$ for some natural number $q$ of polynomial size.

\begin{proposition}[Solving the approximate dilation invariance]\label{sold}  Let the notation and hypotheses be as in Proposition~\ref{prop41-analog}, and write $\phi''_{I''} = (I'', P_{I''})$ for each $I''$.  Then for any of the quadruples $e = (I''_1,I''_2,p'_1,p'_2) \in {\mathcal Q}'$, and any $\vec a = (a_1,b_1,a_2,b_2)$ in ${\mathcal A}_e$, there exists a real number
$$ T = T_{I''_1,I''_2} \ll N^{k+1}$$
and decompositions
$$ P_{I''_i}(t) = \eps_i(t) + \frac{T}{2\pi} \log t + \gamma_i(t) $$
for $i=1,2$ and $t > 0$, where $\eps_i \colon \R^+ \to \R$ is a smooth function obeying the derivative bounds 
$$\eps_i^{(j)}(t) \ll_j |I''_i|^{-j}$$
for all $j \geq 0$ and $t \in I''_i$, and $\gamma_i$ is a $Q$-rational polynomial
with
$$ Q \coloneqq \prod {\mathcal P}_{e,\vec a}.$$
Here $T, \varepsilon_i$ and $\gamma_i$ may depend on $e$ and $\vec a$.

Also, we have
\begin{equation}\label{gap}
 \gamma_1( p'_2 \cdot) = \gamma_2(p'_1 \cdot) \mod \Poly_{\leq k}(\Z \to \Z).
\end{equation}
\end{proposition}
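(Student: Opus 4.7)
The strategy is to unpack the approximate dilation invariance \eqref{adi} into a polynomial identity and then to exploit the fact that the modulus $Q \coloneqq \prod \mathcal{P}_{e,\vec a}$ is exponentially large (by~\eqref{exp-lower}) while every other quantity in the problem is of polynomial size. Unfolding \eqref{adi} for each $i \in \{1, 2\}$ yields
$$
P_{I''_i}(a_i t) - P_{I''_i}(b_i t) = \tilde \epsilon_i(t) + \tilde \gamma_i(t),
$$
with $\tilde \epsilon_i$ smooth on $(1/a_i) I''_i$ (so $|\tilde\epsilon_i^{(j)}(t)| \ll_j (a_i/|I''_i|)^j$) and $\tilde \gamma_i \in \Poly_{\leq k}((1/Q)\Z \to \Z)$. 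I would expand $\tilde \gamma_i(t) = \sum_{j=0}^k d_{i,j} \binom{Qt}{j}$ via Lemma~\ref{dte} and examine the identity one coefficient at a time. At top degree, the $t^k$-coefficient reads $\alpha_{i,k}(a_i^k - b_i^k) = \epsilon_{i,k} + d_{i,k}Q^k/k!$; since $Q^k$ dwarfs both $\epsilon_{i,k}$ and the polynomially-bounded term $\alpha_{i,k}(a_i^k - b_i^k)$ (after reducing $P_{I''_i}$ modulo $\Poly_{\leq k}(\Z \to \Z)$), the integer $d_{i,k}$ must vanish. Descending inductively from $j = k$ to $j = 1$, each integer $d_{i,j}$ is extracted and packaged into an explicit $Q$-rational polynomial $\gamma_i$, which one sets aside as the rational component in the target decomposition.

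With $\gamma_i$ peeled off, the remaining polynomial $\tilde P_{I''_i} \coloneqq P_{I''_i} - \gamma_i$ satisfies the cleaner approximate equation $\tilde P_{I''_i}(a_i t) - \tilde P_{I''_i}(b_i t) = $ smooth. The polynomial kernel of the dilation operator $P \mapsto P(a_i\cdot) - P(b_i\cdot)$ on degree-$\leq k$ polynomials consists only of constants (the operator is diagonal in the monomial basis with nonzero eigenvalues $a_i^j - b_i^j$ for $j \geq 1$); however, in the larger space of smooth functions the function $\log t$ essentially lies in this kernel, since $\log(a_it) - \log(b_it) = \log(a_i/b_i)$ is constant in $t$. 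Accordingly, I would match $\tilde P_{I''_i}(t)$ against $\tfrac{T_i}{2\pi} L_k(t)$ where $L_k$ is the degree-$k$ Taylor polynomial of $\log$ about the midpoint $x_i$ of $I''_i$: the top equation ($j = k$) determines $T_i$ uniquely from the top coefficient of $\tilde P_{I''_i}$, while lower-order discrepancies in the matching are absorbed into the smooth remainder $\epsilon_i$ (the Taylor error $\log t - L_k(t) = O((t-x_i)^{k+1}/x_i^{k+1})$ on $I''_i$ is also small enough to be folded into $\epsilon_i$). The quantitative estimate $|T_i| \ll N^{k+1}$ follows from quantitative tracking in the $j = k$ matching, using $a_i^k - b_i^k \asymp a_i^k/N$ against the smooth bound $(a_i/|I''_i|)^k$.

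Finally, to see $T_1 = T_2$ and obtain \eqref{gap}, I would unfold \eqref{phip-big} to the identity that $P_{I''_1}(p'_2 t) - P_{I''_2}(p'_1 t)$ is smooth plus $Q$-rational and substitute the decompositions just derived. Since $\tfrac{T_1}{2\pi}\log(p'_2 t) - \tfrac{T_2}{2\pi}\log(p'_1 t) = \tfrac{T_1 - T_2}{2\pi}\log t + \mathrm{const}$, the log piece must itself be smooth on the relevant interval, which (comparing $|(\log t)^{(j)}| \asymp x_i^{-j}$ against the required smoothness bound $|I''_i|^{-j}$ together with $|I''_i| \ll x_i$) forces $T_1 = T_2$. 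The residual $Q$-rational piece $\gamma_1(p'_2\cdot) - \gamma_2(p'_1\cdot)$ must then be $1$-integer valued, which is exactly \eqref{gap}. The main obstacle throughout is the coefficient bookkeeping in the first two paragraphs: the smooth, log, and $Q$-rational components formally overlap in polynomial space (a $Q$-rational polynomial with tiny integer coefficients is essentially smooth, and $\log t$ itself can be Taylor-approximated by a polynomial), and making the separation clean requires systematically invoking the exponential size of $Q$ to force integer coefficients in the rational component to vanish wherever they would collide with the smooth or log contributions.
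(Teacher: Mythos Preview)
Your overall strategy mirrors the paper's and the first paragraph is essentially right. Two genuine gaps remain, however.

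First, the claim that matching only the top coefficient determines $T_i$, with all lower-order discrepancies absorbed into the smooth remainder $\eps_i$, is not justified. After your matching, the residual $R_i \coloneqq \tilde P_{I''_i} - \tfrac{T_i}{2\pi}L_k$ has degree $\leq k-1$ and still satisfies a purely smooth dilation relation, but this only tells you its top ($(k-1)$st) Taylor coefficient at $x_i$ is $O(N/|I''_i|^{k-1})$, whereas smoothness on $I''_i$ would require $O(|I''_i|^{-(k-1)})$ --- a factor of $N$ too large. One must therefore iterate: subtract another logarithmic piece at degree $k-1$, then at $k-2$, and so on, accumulating contributions $T_i^{(k)}+T_i^{(k-1)}+\cdots$ into the final $T_i$. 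This is exactly the inductive descent the paper performs (``iterate this procedure $k$ times and after collecting terms in the telescoping series\dots'').

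Second, your argument that smoothness forces $T_1=T_2$ exactly has the inequality the wrong way round. Since $|I''_i|\ll x_i$, the smoothness target $|I''_i|^{-j}$ is \emph{larger} than the natural size $x_i^{-j}$ of $(\log t)^{(j)}$ on $I''_i$; a term $\tfrac{T_1-T_2}{2\pi}\log t$ satisfies the required derivative bounds precisely when $|T_1-T_2|\ll x_i/|I''_i|\asymp N$, not only when it vanishes. The paper obtains exactly this weaker conclusion $T_1-T_2=O(N)$ (by evaluating first derivatives at an integer point $n_{I''_{0,1}}$ and noting that the $Q$-rational contribution lands in $\tfrac{Q}{q^k k!}\Z$, so the enormity of $Q$ forces the integer part to zero), and then simply sets $T\coloneqq T_1$ and absorbs $\tfrac{T_2-T}{2\pi}\log t$ --- minus a suitable integer constant shifted into $\gamma_2$ --- into $\eps_2$. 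The identity \eqref{gap} then follows by the same device: applied to all derivatives, it shows that $\gamma_1(p'_2\cdot)-\gamma_2(p'_1\cdot)$ minus the $1$-integral piece $\gamma^\dagger$ from \eqref{phip-big} has identically vanishing derivative, hence is an integer constant.
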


\begin{proof} We abbreviate $P_{I''_i}$ as $P_i$.  From \eqref{adi} we have an identity of the form
$$
P_i( a_i t ) = \eps''_i(t) + P_i(b_i t) + \gamma'_i(t)
$$
where $\eps''_i \in \Poly_{\leq k}(\R \to \R)$ is smooth on $\frac{1}{a_i} I''_1$ and $\gamma'_i \in \Poly_{\leq k}(\frac{1}{Q}\Z \to \Z)$ is $Q$-integral; by a change of variables, we can write this as
\begin{equation}
\label{eq:PiaPibrel} 
P_i( a_i t ) = \eps'_i(a_i t) + P_i(b_i t) + \gamma'_i(t)
\end{equation}
where $\eps'_i \in \Poly_{\leq k}(\R \to \R)$ is now smooth on $I''_1$.  Taking $k^{\mathrm{th}}$ derivatives to make all functions independent of $t$, we conclude in particular that
$$ a_i^k P_i^{(k)} = a_i^k (\eps'_i)^{(k)} + b_i^k P_i^{(k)} + (\gamma'_i)^{(k)}$$
or equivalently
\begin{equation}\label{qpi}
 q_i P_i^{(k)} = a_i^k (\eps'_i)^{(k)} + (\gamma'_i)^{(k)}
\end{equation}
where\footnote{Note that this choice of $q_i$ explains why our bound on $q_i$ in this lemma is a lot weaker that in~\cite[Proposition 4.1]{mrt-fourier}, even if we try to take $\ell_1,\ell_2$ to be as small as possible. Indeed, if $q_i=a_i^k-b_i^k$ with $a_i-b_i$ small, $a_i^k-b_i^k$ may still be relatively large.} $q_i\coloneqq a_i^k - b_i^k$.  As $\gamma_i$ is $Q$-integral, we see on taking $k^{\mathrm{th}}$ divided differences (or using Lemma~\ref{dte}) that $\gamma_i^{(k)}$ is an integer multiple $c_i Q^k$ of $Q^k$.  Thus
$$ q_i P_i^{(k)} = O( a_i^k |I''_i|^{-k} ) + c_i Q^k$$
From \eqref{ab0} we also know that $q_i$ is a natural number of polynomial size; and from the mean value theorem and \eqref{ab3} we have
$$ q_i \asymp \frac{a_i - b_i}{a_i} a_i^k \asymp \frac{1}{N} a_i^k.$$
We thus have
$$ P_i^{(k)} = \frac{c_i}{q_i} Q^k + O( N |I''_i|^{-k} ).$$
Recalling that $x_{I''_i}$ is the midpoint of $I''_i$, we can write the above estimate as
\begin{equation}\label{qpi-2}
 P_i^{(k)} = \frac{c_i}{q_i} Q^k + \frac{(-1)^{k-1} (k-1)!}{2\pi} \frac{T_i}{x_{I''_i}^k} 
\end{equation}
for some real number $T_i$ with the bounds
$$ T_i \ll N \left(\frac{X}{P'P''}\right)^k |I''_i|^{-k} \ll N^{k+1}.$$
Motivated by the Taylor expansion around $x_{I''_i}$, we write
\begin{equation}
\label{eq:PiPitilderel} 
P_i(t) = \tilde \eps_i(t) + \frac{T_i}{2\pi} \log t + \tilde P_i(t) + \tilde \gamma_i(t)
\end{equation}
for $t \in \R^+$, where $\tilde \eps_i: \R^+ \to \R$ is the Taylor remainder
$$ \tilde \eps_i(t) = - \frac{T_i}{2\pi} \log t + \frac{T_i}{2\pi} \log x_{I''_i} + \sum_{j=1}^k \frac{(-1)^{j-1} T_i}{2\pi j} \frac{(t - x_{I''_i})^j}{x_{I''_i}^j} $$ 
which is a smooth function obeying the bounds
$$ \tilde \eps_i^{(j)}(t) \ll_j (H/P'P'')^{-j}$$ 
for $j \geq 0$ and $t \in I''_i$, and $\tilde \gamma_i \in \Poly_{\leq k}(\R \to \R)$ is the function
$$ \tilde \gamma_i(t) \coloneqq \frac{c_i}{q_i} \binom{Qt}{k},$$
and
$$ \tilde P_i(t) \coloneqq P_i(t) - \tilde \gamma_i(t) - \sum_{j=1}^k \frac{(-1)^{j-1} T_i}{2\pi j} \frac{(t - x_{I''_i})^j}{x_{I''_i}^j} - \frac{T_i}{2\pi} \log x_{I_i''}$$
is an element of $\Poly_{\leq k-1}(\R \to \R)$.  
We can then write by~\eqref{eq:PiaPibrel} and~\eqref{eq:PiPitilderel}
\begin{equation}\label{pio}
 \tilde P_i( a_i t ) = \eps^*_i(a_i t) + \tilde P_i(b_i t) + \gamma^*_i(t)
\end{equation}
for $t \in \R^+$, where
$$ \gamma^*_i(t) \coloneqq \gamma'_i(t) + \tilde \gamma_i(b_i t) - \tilde \gamma_i(a_i t) + \left\lfloor \frac{T_i}{2\pi} \log \frac{b_i}{a_i} \right\rfloor$$
and
$$ \eps^*_i(t) \coloneqq \eps'_i(t) + \tilde \eps_i\left(\frac{b_i}{a_i} t\right) - \tilde \eps_i(t) + \left\{ \frac{T_i}{2\pi} \log \frac{b_i}{a_i} \right\}.$$
By construction, $\gamma^*_i$ is an element of $\Poly_{\leq k}(\frac{q_i}{Q}\Z \to \Z)$ that has vanishing $k^{\mathrm{th}}$ derivative, so $\gamma^*_i$ in fact lies in $\Poly_{\leq k-1}(\R \to \R)$. From \eqref{pio} we conclude that $\eps^*_i(a_i t)$ also lies in $\Poly_{\leq k-1}(\R \to \R)$, and from the triangle inequality we have
$$ (\eps_i^*)^{(j)}(t) \ll (H/P'P'')^{-j}$$ 
for all $j \geq 0$ and $t \in I''_i$.  In conclusion, $\tilde P_i$ obeys similar properties to $P_i$ except that all polynomials involved have degree at most $k-1$ instead of at most $k$, and the polynomial $\gamma^*_i$ lies in $\Poly_{\leq k-1}(\frac{q_i}{Q}\Z \to \Z)$  rather than $\Poly_{\leq k-1}(\frac{1}{Q}\Z \to \Z)$.  One can iterate this procedure $k$ times and after collecting terms in the telescoping series, one ends up with a decomposition of the form
$$ P_i(t) = \eps^{**}_i(t) + \frac{T^{**}_i}{2\pi} \log t + P^{**}_i + \gamma^{**}_i(t)$$
for $t \in \R^+$, where $T^{**}_i$ is a real number with
$$ T^{**}_i \ll N^{k+1},$$
$\eps^{**}_i: \R^+ \to \R$ is a smooth function obeying the derivative estimates
$$ (\eps_i^{**})^{(j)}(t) \ll_j (H/P'P'')^{-j}$$ 
for all $j \geq 0$ and $t \in I''_i$, $P^{**}_i \in \R$ is a constant, and $\gamma^{**}_i$ is $Q$-rational.  By splitting $P^{**}_i$ into integer and fractional parts and redistributing these parts to $\gamma^{**}_i$ and $\eps^{**}_i$ respectively, we may assume that $P^{**}_i=0$, thus
\begin{equation}\label{pit}
 P_i(t) = \eps^{**}_i(t) + \frac{T^{**}_i}{2\pi} \log t + \gamma^{**}_i(t)
\end{equation}
for $t \in \R^+$.

This is almost what we need for the claims of the proposition (excluding \eqref{gap}), except that the two real numbers $T^{**}_1, T^{**}_2$ are allowed to be unequal. From \eqref{p21} and Definition~\ref{poly-comp} we have
$$ P_1(p'_2 t) = \eps^\dagger(p'_2 t) + P_2(p'_1 t) + \gamma^\dagger(t)$$
for $t \in \R^+$, where $\eps^\dagger \in \Poly_{\leq k}(\R \to \R)$ is smooth on $I''_{0,1}$, and $\gamma^\dagger$ is $Q$-integral.  Inserting \eqref{pit}, we conclude that
$$ \frac{T^{**}_1}{2\pi} \log(p'_2 t) = \eps^{\dagger \dagger}(p'_2 t) + \frac{T^{**}_2}{2\pi} \log(p'_1 t) + \gamma^{\dagger \dagger}(t)$$
where 
$\eps^{\dagger\dagger} \colon \R^+ \to \R$ is given by the formula
$$ \eps^{\dagger \dagger}(p'_2 t) \coloneqq \eps^\dagger(p'_2 t) + \eps^{**}_2(p'_1 t) - \eps^{**}_1(p'_2 t)$$
and obeys the derivative estimates 
$$(\eps^{\dagger \dagger})^{(j)}(t) \ll_j (H/P'P'')^{-j}$$
for all $j \geq 0$ and $t \in I''_i$, and $\gamma^{\dagger\dagger}$ is given by the formula
\begin{equation}\label{gap-0}
 \gamma^{\dagger\dagger}(t) \coloneqq \gamma^\dagger(t) + \gamma_2^{**}(p'_1 t) - \gamma_1^{**}(p'_2 t).
\end{equation}
and in particular is $Q$-rational.  Let $n_{I''_{0,1}}$ be an integer point of $I''_{0,1}$.  From Lemma~\ref{dte} we see that the first derivative $(\gamma^{\dagger\dagger})'(n_{I''_{0,1}})$ takes values in $\frac{Q}{q^k k!} \Z$ for some $q$ of polynomial size.  We conclude that
\begin{equation}\label{tt}
 \frac{T^{**}_1}{2\pi n_{I''_{0,1}}} = O\left( \frac{P'P''}{H} \right) + \frac{T^{**}_2}{2\pi n_{I''_{0,1}}} \mod \frac{Q}{q^k k!}\Z.
\end{equation}
Since ${\mathcal P}_{I''_1,I''_2}$ is a large set of primes in $[P''/2,P'']$, we see from \eqref{exp-lower} that $Q \gg \exp( c P'' )$ for some $c \gg 1$, so in particular\footnote{\label{foot1}We remark that it is this need for $Q$ to be bigger than $X$ that puts a limit on the range of $H$ where one could possibly prove Theorem~\ref{mult-poly} using the strategy of this paper. Since $P''\leq H^{\varepsilon}$, we must have $H\geq (\log x)^{A}$ for any fixed $A$. It turns out that there are further restrictions on the size of $H$ in our proof, coming from the graph theory part of the proof, where factors of $\ell!$ appear, and also from the Vinogradov--Korobov zero-free region. For these reasons, $H$ actually needs to be at least $\exp((\log X)^{c})$ for some $c \geq 1/2$.} $Q$ exceeds $X^C$ for any fixed $C$ if $X$ is large enough.  But both sides of \eqref{tt} are of polynomial size, and thus have magnitude less than $\frac{Q}{2q^k k!}$ for $X$ large enough.  Hence we may remove the modulus restriction and conclude that
$$ \frac{T^{**}_1}{2\pi n_{I''_{0,1}}} = O\left( \frac{P'P''}{H} \right) + \frac{T^{**}_2}{2\pi n_{I''_{0,1}}} $$
which we can rearrange using \eqref{N-def} as
$$ T^{**}_1 = T^{**}_2 + O( N ).$$
If we set $T \coloneqq T^{**}_1$, 
\[
\gamma_i(t) \coloneqq \gamma^{**}_i(t) + \left\lfloor \frac{T^{**}_i-T}{2\pi} \log n_{I''_{0,1}} \right\rfloor, \text{  and  } \eps_i(t) \coloneqq \eps^{**}_i(t) + \frac{T^{**}_i-T}{2\pi} \log t - \left\lfloor \frac{T^{**}_i-T}{2\pi} \log n_{I''_{0,1}} \right\rfloor,
\]

we obtain all the required claims except for \eqref{gap}.  
But observe that the previous argument in fact showed that the first derivative of $\gamma^{\dagger \dagger}$ vanished at all integer points of $I''_{0,1}$, and thus vanished identically thanks to Lagrange interpolation; hence $\gamma^{\dagger \dagger}$ is in fact an integer constant. The claim \eqref{gap} now follows from \eqref{gap-0} since $\gamma^\dagger$ is already $1$-integral.
\end{proof}

We now follow the arguments in~\cite[\S 5]{mrt-fourier} (starting after the proof of~\cite[Corollary 5.2]{mrt-fourier}.  Let $\delta>0$ be a sufficiently small quantity (depending on $k,\eps,\eta,\theta$) to be chosen later.  We assume $X$ (and hence $H$) to be sufficiently large depending on $\delta$, and allow implied constants to depend on $\delta$.  Define a \emph{good quadruple} to be a tuple $(I'',T,q,\gamma)$ with $I'' \in {\mathcal I}''$, $T$ a real number with
\begin{equation}\label{teo}
 |T| \leq \frac{1}{\delta} N^{k+1},
\end{equation}
and $q$ a natural number with
\begin{equation}\label{qhk}
 1 \leq q \leq X^{1/\delta}
\end{equation}
and $\gamma$ an element of $\Poly_{\leq k}(\frac{q}{\prod {\mathcal P}}\Z \to \Z)$ for some collection ${\mathcal P}$ of primes in $[P''/2,P'']$ of cardinality $\geq \delta \pi_0(P'')$ that do not divide $q$, such that we have a decomposition
\begin{equation}\label{que}
P_{I''}(t) = \eps(t) + \frac{T}{2\pi} \log t + \gamma(t)
\end{equation}
for all $t > 0$, where $\phi''_{I''} = (I'', P_{I''})$, and $\eps \colon \R^+ \to \R$ is a smooth function obeying the estimates
\begin{equation}\label{epso}
 |\eps^{(j)}(t)| \leq \frac{1}{\delta} (H/P'P'')^{-j}
\end{equation}
for $t \in I''$ and $0 \leq j \leq k$.  We also require that $q$ is the least natural number for which $\gamma \in \Poly_{\leq k}(q\Z \to \Z)$.

We will shortly show that Proposition~\ref{prop41-analog} yields a lot of pairs of ``compatible'' good quadruples.

Each interval $I''$ is only associated with a small number of essentially distinct good quadruples.  Indeed, we have

\begin{proposition}\label{prop53-analog}  Let $I'' \in {\mathcal I''}$, let $K$ be a sufficiently large natural number depending on $\delta$, and let $(I'',T_j,q_j,\gamma_j), j=1,\dots,K$ be a collection of good quadruples associated to the interval $I''$.  Then there exist $1 \leq j < j' \leq K$ with the following properties:
\begin{itemize}
\item[(i)] $q_j = q_{j'}$.
\item[(ii)] $\gamma_j = \gamma_{j'} \mod \Z$.  (Here we view $\Z \subset \Poly_{\leq k}(\R \to \R)$ as the group of constant integer functions). 
\item[(iii)] $T_j = T_{j'} + O( N )$.
\end{itemize}
(Recall that we allow implied constants to depend on $\delta$.)
\end{proposition}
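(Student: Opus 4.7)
The plan is to convert each good quadruple into a local polynomial phase with which $f$ correlates, apply the large sieve (Proposition~\ref{lsieve}) to find two comparable phases, and then extract the three numerical conclusions by a rigidity argument modeled on the end of the proof of Proposition~\ref{sold}. Writing $t_0 := x_{I''}$ for the midpoint, I first Taylor-expand $\frac{T_j}{2\pi}\log t$ around $t_0$ to degree $k$,
\[
\frac{T_j}{2\pi}\log t = \frac{T_j}{2\pi}\log t_0 + Q_{T_j}(t) + R_j(t),\qquad Q_{T_j}(t) := \frac{T_j}{2\pi}\sum_{m=1}^k \frac{(-1)^{m-1}}{m}\left(\frac{t-t_0}{t_0}\right)^m.
\]
Since $|(t-t_0)/t_0|\ll H/X$ and $|T_j|\ll N^{k+1}/\delta$, the Taylor remainder estimate $|R_j^{(\ell)}(t)| \ll |T_j| t_0^{-\ell}(H/X)^{\max(k+1-\ell,0)}$ collapses via $N=X/H$ to $|R_j^{(\ell)}(t)|\ll (H/P'P'')^{-\ell}/\delta$ for $0\leq\ell\leq k$ and $t\in I''$, matching \eqref{epso}. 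Absorbing $R_j$ into $\tilde\eps_j := \eps_j+R_j$ and setting $\tilde P_j := Q_{T_j}+\gamma_j\in\Poly_{\leq k}(\R\to\R)$, the decomposition \eqref{que} rewrites as $P_{I''}(t)=\tilde\eps_j(t)+\frac{T_j}{2\pi}\log t_0+\tilde P_j(t)$.

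From $|\langle f,\phi''_{I''}\rangle|\gg 1$ we obtain $\bigl|\sum_{n\in I''}f(n)e(-\tilde P_j(n))e(-\tilde\eps_j(n))\bigr|\gg |I''|$. Since the weight $e(-\tilde\eps_j)$ has derivative of size $\ll (H/P'P'')^{-1}/\delta$ and $|I''|=H/P'P''$, Abel summation in this smooth weight produces a subinterval $I_j^\ast\subset I''$ with $|I_j^\ast|\gg |I''|$ on which $|\langle f,(I_j^\ast,\tilde P_j)\rangle|\gg 1$. Applying Proposition~\ref{lsieve} to the family $\{(I_j^\ast,\tilde P_j)\}_{j=1}^K$ (whose intervals are all comparable to $I''$), once $K$ exceeds the absolute constant from that proposition there exist $1\leq j<j'\leq K$ with $(I_j^\ast,\tilde P_j)\sim_1 (I_{j'}^\ast,\tilde P_{j'})$; unpacking Definition~\ref{poly-comp} and substituting $\tilde P_j=Q_{T_j}+\gamma_j$ gives
\[
\gamma_j-\gamma_{j'}=(Q_{T_{j'}}-Q_{T_j})+\eps^\ast+\gamma^\ast,
\]
for some $\eps^\ast\in\Poly_{\leq k}(\R\to\R)$ bounded on $I_j^\ast$ and some $\gamma^\ast\in\Poly_{\leq k}(\Z\to\Z)$.

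The remaining step, from which (i)--(iii) are read off, is the main obstacle and parallels the rigidity mechanism in the proof of Proposition~\ref{sold}. The polynomials $\gamma_j,\gamma_{j'}$ live in the highly constrained lattices $\Poly_{\leq k}\bigl(\frac{q_j}{\prod\mathcal{P}_j}\Z\to\Z\bigr)$ and $\Poly_{\leq k}\bigl(\frac{q_{j'}}{\prod\mathcal{P}_{j'}}\Z\to\Z\bigr)$, whose moduli $\prod\mathcal{P}_j,\prod\mathcal{P}_{j'}$ are exponentially large by \eqref{exp-lower}; by first pigeonholing the indices $j$ so that the intersection $\mathcal{P}_j\cap\mathcal{P}_{j'}$ remains a large set of primes, the difference $\gamma_j-\gamma_{j'}$ sits in a common lattice with exponentially-large modulus. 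Taking the $k$-th derivative of the displayed identity at $t_0$, the contributions from $\eps^\ast$ and $Q_{T_{j'}}-Q_{T_j}$ are $O((H/P'P'')^{-k})$ and $\frac{(T_{j'}-T_j)(-1)^{k-1}(k-1)!}{2\pi t_0^k}$ respectively, while $(\gamma^\ast)^{(k)}$ is an integer and $(\gamma_j-\gamma_{j'})^{(k)}$ is a rational with exponentially-large denominator. Exactly as in the derivation of \eqref{tt}, the exponential largeness of this denominator forces the polynomial-size integer numerator to coincide with the rest of the equation, producing $T_j-T_{j'}=O(N)$, which is conclusion (iii). One then iterates the same logic downward through the derivatives of orders $k-1,k-2,\dots,1$ (after rescuing integer values of $(\gamma^\ast)^{(\ell)}$ via the Taylor-coefficient reduction used in Proposition~\ref{sold} to pass from $\Poly_{\leq k}$ to $\Poly_{\leq k-1}$ at each stage); this cascades into forcing $\gamma_j-\gamma_{j'}-\gamma^\ast$ to vanish identically except for an additive integer constant, which is conclusion (ii). Conclusion (i), $q_j=q_{j'}$, is then automatic from the minimality clause in the definition of a good quadruple, since adding a constant integer to a polynomial does not change its minimal denominator. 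The whole difficulty is concentrated in this last step, which relies crucially on the exponential lower bound $\prod\mathcal{P}\gg\exp(cP'')$, the same mechanism that drives the rigidity in Proposition~\ref{sold}.
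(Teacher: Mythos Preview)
Your detour through the large sieve is unnecessary and introduces a genuine gap. The crucial point you miss is that all $K$ good quadruples decompose the \emph{same} polynomial $P_{I''}$: from \eqref{que} one has directly, for every pair $(j,j')$,
\[
\eps_j(t)+\tfrac{T_j}{2\pi}\log t+\gamma_j(t)=\eps_{j'}(t)+\tfrac{T_{j'}}{2\pi}\log t+\gamma_{j'}(t),
\]
so there is nothing for Proposition~\ref{lsieve} to do. The paper's proof simply selects (via a Cauchy--Schwarz pigeonhole on the $\mathcal{P}_j$) a pair $(j,j')$ with $\mathcal{P}_j\cap\mathcal{P}_{j'}$ large, differentiates this identity once, and evaluates at an integer $n_{I''}$: the term $\gamma_j'(n_{I''})$ then lies in an exponentially \emph{sparse} lattice (spacing $\asymp\prod\mathcal{P}$ divided by a polynomial-size denominator), while all other terms are of polynomial size, forcing (iii) immediately; Bernstein plus the same lattice rigidity gives (ii), and (i) follows from minimality of $q_j$.

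Your route has two concrete problems. First, Proposition~\ref{lsieve} returns a specific pair $(j,j')$ with no control on $\mathcal{P}_j\cap\mathcal{P}_{j'}$; you cannot ``first pigeonhole'' to a subfamily with all pairwise intersections large and then expect the large sieve to select within it, and you have not indicated how to reconcile the two. Second, and more seriously, your rigidity step has the lattice structure backwards: $(\gamma_j-\gamma_{j'})^{(k)}$ is not a rational with exponentially large denominator but an integer multiple of $(\prod\mathcal{P}_j/q_j)^k$, i.e.\ an element of an exponentially \emph{sparse} lattice. The mechanism should be that a polynomial-size element of such a lattice must vanish---but your $k$-th-derivative identity also contains $(\gamma^\ast)^{(k)}$, and Proposition~\ref{lsieve} (see Definition~\ref{poly-comp}) puts no size bound on the integer polynomial $\gamma^\ast$ whatsoever. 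This term therefore absorbs the exponentially large left-hand side, and the equation becomes a tautology. The only way to bound $\gamma^\ast$ is to compare the large-sieve output with the direct identity above, at which point the large sieve has contributed nothing.
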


\begin{proof}
We modify the proof of~\cite[Proposition 5.3]{mrt-fourier}.  For $j = 1,\dots,K$, let ${\mathcal P}_j$ denote the set of primes in $[P''/2,P'']$ associated to the good quadruple $(I'', T_j, q_j, \gamma_j)$.  Then
$$ \sum_{p'' \in [P''/2,P'']} \sum_{j=1}^K 1_{p'' \in {\mathcal P}_j} \gg K \delta \pi_0(P'')$$
and hence by the prime number theorem we have that
$$ \sum_{j=1}^K 1_{p'' \in {\mathcal P}_j} \gg K \delta $$
for $\gg \delta \pi_0(P'')$ primes $p'' \in [P''/2,P'']$. For $K$ large in terms of $\delta$, we can then find $j, j' \in \{1,\dotsc,K\}$ such that ${\mathcal P} := {\mathcal P}_j \cap {\mathcal P}_{j'}$ contains $\gg_\delta \pi_0(P'')$ primes $p'' \in [P''/2,P'']$. 

From \eqref{que}, we have for all $j = 1,\dots,K$ that
$$P_{I''}(t) = \eps_j(t) + \frac{T_j}{2\pi} \log t + \gamma_j(t)$$
for all $t>0$, where $\phi''_{I''} = (I'', P'')$ and $\eps_j \colon \R^+ \to \R$ is smooth with $\eps_j^{(l)}(t) \ll (H/P'P'')^{-l}$ for all $t \in I''$ and $0 \leq l \leq k$.  Taking first derivatives, we see that the function
\begin{equation}\label{cho}
 \eps'_j(t) + \frac{T_j}{2\pi t} + \gamma'_j(t)
\end{equation}
is independent of $j$.  We now specialize $t$ to an integer point $n_{I''}$ of $I''$.  From Lemma~\ref{dte}, we have $\gamma'_j(n_I) \in \frac{\prod {\mathcal P}}{q^k_j k!} \Z$.  Thus we have
$$ \frac{T_j}{2\pi n_{I''}} =  \frac{T_{j'}}{2\pi n_{I''}} + O\left( \frac{P' P''}{H}\right) \mod \frac{\prod {\mathcal P}}{q^k_jq^k_{j'}k!} \Z$$
for all $j,j' \in \{1,\dots,K\}$.   Both sides of this equation are of polynomial size, while the modulus $\frac{\prod {\mathcal P}}{q^k_jq^k_{j'}k!}$ is far larger than this thanks to \eqref{exp-lower}.  We may thus remove the modulus and conclude that
$$ \frac{T_j}{2\pi n_{I''}} =  \frac{T_{j'}}{2\pi n_{I''}} + O\left( \frac{P' P''}{H}\right) $$
and hence by \eqref{N-def}
$$ T_{j'} = T_j + O( N ),$$
giving the conclusion (iii).  If we now return to the independence of \eqref{cho} in $j$, we conclude that
$$ \gamma'_j(t) - \gamma'_{j'}(t) = O\left( \frac{P' P''}{H}\right) $$
for all $t \in I''$.  By the Bernstein inequality \eqref{ber-2}, we can thus obtain the bound
$$ \gamma^{(l)}_j(n_I) - \gamma^{(l)}_{j'}(n_I) = O\left( \left(\frac{P' P''}{H}\right)^l\right) $$
for all $1 \leq l \leq k$.  On the other hand, from Lemma~\ref{dte} the left-hand side lies in $\frac{\prod {\mathcal P}}{q^k_jq^k_{j'}k!} \Z$.  Using \eqref{exp-lower} as before, we conclude that
$$ \gamma^{(l)}_j(n_I) - \gamma^{(l)}_{j'}(n_I) = 0$$
for $1 \leq l \leq k$, hence by Taylor expansion $\gamma_j$ and $\gamma_{j'}$ differ by a constant, which must lie in $\Z$ since $\gamma_j,\gamma_{j'} \in \Poly_{\leq k}( q_j q_{j'} \Z \to \Z )$.  This gives the conclusion (ii).  Finally, since $q_j$ is the minimal natural number for which $\gamma_j \in \Poly_{\leq k}(q_j\Z \to \Z)$, and $\gamma_j, \gamma_{j'}$ differ by an integer shift, we conclude (i).
\end{proof}

From this and the greedy algorithm, we conclude the following analogue of~\cite[Corollary 5.4]{mrt-fourier}:

\begin{corollary}\label{cor54-analog} For each $I'' \in {\mathcal I}''$ there exists a set ${\mathcal F}(I'')$ of triples $(T',q,\gamma')$ of cardinality 
$$ \# {\mathcal F}(I'') \ll 1$$
such that for any good quadruple $(I'',T,q,\gamma)$ there exists a real number $T'$ and a $\gamma' = \gamma \mod \Z$ such that $(T',q,\gamma') \in {\mathcal F}(I'')$ and
$$ T = T' + O(N).$$
\end{corollary}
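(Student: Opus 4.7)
The plan is to deduce Corollary~\ref{cor54-analog} from Proposition~\ref{prop53-analog} by a standard greedy (pigeonhole) selection. Let $K$ be the constant from Proposition~\ref{prop53-analog}, which depends only on $\delta$. Fix $I'' \in {\mathcal I}''$ and fix once and for all a specific implied constant $C$ in the $O(N)$ error bound of Proposition~\ref{prop53-analog}(iii); call two good quadruples $(I'',T,q,\gamma)$ and $(I'',\tilde T,\tilde q,\tilde \gamma)$ \emph{compatible} if $q = \tilde q$, $\gamma = \tilde\gamma \mod \Z$, and $|T - \tilde T| \leq C N$.

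I would construct inductively a maximal collection $(I'',T_j,q_j,\gamma_j)$ for $j = 1,\dotsc,K'$ of good quadruples associated to $I''$ such that no two distinct members of this collection are compatible. Proposition~\ref{prop53-analog} immediately forces $K' \leq K-1$, since otherwise applying it to the first $K$ members would produce a compatible pair, contradicting the construction. I would then set
$$ {\mathcal F}(I'') \coloneqq \{\, (T_j, q_j, \gamma_j \mod \Z) : 1 \leq j \leq K'\,\}, $$
so that $\# {\mathcal F}(I'') \leq K - 1 \ll 1$ as required.

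To verify the conclusion, let $(I'',T,q,\gamma)$ be an arbitrary good quadruple associated to $I''$. By maximality of the collection, the enlarged family obtained by appending $(I'',T,q,\gamma)$ must contain a compatible pair, and since no compatible pair existed before, that pair must involve the new quadruple. Thus there is some $1 \leq j \leq K'$ with $q = q_j$, $\gamma = \gamma_j \mod \Z$, and $T = T_j + O(N)$. Taking $T' \coloneqq T_j$ and $\gamma' \coloneqq \gamma_j \mod \Z$ (which equals $\gamma \mod \Z$) gives the desired element $(T',q,\gamma') \in {\mathcal F}(I'')$.

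There is no genuine obstacle here beyond bookkeeping: the only point requiring care is that the compatibility relation is neither symmetric nor transitive in a strict sense, so one must fix a specific constant $C$ before invoking Proposition~\ref{prop53-analog} and keep it fixed throughout the greedy construction. Once this is done, everything reduces to a one-line pigeonhole argument, and all the essential work has already been carried out in Proposition~\ref{prop53-analog}.
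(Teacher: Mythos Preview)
Your argument is correct and matches the paper's approach exactly: the paper simply states that the corollary follows ``from this and the greedy algorithm,'' and you have spelled out precisely that greedy selection. One cosmetic point: in defining $\mathcal F(I'')$ you should store the polynomial $\gamma_j$ itself (rather than the coset $\gamma_j \bmod \Z$) so that the triple $(T_j,q_j,\gamma_j)$ literally lies in $\mathcal F(I'')$ as the statement requires.
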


Henceforth we fix the finite sets ${\mathcal F}(I'')$. Now we can obtain many pairs of compatible good quadruples:

\begin{proposition}\label{prop55-analog}  For $\gg N \pi_0(P')^2$ pairs $(I''_1,I''_2) \in ({\mathcal I''})^2$, there exist $T_1,T_2,q,\gamma_1,\gamma_2$ with $(T_i,q,\gamma_i) \in {\mathcal F}(I''_i)$ for $i=1,2$ and
\begin{equation}\label{t1}
 T_2 = T_1 + O(N)
\end{equation}
Furthermore, for each such pair, there exist primes $p'_1,p'_2 \in [P',2P']$ coprime to $q$ such that $I''_1$ lies within $100 \frac{H}{P'P''}$ of $\frac{p'_2}{p'_1} I''_2$ with
\begin{equation}\label{p1}
\gamma_1(p'_2 \cdot) = \gamma_2(p'_1 \cdot) \mod \Poly_{\leq k}(\Z \to \Z).
\end{equation}
\end{proposition}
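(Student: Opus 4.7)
The plan is to combine the three previously established ingredients with a pigeonholing argument. Proposition~\ref{prop41-analog} supplies the large family ${\mathcal Q}'$ of quadruples $e = (I''_1, I''_2, p'_1, p'_2)$ of cardinality $\gg dN = N \pi_0(P')^2$ enjoying the approximate dilation invariance; Proposition~\ref{sold} converts each $e$ into matching decompositions of $P_{I''_1}$ and $P_{I''_2}$ that share a common Archimedean exponent $T_e = T_{I''_1,I''_2}$; and Corollary~\ref{cor54-analog} compresses the resulting data into the bounded family ${\mathcal F}(I'')$.

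Concretely, for each $e \in {\mathcal Q}'$ I would fix an arbitrary $\vec{a}_e \in {\mathcal A}_e$ and apply Proposition~\ref{sold} to obtain $T_e$ with $|T_e| \ll N^{k+1}$, smooth remainders $\eps^{(e)}_i$ on $I''_i$ with the required derivative bounds, and $Q_e$-rational polynomials $\gamma^{(e)}_i$ (with $Q_e \coloneqq \prod {\mathcal P}_{e,\vec{a}_e}$) satisfying $P_{I''_i}(t) = \eps^{(e)}_i(t) + \tfrac{T_e}{2\pi}\log t + \gamma^{(e)}_i(t)$ and the symmetry $\gamma^{(e)}_1(p'_2\cdot) \equiv \gamma^{(e)}_2(p'_1\cdot) \mod \Poly_{\leq k}(\Z \to \Z)$ from \eqref{gap}. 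Letting $q^{(e)}_i$ be the minimal natural number with $\gamma^{(e)}_i \in \Poly_{\leq k}(q^{(e)}_i \Z \to \Z)$, the $Q_e$-rationality forces $q^{(e)}_i$ to be of polynomial size, and removing the $O(\log X)$ primes of ${\mathcal P}_{e,\vec{a}_e}$ dividing $q^{(e)}_i$ leaves a still-large subfamily. Taking $\delta$ small enough depending on $k,\eps,\theta,\eta$, each $(I''_i, T_e, q^{(e)}_i, \gamma^{(e)}_i)$ is therefore a good quadruple in the sense preceding Proposition~\ref{prop53-analog}.

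Next, I would apply Corollary~\ref{cor54-analog} on each side to extract a representative triple $(T^*_i(e), q^{(e)}_i, \tilde \gamma^{(e)}_i) \in {\mathcal F}(I''_i)$ obeying $T_e = T^*_i(e) + O(N)$ and $\tilde \gamma^{(e)}_i \equiv \gamma^{(e)}_i \mod \Z$. The shared $T_e$ immediately gives $T^*_2(e) = T^*_1(e) + O(N)$, while \eqref{gap} is preserved under the integer shifts relating $\gamma^{(e)}_i$ to $\tilde \gamma^{(e)}_i$, so \eqref{p1} will hold. Because $\# {\mathcal F}(I''_i) \ll 1$, as $e$ ranges over quadruples involving a fixed $I''_i$ the denominator $q^{(e)}_i$ takes only $O(1)$ values, so the set of primes in $[P',2P']$ dividing any such $q^{(e)}_i$ has cardinality $O(\log X)$. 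Discarding those $e \in {\mathcal Q}'$ for which $p'_1$ or $p'_2$ lies in this bad set removes at most an $O(\log X / \pi_0(P')) = o(1)$ fraction of ${\mathcal Q}'$, so $\gg dN$ quadruples survive.

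The main (and only genuinely delicate) remaining point is to upgrade $q^{(e)}_1 = q^{(e)}_2 =: q$. For this I would use the elementary fact — a direct consequence of Lemma~\ref{dte} — that if $\gamma \in \Poly_{\leq k}(q\Z \to \Z)$ has minimal denominator $q$ and $p$ is coprime to $q$, then $\gamma(p\cdot)$ also has minimal denominator exactly $q$. Under the coprimality just enforced, this shows that the minimal denominators of $\gamma^{(e)}_1(p'_2 \cdot)$ and $\gamma^{(e)}_2(p'_1 \cdot)$ are $q^{(e)}_1$ and $q^{(e)}_2$ respectively; since these polynomials differ by an element of $\Poly_{\leq k}(\Z \to \Z)$, an operation that manifestly preserves the minimal denominator, we conclude $q^{(e)}_1 = q^{(e)}_2$. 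Setting $q \coloneqq q^{(e)}_1$, $T_i \coloneqq T^*_i(e)$, $\gamma_i \coloneqq \tilde \gamma^{(e)}_i$ then furnishes everything demanded by the proposition on each surviving quadruple, and the $\gg dN$ surviving quadruples yield the claimed count of pairs.
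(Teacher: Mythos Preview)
Your proposal is correct and follows essentially the same approach as the paper: both invoke Propositions~\ref{prop41-analog} and~\ref{sold} to produce good quadruples on each side of an edge $e\in\mathcal{Q}'$, push them into ${\mathcal F}(I''_i)$ via Corollary~\ref{cor54-analog}, discard the negligible set of edges where $p'_1$ or $p'_2$ divides a relevant denominator, and finally deduce $q_1=q_2$ from \eqref{gap} together with coprimality. One small correction: the fact that dilation by a prime $p$ coprime to $q$ preserves the minimal denominator is not a direct consequence of Lemma~\ref{dte} but rather of Lemma~\ref{bezout} (equivalently, of the property that the minimal denominator always divides any admissible denominator); this is exactly the lemma the paper invokes at this step, and with that substitution your argument is complete.
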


\begin{proof}
This will be a modification of the arguments used to establish~\cite[Proposition 5.5]{mrt-fourier}. From Propositions~\ref{prop41-analog} and~\ref{sold}, we can find a collection ${\mathcal Q}'$ of quadruples $e = (I''_1, I''_2, p'_1, p'_2)$ in ${\mathcal Q}$ of cardinality $\gg N \pi_0(P')^2$, such that to each such quadruple $e$ there exists $T, \eps_1, \eps_2, \gamma_1, \gamma_2, Q$ obeying the conclusions of Proposition~\ref{sold} (for some quadruple $\vec a$, which will play no further role in the arguments).  In particular, each $e \in {\mathcal Q}'$ generates a pair of good quadruples $(I''_1, T_1, q_1, \gamma_1)$, $(I''_2, T_2, q_2, \gamma_2)$ for some $\gamma_1 \in \Poly_{\leq k}(q_1 \Z \to \Z)$, $\gamma_2 \in \Poly_{\leq k}(q_2 \Z \to \Z)$ obeying \eqref{t1}, \eqref{p1}.
By Corollary~\ref{cor54-analog} we may adjust these good quadruples so that $(T_i, q_i, \gamma_i) \in {\mathcal F}(I''_i)$ for $i=1,2$.

At present it is possible that $p'_i$ divides $q_j$ for some $i,j=1,2$.  But, as noted in~\cite[Proposition 5.5]{mrt-fourier}, for each $q_j$ there are only at most $O(1)$ such $p'_i$ that can do this, and by the bounded cardinality of the ${\mathcal F}(I''_i)$, the total number of quadruples $e = (I''_1, I''_2, p'_1, p'_2)$ that generate such a situation is $O( N \pi_0(P') )$, which is negligible compared to the cardinality of ${\mathcal Q}'$. Thus by refining ${\mathcal Q}'$ we may assume that $p'_1,p'_2$ do not divide $q_1$ or $q_2$.

We now claim that $q_1$ and $q_2$ are equal.  By the definition of a good quadruple, $\gamma_1$ lies in $\Poly_{\leq k}(q_1\Z \to \Z)$; by \eqref{p1} this implies that $\gamma_2$ lies in $\Poly_{\leq k}(p'_2 q_1\Z \to \Z)$.  On the other hand, $q_2$ is the minimal natural number for which $\gamma_2$ lies in $\Poly_{\leq k}(q_2\Z \to \Z)$; by Lemma~\ref{bezout}, this implies that $q_2$ divides $p'_2 q_1$, and similarly $q_1$ divides $p'_1 q_2$.  Since $p'_1, p'_2$ do not divide $q_1,q_2$, we obtain $q_1=q_2$, and the claim follows.
\end{proof}

As in~\cite[\S 5]{mrt-fourier}, on the space $Z$ of triples $(T,q,\gamma)$ with $T \in \R$, $q \geq 1$, $\gamma \in \Poly(q\Z \to \Z)$ we define the metric
$$ d((T_1,q_1,\gamma_1), (T_2,q_2,\gamma_2)) \coloneqq c(\delta) \frac{1}{N} |T_1-T_2| + 1_{q_1 \neq q_2} + \frac{1}{100} 1_{\gamma_1 \neq \gamma_2}$$
with some sufficiently small constant $c(\delta)>0$.  Proposition~\ref{prop55-analog} provides one with a collection ${\mathcal S}$ of sextuples $(I''_1, I''_2, (T_1,q_1,\gamma_1), (T_2,q_2,\gamma_2), p'_1, p'_2)$ of cardinality $\gg N \pi_0(P')^2$ such that
$$ d((T_1,q_1,\gamma_1), (T_2,q_2,\gamma_2)) \leq \frac{1}{10}.$$
Applying the mixing lemma in~\cite[Corollary 5.2]{mrt-fourier}, we conclude that there exists a triple $(T_0,q_0,\gamma_0) \in Z$ and a collection ${\mathcal T}$ of quadruples $(I'',T,q,\gamma)$ with $I'' \in {\mathcal I}''$, $(T,q,\gamma) \in {\mathcal F}(I'')$, and $d((T,q,\gamma), (T_0,q_0,\gamma_0)) \leq \frac{1}{5}$ such that
$$ \# {\mathcal T} \gg N$$
and such that there are $\gg Nd$ sextuples $(I''_1,I''_2, (T_1,q',\gamma_1), (T_2,q',\gamma_2), p'_1,p'_2)$ such that $(I''_i,T_i,q',\gamma_i) \in {\mathcal T}$ and $p'_1,p'_2$ distinct primes in $[P',2P']$ with $I''_1$ lying within $100 \frac{H}{P'P''}$ of $\frac{p'_2}{p'_1} I''_2$ (so in particular $I''_1 \sim \frac{p'_2}{p'_1} I''_2$), with $p'_1, p'_2$ coprime to $q'$, and obeying the properties \eqref{t1}, \eqref{p1}.

In particular, if $(I'',T,q,\gamma)\in {\mathcal T}$, then $q=q_0$ and 
\begin{equation}\label{tan}
 T = T_0 + O(N).
\end{equation}
From this and \eqref{teo}, we conclude in particular that
\begin{equation}\label{teo-2}
T_0 \ll N^{k+1}.
\end{equation}
At present our upper bound \eqref{qhk} on $q=q_0$ is quite large (and significantly worse than in~\cite{mrt-fourier}).  Nevertheless, we can improve the bound on $q_0$ after first establishing the following variant of~\cite[Lemma 2.6]{mrt-fourier}:

\begin{lemma}\label{lem26-analog}  Let $m,\ell \in \N$ and $P',N \geq 3$ be such that $(P')^{\ell-1} \gg N$.  Let $q \geq 1$. Then the number of $2\ell$-tuples $(p'_{1,1},\dots,p'_{1,\ell},p'_{2,1},\dots,p'_{2,\ell})$ of primes in $[P',2P']$ not dividing $q$ obeying the condition 
$$ \left| \prod_{j=1}^\ell p'_{2,j} - \prod_{j=1}^\ell p'_{1,j} \right| \leq C \frac{(P')^\ell}{N}$$
and
$$ \prod_{j=1}^\ell (p'_{2,j})^m = \prod_{j=1}^\ell (p'_{1,j})^m \mod q$$
for some $C \geq 1$ is bounded by
$$ \ll_{\ell,C,m} \frac{d^\ell}{N} \left( \frac{m^{\omega(q)} }{\phi(q)} + \frac{1}{\log N} \right),$$
where $\omega(q)$ denotes the number of prime factors of $q$.
\end{lemma}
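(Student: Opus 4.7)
The plan is to adapt the proof of \cite[Lemma 2.6]{mrt-fourier} by incorporating the modular constraint via a Brun--Titchmarsh inequality in arithmetic progressions. First I would rewrite the modular hypothesis: setting $M := \prod_{j=1}^\ell p'_{1,j}$, the condition becomes $(\prod_{j=1}^\ell p'_{2,j})^m \equiv M^m \pmod{q}$. Since every $p'_{i,j}$ is coprime to $q$, this confines $\prod_j p'_{2,j}$ to a coset of the $m$-torsion subgroup of $(\Z/q\Z)^\times$, which by the Chinese remainder theorem and the near-cyclic structure of $(\Z/p^k\Z)^\times$ has size $\prod_{p^k \| q}\gcd(m, \phi(p^k)) \leq m^{\omega(q)}$. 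Equivalently, once $p'_{2,1},\dots,p'_{2,\ell-1}$ are fixed, the single remaining prime $p'_{2,\ell}$ is restricted to at most $m^{\omega(q)}$ reduced residue classes modulo $q$.

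Next I would fix the $2\ell-1$ primes $p'_{1,1},\dots,p'_{1,\ell},p'_{2,1},\dots,p'_{2,\ell-1}$, contributing at most $\pi_0(P')^{2\ell-1}$ choices. The interval hypothesis forces $p'_{2,\ell}$ into an interval $J$ of length $L \asymp CP'/N$ centred near a point of size $\asymp P'$ (the hypothesis $(P')^{\ell-1} \gg N$ guarantees $L \gg 1$). By the Brun--Titchmarsh inequality in arithmetic progressions, the number of primes in $J$ lying in any one reduced residue class modulo $q$ is $\ll L/(\phi(q)\log(L/q))$ when $L \geq 2q$, and is at most $1$ otherwise. Multiplying by the $\leq m^{\omega(q)}$ admissible classes, summing over the $\pi_0(P')^{2\ell-1}$ fixed-prime configurations, and using the identity $\pi_0(P')^{2\ell-1} \cdot P' \asymp d^\ell \log P'$ yields the full count. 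In the range where $L/q$ is of polynomial size in $P'$, the denominator $\log(L/q) \asymp \log P'$ cancels the $\log P'$ factor from $P'/\pi_0(P')$, producing the first claimed summand $d^\ell m^{\omega(q)}/(N\phi(q))$. In the complementary range one uses either the trivial bound of at most one prime per residue class (when $L < 2q$) or Brun--Titchmarsh in the weak regime $\log(L/q) = o(\log P')$; the excess is absorbed into the slack term $d^\ell/(N \log N)$ via the hypothesis $(P')^{\ell-1} \gg N$ and a dyadic decomposition over $\omega(q)$ controlling $m^{\omega(q)}$.

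The principal obstacle is the intermediate regime where $L$ is comparable to $q$ up to logarithmic factors: here Brun--Titchmarsh gives essentially no saving over the trivial bound, and at the same time the modular density $m^{\omega(q)}/\phi(q)$ need not be particularly small. This is precisely the regime where the slack $1/\log N$ in the claimed bound is indispensable, and extracting it cleanly requires careful control on $m^{\omega(q)}$ for $q$ with many small prime factors (where $\phi(q)$ is correspondingly depressed). A dyadic split according to the size of $\omega(q)$, combined with the bound $\phi(q) \gg q/\log\log q$ and the hypothesis $(P')^{\ell-1} \gg N$, should suffice to show that the trivial-bound contribution in this intermediate range is dominated by $d^\ell/(N\log N)$, completing the proof.
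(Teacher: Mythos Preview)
Your approach has a genuine gap at the very first step: the claim that the hypothesis $(P')^{\ell-1}\gg N$ forces the interval $J$ for $p'_{2,\ell}$ to have length $L\gg 1$ is false. From $L\asymp CP'/N$ and $N\ll (P')^{\ell-1}$ you only get $L\gg (P')^{2-\ell}$, which is $o(1)$ as soon as $\ell\ge 3$. In the paper's application (Proposition~\ref{prop56-analog}) one has $P'\le X^{\eps}$ and $N\asymp X^{1-\theta}$, so $L\asymp P'/N\le X^{\eps-(1-\theta)}$ is polynomially small in $X$; the interval $J$ typically contains no integers at all. Brun--Titchmarsh is then vacuous, and the only bound you retain after fixing the first $2\ell-1$ primes is the trivial ``at most one prime in $J$'', yielding a total count $\ll \pi_0(P')^{2\ell-1}\asymp d^{\ell-1/2}$. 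This misses the target $d^{\ell}/N$ by a factor of $N/d^{1/2}$, which in the regime of interest is a large power of $X$. The difficulty is not the intermediate regime you flag at the end; it is that isolating a single prime destroys the multiplicative coupling that makes the interval condition restrictive.

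The paper's argument avoids this entirely by never isolating one prime. It detects the congruence $\prod_j (p'_{2,j})^m\equiv\prod_j (p'_{1,j})^m\pmod q$ by character orthogonality,
\[
\frac{1}{\phi(q)}\sum_{\chi\bmod q}\chi^m\!\Big(\prod_j p'_{2,j}\Big)\,\overline{\chi^m\!\Big(\prod_j p'_{1,j}\Big)},
\]
and then, \emph{for each fixed $\chi$}, runs the proof of \cite[Lemma~2.6]{mrt-fourier} with the twist $\chi^m$ inserted. That proof handles the interval condition via a Fourier/Dirichlet-polynomial argument (integrating $|\sum_{P'\le p\le 2P'}\chi^m(p)p^{it}|^{2\ell}$ against a suitable kernel), which treats all $2\ell$ primes symmetrically and never requires any single-prime interval to be long. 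The characters with $\chi^m=\chi_0$---of which there are at most $m^{\omega(q)}$---give the unweighted count $\ll d^\ell/N$ each, producing the $m^{\omega(q)}/\phi(q)$ term; for the remaining characters the Dirichlet polynomial over primes has nontrivial cancellation, which is what yields the $1/\log N$ term. If you want to salvage an elementary approach you would need to fix only the $p'_{1,j}$'s and count \emph{products} of $\ell$ primes in an interval of length $\asymp (P')^\ell/N\gg P'$ in residue classes mod $q$, but extracting the $1/\log N$ saving in that formulation still requires essentially the same character decomposition.
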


\begin{proof}  This follows the same Dirichlet character argument used to prove~\cite[Lemma 2.6]{mrt-fourier}, with the one main difference being that the indicator $1_{\chi = \chi_0}$ is replaced by $1_{\chi^m = \chi_0}$.  This latter condition is attained for at most $m^{\omega(q)}$ characters $\chi$ with period $q$, explaining the additional factor of $m^{\omega(q)}$ here compared with~\cite[Lemma 2.6]{mrt-fourier}.
\end{proof}

We now have

\begin{proposition}\label{prop56-analog} $q_0 \ll 1$.
\end{proposition}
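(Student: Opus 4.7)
The plan is to apply the graph-theoretic cycle-finding machinery underlying Proposition~\ref{prop41-analog} to the refined structure~${\mathcal T}$ obtained after the mixing step, producing many pairs $(A,B)$ of prime products for which $\gamma_0$ satisfies a self-dilation identity; extracting a congruence of the form $A^m \equiv B^m \pmod{q_0'}$ for some divisor $q_0' \mid q_0$ with $q_0/q_0' \ll_k 1$, and comparing against the arithmetic upper bound from Lemma~\ref{lem26-analog}, will then force $q_0' \ll_k 1$ and hence $q_0 \ll 1$.

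First, since only $\omega(q_0) \ll \log X$ primes in $[P',2P']$ can divide $q_0$ (a negligible fraction of $\pi_0(P')$), we may refine the collection of sextuples produced after the mixing step so that $p'_1, p'_2$ are always coprime to $q_0$, without changing the order of magnitude $\gg Nd$. Regarding these refined sextuples as edges on the vertex set given by the projection of ${\mathcal T}$ to ${\mathcal I}''$, the same Sidorenko-type cycle-counting argument that underlies Proposition~\ref{prop41-analog}, applied now to a single closed $\ell$-cycle rather than to a pair of cycles joined by an edge, produces for a suitable even $\ell$ (with $d^\ell$ sufficiently large in terms of~$N$) at least $\gg d^\ell/N$ non-degenerate tuples of primes $(p'_{1,j}, p'_{2,j})_{j=0}^{\ell-1}$ in $[P',2P']$, all coprime to $q_0$, whose associated intervals close into an $\ell$-cycle in the refined graph and whose products $A := \prod_j p'_{2,j}$ and $B := \prod_j p'_{1,j}$ obey the closeness bound $|A-B| \ll (P')^\ell / N$.

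Second, iterating the compatibility identity~\eqref{p1} around the cycle and using that every $\gamma$ appearing within~${\mathcal T}$ agrees with $\gamma_0$ modulo~$\Z$, together with Proposition~\ref{basic}(ii),(iii), produces the self-dilation relation
\[
\gamma_0(A t) = \gamma_0(B t) \mod \Poly_{\leq k}(\Z \to \Z).
\]
Expanding $\gamma_0(t) = \sum_{m=0}^{k} c_m \binom{t/q_0}{m}$ by Lemma~\ref{dte} with $c_m \in \Z$, and computing the $m$-th derivative of this identity for $m = k, k-1, \ldots, 1$ in turn, one reaches at each step a relation of the form $c_m(A^m - B^m)/q_0^m \in \Z$. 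By the minimality of $q_0$, for each prime power $p^{v_p(q_0)}$ dividing $q_0$ there exists some $m_p \in \{1, \ldots, k\}$ for which $p \nmid c_{m_p}$ at the corresponding derivative level, yielding the congruence $A^{m_p} \equiv B^{m_p} \pmod{p^{v_p(q_0)}}$. Combining these congruences via the Chinese remainder theorem over the prime factors of $q_0$ and pigeonholing over $m_p$, one obtains a single exponent $m \in \{1,\ldots,k\}$ and a divisor $q_0' \mid q_0$ with $q_0/q_0' \ll_k 1$ such that $A^m \equiv B^m \pmod{q_0'}$ for a positive proportion of the pairs $(A,B)$ from Step~1.

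Finally, Lemma~\ref{lem26-analog} applied with this $m$ and $q = q_0'$ bounds the number of such prime tuples by
\[
\ll_{\ell, m} \frac{d^\ell}{N}\left(\frac{m^{\omega(q_0')}}{\phi(q_0')} + \frac{1}{\log N}\right).
\]
Comparing with the lower bound $\gg d^\ell/N$ from Step~1 forces $m^{\omega(q_0')}/\phi(q_0') + 1/\log N \gg 1$, and using that $1/\log N \leq 1/\log X = o(1)$ yields $m^{\omega(q_0')}/\phi(q_0') \gg 1$. Combining this with the Rosser--Schoenfeld lower bound $\phi(q) \gg q/\log\log(q+2)$ and the Hardy--Ramanujan upper bound $\omega(q) \ll \log q/\log\log(q+2)$ gives $q_0' \ll_k 1$ and hence $q_0 \ll_k 1$. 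The main obstacle is the descent argument in the middle step: when the top coefficient $c_k$ shares factors with $q_0$ the $k$-th derivative of the self-dilation identity does not yield a useful congruence modulo all of $q_0$, and one must instead handle each prime power factor $p^{v_p(q_0)}$ of $q_0$ separately, selecting a different derivative order $m_p$ tailored to each $p$ via the minimality of $q_0$, and then recombine these congruences via the Chinese remainder theorem.
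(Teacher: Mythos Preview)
Your overall plan coincides with the paper's proof: build $\ell$-cycles in the graph on $\mathcal{T}$, iterate the compatibility relation~\eqref{p1} around a cycle to obtain a self-dilation identity $\gamma_0(A\,\cdot)=\gamma_0(B\,\cdot)\bmod \Poly_{\leq k}(\Z\to\Z)$ for the starting polynomial $\gamma_0$, extract a congruence $A^m\equiv B^m$ modulo a large divisor of $q_0$, and compare against Lemma~\ref{lem26-analog}. Two side remarks: the assertion that every $\gamma$ in $\mathcal{T}$ agrees with $\gamma_0$ modulo~$\Z$ is neither established nor needed (the cycle iteration already returns to the \emph{same} starting polynomial, which is all that is used); and pigeonholing over the $m_p$ can only guarantee $q_0'\ge q_0^{1/k}$, not $q_0/q_0'\ll_k 1$, though the weaker bound would still be enough for the final step.

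The genuine gap is in your congruence extraction from the binomial-basis expansion $\gamma_0(t)=\sum_m c_m\binom{t/q_0}{m}$. The claim that minimality of $q_0$ forces, for each prime $p\mid q_0$, some $m_p$ with $p\nmid c_{m_p}$ is false. Take $\gamma_0(t)=p\binom{t/p}{2}$ for a prime $p$: then $q_0=p$ is minimal (since $\gamma_0(1)=(1-p)/(2p)\notin\Z$), yet $c_2=p$, $c_1=c_0=0$ are all divisible by $p$. More generally, divisibility of all $c_m$ by $p$ does \emph{not} force $\gamma_0\in\Poly_{\leq k}((q_0/p)\Z\to\Z)$, because $p\binom{n/p}{m}$ need not be an integer. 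Furthermore, for $m<k$ the quantity you want, $c_m(A^m-B^m)/q_0^m$, is not what falls out of the $m$-th derivative of the identity: there are cross-terms from $c_{m+1},\dots,c_k$ that do not cancel cleanly in the binomial basis.

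The paper's remedy is to expand in the \emph{monomial} basis instead: write $\gamma_0(t)=\sum_{m}\frac{a_m}{b_m}t^m$ with $(a_m,b_m)=1$, observe $q_0\le b_1\cdots b_k$ so some $b_m\ge q_0^{1/k}$, and compare $t^m$-coefficients in $\gamma_0(At)-\gamma_0(Bt)\in\Poly_{\leq k}(\Z\to\Z)$ to obtain directly $A^m\equiv B^m\pmod{b_m/(b_m,k!)}$. This avoids the descent entirely and feeds straight into Lemma~\ref{lem26-analog}.
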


\begin{proof}  This will be a modification of the proof of~\cite[Proposition 5.6]{mrt-fourier}, using Lemma~\ref{lem26-analog} in place of~\cite[Lemma 2.6]{mrt-fourier}.  Let $\ell$ be the first even natural number such that $d^\ell \geq N^{2+\eps}$. Arguing as in the proof of~\cite[Proposition 5.6]{mrt-fourier}, we can find $\gg d^\ell$ tuples
$$ (Q_0,\dots,Q_{\ell-1}) \in {\mathcal T}^\ell$$
such that if we write $Q_j = (I''_j, T_j, q_0, \gamma_j)$ for $j=0,\dots,\ell$ (with the convention $Q_{\ell} = Q_0$) then for each $j=0,\dots,\ell-1$, there exist primes $p'_{j,1}, p'_{j,2} \in [P',2P']$ such that
$$
\gamma_j(p'_{j,2} \cdot) = \gamma_{j+1}(p'_{j,1} \cdot) \mod \Poly_{\leq k}(\Z \to \Z)$$
and such that $I''_j \sim \frac{p'_{j,2}}{p'_{j,1}} I''_{j+1}$.  From the first claim we have
$$ \gamma_j\left( (\prod_{i=0}^{j-1} p'_{i,1}) (\prod_{i=j}^{\ell-1} p'_{i,2}) \cdot \right) = 
\gamma_{j+1}\left( (\prod_{i=0}^{j} p'_{i,1}) (\prod_{i=j+1}^{\ell-1} p'_{i,2}) \cdot \right) \mod \Poly_{\leq k}(\Z \to \Z)$$
for $j=0,\dots,\ell-1$, which by transitivity implies that
\begin{equation}\label{gap2}
 \gamma_0\left( (\prod_{i=0}^{\ell-1} p'_{i,2}) \cdot \right) = 
\gamma_0\left( (\prod_{i=0}^{\ell-1} p'_{i,1}) \cdot \right) \mod \Poly_{\leq k}(\Z \to \Z).
\end{equation}
Similarly, we have that $I''_0 \sim \frac{\prod_{i=0}^{\ell-1} p'_{i,2}}{\prod_{i=0}^{\ell-1} p'_{i,1}} I''_0$, which implies that
$$\prod_{i=0}^{\ell-1} p'_{i,2} - \prod_{i=0}^{\ell-1} p'_{i,1} \ll \frac{(P')^\ell}{N}.$$

Now we analyze the condition \eqref{gap2}.   We write the polynomial $\gamma_0$ as
$$ \gamma_0(t) = \sum_{m=0}^k \frac{a_m}{b_m} t^m$$
where $b_m$ are natural numbers and each $a_m$ is an integer coprime to $b_m$.  Clearly $\gamma_0 \in \Poly_{\leq k}(b_1 \dots b_k \Z \to \Z)$, and hence $q_0 \leq b_1 \dots b_k$.  In particular, there exists $1 \leq m \leq k$ such that $b_m \geq q_0^{1/k}$.  From \eqref{gap2} and Lemma~\ref{dte}, and extracting the $t^m$ coefficient, we see that
$$ \left(\prod_{i=0}^{\ell-1} p'_{i,2}\right)^m \frac{a_m}{b_m} = \left(\prod_{i=0}^{\ell-1} p'_{i,1}\right)^m \frac{a_m}{b_m} \mod \frac{1}{k!} \Z$$
and hence
$$ \left(\prod_{i=0}^{\ell-1} p'_{i,2}\right)^m = \left(\prod_{i=0}^{\ell-1} p'_{i,1}\right)^m \mod \frac{b_m}{(b_m,k!)}.$$
By Lemma~\ref{lem26-analog} (and bounding $\frac{m^{\omega(q)} }{\phi(q)}  \ll q^{-1/2}$, say), we conclude that the total number of tuples of primes $(p'_{i,1}, p'_{i,2})_{0 \leq i < \ell}$ is at most 
$$ \ll \frac{d^\ell}{N} \left( q_0^{-1/2k} + \frac{1}{\log X} \right).$$
Since there are $\ll N$ choices for the interval $I_1''$, and $I_1''$ and $(p'_{i,1}, p'_{i,2})_{0 \leq i < \ell}$ determine the other $I_j''$, and we have $\#{F}(I_j'')\ll 1$, we deduce that the number of tuples $(Q_0,\ldots, Q_{\ell-1})\in \mathcal{T}^{\ell}$ is in fact $\ll d^{\ell}(q_0^{-1/2k} + (\log X)^{-1})$.
Comparing with the lower bound we had for the number of these tuples, we must have
$$ q_0^{-1/2k} + \frac{1}{\log X} \gg 1,$$
giving the claim.
\end{proof}

Let $(I'',T,q_0,\gamma) \in {\mathcal T}$, then from \eqref{falp} one has
$$
\left| \sum_{n \in I''} f(n) e(-P_{I''}(n)) \right| \gg |I''|.
$$
Let $H^* \coloneqq c \frac{H}{P'P''}$ for a sufficiently small $c>0$.  Then one has
$$ 
\sum_{n \in I''} f(n) e(-P_{I''}(n))  
= \frac{1}{H^*} \int_{I''} \sum_{n \in [x,x+H^*]} f(n) e(-P_{I''}(n)) \ dx + O( H^* )$$
and thus by the triangle inequality we have (for $c$ small enough)
\begin{equation}\label{abs}
\int_{I''} \left| \sum_{n \in [x,x+H^*]} f(n) e(-P_{I''}(n)) \right| \ dx \gg |I''| H^*.
\end{equation}
For $n \in [x,x+H^*] \cap \Z$, we have from \eqref{que} that
$$
P_{I''}(n) = \eps(n) + \frac{T}{2\pi} \log n + \gamma(n);$$
from \eqref{epso} we have
$$ \eps(n) = \eps(x) + O( c )$$
while from \eqref{tan} one has
$$ \frac{T}{2\pi} \log n  = \frac{T_0}{2\pi} \log n + \frac{T-T_0}{2\pi} \log x + O( c ).$$
The effect of the $O( c)$ error to \eqref{abs} is negligible if $c$ is small enough, and the constant terms $\eps(x), \frac{T-T_0}{2\pi} \log x $ disappear once the absolute value signs in \eqref{abs} are applied.  We conclude that
$$
\int_{I''} \left| \sum_{n \in [x,x+H^*]} f(n) n^{-iT_0} e(-\gamma(n))\right| \ dx \gg |I''| H^*.
$$
The function $e(-\gamma(n))$ is periodic modulo $q_0$.  Since $q_0 = O(1)$, we can expand $e(\gamma(n))$ as a linear combination of $O(1)$ functions of the form $1_{q_1|n} \chi(n/q_1)$, where $q_1$ divides $q_0$ and $\chi$ is a Dirichlet character of period $q_0/q_1$.  We conclude that there exists $q_1, \chi$ of this form such that
$$
\int_{I''} \left| \sum_{n \in [x,x+H^*]} f(n) n^{-iT_0} 1_{q_1|n} \overline{\chi}(n/q_1) \right| \ dx \gg |I''| H^*.
$$
Since each $I''$ is associated to $O(1)$ quadruples in ${\mathcal T}$, there are $\gg X/H$ intervals $I'' \in {\mathcal I}''$ for which we have an estimate of this form.  At present $q_1, \chi$ can depend on $I''$, but there are only $O(1)$ choices for these quantities, so by the pigeonhole principle we may make $q_1,\chi$ independent of $I''$, while still retaining $\gg X/H$ intervals.  Summing in these intervals, we conclude that
$$ \int_{X/4P'P''}^{4X/P'P''} \left| \sum_{n \in [x,x+H^*]} f(n) n^{-iT_0} 1_{q_1|n} \overline{\chi}(n/q_1) \right| \ dx \gg \frac{X}{P'P''} H^*.$$
Arguing exactly as in the final part of~\cite[\S 5]{mrt-fourier} (namely, applying the complex-valued version~\cite{MRT} of the main result from~\cite{mr}), we conclude that
$$ M( f; T, Q ) \ll 1$$
for some $T \ll \frac{X^{k+1}}{H^{k+1}}$ and $Q \ll 1$, and Theorem~\ref{mult-poly} follows.

\section{Local correlation with nilsequences}\label{nilseq}
\subsection{The set-up} 
In this section we prove Theorem~\ref{mult-pret}.  Our argument shall closely follow in large parts the proof of Theorem~\ref{mult-poly}, except that the space $\Phi$ of local polynomial phases will be replaced by a different family $\Psi$ of local nilsequences, and significantly more effort needs to be expended to ``solve'' the approximate dilation invariance ``equations''.

Recall that a \emph{degree $k$ filtered nilmanifold} $G/\Gamma$ is a quotient space $G/\Gamma$, where
\begin{itemize}
\item $G$ is a connected, simply connected Lie group equipped with a filtration $G_\bullet = (G_i)_{i \geq 0}$ of closed connected subgroups $G_i$, with $G_0=G_1=G$, $G_i \supset G_{i+1}$ for all $i$, $G_i = \{1\}$ for $i > k$, and $[G_i,G_j] \subset G_{i+j}$ for $i,j \geq 0$ (note in particular that this implies that $G$ is nilpotent);
\item $\Gamma$ is a discrete subgroup of $G$ such that the subgroups $\Gamma_i \coloneqq G_i \cap \Gamma$ are cocompact subgroups of $G_i$ for each $i$, so that the quotient spaces $G_i/\Gamma_i$ are all compact.
\end{itemize}

Let $G$ be a connected, simply connected nilpotent Lie group.  Then $G$ is isomorphic to a matrix Lie group (a Lie group consisting of invertible $n \times n$ complex matrices for some $n$); see e.g.,~\cite[Proposition 16.2.6]{hilgert}, and so for the following discussion we may assume without loss of generality that $G$ is a matrix Lie group.  The Lie algebra of $G$, defined as the tangent space of $G$ at the identity, will be denoted $\log G$.  The matrix exponential map $\exp \colon \log G \to G$ is then a diffeomorphism (see e.g.,~\cite[Corollary 11.2.7]{hilgert}), and hence we have a well-defined logarithm map $\log\colon G \to \log G$ inverting this map; similarly we have the diffeomorphism $\log \colon G_i \to \log G_i$ where $\log G_i$ is the Lie algebra of $G_i$.  We define exponentiation $g^t$ for any $g \in G$ and $t \in \R$ by the familiar formula
\begin{equation}\label{realexp}
 g^t \coloneqq \exp( t \log g ),
\end{equation}
so in particular $\log(g^t) = t \log g$.
We place an arbitrary Euclidean metric on the vector space $\log G$, and allow implied constants to depend on $G$ and this metric.  
 If $g \in G$ and $X > 0$, we then write $g = O(X)$ as shorthand
 for $|\log g| =O(X)$.  We also place an arbitrary smooth metric $d$ on $G/\Gamma$ (for instance, one could take the Carnot--Carath\'eodory metric associated to the metric on $\log G$, although it is not essential here that we do so), and define the Lipschitz norm of a function $F \colon G/\Gamma \to \C$ to be
$$ \|F\|_{\mathrm{Lip}} \coloneqq \sup_{x \in G/\Gamma} |F(x)| + \sup_{x,y \in G/\Gamma: x \neq y} \frac{|F(x)-F(y)|}{d(x,y)}$$
and call a function $F$ \emph{Lipschitz continuous} if its Lipschitz norm is finite.

The presence of the logarithm here may seem strange to those accustomed to more ``abelian'' analysis, but for nilpotent groups (written multiplicatively) one should view $\log$, $\exp$, and $(g,t) \mapsto g^t$ as polynomial maps, as the following example illustrates:

\begin{example}[Heisenberg group]\label{heisen}  Take $G$ to be the Heisenberg group $G = \begin{pmatrix} 1 & \R & \R \\ 0 & 1 & \R \\ 0 & 0 & 1 \end{pmatrix}$, with filtration $G_0=G_1=G$, $G_2 = \begin{pmatrix} 1 & 0 & \R \\ 0 & 1 & 0 \\ 0 & 0 & 1 \end{pmatrix}$, and $G_i=\left\{\begin{pmatrix} 1 & 0 & 0 \\ 0 & 1 & 0 \\ 0 & 0 & 1 \end{pmatrix}\right\}$ for all $i>2$.  Then $\log G = \begin{pmatrix} 0 & \R & \R \\ 0 & 0 & \R \\ 0 & 0 & 0 \end{pmatrix}$ and 
$$\exp \begin{pmatrix} 0 & x & z \\ 0 & 0 & y \\ 0 & 0 & 0 \end{pmatrix} = \begin{pmatrix} 1 & x & z + \frac{xy}{2} \\ 0 & 1 & y \\ 0 & 0 & 1 \end{pmatrix},$$
and hence 
$$\log \begin{pmatrix} 1 & x & z \\ 0 & 1 & y \\ 0 & 0 & 1 \end{pmatrix} = \begin{pmatrix} 0 & x & z - \frac{xy}{2} \\ 0 & 0 & y \\ 0 & 0 & 0 \end{pmatrix}$$
for any $x,y,z \in\R$.
In particular we have
$$ \begin{pmatrix} 1 & x & z \\ 0 & 1 & y \\ 0 & 0 & 1 \end{pmatrix}^t = \begin{pmatrix} 1 & tx & tz + \frac{t(t-1)}{2} xy \\ 0 & 1 & ty \\ 0 & 0 & 1 \end{pmatrix}$$
for any $x,y,z,t \in \R$, and $\begin{pmatrix} 1 & x & z \\ 0 & 1 & y \\ 0 & 0 & 1 \end{pmatrix}  = O(X)$ if and only if $x,y,z-\frac{xy}{2} = O(X)$.
\end{example}

From the identity $\log g^{-1} = -\log g$ we see that if $g=O(X)$ then $g^{-1}=O(X)$.  Similarly, from the Baker--Campbell--Hausdorff formula \eqref{ast-def}, \eqref{poly} we see that that $\log(gh)$ is a polynomial function of $\log g, \log h$ (with degree and coefficients $O(1)$), and hence if $g,h = O(X)$ then $gh = O(X^{O(1)})$.

We define $\Poly(\R \to G)$ to be the space of all maps $g\colon \R \to G$ of the form
$$ g(t) \coloneqq \exp( \sum_{i=0}^k X_i t^i )$$
where $X_i \in \log G_i$ for $i=0,\dots,k$.  From the Baker--Campbell--Hausdorff formula \eqref{ast-def}, \eqref{poly}, \eqref{gij-inc} we see that $\Poly(\R \to G)$ is a group with respect to multiplication.  For any $\delta>0$, we define $\Poly(\delta \Z \to G)$ to be the set of all maps $g \colon \delta \Z \to G$ such that
$$ \partial_{h_1} \dots \partial_{h_i} g(t) \in G_i$$
for all $i \geq 0$ and $h_1,\dots,h_i,t \in\delta \Z$, where $\partial_h g(t) \coloneqq g(t+h) g(t)^{-1}$.  We similarly define $\Poly(\delta \Z \to \Gamma)$ by replacing $G_i$ with $\Gamma_i$ in the above definition; equivalently, $\Poly(\delta \Z \to \Gamma)$ consists of those elements of $\Poly(\delta \Z \to G)$ that take values in $\Gamma$. We refer to elements of $\Poly(\R \to G)$ and $\Poly(\delta \Z \to G)$ as \emph{polynomial maps}.    We have the following basic fact:

\begin{lemma}\label{uniq}  Let $\delta > 0$.  Then every element $\tilde g$ of $\Poly(\R \to G)$ restricts to an element $g$ of $\Poly(\delta \Z \to G)$; conversely, every element $g$ of $\Poly(\delta \Z \to G)$ has a unique extension to an element $\tilde g$ of $\Poly(\R \to G)$.  Finally, $\Poly(\delta \Z \to \Gamma)$ forms a group.
\end{lemma}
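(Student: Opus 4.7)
The plan is to proceed in four steps, bridging the Lie-algebra polynomial description of $\Poly(\R \to G)$ and the discrete-derivative description of $\Poly(\delta \Z \to G)$ via Baker--Campbell--Hausdorff bookkeeping together with the non-abelian discrete Taylor expansion.

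First I would verify that restriction to $\delta\Z$ sends $\Poly(\R\to G)$ into $\Poly(\delta\Z\to G)$. Writing $\tilde g(t)=\exp(\sum_{i=0}^k X_i t^i)$ with $X_i\in\log G_i$, and setting $b=\log\tilde g(t)$ and $\Delta=\log\tilde g(t+h)-b$, a direct expansion gives $\Delta=\sum_j t^j\sum_{i>j}\binom{i}{j}h^{i-j}X_i$, whose $t^j$-coefficient lies in $\log G_{j+1}$ because $X_i\in\log G_i\subset\log G_{j+1}$ for $i>j$. Now $\log\partial_h\tilde g(t)=\mathrm{BCH}(b+\Delta,-b)$; the linear piece equals $\Delta$, and after collecting brackets and using $[b,b]=0$ every higher-order term is a nested Lie bracket containing at least one factor of $\Delta$. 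Since brackets add filtration depths and each $\Delta$-factor contributes an extra $+1$ shift, the $t^j$-coefficient of $\log\partial_h\tilde g(t)$ lies in $\log G_{j+1}$. Iterating $i$ times, the $t^j$-coefficient of $\log(\partial_{h_1}\cdots\partial_{h_i}\tilde g)(t)$ lies in $\log G_{j+i}\subset\log G_i$, so $\partial_{h_1}\cdots\partial_{h_i}\tilde g(t)\in G_i$, which is the defining property of $\Poly(\delta\Z\to G)$.

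Second, uniqueness of extension follows because any $\tilde g\in\Poly(\R\to G)$ is determined by the polynomial map $\log\tilde g\in(\log G)[t]$ of degree $\leq k$, and two such polynomial maps agreeing on $\delta\Z$ (which contains more than $k$ points) must coincide. For existence, I invoke the non-abelian discrete Taylor expansion (Lemma~\ref{na-dte}, the non-abelian analogue of Lemma~\ref{dte}), which factors any $g\in\Poly(\delta\Z\to G)$ as $g(n)=\prod_{i=0}^{k}g_i^{\binom{n/\delta}{i}}$ for $n\in\delta\Z$, with $g_i\in G_i$. Using the real exponentiation \eqref{realexp}, define $\tilde g(t)=\prod_{i=0}^{k}g_i^{\binom{t/\delta}{i}}$ for $t\in\R$; this agrees with $g$ on $\delta\Z$. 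Expanding via Baker--Campbell--Hausdorff, $\log\tilde g(t)$ is a finite sum of terms of the form $c\cdot\prod_\ell\binom{t/\delta}{i_\ell}\cdot[\log g_{i_1},[\log g_{i_2},\dots,\log g_{i_m}]]$; the nested bracket lies in $\log G_{i_1+\cdots+i_m}$ by the filtration property, while the polynomial factor has degree $\sum i_\ell$ in $t$, so the $t^j$-coefficient only receives contributions with $\sum i_\ell\geq j$ and hence lies in $\log G_{\sum i_\ell}\subset\log G_j$. Therefore $\tilde g\in\Poly(\R\to G)$.

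Finally, for the group structure of $\Poly(\delta\Z\to\Gamma)$ I use that $\Poly(\R\to G)$ is already a group under pointwise multiplication by the Baker--Campbell--Hausdorff formula. Given $g,g'\in\Poly(\delta\Z\to\Gamma)$, extend them uniquely to $\tilde g,\tilde g'\in\Poly(\R\to G)$; then $\tilde g\tilde g'^{-1}\in\Poly(\R\to G)$, and its restriction to $\delta\Z$ lies in $\Poly(\delta\Z\to G)$ by the first step, while taking values in $\Gamma$ since $\Gamma$ is a group. The main obstacle I anticipate is the meticulous filtration bookkeeping through Baker--Campbell--Hausdorff in step one, and the invocation of Lemma~\ref{na-dte} in step three: without a clean factorization into building blocks $g_i^{p(t)}$ with $g_i\in G_i$, producing a smooth polynomial extension of $g$ directly from the iterated-derivative definition seems substantially harder.
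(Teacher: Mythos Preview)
Your approach is essentially the same as the paper's. The paper rescales to $\delta=1$, packages your restriction argument into the framework of filtration-shifted polynomial spaces $V_j$ satisfying $[V_i,V_j]\subset V_{i+j}$ (so your direct BCH bookkeeping becomes the single inductive claim $\log\partial_{h_1}\cdots\partial_{h_j}\tilde g\in V_j$), and handles existence and uniqueness of the extension by the same Taylor-expansion-then-BCH verification you give. For the group claim the paper cites the Leibman--Lazard theorem directly rather than passing through $\Poly(\R\to G)$, but your extension-restriction route is equally valid.

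One logical point to correct: you invoke Lemma~\ref{na-dte} for the existence step, but in the paper Lemma~\ref{na-dte} is stated \emph{after} Lemma~\ref{uniq} and is explicitly derived as a corollary of it, so citing it here is circular. The fix is immediate: after rescaling to $\delta=1$, cite Lemma~\ref{taylo} instead, which is the Taylor expansion for $\Poly(\Z\to G)$ and is established independently as a special case of~\cite[Lemma~B.9]{gtz}.
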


\begin{proof}  See Appendix~\ref{bch}.
\end{proof}

In view of this lemma we shall abuse notation by identifying $\Poly(\delta \Z \to G)$ with $\Poly(\R \to G)$, and viewing each of the $\Poly(\delta \Z \to \Gamma)$ as subgroups of $\Poly(\R \to G)$.  We will refer to polynomial maps in $\Poly(\delta \Z \to \Gamma)$ as being \emph{$\frac{1}{\delta}$-integral}.

Applying the inverse conjecture for the Gowers norms as in~\cite[\S 4]{TaoEq}, \cite[\S C]{gtz} we see that Theorem~\ref{mult-pret} follows from (and is in fact equivalent to) the following claim:

\begin{theorem}[Non-pretentious multiplicative functions do not correlate with nilsequences on short intervals on average]\label{mult-nil}  Let $k \geq 0$ be a non-negative integer, and let $0 < \theta < 1$.  Let $G/\Gamma$ be a degree $k$ filtered nilmanifold, and let $F \colon G/\Gamma \to \C$ be a Lipschitz function.  Suppose that $f\colon \N \to \C$ is a multiplicative $1$-bounded function, and suppose that $X \geq 1$, $X^\theta \leq H \leq X^{1-\theta}$, and $\eta > 0$ are such that
$$ \int_X^{2X} \sup_{g \in \Poly( \R \to G )} \left| \sum_{n \in [x,x+H]} f(n) \overline{F}(g(n) \Gamma)\right| \ dx \geq \eta H X.$$
Then one has
\begin{equation}
\label{eq:PretCond}
M(f; CX^{k+1} / H^{k+1}, Q ) \ll_{k,\eta,\theta,F,G/\Gamma} 1
\end{equation}
for some $C, Q \ll_{k,\eta,\theta,F,G/\Gamma} 1$.
\end{theorem}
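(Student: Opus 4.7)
The plan is to proceed by induction on the dimension of $G$. When $G$ is abelian, every polynomial sequence $g\in\Poly(\R\to G)$ composed with a Lipschitz function $F\colon G/\Gamma\to\C$ expands into a bounded linear combination of polynomial phases $n\mapsto e(P(n))$ of degree at most $k$, so the conclusion is immediate from Theorem~\ref{mult-poly}. For the inductive step I will invoke the factorization theorem for polynomial nilsequences to reduce to the case in which $F$ is ``irreducible'', meaning effectively orthogonal to all nilsequences coming from proper rational subquotients of $G/\Gamma$; otherwise the hypothesis of the theorem descends to a lower-dimensional filtered nilmanifold and the inductive hypothesis applies directly.

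To handle the irreducible non-abelian case I will reproduce the abstract skeleton of Section~\ref{sec: polyphases}, replacing the space $\Phi$ of local polynomial phases by a space $\Psi$ of local nilsequences $\phi=(I,g)$ with $g\in\Poly(\R\to G)$, and defining $\phi_1\sim_\delta\phi_2$ to mean $g_1=\varepsilon\, g_2\, \gamma$ with $\log\varepsilon$ smooth on $I_1$ and $\gamma\in\Poly(\delta\Z\to\Gamma)$. The initial pigeonholing into a large $(X,H)$-family, the Tur\'an--Kubilius scaling-down step, the Chinese remainder assembly, and the graph-theoretic construction all carry over with only cosmetic changes, provided one uses quantitative equidistribution of irreducible nilsequences in place of the Weyl bound underlying Proposition~\ref{lsieve}. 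The output is an approximate dilation invariance $(1/a_i)_*\phi_x \sim_{1/Q}(1/b_i)_*\phi_x$ with $(a_i-b_i)/a_i\asymp H/X$, directly analogous to the conclusion of Proposition~\ref{prop41-analog}.

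The main obstacle, which forces a genuine departure from the polynomial-phase argument, is that a single dilation invariance does not pin $g_x$ down to an Archimedean character in the non-abelian case, due to exotic approximately multiplicative solutions such as the one highlighted in the remark after equation~\eqref{gmix}. To rule these out I will strengthen the graph-theoretic step so that the two linking cycles have distinct lengths $\ell_1\neq\ell_2$, producing not one but an exceptionally dense set of ratios $1+\theta$ of size $O(H/X)$ all satisfying $g_x((1+\theta)t)\sim g_x(t)\,\gamma_{x,\theta}(t)$ with $\gamma_{x,\theta}$ highly rational. A Baker--Campbell--Hausdorff computation then forces any polynomial map in $\Poly(\R\to G)$ that is simultaneously $Q$-rational with $Q$ exponentially large in $P''$ and of polynomial size to be constant, so each $\gamma_{x,\theta}$ collapses to a single element of $G$. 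Iterating the resulting cocycle relation in $\theta$ and interpolating from integer to real shifts via Lemma~\ref{ber-exp} then upgrades this family into the two-variable functional equation $g_x(y)\sim T_x^{N\log(y/x)}$ on $y\in[x,x+H]$ for some $T_x\in G$ of polynomial size.

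To close the induction I will combine quantitative equidistribution theory on $G/\Gamma$ with the mixing lemma from Section~\ref{sec: polyphases} to compare the functional equation $g_x(y)\sim T_x^{N\log(y/x)}$ with the dense family of dilation invariances, concluding that $T_x = O(1)\cdot T_0$ where $T_0$ lies in a proper rational subgroup $G'\subsetneq G$ (when $G$ is non-abelian the central case is automatically contained in this one). The $O(1)$ factor is negligible after a Taylor expansion of $s\mapsto T^{Ns}$, so $g_x$ may be factored through $G'/(G'\cap\Gamma)$, and applying the inductive hypothesis on this lower-dimensional filtered nilmanifold furnishes the pretentiousness conclusion \eqref{eq:PretCond}.
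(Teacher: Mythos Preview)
Your plan is essentially the paper's own: induction on $\dim G$, the abelian case via Theorem~\ref{mult-poly}, the space $\Psi$ of local nilsequences with $\sim_\delta$, the scaling-down/Chinese-remainder/graph-theory package, unequal cycle lengths to produce a \emph{dense} family of dilations, the transversality principle (a polynomial map that is simultaneously of polynomial size and $Q$-rational with $Q$ exponentially large must be constant), interpolation via Lemma~\ref{ber-exp} to reach $\tilde g_x(t)\approx T_x^{N\log(t/x)}$, and then equidistribution theory to locate $T_x$ near a proper rational subgroup. One small misattribution: the paper does \emph{not} use the mixing lemma at the last stage; Proposition~\ref{desc} works edge-by-edge, using the density of the dilation set together with the quantitative Leibman theorem to produce a nontrivial horizontal character $\eta$ with $d\eta(\log T_x)=O(N)$, and then pigeonholes over the finitely many candidate $\eta$ to make it uniform across intervals.

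There is, however, a genuine gap. Your splitting $g_x=\tilde g_x\,\gamma_x$ gives control only on the Archimedean factor $\tilde g_x$; the rational factor $\gamma_x$ is at this point merely $Q$-rational for an exponentially large $Q$, i.e.\ $\gamma_x\in\Poly(q\Z\to\Gamma)$ for some $q$ of \emph{polynomial} size in $X$. To descend via the inductive hypothesis (equivalently, to invoke the major-arc estimate in Proposition~\ref{major}) you need $q=O(1)$, so that $n\mapsto\gamma_x(n)\Gamma$ is periodic with bounded period and you can split into residue classes before factoring through the proper subgroup. The paper accomplishes this reduction with a second, independent graph-theoretic argument (Propositions~\ref{cor54-nil} and~\ref{qbound}), a non-abelian analogue of Proposition~\ref{prop56-analog}: one builds $2\ell$-cycles, extracts from each cycle a congruence $a^m\equiv b^m\pmod{n_m}$ on the prime products, and then counts with Lemma~\ref{lem26-analog} to force $n_m$ (and hence $q$) to be bounded. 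Your sketch omits this step entirely, and without it the final sentence ``$g_x$ may be factored through $G'/(G'\cap\Gamma)$'' is not justified.
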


We note that in order to prove Theorem~\ref{mult-pret} it suffices to prove Theorem~\ref{mult-nil} with $F$ fixed since by Arzel\`a--Ascoli the family of Lipschitz functions $F$ on $G / \Gamma$ of bounded norm is precompact in the uniform topology, and moreover we can modify $F$ in the uniform norm by anything less than $\eta / 10$, say, without significantly affecting the assumption of Theorem~\ref{mult-nil} (i.e changing $\geq \eta H X$ to $\geq \eta H X / 2$, say). As a result we can restrict to a finite set of $F$'s and thus to a fixed $F$ by pigeonholing. 

As in the previous section, at present it is only the values of $g$ on $\Z$ that are relevant, but once one begins exploiting the dilation structure of $\R$ it becomes convenient to view $g$ as a polynomial map on all of $\R$ and not just on $\Z$. As remarked in the introduction, in~\cite{HeWang}  a variant of this estimate was established in which the supremum in $g$ was placed outside the integral, and in which $H$ was allowed to grow in $X$ arbitrarily slowly rather than at a polynomial rate; see also~\cite{flam} for an earlier partial result in this direction.

We prove Theorem~\ref{mult-nil} by induction on the dimension $\mathrm{dim}(G/\Gamma) = \mathrm{dim}(G)$ of the nilmanifold $G/\Gamma$ (keeping $k$ fixed).  When $\mathrm{dim}(G/\Gamma)=0$, the function $F(g(n)\Gamma)$ is constant, and the claim corresponds to $k=0$ case of Theorem~\ref{mult-poly} which in turn essentially followed from the result in~\cite{MRT}. Hence we assume inductively that $\mathrm{dim}(G/\Gamma) \geq 1$, and that the claim has already been proven for all $G$ of smaller dimension.  We now fix $k,\eta,\theta,F,G/\Gamma$, and allow implied constants to depend on these quantities.  Thus we have
\begin{equation}\label{xi}
 \int_X^{2X} \sup_{g \in \Poly( \R \to G )} \left| \sum_{n \in [x,x+H]} f(n) \overline{F}(g(n) \Gamma)\right| \ dx \gg H X,
\end{equation}
and our objective is to show that
$$ M(f; C X^{k+1} / H^{k+1}, Q ) \ll 1$$
for some $C, Q = O(1)$.  We may normalize $F$ to be bounded in magnitude by $1$, so that the sequences $n \mapsto \overline{F}(g(n)\Gamma)$ are $1$-bounded.  As in the previous section, we also introduce a small parameter $\eps>0$ that can depend on $k,\eta,\theta,F,G/\Gamma$, and allow implied constants to also depend on $\eps$ unless otherwise specified.

\subsection{Initial reductions}
We first make a minor but convenient reduction, namely that we restrict to the case when $f$ is completely multiplicative rather than merely multiplicative (cf.~\cite[Proposition 10]{Tao}).  If we let $f_1$ be the completely multiplicative function that equals $f$ at each prime $p$, then we can write $f$ as a Dirichlet convolution $f(n) = \sum_{d=1}^\infty 1_{d|n} f_1(\frac{n}{d}) h(d)$ for some multiplicative function $h$ with $h(p)=0$ and $|h(p^j)| \leq 2$ for all $j \geq 2$ (in fact $h(p^j) = f(p^j) - f(p) f(p^{j-1})$).  From \eqref{xi} and the triangle inequality, we thus have
$$
\sum_{d=1}^\infty |h(d)| \int_X^{2X} \sup_{g \in \Poly( \R \to G )} \left| \sum_{n \in [x,x+H]} 1_{d|n} f_1(\frac{n}{d}) \overline{F}(g(n) \Gamma)\right| \ dx \gg H X.$$
From Euler products we see that $\sum_{d=1}^\infty \frac{|h(d)|}{d^{2/3}} \ll 1$ (say), so by the pigeonhole principle there exists $d \geq 1$ such that
$$
\int_X^{2X} \sup_{g \in \Poly( \R \to G )} \left| \sum_{n \in [x,x+H]} 1_{d|n} f_1(\frac{n}{d}) \overline{F}(g(n) \Gamma)\right| \ dx \gg d^{-2/3} H X.$$
The left-hand side can be trivially bounded by $O( d^{-1} HX)$, hence $d = O(1)$.  Making the change of variables $n = dn'$ and $x = dx'$, we then have
$$
\int_{X/d}^{2X/d} \sup_{g \in \Poly( \R \to G )} \left| \sum_{n' \in [x',x'+H/d]} f_1(n') \overline{F}(g(dn') \Gamma)\right| \ dx \gg (H/d) (X/d).$$
Note that if $g$ lies in $\Poly(\R \to G)$ then the dilation $g(d\cdot)$ does also.
Applying Theorem~\ref{mult-nil} for the completely multiplicative function $f_1$ (adjusting $\theta$ slightly to retain the hypothesis $X^\theta \leq H \leq X^{1-\theta}$), we conclude that
$$ M(f_1; C (X/d)^{k+1} / (H/d)^{k+1}, Q ) \ll 1$$
and the claim follows.

It remains to establish the claim for completely multiplicative $f$.  Assume for contradiction that this claim is false.  Then we can find a sequence $X = X_{\n} \geq 1$ of real numbers and a sequence $f = f_\n$ of $1$-bounded completely multiplicative functions, such that \eqref{xi} holds uniformly in $\n$, but such that
\begin{equation}\label{dq}
 M(f; C X^{k+1} / H^{k+1}, Q ) \to \infty
\end{equation}
as $\n \to \infty$ for any fixed $Q, C$, where $H = H_\n$ lies in the interval $[X_\n^\theta, X_\n^{1-\theta}]$.  Among other things, this implies that $X \to \infty$ as $\n \to \infty$.  We now restrict attention to $\n$ sufficiently large, so that $X$ can be made larger than any fixed constant.
Henceforth we suppress the dependence of $X, H, f$ on $\n$.  We refer to a quantity as \emph{fixed} if it is independent of $\n$, and use the asymptotic notation $Y = o(Z)$ to denote the claim $|Y| \leq c(\n) Z$ for some quantity $c(\n)$ that may depend on fixed quantities, but goes to zero as $\n \to \infty$.  From the induction hypothesis, we conclude that
$$ \int_X^{2X} \sup_{\tilde g \in \Poly( \R \to \tilde G )} \left| \sum_{n \in [x,x+H]} f(n) \overline{\tilde F}(\tilde g(n) \tilde \Gamma)\right| \ dx = o(HX)$$
whenever $\tilde G/\tilde \Gamma$ is a fixed degree $k$ filtered nilmanifold of dimension strictly less than that of $G/\Gamma$, and $\tilde F: \tilde G/\tilde \Gamma\to \mathbb{C}$ is a fixed Lipschitz function.  More generally, for any fixed Dirichlet character $\chi$, we see from \eqref{dq} and enlarging $Q$ that
$$ M(f\chi; C X^{k+1} / H^{k+1}, Q ) \to \infty$$
for any fixed $C$, and hence
$$ \int_X^{2X} \sup_{\tilde g \in \Poly( \R \to \tilde G )} \left| \sum_{n \in [x,x+H]} f(n) \chi(n) \overline{\tilde F}(\tilde g(n) \tilde \Gamma)\right| \ dx = o(HX).$$
By multiplicative Fourier expansion we thus have
\begin{equation}\label{mul}
\int_X^{2X} \sup_{\tilde g \in \Poly( \R \to \tilde G )} \left| \sum_{n \in [x,x+H]} f(n) 1_{n = a\mod q} \overline{\tilde F}(\tilde g(n) \tilde \Gamma)\right| \ dx = o(HX)
\end{equation}
for any fixed natural number $q$ and any fixed $a$ coprime to $q$.  Because $f$ is completely multiplicative, we also see that the same claim is true when $a$ shares a common factor $d$ with $q$, after rescaling $X,H,x,n$ by $d$ as before (and expressing sum over the shrunken interval $[x/d, x/d+H/d]$ as an average of sums over intervals of length $(X/H)^{\theta/2}$, plus negligible error).

Among other things, this allows us to eliminate ``major arc'' cases of \eqref{xi}.  Define a \emph{rational} subgroup of $G$ to be a closed subgroup $\tilde G$ of $G$ for which $\tilde G \cap \Gamma$ is cocompact in $\tilde G$.

\begin{proposition}[Major arc case]\label{major}  Assume that $f$ satisfies \eqref{dq}. Let $\tilde G$ be a fixed connected rational subgroup of $G$, and suppose that $\tilde G$ is a proper subgroup in the sense that $\mathrm{dim}(\tilde G) < \mathrm{dim}(G)$ (or equivalently\footnote{This is because $\tilde G \neq G$ is equivalent to $\log \tilde G$ being a proper subspace of $\log G$.}, $\tilde G \neq G$).  We endow $\tilde G$ with the filtration $\tilde G_i \coloneqq G_i \cap \tilde G$ induced from $G$.  Let $q$ be a fixed natural number, and let $E$ be a fixed compact subset of $\tilde G$.  Then
$$
\int_X^{2X} \sup_{\substack{\eps \in E\\ \tilde g \in \Poly( \R \to \tilde G )\\ \gamma \in \Poly(q\Z \to \Gamma)}} \left| \sum_{n \in [x,x+H]} f(n) \overline{F}(\eps \tilde g(n) \gamma(n) \Gamma)\right| \ dx = o(HX).$$
\end{proposition}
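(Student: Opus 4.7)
The strategy is to reduce Proposition~\ref{major} to the induction hypothesis applied to the strictly smaller nilmanifold $\tilde G/\tilde\Gamma$, combined with the multiplicative Fourier expansion identity~\eqref{mul}. First, since $\eps \in E \subset \tilde G$ and $\Poly(\R \to \tilde G)$ contains all constant $\tilde G$-valued maps, we absorb $\eps$ into $\tilde g$. Second, using the non-abelian discrete Taylor expansion (Lemma~\ref{na-dte}), any $\gamma \in \Poly(q\Z \to \Gamma)$ has Mal'cev coordinates that are $q$-rational polynomial combinations of $\binom{t/q}{j}$; a direct computation then shows that $n \mapsto \gamma(n)\Gamma \in G/\Gamma$ is periodic in $n \in \Z$ with some period $Q = Q(q,k,G/\Gamma)$ depending only on $q$, $k$, and the filtered nilmanifold. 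Partitioning $[x,x+H] \cap \Z$ into $Q$ residue classes and applying the triangle inequality, it suffices to prove, for each fixed $b \in \{0, \dots, Q-1\}$,
\[
\int_X^{2X}\sup_{\tilde g,\gamma}\Big|\sum_{\substack{n\in[x,x+H]\\ n\equiv b \pmod{Q}}} f(n)\overline{F}(\tilde g(n)\gamma(n)\Gamma)\Big|\,dx = o(HX).
\]

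On this residue class, $\gamma(n)\Gamma = \gamma(b)\Gamma$ is independent of $n$, and we may replace $\gamma(b)$ by a representative $\tilde\eps$ in a fixed compact fundamental domain $K$ of $\Gamma$ in $G$. A Mal'cev-coordinate computation (using that $\gamma(b)$ has $q$-rational Mal'cev coordinates with respect to $\Gamma$) shows that $\tilde\eps\,\Gamma\,\tilde\eps^{-1}$ is commensurable with $\Gamma$, and hence $\tilde\Gamma_{\tilde\eps} := \tilde G \cap \tilde\eps\,\Gamma\,\tilde\eps^{-1}$ is a lattice in $\tilde G$. The map $h\tilde\Gamma_{\tilde\eps} \mapsto h\tilde\eps\Gamma$ then identifies $\tilde G/\tilde\Gamma_{\tilde\eps}$ with a sub-nilmanifold of $G/\Gamma$ of dimension $\mathrm{dim}(\tilde G) < \mathrm{dim}(G)$, carrying the induced degree-$k$ filtration. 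Defining $F_{\tilde\eps}(h\tilde\Gamma_{\tilde\eps}) := F(h\tilde\eps\Gamma)$, a Lipschitz function on $\tilde G/\tilde\Gamma_{\tilde\eps}$ with Lipschitz norm bounded uniformly in $\tilde\eps \in K$, we obtain the identification
\[
\overline{F}(\tilde g(n)\gamma(b)\Gamma) = \overline{F_{\tilde\eps}}(\tilde g(n)\tilde\Gamma_{\tilde\eps}),
\]
so the inner sum becomes a correlation of $f$ with a nilsequence on a strictly lower-dimensional nilmanifold, restricted to an arithmetic progression modulo $Q$.

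To apply the induction hypothesis we remove the $\tilde\eps$-dependence. Since $\tilde\eps$ ranges over the compact set $K$ and $(\tilde\Gamma_{\tilde\eps}, F_{\tilde\eps})$ depends continuously on $\tilde\eps$, an Arzel\`a--Ascoli compactness argument lets us cover the family by a finite $o(1)$-net; the lattices $\tilde\Gamma_{\tilde\eps}$, being commensurable with $\tilde\Gamma$ and varying continuously over compact $K$, collapse to finitely many $\Gamma$-conjugacy classes up to a small Lipschitz perturbation of $F_{\tilde\eps}$. Pigeonholing over the finite net fixes a single proper rational sub-nilmanifold $\tilde G/\tilde\Gamma'$ with a fixed Lipschitz function $F'$, at the cost of a bounded multiplicative constant. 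The residue-class indicator $1_{n \equiv b \pmod{Q}}$ is then absorbed via~\eqref{mul}, which applies to residues not necessarily coprime to $Q$ by the completely multiplicative reduction performed earlier in the section, and the induction hypothesis applied to the lower-dimensional $\tilde G/\tilde\Gamma'$ concludes the proof.

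The main obstacle is the uniform treatment of the conjugate sub-nilmanifolds $\{\tilde G/\tilde\Gamma_{\tilde\eps}\}_{\tilde\eps \in K}$: specifically, verifying the commensurability of $\tilde\Gamma_{\tilde\eps}$ with $\tilde\Gamma$ (which hinges on the rational Mal'cev structure of $\tilde\eps$ inherited from $\gamma$) and executing the compactness/pigeonholing argument that reduces the continuous $K$-family of filtered nilmanifolds and Lipschitz functions to a single fixed instance on which the induction hypothesis can be invoked.
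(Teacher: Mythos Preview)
Your proposal follows essentially the same route as the paper's proof: reduce to a periodic $\gamma(n)\Gamma$, split into residue classes, pass to a conjugated sub-nilmanifold, and invoke \eqref{mul} with the induction hypothesis. The absorption of $\eps$ into $\tilde g$ is a clean simplification over the paper's choice of fixing $\eps$ via Lipschitz continuity and compactness of $E$.

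However, your identified ``main obstacle'' is not in fact an obstacle, and your Arzel\`a--Ascoli compactness argument is unnecessary. The key observation you are missing is that as $\gamma$ ranges over $\Poly(q\Z\to\Gamma)$, the value $\gamma(b)$ always lies in the group $\Gamma'$ generated by $\{\gamma^{1/q^k}:\gamma\in\Gamma\}$, and $\Gamma$ has \emph{finite index} in $\Gamma'$ (this is noted at the end of Appendix~\ref{bch}). Consequently $\gamma(b)\Gamma$ takes only finitely many values in $\Gamma'/\Gamma$, so your $\tilde\eps$ ranges over a \emph{finite} set, not a continuous compact family. There is no ``continuous $K$-family'' to net over, and no need for Arzel\`a--Ascoli: one simply pigeonholes over the $[\Gamma':\Gamma]$ possible cosets. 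The paper makes this explicit by first observing that $\Poly(q\Z\to\Gamma)/\Poly(\Z\to\Gamma)$ is finite, then fixing a single $\gamma$, and only afterwards invoking periodicity (using the normal core $\Gamma_*$ of $\Gamma$ in $\Gamma'$). This also cleanly handles the commensurability you need: since $\Gamma_*$ is normal in $\Gamma'$ and $\tilde\eps\in\Gamma'$, one has $\tilde\eps\,\Gamma_*\,\tilde\eps^{-1}=\Gamma_*$, so $\tilde G\cap\tilde\eps\,\Gamma\,\tilde\eps^{-1}\supseteq\tilde G\cap\Gamma_*$, which is cocompact in $\tilde G$ because $\tilde G$ is rational and $[\Gamma:\Gamma_*]<\infty$. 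With finiteness in hand, your ``small Lipschitz perturbation'' and ``collapse to finitely many conjugacy classes'' become vacuous: each of the finitely many $\tilde\eps$ gives a genuine fixed lower-dimensional nilmanifold to which \eqref{mul} applies directly.
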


\begin{proof} Since $F$ is a Lipschitz function, and $E$ is compact it suffices to verify the Theorem for a single choice of $\eps$.
  Next, we claim that the quotient space $\Poly(q\Z \to \Gamma) / \Poly(\Z \to \Gamma)$ is finite.  Indeed, from Taylor expansion we see that if $\gamma \in \Poly(q\Z \to \Gamma)$, then $\gamma(\Z)$ takes values in the group $\Gamma'$ generated by the roots $\{ \gamma^{1/q^k}: \gamma \in \Gamma \}$ of $\Gamma$.  As noted at the end of Appendix~\ref{bch}, $\Gamma$ has finite index in $\Gamma'$, so there are only finitely many possibilities for the tuple $(\gamma(0),\dots,\gamma(k))$ modulo right multiplication by elements of $\Gamma^{k+1}$.  As this tuple uniquely determines the polynomial map $\gamma$, we conclude that there are only finitely many possibilities for $\gamma$ modulo right multiplication by elements of $\Poly(\Z \to \Gamma)$, giving the claim.

Since the quantity $F(\tilde g(n) \gamma(n) \Gamma)$ is unaffected if one multiplies $\gamma$ on the right by an element of $\Poly(\Z \to \Gamma)$, we see that we may restrict $\gamma$ without loss of generality to a set of coset representatives of the finite quotient space $\Poly(q\Z \to \Gamma) / \Poly(\Z \to \Gamma)$.  Thus, by the triangle inequality, it suffices to prove the claim for a single fixed choice of $\gamma$.  

Fix $\gamma$.  As $\Gamma$ has finite index in $\Gamma'$, there is a finite index subgroup $\Gamma_*$ of $\Gamma$ which is normal in $\Gamma'$ (for instance, one can take $\Gamma_*$ to be the kernel of the left-action of $\Gamma$ on the finite space $\Gamma/\Gamma'$). 

The sequence $n \mapsto \gamma(n) \Gamma_*$ is then a polynomial map from $\Z$ to the finite group $\Gamma'/\Gamma_*$ (it is the composition of $\gamma \in \Poly(\Z \to \Gamma')$ with the quotient homomorphism $\pi$ from $\Gamma'$ to $\Gamma'/\Gamma_*$, where we equip $\Gamma'/\Gamma_*$ with the filtration $\pi(\Gamma'_i)$) and is hence periodic of some fixed period $Q$; this implies that $n \mapsto \gamma(n) \Gamma$ depends only on the residue class $n \mod Q$.  By the triangle inequality, it now suffices to show that
\begin{equation}\label{hax}
\int_X^{2X} \sup_{\tilde g \in \Poly( \R \to \tilde G )} \left| \sum_{n \in [x,x+H]} f(n) 1_{n=a \mod Q} \overline{F}(\tilde g(n) \gamma_0 \Gamma)\right| \ dx = o(HX)
\end{equation}
for any fixed $a$ and any fixed $\gamma_0 \in \Gamma'$.  

Since $\tilde G \cap \Gamma$ is cocompact in $\tilde G$, so is $\tilde G \cap \Gamma_*$.  As $\Gamma_*$ is normalized by $\gamma_0$, this implies that $\gamma_0^{-1} \tilde G \gamma_0 \cap \Gamma_*$ is cocompact in $\gamma_0^{-1} \tilde G \gamma_0$, so in particular the group $\gamma_0^{-1} \tilde G \gamma_0$ is rational.  If we let $\tilde F: \gamma_0^{-1} \tilde G \gamma_0 / (\gamma_0^{-1} \tilde G \gamma_0 \cap \Gamma_*) \to \C$ be the function 
$$\tilde F( \gamma_0^{-1} \tilde g \gamma_0 \Gamma_* ) \coloneqq F( \tilde g \gamma_0 \Gamma )$$
then $\tilde F$ is Lipschitz, and the left-hand side of \eqref{hax} can be rewritten (after conjugating $\tilde g$ by $\gamma_0$) as
$$\int_X^{2X} \sup_{\tilde g \in \Poly( \R \to \gamma_0^{-1} \tilde G \gamma_0 )} \left| \sum_{n \in [x,x+H]} f(n) 1_{n=a \mod Q} \overline{\tilde F}(\tilde g(n) \Gamma^*)\right| \ dx.$$
Here of course we give $\gamma_0^{-1} \tilde G \gamma_0$ the filtration $(\gamma_0^{-1} \tilde G \gamma_0)_i = \gamma_0^{-1} \tilde G_i \gamma_0$, and note that composition with the Lie group isomorphism $g \mapsto \gamma_0^{-1} g \gamma_0$ gives an isomorphism between $\Poly(\R \to \tilde G)$ and $\Poly( \R \to \gamma_0^{-1} \tilde G \gamma_0 )$.  Since the dimension of the nilmanifold $\gamma_0^{-1} \tilde G \gamma_0 / (\gamma_0^{-1} \tilde G \gamma_0 \cap \Gamma_*)$ is strictly less than that of $G/\Gamma$, the claim now follows from \eqref{mul}.
\end{proof}

We now eliminate some components of $F$ that arise from lower dimensional nilmanifolds\footnote{The need for dealing with these arises from the large sieve for nilsequences that we present as Proposition~\ref{lsieve-nil}.}.  Suppose that there is a non-trivial normal rational connected closed subgroup $N$ of $G$.  Then inside the Hilbert space $L^2(G/\Gamma)$ of square-integrable functions on $G/\Gamma$ (with respect to the Haar probability measure $\mu_{G/\Gamma}$) there is the closed subspace $L^2(G/\Gamma)^N$ of functions that are invariant with respect to the left-action of $N$; from normality this space is also preserved by the left-action of $G$.  

\begin{proposition}[Invariant case]\label{inv}  Assume that $f$ satisfies \eqref{dq}. If $N$ is a fixed non-trivial normal connected rational subgroup of $G$, and $F_N \in L^2(G/\Gamma)^N$ is a fixed Lipschitz continuous function, then
$$
\int_X^{2X} \sup_{g \in \Poly( \R \to G )} \left| \sum_{n \in [x,x+H]} f(n) \overline{F_N}(g(n) \Gamma)\right| \ dx  = o(HX).
$$
\end{proposition}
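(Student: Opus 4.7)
The plan is to deduce Proposition~\ref{inv} from the induction hypothesis on $\dim(G/\Gamma)$ by descending to the quotient nilmanifold $(G/N)\big/(\Gamma N/N)$, whose dimension is strictly smaller because $N$ is nontrivial and connected.

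First I would set up the quotient data. Let $\pi \colon G \to \tilde G := G/N$ be the quotient homomorphism, which is well-defined since $N$ is normal. Endow $\tilde G$ with the pushforward filtration $\tilde G_i := \pi(G_i)$; the relations $[\tilde G_i, \tilde G_j] \subset \tilde G_{i+j}$ and $\tilde G_{k+1} = \{1\}$ follow automatically from the corresponding properties in $G$, so $\tilde G$ is a degree $k$ filtered group. Let $\tilde \Gamma := \pi(\Gamma) = \Gamma N / N$. The rationality of $N$ (i.e.\ $N \cap \Gamma$ is cocompact in $N$) together with the cocompactness of $\Gamma$ in $G$ ensures that $\Gamma N$ is a closed subgroup of $G$ and that $\tilde\Gamma$ is a discrete cocompact subgroup of $\tilde G$; the analogous statement for each $\tilde \Gamma \cap \tilde G_i$ follows from standard rationality bookkeeping. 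Thus $\tilde G / \tilde \Gamma$ is a degree $k$ filtered nilmanifold of dimension $\dim(G) - \dim(N) < \dim(G)$.

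Next I would push $F_N$ and the polynomial sequences $g$ through $\pi$. Because $N$ is normal and $F_N \in L^2(G/\Gamma)^N$ is left-$N$-invariant, $F_N$ descends to a function $\tilde F \colon \tilde G / \tilde \Gamma \to \C$ with $F_N(g\Gamma) = \tilde F(\pi(g)\tilde \Gamma)$; with respect to the quotient metric on $\tilde G/\tilde\Gamma$, the function $\tilde F$ inherits a finite Lipschitz norm from $F_N$. If $g \in \Poly(\R \to G)$, then $\tilde g := \pi \circ g \in \Poly(\R \to \tilde G)$, since $\pi$ is a filtered homomorphism and hence $\partial_{h_1}\cdots\partial_{h_i}\tilde g = \pi(\partial_{h_1}\cdots\partial_{h_i} g) \in \pi(G_i) = \tilde G_i$. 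The key pointwise identity $F_N(g(n)\Gamma) = \tilde F(\tilde g(n)\tilde \Gamma)$ then gives, for every $x$,
$$
\sup_{g \in \Poly(\R \to G)} \Bigl| \sum_{n \in [x,x+H]} f(n) \overline{F_N}(g(n)\Gamma)\Bigr|
\;\leq\;
\sup_{\tilde g \in \Poly(\R \to \tilde G)} \Bigl| \sum_{n \in [x,x+H]} f(n) \overline{\tilde F}(\tilde g(n)\tilde \Gamma)\Bigr|.
$$
Integrating over $x \in [X,2X]$ and invoking the induction hypothesis (Theorem~\ref{mult-nil} for the fixed lower-dimensional filtered nilmanifold $\tilde G/\tilde \Gamma$ and the fixed Lipschitz function $\tilde F$), which applies precisely because $f$ continues to satisfy \eqref{dq}, yields the claimed $o(HX)$ bound.

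The one spot that requires genuine (but routine) care is verifying that $(\tilde G, \tilde \Gamma, \tilde G_\bullet)$ really is a degree $k$ filtered nilmanifold in the paper's sense, which amounts to checking that each $\tilde G_i \cap \tilde \Gamma$ is cocompact in $\tilde G_i$; this is where the rationality of $N$ enters essentially. Everything else, including the Lipschitz estimate on $\tilde F$ and the lifting of polynomial sequences, is standard nilmanifold formalism. No new analytic input is needed beyond the induction hypothesis itself, so I do not expect any serious obstacle.
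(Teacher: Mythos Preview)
Your proposal is correct and follows essentially the same approach as the paper: descend via the quotient map $\pi\colon G \to G/N$ to the lower-dimensional filtered nilmanifold $\pi(G)/\pi(\Gamma)$, factor $F_N = \tilde F \circ \tilde\pi$ using the $N$-invariance, push polynomial sequences through $\pi$, and invoke the induction hypothesis (which the paper packages as \eqref{mul}). The paper's proof is slightly terser but otherwise identical in structure and content.
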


\begin{proof}  Let $\pi \colon G \to G/N$ be the quotient map from $G$ to $G/N$.  As $N$ is normal, closed, and connected, $G/N$ is also a nilpotent connected, simply connected\footnote{Indeed, from the Baker--Campbell--Hausdorff formula the space $G/N$ is homeomorphic to the vector space $\log G/\log N$.} Lie group, with a degree $k$ filtration $(G/N)_j \coloneqq \pi( G_j)$.  Because $\Gamma$ is discrete and cocompact in $G$ and $N \cap \Gamma$ is discrete and cocompact in $N$, we see that $\pi(\Gamma) \equiv \Gamma / (N \cap \Gamma)$ is discrete and cocompact in $\pi(G) = G/N$.  Thus $\pi(G)/\pi(\Gamma)$ is a degree $k$ filtered nilmanifold, whose dimension $\mathrm{dim}(G)-\mathrm{dim}(N)$ is strictly less than that of $G/\Gamma$. Then we can write $F_N = \tilde F \circ \tilde \pi$ for some $\tilde F \colon \pi(G)/\pi(\Gamma) \to \C$ with $\tilde \pi \colon G/\Gamma \to \pi(G)/\pi(\Gamma)$ is the obvious projection; this function $F_N$ can be seen to also be Lipschitz continuous by working in local coordinates.  Since $\pi \circ g \in \Poly(\R \to \pi(G))$ whenever $g \in \Poly(\R \to G)$, the claim now follows from \eqref{mul}.
\end{proof}

We let $F \mapsto {\mathbf E}(F|N)$ denote the orthogonal projection from $L^2(G/\Gamma)$ to $L^2(G/\Gamma)^N$; it can be described explicitly as
$$ {\mathbf E}(F|N)(g\Gamma) = \int_{N/(N \cap \Gamma)} F(gx)\ d\mu_{N/(N \cap \Gamma)}(x)$$
for almost every $g \in G$, where we view $N/(N \cap \Gamma)$ as a subset of $G/\Gamma$ in the natural fashion.  One can check (using the normality of $N$ and the uniqueness of the Haar probability measure $\mu_{N/(N \cap \Gamma)}$) that this gives a well-defined self-adjoint projection from $L^2(G/\Gamma)$ to $L^2(G/\Gamma)^N$, and so must indeed agree with the orthogonal projection to the latter space.  It is also clear from this definition that if $F$ is Lipschitz continuous then so is ${\mathbf E}(F|N)$.  In particular, from Proposition~\ref{inv} one can remove the component ${\mathbf E}(F|N)$ from $F$ while making a negligible impact to \eqref{xi}.  In our arguments we would like to perform this maneuver not for a single $N$, but for a large (but fixed) finite collection of such $N$.  To do this we need the following observation:

\begin{lemma}[Composition of projections]  Let $N_1, N_2$ be two normal connected rational subgroups of $G$.  Then $N_1N_2$ is also a normal connected rational subgroup, and
$$ {\mathbf E}( {\mathbf E}(F|N_1) | N_2 ) = {\mathbf E}(F|N_1 N_2)$$
for all $F \in L^2(G/\Gamma)$.  In particular (since $N_1 N_2 = N_2 N_1$), the projections $F \mapsto {\mathbf E}(F|N_1)$ and $F \mapsto {\mathbf E}(F|N_2)$ commute with each other.
\end{lemma}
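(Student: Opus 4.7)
I will address the three claims in turn, with Step 1 setting up the structure of $N_1N_2$ and Step 2 handling the composition formula via a Hilbert-space argument rather than a direct measure computation.

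\emph{Step 1 (Structure of $N_1N_2$).} Normality is immediate: $gN_1N_2g^{-1}=(gN_1g^{-1})(gN_2g^{-1})=N_1N_2$. Connectedness follows because $N_1N_2$ is the continuous image of the connected space $N_1\times N_2$ under multiplication. For closedness, I would pass to the Lie algebra: since each $N_i$ is normal, $[\log G,\log N_i]\subset\log N_i$, whence $[\log N_1,\log N_2]\subset\log N_1\cap\log N_2$, so $\log N_1+\log N_2$ is a Lie subalgebra of $\log G$, and the Baker--Campbell--Hausdorff formula (applied in the nilpotent group $G$) shows the corresponding closed connected Lie subgroup is precisely $N_1N_2$. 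For rationality, I would invoke the standard Malcev correspondence between rational connected closed subgroups of $G$ and rational Lie subalgebras of $\log G$ (with respect to the $\Q$-structure induced by $\log\Gamma$): since $\log N_1$ and $\log N_2$ are both $\Q$-subspaces, so is their sum, and hence $N_1N_2\cap\Gamma$ is cocompact in $N_1N_2$.

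\emph{Step 2 (Composition formula).} Write $T_i\coloneqq \mathbf{E}(\cdot|N_i)$ and $T_{12}\coloneqq \mathbf{E}(\cdot|N_1N_2)$; each is a self-adjoint orthogonal projection by the discussion preceding the lemma. Since $N_1N_2$ is generated by $N_1\cup N_2$, we have $L^2(G/\Gamma)^{N_1N_2}=L^2(G/\Gamma)^{N_1}\cap L^2(G/\Gamma)^{N_2}$. I first check that $T_2T_1F\in L^2(G/\Gamma)^{N_1N_2}$ for every $F\in L^2(G/\Gamma)$: $N_2$-invariance is immediate from the definition of $T_2$, while for any $m\in N_1$ and $g\in G$,
\[
T_2T_1F(mg\Gamma)=\int T_1F(mgn_2\Gamma)\,d\mu_{N_2}(n_2)=\int T_1F(gn_2\Gamma)\,d\mu_{N_2}(n_2)=T_2T_1F(g\Gamma),
\]
using that $T_1F$ is left-$N_1$-invariant (apply the invariance with $h=gn_2$). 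Given this, for an arbitrary $F_0\in L^2(G/\Gamma)^{N_1N_2}$,
\[
\langle T_2T_1F-T_{12}F,F_0\rangle=\langle F,T_1T_2F_0\rangle-\langle F,T_{12}F_0\rangle=\langle F,F_0\rangle-\langle F,F_0\rangle=0,
\]
by self-adjointness of $T_1,T_2,T_{12}$ and the fact that $F_0$ is fixed by each of them. Since $T_2T_1F-T_{12}F$ itself lies in $L^2(G/\Gamma)^{N_1N_2}$ and is orthogonal to every element of that space, it must vanish, giving $T_2T_1F=T_{12}F$.

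\emph{Step 3 (Commutativity).} Since $N_1N_2=N_2N_1$, applying the composition formula in the two possible orders yields $T_2T_1=T_{12}=T_{21}=T_1T_2$, which is the asserted commutativity.

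The main obstacle I anticipate is establishing the rationality of $N_1N_2$ in Step 1, which hinges on the Malcev-type correspondence between rational connected subgroups of $G$ and $\Q$-Lie-subalgebras of $\log G$; once this structural input is in hand, the composition formula reduces to the clean Hilbert-space manipulation above, and the commutativity is then an immediate corollary.
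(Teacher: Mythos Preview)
Your Hilbert-space argument in Step 2 is essentially the same as the paper's: both show that $T_2T_1F$ lies in $L^2(G/\Gamma)^{N_1}\cap L^2(G/\Gamma)^{N_2}=L^2(G/\Gamma)^{N_1N_2}$ and that $F-T_2T_1F$ is orthogonal to this subspace, so $T_2T_1F$ must be the orthogonal projection $T_{12}F$. Your computation for $N_1$-invariance of $T_2T_1F$ is correct and in fact slightly more explicit than the paper, which simply remarks that this follows ``using the normality of $N_2$''; your direct verification via the integral formula is fine. Step 3 matches the paper exactly.

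The one genuine difference is in Step 1, specifically rationality. You invoke the Malcev correspondence between rational connected subgroups and $\Q$-Lie-subalgebras, and you flag this as the main obstacle. The paper bypasses this entirely with a short hands-on argument: since $N_i\cap\Gamma$ is cocompact in $N_i$, write $N_i=K_i(N_i\cap\Gamma)$ with $K_i$ compact; then using normality of $N_1$,
\[
N_1N_2=N_1K_2(N_2\cap\Gamma)=K_2N_1(N_2\cap\Gamma)=K_2K_1(N_1\cap\Gamma)(N_2\cap\Gamma),
\]
so $(N_1\cap\Gamma)(N_2\cap\Gamma)$ is cocompact in $N_1N_2$, giving rationality directly. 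This avoids appealing to the Malcev $\Q$-structure, and also establishes closedness implicitly (since a group with a cocompact discrete subgroup is closed in $G$). Your route via Lie algebras is perfectly valid but heavier than needed here.
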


\begin{proof} It is clear that $N_1 N_2$ is a normal connected subgroup of $G$.  Because $N_1 \cap \Gamma$ is cocompact in $N_1$ and $N_2 \cap \Gamma$ is cocompact in $N_2$, and $N_1$ is normal, $(N_1 \cap \Gamma)(N_2 \cap \Gamma)$ is cocompact\footnote{Indeed, we have $N_1 = K_1 (N_1 \cap \Gamma)$ and $N_2 = K_2 (N_2 \cap \Gamma)$ for some compact $K_1,K_2$, hence $N_1 N_2 = N_1 K_2 (N_2 \cap \Gamma) = K_2 N_1 (N_2 \cap \Gamma) = K_2 K_1 (N_1 \cap \Gamma) (N_2 \cap \Gamma)$, giving the cocompactness.} in $N_1 N_2$, so $N_1 N_2$ is rational.
The function
$$ F - {\mathbf E}( {\mathbf E}(F|N_1) | N_2 ) =(F - {\mathbf E}(F|N_1) ) + ({\mathbf E}(F|N_1) - {\mathbf E}( {\mathbf E}(F|N_1) | N_2 ) )$$
is orthogonal to $L^2(G/\Gamma)^{N_1} \cap L^2(G/\Gamma)^{N_2} = L^2(G/\Gamma)^{N_1 N_2}$.  The function
$$ {\mathbf E}( {\mathbf E}(F|N_1) | N_2 ) $$
is clearly $N_2$-invariant, and can also be seen to be $N_1$-invariant using the normality of $N_2$.  Thus ${\mathbf E}( {\mathbf E}(F|N_1) | N_2 ) $ lies in $L^2(G/\Gamma)^{N_1 N_2}$, and is thus the orthogonal projection of $F$ to this space.  The claim follows.
\end{proof}

Given any fixed finite collection $N_1,\dots,N_{\ell}$ of non-trivial normal connected rational subgroups $N_1,\dots,N_{\ell}$ of $G$, let $\Pi_{N_j} \colon L^2(G/\Gamma) \to (L^2(G/\Gamma)^{N_j})^\perp$ denote the complementary orthogonal projection to $L^2(G/\Gamma)^{N_j}$, thus
$$ \Pi_{N_j} F \coloneqq F - {\mathbf E}(F|N_j).$$
From the above lemma, the $\Pi_{N_j}$ all commute with each other.  Let $\Pi_{N_1,\dots,N_{\ell}} \coloneqq \Pi_{N_1} \dots \Pi_{N_{\ell}}$ denote the composition of these projections.  Then one can express $F - \Pi_{N_1,\dots,N_{\ell}} F$ as a finite sum of Lipschitz functions, each of which lies in one of the $L^2(G/\Gamma)^{N_j}$.  From Proposition~\ref{inv} and the triangle inequality, we thus have
\begin{equation}\label{leo}
\int_X^{2X} \sup_{g \in \Poly( \R \to G )} \left| \sum_{n \in [x,x+H]} f(n) \overline{(F-\Pi_{N_1,\dots,N_{\ell}}F)}(g(n) \Gamma)\right| \ dx = o(HX)
\end{equation}
as $\n \to \infty$.

We can also use Theorem~\ref{mult-poly}, proven in the previous section, to obtain

\begin{proposition}\label{nonab} Let the hypotheses be as in Theorem~\ref{mult-nil}, but assume that  $f$ satisfies \eqref{dq}. Then $G$ is not abelian.
\end{proposition}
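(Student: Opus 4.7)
The plan is to derive a contradiction with \eqref{xi} by reducing the abelian case to the already-proven Theorem~\ref{mult-poly}. Suppose for contradiction that $G$ is abelian. Since $G$ is connected, simply connected and nilpotent, the exponential map $\exp\colon \log G \to G$ is a diffeomorphism and a group isomorphism (as $G$ is abelian), so $G \cong \R^d$ and $G/\Gamma$ is isomorphic, after a linear change of coordinates, to a standard torus $\T^d$. Moreover, in the abelian setting, any $g \in \Poly(\R \to G)$ can, after identifying $G$ with $\log G$ via $\log$, be written as an honest vector-valued polynomial $g(t) = \sum_{i=0}^{k} X_i t^i$ of degree at most $k$ (with $X_i \in \log G_i$). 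Consequently, for any character $\xi$ of $G/\Gamma$, the real-valued map $t \mapsto \xi \cdot \log g(t)$ lies in $\Poly_{\leq k}(\R \to \R)$, which is the key feature that will let us invoke Theorem~\ref{mult-poly}.

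Next, I would exploit this by Fourier expansion on $G/\Gamma$. Fix $\delta > 0$; by Stone--Weierstrass (or by Fej\'er-kernel convolution on the torus), we may approximate the Lipschitz function $F$ uniformly to within $\delta$ by a trigonometric polynomial
\begin{equation*}
F_\delta(x) = \sum_{\xi \in \Xi_\delta} c_\xi\, e(\xi \cdot \log x),
\end{equation*}
where $\Xi_\delta$ is finite and $\sum_{\xi}|c_\xi| \leq C_\delta$, with $C_\delta$ depending only on $\delta$ and $F$. For each $\xi \in \Xi_\delta$, the phase $n \mapsto \xi \cdot \log g(n)$ is a polynomial phase of degree at most $k$, so
\begin{equation*}
\sup_{g \in \Poly(\R \to G)} \left| \sum_{n \in [x,x+H]} f(n)\, \overline{F_\delta}(g(n)\Gamma) \right|
\leq C_\delta \sup_{P \in \Poly_{\leq k}(\R \to \R)} \left| \sum_{n \in [x,x+H]} f(n)\, e(-P(n)) \right|
= C_\delta H \, \|f\|_{u^{k+1}([x,x+H])}.
\end{equation*}
Combining with the uniform approximation $\|F - F_\delta\|_\infty < \delta$ and integrating over $x \in [X,2X]$ gives
\begin{equation*}
\int_X^{2X} \sup_{g \in \Poly(\R \to G)} \left| \sum_{n \in [x,x+H]} f(n)\, \overline{F}(g(n)\Gamma) \right| dx
\leq C_\delta H \int_X^{2X} \|f\|_{u^{k+1}([x,x+H])}\, dx + \delta H X.
\end{equation*}

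Finally, the assumption \eqref{dq} states that $M(f; CX^{k+1}/H^{k+1}, Q) \to \infty$ as $\n \to \infty$ for every fixed $C$ and $Q$. The contrapositive of Theorem~\ref{mult-poly} therefore yields
\begin{equation*}
\int_X^{2X} \|f\|_{u^{k+1}([x,x+H])}\, dx = o(X).
\end{equation*}
Inserting this into the previous display shows that the left-hand side is at most $o_\delta(HX) + \delta HX$ as $\n \to \infty$, and since $\delta > 0$ was arbitrary, it follows that
\begin{equation*}
\int_X^{2X} \sup_{g \in \Poly(\R \to G)} \left| \sum_{n \in [x,x+H]} f(n)\, \overline{F}(g(n)\Gamma) \right| dx = o(HX),
\end{equation*}
contradicting \eqref{xi}. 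Hence $G$ cannot be abelian. There is no serious obstacle in this argument; the only subtlety is ensuring that the Lipschitz approximation by trigonometric polynomials on the torus $G/\Gamma$ is performed with constants independent of $\n$, which is immediate since $G$, $\Gamma$ and $F$ are all fixed.
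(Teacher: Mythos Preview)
Your proof is correct and follows essentially the same approach as the paper: both reduce the abelian case to Theorem~\ref{mult-poly} by identifying $G/\Gamma$ with a torus, approximating $F$ uniformly by trigonometric polynomials, and observing that each character composed with a polynomial map $g \in \Poly(\R \to G)$ yields a polynomial phase of degree at most $k$. The only cosmetic difference is that the paper pigeonholes (possibly passing to a subsequence in $\n$) to a single character $\xi$ for which the correlation integral is $\gg HX$, whereas you keep the full trigonometric sum and bound it by $C_\delta H\|f\|_{u^{k+1}([x,x+H])}$ before invoking the contrapositive of Theorem~\ref{mult-poly}; your route avoids the subsequence step but is otherwise the same argument.
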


\begin{proof}  Suppose for contradiction that $G$ was abelian, then $G/\Gamma$ is a connected abelian Lie group and is therefore a torus (this follows for instance from Pontryagin duality).  One can approximate $F$ uniformly by finite linear combinations of characters $e(\xi)$, where $\xi \colon G/\Gamma \to \R/\Z$ are continuous homomorphisms. By the triangle inequality (and passing to a subsequence of $X$ if necessary), we may thus find $\xi$ such that
$$ \int_X^{2X} \sup_{g \in \Poly( \R \to G )} \left| \sum_{n \in [x,x+H]} f(n) e(-\xi(g(n) \Gamma))\right| \ dx \gg H X.$$
But from Taylor expansion we see that $t \mapsto \xi(g(t) \Gamma)$ is of the form $t \mapsto P(t) \mod \Z$ for some $P \in \Poly_{\leq k}(\R \to \R)$, and Theorem~\ref{mult-poly} supplies the required contradiction.
\end{proof}

\subsection{Studying the structure of local nilsequences}
Now we start following the arguments of the previous section.  Define a \emph{local nilsequence} to be a pair $\phi = (I,g)$, where $I$ is an interval and $g \in \Poly(\R \to G)$.  We let $\Psi$ be the collection of all local nilsequences $\phi = (I,g)$, and $\Psi_I$ to be the collection of local nilsequences $(I,g)$ with a fixed choice of $I$.  One should view $(I,g)$ as an abstraction of the function $t \mapsto F(g(t) \Gamma)$ on $I$.  For any $\phi = (I,g) \in \Psi$ and $f \colon \R \to \C$, we define the correlation
$$ \langle f, \phi \rangle \coloneqq \frac{1}{|I|} \sum_{n \in I} f(n) \overline{F}(g(n) \Gamma),$$
where $F:G/\Gamma\to \mathbb{C}$ is understood to be a fixed Lipschitz function, with $G/\Gamma$ a fixed filtered nilmanifold.
As before we have the dilation action
$$ \lambda_{*} (I,g) \coloneqq \left(\lambda I, g\left(\frac{1}{\lambda} \cdot\right) \right)$$
for any $(I,g) \in \Psi$ and $\lambda >0$.  The family $\Psi$ will play the role of the family $\Phi$ from the preceding section (which can be viewed as the special case when $G/\Gamma = \R/\Z$ with the filtration $G_j = \R$ for $j \leq k$ and $G_j = \{0\}$ for $j>k$, and $F(x) \coloneqq e(x)$).  From \eqref{xi} we have
$$
 \int_X^{2X} \sup_{\phi \in \Psi_{[x,x+H]}} \left| \langle f, \phi \rangle\right| \ dx \gg X
$$
and hence by repeating the proof of~\cite[Lemma 2.1]{mrt-fourier} as in the previous section, we can find a large $(X,H)$-family of intervals ${\mathcal I}$, such that for each $I \in {\mathcal I}$ one can find $\phi_I \in \Psi_I$ such that $|\langle f, \phi_I \rangle| \gg 1$.

For subsequent analysis we will need to somehow import the decay estimates in Proposition~\ref{major} and \eqref{leo} into this context.  This is achieved via the following application of Markov's inequality.  Call a $(X,H)$-family of intervals \emph{small} if it has cardinality $o(X/H)$.

\begin{proposition}[Local decay outside of exceptional set]\label{loc-decay}   Assume that $f$ satisfies \eqref{dq}. Let $1 \leq P \leq X^{2\eps}$, and let ${\mathcal I}'$ be a $(X/P,H/P)$-family of intervals.  Then there exists a small exceptional subset ${\mathcal E}$ of ${\mathcal I}'$ such that the following properties hold uniformly for all $I \in {\mathcal I}' \backslash {\mathcal E}$:
\begin{itemize}
\item[(i)]  (Major arc estimate) If $\tilde G$ is a fixed connected closed proper rational subgroup of $G$, $E$ is a fixed compact subset of $\tilde G$, and $q$ is a fixed natural number, then
$$
\sup_{\substack{\eps \in E \\ \tilde g \in \Poly( \R \to \tilde G ) \\ \gamma \in \Poly(q\Z \to \Gamma)}} \sup_{I' \subset 500 I} \left| \sum_{n \in I'} f(n) \overline{F}(\eps \tilde g(n) \gamma(n) \Gamma)\right| \ dx = o(H/P)$$
where $I'$ ranges over all intervals contained in $500I$.
\item[(ii)]  (Invariant estimate)  For any fixed finite collection $N_1,\dots,N_{\ell}$ of non-trivial normal connected rational subgroups $N_1,\dots,N_{\ell}$ of $G$, one has
$$ \sup_{g \in \Poly( \R \to G )} \sup_{I' \subset 500 I} \left| \sum_{n \in [x,x+H]} f(n) \overline{(F-\Pi_{N_1,\dots,N_{\ell}}F)}(g(n) \Gamma)\right| = o(H/P).$$
\end{itemize}
\end{proposition}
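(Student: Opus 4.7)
My plan is to deduce the proposition from Proposition~\ref{major} (for (i)) and from \eqref{leo} (for (ii)) by applying these integrated estimates at the rescaled parameters $(X/P, H/P)$ and converting the resulting integrated smallness to pointwise smallness via a Markov--Lipschitz argument. The rescaling is admissible because $(X/P)^{k+1}/(H/P)^{k+1} = X^{k+1}/H^{k+1}$, so the pretentious-distance hypothesis \eqref{dq} transfers verbatim; the range condition $X^\theta \leq H \leq X^{1-\theta}$ likewise transfers to $(X/P)^{\theta'} \leq H/P \leq (X/P)^{1-\theta'}$ for some $\theta' > 0$, provided $\eps$ is sufficiently small in terms of $\theta$ (using $P \leq X^{2\eps}$). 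Covering $[X/(10P), 10X/P]$ by $O(1)$ dyadic pieces $[Y, 2Y]$ with $Y \asymp X/P$, and applying the relevant integrated estimate on each, for every fixed choice of parameters $\star$ (meaning $(\tilde G, E, q)$ for (i), or $(N_1, \ldots, N_\ell)$ for (ii)) and every length $L$ in the rescaled admissible range, I obtain
$$ \int_{X/(10P)}^{10X/P} U_\star^{(L)}(x)\, dx = o_\star(LX/P), $$
where $U_\star^{(L)}(x) \coloneqq \sup \left| \sum_{n \in [x, x+L]} f(n) \overline{F_\star}(g(n)\Gamma) \right|$ with the supremum taken over the relevant parameters.

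Since the summand is $1$-bounded, $U_\star^{(L)}$ is $2$-Lipschitz, so if $\sup_{x \in 502 I} U_\star^{(L)}(x) \geq \sigma L$ is attained at some $x_0$, then $U_\star^{(L)}(x) \geq \sigma L/2$ throughout $[x_0 - \sigma L/4, x_0 + \sigma L/4] \subset 503 I$, giving $\int_{503 I} U_\star^{(L)} \gg \sigma^2 L^2$. The enlargements $503 I$ have bounded overlap (midpoints of distinct intervals in $\mathcal{I}'$ are separated by $\geq 501 H/P$), and summing against the global upper bound yields
$$ \#\{ I \in \mathcal{I}' : \sup_{x \in 502 I} U_\star^{(L)}(x) \geq \sigma L \} \ll \frac{1}{\sigma^2 L^2} \int U_\star^{(L)} = o_{\sigma, L, \star}\!\left( \frac{X}{\sigma^2 L P} \right), $$
and for $L \geq \delta H/P$ one has $\sigma^2 L P \geq \sigma^2 \delta H$, so this exceptional set has cardinality $o_{\sigma, \delta, L, \star}(X/H)$.

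To convert these dyadic scale bounds into a uniform bound on all subintervals $I' \subset 500 I$, I use dyadic extension: any $I'$ with $|I'| < \delta H/P$ contributes at most $\delta H/P$ trivially, while for $|I'| = L^* \geq \delta H/P$, letting $L$ be the least power of $2$ with $L \geq L^*$, the identity $\sum_{I'} = \sum_{[a, a+L]} - \sum_{[b+1, a+L]}$ splits the sum into one of length exactly $L \leq 2L^*$ (in $502 I$) and a residual of length $L - L^* \leq L^*$ on which one iterates. The successive lengths form a strictly decreasing sequence of powers of $2$ lying in $[\delta H/P, 1000 H/P]$---only $O_\delta(1)$ distinct scales. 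Outside the $O_\delta(1)$-fold union of the previous-step exceptional sets (still of size $o(X/H)$), each dyadic scale contributes at most $\sigma L$, and since the relevant lengths sum to $O(L^*)$, the telescoping total is $O(\sigma L^*) \ll \sigma H/P$, so
$$ V(I) \coloneqq \sup_{I' \subset 500 I}\, \sup_{\mathrm{params}} \left| \sum_{n \in I'} f(n) \overline{F_\star}(g(n)\Gamma) \right| \ll (\sigma + \delta)\, H/P. $$

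Letting $\sigma_\n, \delta_\n \to 0$ slowly so the associated exceptional set remains of cardinality $o(X/H)$, and then diagonalizing over the countably many fixed choices $(\tilde G, E, q)$ and $(N_1, \ldots, N_\ell)$---a dense rational net inside the compact set $E$ together with Lipschitz continuity of $F$ suffices to discretize the sup over $\eps \in E$ to a countable family, and the rational subgroups $\tilde G, N_j$ of $G$ themselves form a countable set---I assemble a single exceptional set $\mathcal{E} \subset \mathcal{I}'$ of cardinality $o(X/H)$ outside of which both (i) and (ii) hold. The main obstacle will be verifying that the hypotheses of Proposition~\ref{major} transfer cleanly under rescaling across all the dyadic scales needed, and tracking the joint dependencies between $\sigma, \delta, L$ and the rescaled $o$-rates; the remaining steps (Markov, Lipschitz, bounded overlap, diagonalization) are standard.
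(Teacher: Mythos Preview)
Your approach is correct and reaches the same conclusion, but it is a genuinely different route from the paper. The paper introduces a single auxiliary scale $H^* \coloneqq (X/P)^{\theta/2}$, which is much smaller than $H/P$, and uses the averaging identity
\[
\sum_{n \in I'} f(n) \overline{F}(\eps \tilde g(n) \gamma(n) \Gamma) \ll \frac{1}{H^*} \int_{I'} \Bigl| \sum_{n \in [x,x+H^*]} f(n) \overline{F}(\eps \tilde g(n) \gamma(n) \Gamma)\Bigr|\,dx + O(H^*)
\]
to handle \emph{all} subintervals $I' \subset 500I$ at once via a single application of Proposition~\ref{major} at that scale. Summing over $I \in {\mathcal I}'$, bounded overlap gives a clean integral bound \eqref{toast}, after which a straightforward Markov inequality plus diagonalization over the countable family $(\tilde G, q, r)$ finishes the argument; no Lipschitz estimate or dyadic decomposition is needed. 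By contrast, you stay at the natural scale $L \asymp H/P$, apply Proposition~\ref{major} separately at each of $O_\delta(1)$ dyadic lengths, use the Lipschitz nature of $U_\star^{(L)}$ to pass from an integral to a pointwise bound, and then reassemble an arbitrary $I'$ by telescoping over the dyadic pieces. Your route is heavier in bookkeeping (and the constant $502$ should be enlarged to accommodate the extended dyadic pieces), but it has the minor conceptual advantage that the transfer of \eqref{dq} at scale $L \asymp H/P$ is exact, whereas the paper's application at the much smaller scale $H^*$ requires a separate remark that Proposition~\ref{major} remains valid there.
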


\begin{proof}  We begin with (i).  We will shortly establish that
\begin{equation}\label{toast}
 \sum_{I \in {\mathcal I}'} \sup_{\substack{\eps \in B_{\tilde G}(1,r)\\ \tilde g \in \Poly( \R \to \tilde G )\\ \gamma \in \Poly(q\Z \to \Gamma)}} \sup_{I' \subset 500 I} \left| \sum_{n \in I'} f(n) \overline{F}(\eps \tilde g(n) \gamma(n) \Gamma)\right| = o(X/P)
\end{equation}
for each fixed $\tilde G, q,r$, where $B_{\tilde G}(1,r)$ denotes the ball of radius $r$ centred at the identity in $\tilde G$, and the decay rate in the $o(X)$ right-hand side may depend on $\tilde G,q$.  Assuming this bound for the moment, we can perform the following ``diagonalization'' argument.  There are only countably many rational subgroups $\tilde G$ of $G$ (because $\log \tilde G$ can be described as a subspace of $\log G$ cut out by equations with rational coefficients).  Enumerate the countable set of triples $(\tilde G, q,r)$ with $r$ a natural number as $(\tilde G_i,q_i,r_i)$.  For each $i$, we see from \eqref{toast}, the triangle inequality, and Markov's inequality that we can find an exceptional set ${\mathcal E}_i \subset {\mathcal I}$ of cardinality at most $\frac{1}{i} \frac{X}{H}$, and a threshold $\x_i$, such that
$$ 
\sum_{j \leq i} \sup_{\substack{\eps \in B_{\tilde G}(1,r_j)\\ \tilde g \in \Poly( \R \to \tilde G_j )\\ \gamma \in \Poly(q_j\Z \to \Gamma)}} \sup_{I' \subset 500 I} \left| \sum_{n \in I'} f(n) \overline{F}(\varepsilon \tilde g(n) \gamma(n) \Gamma)\right| \leq \frac{1}{i} H/P
$$
whenever $x \geq \x_i$ and $I \in {\mathcal I} \backslash {\mathcal E}_i$.  By increasing the $\x_i$ as necessary we may assume that $\x_{i+1}> \x_i$ for all $i$.  If we now set ${\mathcal E} \coloneqq {\mathcal E}_{i_*}$, where $i_*$ is the largest natural number for which $x \geq \x_{i_*}$, then ${\mathcal E}$ is well-defined for sufficiently large $x$, and the claim (i) follows (since any compact set $E$ is a subset of some ball $B_{\tilde{G}}(1,r)$). 

It remains to verify \eqref{toast}.  Set $H^* \coloneqq (X/P)^{\theta/2}$.  Then we can use the triangle inequality to write
$$
\sum_{n \in I'} f(n) \overline{F}(\varepsilon \tilde g(n) \gamma(n) \Gamma) \ll \frac{1}{H^*} \int_{I'} \left| \sum_{n \in [x,x+H^*]} f(n) \overline{F}(\varepsilon \tilde g(n) \gamma(n) \Gamma)\right|\ dx + O( H^* )$$
and thus (since the intervals $500 I$ in ${\mathcal I}'$ have bounded overlap in $[X/2P,4X/P]$) we can bound the left-hand side of \eqref{toast} by
$$ \frac{1}{H^*} \int_{X/2P}^{4X/P}  \sup_{\substack{\varepsilon \in E \\\tilde g \in \Poly( \R \to \tilde G )\\ \gamma \in \Poly(q\Z \to \Gamma)}} \left| \sum_{n \in [x,x+H^*]} f(n) \overline{F}(\varepsilon \tilde g(n) \gamma(n) \Gamma)\right| \ dx + o(X/P)$$
and the claim now follows from Proposition~\ref{major} (which is also valid if one replaces $X$ by a quantity comparable to $X/P$).

The claim (ii) is proven similarly (using \eqref{leo} in place of Proposition~\ref{major}), noting that there are only countably many rational closed connected subgroups $N$ of $G$ (since such groups are determined by their intersection $N \cap \Gamma$ with $\Gamma$, which is a finitely generated subgroup of the countable group $\Gamma$), and hence only countably many finite tuples $(N_1,\dots,N_{\ell})$. 
\end{proof}

Thus, for instance, using this proposition (with $P=1$ and ${\mathcal I}'={\mathcal I}$), we could now delete a small set of intervals from ${\mathcal I}$ and assume without loss of generality that the conclusions of this proposition hold for all $I \in {\mathcal I}$. As it turns out, however, it will be more useful to apply this proposition to a different family ${\mathcal I}'$ of intervals than ${\mathcal I}$, as we shall shortly see.

As in the preceding section, the next step is to relate the various $\phi_I$ to each other.  We need a variant of Definition~\ref{poly-comp}.  If $I$ is an interval, we say that a polynomial map $\eps \in \Poly(\R \to G)$ is \emph{smooth} on $I$ if $\eps(t)=O(1)$ for all $t \in I$.  Taking logarithms and applying \eqref{ber-3} to the polynomial map $\log \eps \colon \R \to \log G$, this implies in particular that
$|\frac{d^j}{dt^j} \log \eps(t)| \ll |I|^{-j} \langle t \rangle_I^{O(1)}$ for all $j \geq 0$ and $t \in \R$. In particular $\eps$ is also smooth on any interval $I' \sim I$ that is comparable to $I$. Also observe from the Baker--Campbell--Hausdorff formula \eqref{poly} that if $\eps_1,\eps_2$ are both smooth on $I$, then so are $\eps_1^{-1}$ and $\eps_1 \eps_2$ (with slightly different implied constants).

\begin{definition}[Comparability of nilsequences]\label{nil-comp} Given two local nilsequences $\phi = (I,g), \phi' = (I',g') \in \Psi$ and a scaling factor $\delta>0$, we define the relation
$$ \phi \sim_{\delta} \phi'$$
to hold if $I \sim I'$, and we have the relation
$$ g(t) = \eps(t) g'(t) \gamma(t)$$
for all $t \in \R$, where $\eps, \gamma \in \Poly(\R \to G)$ are polynomials obeying the following axioms:
\begin{itemize}
\item[(i)]  ($\eps$ smooth) $\eps$ is smooth on $I$.
\item[(ii)]  ($\gamma$ is $\frac{1}{\delta}$-integral)  $\gamma \in \Poly(\delta \Z \to \Gamma)$.
\end{itemize}
\end{definition}

We have the following analogue of Proposition~\ref{basic}:

\begin{proposition}[Basic properties of $\sim_{\delta}$]\label{basic-nil}  Let $\delta > 0$, and let $\phi, \phi', \phi'' \in \Psi$.
\begin{itemize}
\item[(i)] (Equivalence relation)  We have $\phi \sim_{\delta} \phi$, and if $\phi \sim_{\delta} \phi'$ then $\phi' \sim_{\delta} \phi$.  Finally, if $\phi \sim_{\delta} \phi'$ and $\phi' \sim_{\delta} \phi''$ then $\phi \sim_{\delta} \phi''$, where we allow the implied constants in the latter relations to depend on the implied constants in the former relations.
\item[(ii)]  (Dilation invariance)  If  $\phi \sim_{\delta} \phi'$ and $\lambda > 0$, then $\lambda_{*} \phi \sim_{\lambda \delta} \lambda_{*} \phi'$.
\item[(iv)]  (Sparsification)  If $\phi \sim_{\delta} \phi$, then $\phi \sim_{l\delta} \phi$ for any natural number $l$.
\end{itemize}
\end{proposition}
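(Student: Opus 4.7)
The plan is to follow the outline of Proposition~\ref{basic}, verifying each property directly from Definition~\ref{nil-comp}, but paying attention to the non-abelian ordering of the factors $\eps$, $g'$, $\gamma$.

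For part (i), reflexivity follows from the trivial decomposition $g = 1 \cdot g \cdot 1$, where the constant map $1$ is both smooth on $I$ and in $\Poly(\delta\Z \to \Gamma)$. For symmetry, I would rewrite $g = \eps g' \gamma$ as $g' = \eps^{-1} g \gamma^{-1}$; since $\log(\eps^{-1}) = -\log \eps$, the map $\eps^{-1}$ remains $O(1)$ on $I$, and $\gamma^{-1} \in \Poly(\delta\Z \to \Gamma)$ because $\Poly(\delta\Z \to \Gamma)$ is a group by Lemma~\ref{uniq}. For transitivity, composing $g = \eps_1 g' \gamma_1$ and $g' = \eps_2 g'' \gamma_2$ yields $g = (\eps_1 \eps_2) g'' (\gamma_2 \gamma_1)$. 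Here $\eps_1 \eps_2$ is smooth on $I$ (with slightly enlarged implied constants) by the Baker--Campbell--Hausdorff formula, which expresses $\log(\eps_1 \eps_2)$ as a polynomial with bounded coefficients in $\log \eps_1$ and $\log \eps_2$; and $\gamma_2 \gamma_1 \in \Poly(\delta\Z \to \Gamma)$ by Lemma~\ref{uniq} again. The observation already made after Definition~\ref{nil-comp} that smoothness on $I$ persists on any comparable $I' \sim I$ handles the interval compatibility in symmetry and transitivity.

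For part (ii), substituting $t \mapsto t/\lambda$ in $g(t) = \eps(t) g'(t) \gamma(t)$ gives $g(t/\lambda) = \eps(t/\lambda) g'(t/\lambda) \gamma(t/\lambda)$ on $\lambda I$. The bound $\eps(t) = O(1)$ for $t \in I$ transfers directly to $\eps(\cdot/\lambda) = O(1)$ on $\lambda I$, and $\gamma(\cdot/\lambda)(\lambda \delta \Z) = \gamma(\delta \Z) \subset \Gamma$, so $\gamma(\cdot/\lambda) \in \Poly(\lambda \delta \Z \to \Gamma)$. This gives $\lambda_{*} \phi \sim_{\lambda \delta} \lambda_{*} \phi'$. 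For part (iv), the inclusion $\ell \delta \Z \subset \delta \Z$ means that any element of $\Poly(\delta \Z \to \Gamma)$ automatically lies in $\Poly(\ell \delta \Z \to \Gamma)$, so the same witnesses $\eps, \gamma$ establish $\phi \sim_{\ell \delta} \phi'$.

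The only substantive step is the group-theoretic bookkeeping in (i), which rests on two inputs: that $O(1)$-bounded polynomial maps in $\Poly(\R \to G)$ are closed under products and inverses (via the Baker--Campbell--Hausdorff formula and the relation $\log(g^{-1}) = -\log g$), and that $\Poly(\delta\Z \to \Gamma)$ is a group (Lemma~\ref{uniq}). Both are already available in the paper, and everything else is immediate from the definitions.
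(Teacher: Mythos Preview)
Your proof is correct and follows the same approach as the paper, which simply states that the claims are immediate from Definition~\ref{nil-comp} together with the observations (recorded just before that definition) that smoothness on $I$ is preserved under products, inverses, and passage to comparable intervals. You have spelled out these verifications explicitly, but the substance is identical.
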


\begin{proof} These are immediate from Definition~\ref{nil-comp}, together with the previous observation that a polynomial map that is smooth on $I$ is also smooth on $I'$ for any $I' \sim I$, and the observation that the product of two polynomial maps smooth on $I$ is also smooth on $I$.
\end{proof}

Now we have the analogue of Proposition~\ref{lsieve}:

\begin{proposition1}[Large sieve]\label{lsieve-nil}  Let $I$ be an interval of some length $|I| \geq 1$, and let $f \colon \Z \to \C$ be a function bounded in magnitude by $1$.  Suppose that for each $i=1,\dots,K$ there is an interval $I_i \sim I$ and a local nilsequence $\phi_i \in \Psi_{I_i}$ such that
\begin{equation}\label{eo}
 |\langle f, \phi_i \rangle| \gg 1.
\end{equation}
Then at least one of the following claims hold:
\begin{itemize}
\item[(i)] $K \ll 1$.
\item[(ii)]  There exist $1 \leq i < j \leq K$ such that
$ \phi_i \sim_{1} \phi_j.$
\item[(iii)] (Correlation with major arc nilsequence) There is a connected closed proper rational subgroup $\tilde G$ of $G$ (drawn from a fixed finite collection of such subgroups) and a natural number $q$ (drawn from a fixed finite collection of such numbers) and a compact subset $E$ of $\tilde G$ (again drawn from a fixed finite collection) such that
$$
\sup_{\substack{\eps \in E\\ \tilde g \in \Poly( \R \to \tilde G )\\ \gamma \in \Poly(q\Z \to \Gamma)}} \sup_{I' \subset 500 I} \left| \sum_{n \in I'} f(n) \overline{F}(\eps \tilde g(n) \gamma(n) \Gamma)\right| \ dx \gg |I|.$$
\item[(iv)]  (Correlation with invariant nilsequence)  There is a tuple $(N_1,\dots,N_{\ell})$ of non-trivial normal connected rational subgroups $N_1,\dots,N_{\ell}$ of $G$ (drawn from a fixed finite collection of such subgroups) such that
$$ \sup_{g \in \Poly( \R \to G )} \sup_{I' \subset 500 I} \left| \sum_{n \in I'} f(n) \overline{(F-\Pi_{N_1,\dots,N_{\ell}}F)}(g(n) \Gamma)\right| \gg |I|.$$
\end{itemize}
\end{proposition1}

As one might expect, we will be able to use Proposition~\ref{loc-decay} to eliminate the options (iii), (iv) from this proposition, after removing a small set of exceptional intervals.

\begin{proof}  We let $K_0$ be a sufficiently large fixed natural number (depending on $F,G/\Gamma$), to be chosen later, and write $\phi_j = (I_j, g_j)$.  We can assume that $K \geq K_0$, since otherwise we are in case (i).  We will initially just analyze the first $K_0$ local nilsequences $\phi_j$, and return to the remaining $\phi_j$ later.

Let $S \colon \R^+ \to \R^+$ be a sufficiently rapidly growing but fixed function depending on $F,G/\Gamma,K_0$ to be chosen later.    The tuple $\vec g \coloneqq (g_1,\dots,g_{K_0})$ can be viewed as a polynomial map in the product group $G^{K_0}$ (endowed with the obvious filtration $(G^{K_0})_j \coloneqq G_j^{K_0}$).  The subgroup $\Gamma^{K_0}$ is a discrete cocompact lattice in $G^{K_0}$.  We may thus apply the quantitative factorization theorem in~\cite[Theorem 1.19]{gt-nil}, using the function $S$ in place of the function $M \mapsto M^A$, to obtain a factorization
\begin{equation}\label{voc}
 \vec g = \vec \eps\, \vec g'\, \vec \gamma
\end{equation}
where $\vec \eps, \vec g', \vec \gamma \in \Poly(\R \to G^{K_0})$ obey the following properties for some quantity $1 \leq M \ll_{K_0,S} 1$:
\begin{itemize}
\item[(i)]  (Smoothness) One has $|\log \vec \eps(t)| \leq M$ for all $t \in I$ (and hence by \eqref{ber-2}, $|\frac{d^j}{dt^j} \log \vec \eps(t)| \ll M |I|^{-j}$ for all $t \in I$ and $j \geq 0$).
\item[(ii)]  (Equidistribution) $\vec g'$ takes values in some rational connected closed subgroup $\vec G'$ of $G^{K_0}$ which is $M$-rational (in the sense of~\cite[Definition 2.5]{gt-nil}, using some arbitrarily chosen Mal'cev basis on $G^{K_0}$), and is totally $1/S(M)$-equidistributed in the sense that
$$ \left| \frac{1}{\# P} \sum_{n \in P} \vec F(\vec g'(n) \vec \Gamma') - \int_{\vec G'/\vec \Gamma'} \vec F\ d\mu_{\vec G'/\vec \Gamma'} \right|
\leq \frac{1}{S(M)} \|\vec F\|_{\mathrm{Lip}}$$
for any Lipschitz function $\vec F \colon \vec G'/\vec \Gamma' \to \C$, and any arithmetic progression $P$ in $I \cap \Z$ of length at least $\frac{1}{S(M)} |I|$, where $\vec \Gamma' \coloneqq \vec G' \cap \Gamma^{K_0}$ (and we endow $\vec G'/\vec \Gamma'$ with the metric induced from $(G/\Gamma)^{K_0}$).  
\item[(iii)]  (Rationality) $\vec \gamma(\Z) \Gamma^{K_0}$ takes values in the set $\{ \vec \gamma \Gamma^{K_0}: \vec \gamma^q \in \Gamma^{K_0} \}$
for some $1 \leq q \leq M$, and the sequence $n \mapsto \vec \gamma(n) \Gamma^{K_0}$ is periodic on $\Z$ with period at most $M$.
\end{itemize}
Arguing as in the proof of~\cite[Corollary 1.20]{gt-nil}, this gives a summation formula of the form
\begin{equation}\label{geo}
 \sum_{n \in I'} \vec F( \vec g(n) \vec \Gamma ) = \sum_{i=1}^s A_i \int_{x_i \vec G' y_i \Gamma^{K_0} / \Gamma^{K_0}} \vec F\ d\mu_{x_i \vec G' y_i \Gamma^{K_0} / \Gamma^{K_0}} + O_{M,K_0}\left( \frac{\|\vec F\|_{\mathrm{Lip}}}{S(M)^{1/2}} |I| \right)
\end{equation}
for any interval $I' \subset I$, where the $A_i$ are positive quantities summing to $O(|I|)$, the $x_i$ are elements of $\vec G^{K_0}$ with $\log x_i = O_M(1)$, of magnitude $O_M(1)$, and the $y_i$ are elements of $\vec G^{K_0}$ with $y_i^q \in \Gamma$ (the argument proceeds by 
splitting $I'$ into $O_{M,K_0}(S(M)^{1/2})$ arithmetic progressions of diameter $O_{M,K_0}( \frac{|I|}{S(M)^{1/2}})$ and spacing equal to the period of $\vec \gamma \Gamma^{K_0}$).  One could be more precise about the values of $A_i,x_i,y_i$ here, as well as provide upper bounds on the quantity $s$ but it will not be necessary for our argument to do so.

We write $\vec \eps = (\eps_1,\dots,\eps_{K_0})$, $\vec g' = (g'_1,\dots,g'_{K_0})$, and $\vec \gamma = (\gamma_1,\dots,\gamma_{K_0})$.  We now divide into several cases, depending on the nature of $\vec G'$.  For each $1 \leq j \leq K_0$, let $\pi_j \colon G^{K_0} \to G$ be the projection to the $j^{\mathrm{th}}$ factor of $G$.  Then $\pi_j(\vec G')$ is a closed connected rational subgroup of $G$.  Suppose that there exists $j$ for which $\pi_j$ is not surjective, so that $\pi_j(\vec G')$ is a proper subgroup of $G$.  Because $\vec G'$ is $M$-rational, it belongs to a fixed finite family of subgroups of $G^{K_0}$, and hence $\pi_j(\vec G')$ also belongs to a fixed finite family of subgroups.  From \eqref{voc}, \eqref{eo} we have
$$
 \left| \sum_{n \in I_j} f(n) \overline{F}(\eps_j(n) g'_j(n) \gamma_j(n) \Gamma) \right| \gg |I|,
$$ 
so in particular $|I_j| \gg |I|$.  Let $\sigma>0$ be a small quantity to be chosen later. Then by covering $I_j$ by intervals $I'_j$ of length $\sigma |I|$ and using the pigeonhole principle, we can find one such interval $I'_j$ for which
$$
 \left| \sum_{n \in I'_j} f(n) \overline{F}(\eps_j(n) g'_j(n) \gamma_j(n) \Gamma) \right| \gg \sigma |I|.
$$ 
From property (i) and \eqref{ber} we see that
$$ F(\eps_j(n) g'_j(n) \gamma_j(n) \Gamma) = F(\eps_j(x_{I'_j}) g'_j(n) \gamma_j(n) \Gamma) + O_M(\sigma)$$
for $n \in I'_j$.  For $\sigma$ sufficiently small depending on $M$ (but with $\sigma \asymp_M 1$) we can then neglect the error term and conclude that
$$
 \left| \sum_{n \in I'_j} f(n) \overline{F}(\eps_j(x_{I'_j}) g'_j(n) \gamma_j(n) \Gamma) \right| \gg \sigma |I|,
$$ 
and now we have conclusion (iii) of the proposition.

Henceforth we now assume that $\pi_j$ is surjective for all $1 \leq j \leq K_0$.  For distinct $i,j \in \{1,\dots,K_0\}$, consider the group 
$$ N_{i,j} \coloneqq \{ \pi_i(\vec h): \vec h \in \vec G'; \pi_j(\vec h) = 1 \}.$$
This is a normal connected closed rational subgroup of $G$; indeed one can check that
$$ \log N_{i,j} = \{ \tilde \pi_i(\vec h): \vec h \in \log \vec G'; \tilde \pi_j(\vec h) = 0\}$$
where $\tilde \pi_i \colon \log G^{K_0} \to \log G$ are the coordinate projections, and then the claims are easily verified.  Let $\Pi$ be the projection on $L^2(G/\Gamma)$ formed by composing together the $\Pi_{N_{i,j}}$ for all distinct $i,j \in \{1,\dots,K_0\}$ for which $N_{i,j}$ is not trivial.  Note that because $\vec G'$ belongs to a fixed finite family of subgroups of $G^{K_0}$, $N_{i,j}$ belongs to a fixed finite family of subgroups of $G$ (depending on $M, K_0$).  Thus, if
$$ \left| \sum_{n \in I_i} f(n) \overline{(F-\Pi F)(g_i(n) \Gamma)} \right| \gg |I|$$
for some $i=1,\dots,K_0$, then we have conclusion (iv) of the proposition.  Otherwise, by \eqref{eo} and the triangle inequality, we may assume that
$$ \left| \sum_{n \in I_i} f(n) \overline{\Pi F(g_i(n) \Gamma)} \right| \gg |I|$$
for all $i=1,\dots,K_0$.  We may now apply Cauchy--Schwarz as in the proof of Proposition~\ref{lsieve} and conclude that 
\begin{equation}\label{neo}
 \sum_{i=1}^{K_0} \sum_{j=1}^{K_0} \left| \sum_{n \in I_i \cap I_j} \Pi F(g_i(n) \Gamma) \overline{\Pi F(g_j(n) \Gamma)}\right| \gg K_0^2 |I|.
\end{equation}
We now dispose of the diagonal terms by claiming that
\begin{equation}\label{nii}
 \sum_{n \in I_i} |\Pi F(g_i(n) \Gamma)|^2 \ll |I|
\end{equation}
for each $i$.  A key point here is that the implied constant does not depend on $K_0,M$.  Here we have a technical difficulty because $\Pi F$ is not well controlled in $L^\infty(G/\Gamma)$ norm (one has a $L^\infty$ bound of $O_{K_0,M}(1)$ rather than $O(1)$); however it is still bounded in $L^2(G/\Gamma)$ by $1$ since $\Pi$ is an orthogonal projection, and it also has a Lipschitz norm of $O_{K_0,M}(1)$.  Nevertheless, by applying the formula
\eqref{geo}, one can write the left-hand side of \eqref{nii} as 
$$\sum_{j=1}^s A_j \int_{x_j \vec G' y_j \Gamma^{K_0} / \Gamma^{K_0}} |\Pi F \circ \pi_i|^2\ d\mu_{x_j \vec G' y_j \Gamma^{K_0} / \Gamma^{K_0}} + O_{M,K_0}\left( \frac{1}{S(M)^{1/2}} |I| \right)$$
for some $A_j,x_j,y_j$ (which can depend on $i$) with the properties listed after \eqref{geo}.  As $\pi_i$ is surjective, it pushes forward Haar measure to Haar measure by the uniqueness properties of Haar measure, so the above estimate simplifies to
$$\sum_{j=1}^s A_j \int_{G / \Gamma} |\Pi F|^2\ d\mu_{G / \Gamma} + O_{M,K_0}\left( \frac{1}{S(M)^{1/2}} |I| \right).$$
Since the $L^2$ norm of $\Pi F$ is bounded by $1$, and $\sum_{j=1}^s A_j = O(|I|)$, we obtain the claim \eqref{nii} if $S$ is chosen to be sufficiently rapidly growing.

Using \eqref{nii} to remove the diagonal terms from \eqref{neo}, we conclude (for $K_0$ large enough) that there exist distinct $i,j \in \{1,\dots,K_0\}$ such that
$$ \left| \sum_{n \in I_i \cap I_j} \Pi F(g_i(n) \Gamma) \overline{\Pi F(g_j(n) \Gamma)}\right| \gg |I|.$$
Applying \eqref{geo}, we can bound the left-hand side by
$$ \sum_{l=1}^s A_l \int_{x_l \vec G' y_l \Gamma^{K_0} / \Gamma^{K_0}} (\Pi F \circ \pi_i) \overline{(\Pi F \circ \pi_j)}\ d\mu_{x_l \vec G' y_l \Gamma^{K_0} / \Gamma^{K_0}} + O_{M,K_0}\left( \frac{|I|}{S(M)} \right)$$
for some $A_l,x_l,y_l$ obeying the properties after \eqref{geo}; in particular, for $S$ sufficiently rapidly growing, there exists $l$ such that
$$ \int_{x_l \vec G' y_l \Gamma^{K_0} / \Gamma^{K_0}} (\Pi F \circ \pi_i) \overline{(\Pi F \circ \pi_j)}\ d\mu_{x_l \vec G' y_l \Gamma^{K_0} / \Gamma^{K_0}}  \neq 0.$$
We can project the nilmanifold $x_l \vec G' y_l \Gamma^{K_0} / \Gamma^{K_0}$ down to $(G/\Gamma)^2$ using the projection map $(\pi_i,\pi_j)$ to the $i,j$ coordinates.  The image of this nilmanifold is then invariant under the left action of the normal group $N_{i,j} \times \{1\}$.  If $N_{i,j}$ is non-trivial, then $\Pi F$ has mean zero along all orbits of $N_{i,j}$ by construction, and the above integral will vanish.  Thus $N_{i,j}$ must be trivial.  A similar argument shows that $N_{j,i}$ is trivial.

Now consider the subgroup
$$ G_{i,j} \coloneqq \{ (\pi_i(\vec g), \pi_j(\vec g)): \vec g \in \vec G' \}$$
of $G^2$; this is a closed connected rational subgroup of $G^2$.  By the preceding discussion, the projections $\pi_1 \colon G_{i,j} \to G$, $\pi_2 \colon G_{i,j} \to G$ are both surjective and injective.  By the Goursat lemma, $G_{i,j}$ then takes the form
\begin{equation}\label{gij-graph}
 G_{i,j} = \{ (g, \phi_{i,j}(g)): g \in G \}
\end{equation}
for some group isomorphism $\phi_{i,j} \colon G \to G$.  As there are $O_{K_0,M}(1) = O_{K_0,S}(1)$ possible choices for $\vec G'$, there are $O_{K_0,S}(1)$ choices of $\phi_{i,j}$.  As $G_{i,j}$ is rational, the map $\phi_{i,j}$ (when expressed in the standard basis for $\log G$) is a polynomial map with rational coefficients, hence (by Baker--Campbell--Hausdorff) $\phi_{i,j}(\Gamma)$ is covered by finitely many translates of $\Gamma$, and conversely; thus $\phi_{i,j}(\Gamma)$ must be commensurate with $\Gamma$, in the sense that $\phi_{i,j}(\Gamma) \cap \Gamma$ has finite index in $\phi_{i,j}(\Gamma)$ or $\Gamma$.  Since $\vec g'$ takes values in $\vec G'$, we see from \eqref{gij-graph} that
$$ g'_j = \phi_{i,j}(g'_i)$$
and thus by \eqref{voc}
\begin{equation}\label{gji}
 g_j = \eps_{i,j} \phi_{i,j}(g_i) \gamma_{i,j}
\end{equation}
where
$$ \eps_{i,j} \coloneqq \eps_j \phi_{ij}(\eps_i)^{-1}$$
and
$$ \gamma_{i,j} \coloneqq \phi_{i,j}(\gamma_i)^{-1} \gamma_j.$$
From the smoothness properties of $\eps_i,\eps_j$ we see that
$$ \log \eps_{i,j}(t) \ll_{K_0,M} 1 $$
for $t \in I$, and hence (since $M = O_{K_0,S}(1)$)
\begin{equation}\label{log}
 \log \eps_{i,j}(t) \ll_{K_0,S} 1.
\end{equation}
By rationality, the functions $\phi_{i,j}(\gamma_i) \Gamma$, $\gamma_j \Gamma$ each map $\Z$ to $\{ \gamma \Gamma: \gamma^q \in \Gamma \}$ for some $q = O_{K_0,M}(1) = O_{K_0,S}(1)$ and are also periodic with period $O_{K_0,S}(1)$, which (as discussed at the end of Appendix~\ref{bch}) implies that
\begin{equation}\label{gamp}
 \gamma_{i,j}(\Z) \Gamma \subset \{ \gamma \Gamma: \gamma^q \in \Gamma \}
\end{equation}
for some $q = O_{K_0,S}(1)$, and $\gamma_{i,j}$ is periodic with period $O_{K_0,S}(1)$.

Call a pair $(i,j)$ of distinct elements of $\{1,\dots,K\}$ \emph{good} if there is an identity of the form \eqref{gji}, where $\phi_{i,j}$ ranges over one of $O_{K_0,S}(1)$ isomorphisms of $G$, $\eps_{i,j}$ obeys \eqref{log} on $I$, and $\gamma_{i,j}$ obeys \eqref{gamp} for some $q = O_{K_0,S}(1)$ and is periodic with period $O_{K_0,S}(1)$.  By relabeling, we have shown that every $K_0$-element subset of $\{1,\dots,K\}$ contains a good pair $(i,j)$.  Averaging over all such subsets, we conclude that there are $\gg_{K_0} K^2$ good pairs.  In particular, by the pigeonhole principle, there exists $i \in \{1,\dots,K\}$ such that $(i,j)$ is good for $\gg_{K_0} K$ values of $j$.  Note that there are only $O_{K_0,S}(1)$ possible values of $\phi_{i,j}$ and of the coset $\gamma_{i,j} \Poly(\Z \to \Gamma)$.  Thus, if $K$ is large enough, we see from the pigeonhole principle that there exist distinct $j,j'$ such that $\phi_{i,j} = \phi_{i,j'}$ and $\gamma_{i,j} \Poly(\Z \to \Gamma) = \gamma_{i,j'} \Poly(\Z \to \Gamma)$.  From \eqref{gji} we conclude that
$$ g_j = \eps_{i,j} \eps_{i,j'}^{-1} g_{j'} \gamma_{i,j'}^{-1} \gamma_{i,j}$$
and hence by Definition~\ref{nil-comp}
$$ \phi_j \sim_{1} \phi_{j'}$$
and the claim follows.
\end{proof}

Using this proposition as in the previous section (but now also using Proposition~\ref{loc-decay} to eliminate the unwanted options (iii), (iv) from Proposition~\ref{lsieve-nil}), we obtain the following variant of Proposition~\ref{Scaling-down}.

\begin{proposition}[Scaling down]\label{Scaling-down-nil} Let $2 \leq P \leq Q \leq H \leq X$ and let $f \colon \N \to \C$ be a $1$-bounded completely multiplicative function.  Assume that $P, \frac{\log Q}{\log P}$ are sufficiently large (depending on the parameters $k,\theta,\eta$).  Suppose there exists a large $(X,H)$-family ${\mathcal I}$ and a local nilsequence $\phi_I \in \Psi_I$ associated to each interval $I \in {\mathcal I}$ such that
$$ |\langle f, \phi_I \rangle| \gg 1$$
holds for all $I \in {\mathcal I}$.  Then there exist $P' \in [P,Q/2]$, a large $(\frac{X}{P'}, \frac{H}{P'})$-family ${\mathcal I}'$, and a local nilsequence $\phi'_{I'} \in \Psi_{I'}$ associated to each $I' \in {\mathcal I}'$, such that
$$ |\langle f, \phi'_{I'} \rangle| \gg 1$$
for all $I' \in {\mathcal I}'$.  Furthermore, for each $I' \in {\mathcal I}'$, one can find $\gg \pi_0(P')$ pairs $(I, p')$, where $I \in {\mathcal I}$ and $p'$ is a prime in $[P', 2P']$, such that the rescaled interval $\frac{1}{p'} I$ lies within $3\frac{H}{P'}$ of $I'$, and such that
\begin{equation}\label{peo-nil}
(\frac{1}{p'})_{*} \phi_I \sim_{1} \phi'_{I'}.
\end{equation}
\end{proposition}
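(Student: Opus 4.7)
The plan is to follow the proof of Proposition~\ref{Scaling-down} with essentially the same Tur\'an--Kubilius / pigeonholing skeleton, substituting Proposition~\ref{lsieve-nil} for Proposition~\ref{lsieve} and Proposition~\ref{basic-nil} for Proposition~\ref{basic}. The only genuinely new difficulty is that Proposition~\ref{lsieve-nil} carries two extra alternatives (major-arc correlation (iii) and invariant correlation (iv)) that must be suppressed on most intervals via Proposition~\ref{loc-decay}.

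First, I would establish a greedy family statement of the following form: for any fixed $\eta' > 0$, any interval $J$ of length $\asymp H/P'$ for which the conclusions (i), (ii) of Proposition~\ref{loc-decay} hold at scale $P'$, there exist nilsequences $\phi_1,\dots,\phi_K \in \Psi_J$ with $K = O_{\eta'}(1)$ such that every $\phi \in \Psi_{J'}$ with $J' \subset 500 J$, $|J'| \geq \eta' |J|$, and $|\langle f,\phi\rangle|\geq \eta'$ satisfies $\phi \sim_1 \phi_j$ for some $j$. This is proved by a greedy algorithm: we repeatedly add to the collection some $\phi$ violating the desired conclusion; Proposition~\ref{lsieve-nil} then forces either $K \ll 1$ (so the algorithm terminates) or one of cases (iii), (iv), both of which are precluded by the non-exceptionality of $J$.

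Next, I would run the Tur\'an--Kubilius argument of~\cite[Proposition~3.1]{mrt-fourier} verbatim. The key identity is that when $p' \mid n$, complete multiplicativity of $f$ gives $f(n) = f(p') f(n/p')$, while $\overline F(g_I(n)\Gamma) = \overline F(g_I(p'\cdot (n/p'))\Gamma)$, so restricting $\langle f,\phi_I\rangle$ to multiples of $p'$ is (up to a factor of $f(p')$) the correlation $\langle f,(\tfrac{1}{p'})_*\phi_I\rangle$ over $I/p'$. Combining this with the standard second-moment estimate on the count of prime factors of $n$ in $[P,Q]$, and then dyadically pigeonholing in $p'$ and in the location of $\tfrac{1}{p'}I$, one obtains $P'\in[P,Q/2]$ and a large $(X/P',H/P')$-family ${\mathcal I}'$ such that for each $I'\in{\mathcal I}'$ there are $\gg \pi_0(P')$ pairs $(I,p')\in{\mathcal I}\times[P',2P']$ with $\tfrac{1}{p'}I$ within $3H/P'$ of $I'$ and $|\langle f,(\tfrac{1}{p'})_*\phi_I\rangle|\gg 1$.

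Finally, I would apply Proposition~\ref{loc-decay} to ${\mathcal I}'$ at scale $P'$ and discard the resulting small exceptional sub-family ${\mathcal E}$; the remainder ${\mathcal I}'\setminus{\mathcal E}$ is still large. For each such $I'$, the greedy family from the first step (applied to an interval $J\supset I'$ of comparable length) captures each $(\tfrac{1}{p'})_*\phi_I$ up to $\sim_1$ by some $\phi_j$; pigeonholing in the $O(1)$ choices of $j$ selects a single representative $\phi'_{I'}$ and retains $\gg \pi_0(P')$ of the pairs $(I,p')$ with $(\tfrac{1}{p'})_*\phi_I \sim_1 \phi'_{I'}$, and in particular $|\langle f,\phi'_{I'}\rangle|\gg 1$. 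The main obstacle, and really the only new ingredient relative to Proposition~\ref{Scaling-down}, is producing ${\mathcal E}$ uniformly over the countable families of rational subgroups and tuples $(N_1,\dots,N_\ell)$ appearing in Proposition~\ref{lsieve-nil}(iii)--(iv); this is already done inside Proposition~\ref{loc-decay} via the diagonalization carried out there.
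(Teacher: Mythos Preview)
Your proposal is correct and follows essentially the same approach as the paper: run the Tur\'an--Kubilius argument of~\cite[Proposition~3.1]{mrt-fourier} to produce the candidate family ${\mathcal I}'$, then use Proposition~\ref{loc-decay} to discard a small exceptional set on which alternatives (iii) or (iv) of Proposition~\ref{lsieve-nil} might fire, leaving only option (i), so the greedy family of $O(1)$ nilsequences works and a final pigeonhole selects $\phi'_{I'}$. The only cosmetic difference is ordering---the paper first obtains ${\mathcal I}_2$ and then invokes the greedy algorithm and Proposition~\ref{loc-decay} together, whereas you state the greedy lemma conditionally up front---but the content is the same.
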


\begin{proof}  Repeat the proof of~\cite[Proposition 3.1]{mrt-fourier} down to the paragraph after (36).  Then one can find $P' \in [P,Q/2]$, and a collection ${\mathcal I}_2$ of intervals in $[0, 10 X/P']$ that are separated by distance at least $2H/P'$, with the property that for $\gg \frac{X}{H} \pi_0(P')$ pairs $(I,p')$ with $I \in {\mathcal I}$ and $p'$ a prime in $[P',2P']$, $\frac{1}{p'} I$ lies within $3\frac{H}{P'}$ of some interval $I' \in {\mathcal I}_2$, and furthermore
$$ |\langle f, (\frac{1}{p'})_{*} \phi_I \rangle| \gg 1.$$
Note that each $I'$ is associated to at most $O(\pi_0(P'))$ such pairs.  In particular we have the freedom to remove a small set of intervals from ${\mathcal I}'$ without significantly diminishing the set of pairs $(I,p')$ in the above claims.

From Proposition~\ref{lsieve-nil} and the greedy algorithm, we see that for each $I' \in {\mathcal I}_2$, at least one of the following claims hold:
\begin{itemize}
\item[(i)] There is a family $\phi_{I',1},\dots,\phi_{I',K_{I'}} \in \Psi$ of functions with $K_{I'} = O(1)$ such that whenever $(I,p')$ is one of the above pairs with $\frac{1}{p'} I$ within $3\frac{H}{P'}$ of $I'$, one has
$$ (\frac{1}{p'})_{*} \phi_I \sim_{1} \phi_{I',K_i}$$ 
for some $i=1,\dots,K_{I'}$.
\item[(ii)] There is a connected closed proper rational subgroup $\tilde G$ of $G$ (drawn from a fixed finite collection of such subgroups) and a natural number $q$ (drawn from a fixed finite collection of such numbers) and a compact subset $E$ of $\tilde G$ (again drawn from a fixed finite collection) such that
$$
\sup_{\substack{\eps \in E \\ \tilde g \in \Poly( \R \to \tilde G ) \\ \gamma \in \Poly(q\Z \to \Gamma)}} \sup_{J \subset 500 I'} \left| \sum_{n \in J} f(n) \overline{F}(\eps \tilde g(n) \gamma(n) \Gamma)\right| \ dx \gg \frac{H}{P'}.$$
\item[(iii)]  There is a tuple $(N_1,\dots,N_{\ell})$ of non-trivial normal connected rational subgroups $N_1,\dots,N_{\ell}$ of $G$ (drawn from a fixed finite collection of such subgroups) such that
$$ \sup_{g \in \Poly( \R \to G )} \sup_{J \subset 500 I'} \left| \sum_{n \in J} f(n) \overline{(F-\Pi_{N_1,\dots,N_{\ell}}F)}(g(n) \Gamma)\right| \gg \frac{H}{P'}.$$
\end{itemize}
By Proposition~\ref{loc-decay}, we can eliminate the options (ii), (iii) by removing a small set of intervals from ${\mathcal I}_2$, leaving only option (i).  One can now continue the proof of~\cite[Proposition 3.1]{mrt-fourier} (making only the obvious changes) to conclude the proposition.
\end{proof}

We continue to follow the line of argument from the previous section.  We will need an analogue of Lemma~\ref{bezout} for nilsequences:

\begin{lemma}[Bezout identity]\label{bezout-nil}  Let $a,b$ be coprime natural numbers, and let $\lambda>0$.  Then
$$ \Poly(\frac{\lambda}{a} \Z \to \Gamma) \cdot \Poly(\frac{\lambda}{b} \Z \to \Gamma) = \Poly(\lambda\Z \to \Gamma)$$
and
$$ \Poly(\frac{\lambda}{a} \Z \to \Gamma) \cap \Poly(\frac{\lambda}{b} \Z \to \Gamma) = \Poly(\frac{\lambda}{ab} \Z \to \Gamma).$$
\end{lemma}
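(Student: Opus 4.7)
Both identities will be established by induction on the degree $k$ of the filtration $G_\bullet$, with the $k=0$ case (trivial group) being vacuous. The inclusion $\supseteq$ in each identity is immediate, since $\Poly(\delta'\Z \to \Gamma) \subseteq \Poly(\delta\Z \to \Gamma)$ whenever $\delta\Z \subseteq \delta'\Z$ (the condition defining $\Poly(\cdot \to \Gamma)$ only weakens when tested on a coarser lattice); combined with Lemma~\ref{uniq}, which asserts that each $\Poly(\delta\Z \to \Gamma)$ is a group, this yields $\supseteq$. For the inductive step, let $Z \coloneqq G_k$, which is central in $G$ (since $[G,G_k] \subseteq G_{k+1}=\{1\}$), connected, simply connected, and abelian; via $\exp$ we identify $Z$ with a real vector space and $\Gamma_k = \Gamma \cap Z$ with a cocompact lattice in $Z$, so after a linear change of coordinates we may as well treat $Z$ as $\R^m$ and $\Gamma_k$ as $\Z^m$. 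The quotient $\bar G \coloneqq G/Z$ carries an induced degree $k-1$ filtration with cocompact lattice $\bar\Gamma \coloneqq \Gamma/\Gamma_k$, and the quotient map $\pi\colon G\to \bar G$ induces a homomorphism $\pi_*\colon \Poly(\delta\Z \to \Gamma) \to \Poly(\delta\Z \to \bar\Gamma)$.

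The crucial technical input is that $\pi_*$ is \emph{surjective}: by the non-abelian discrete Taylor expansion (Lemma~\ref{na-dte} from the appendix), any $\bar g \in \Poly(\delta\Z \to \bar\Gamma)$ admits a normal form $\bar g(t) = \prod_{j=0}^{k-1} \bar\gamma_j^{\binom{t/\delta}{j}}$ with $\bar\gamma_j \in \bar\Gamma_j$; lifting each $\bar\gamma_j$ to an arbitrary $\gamma_j \in \Gamma_j$ (possible because $\Gamma_j \twoheadrightarrow \bar\Gamma_j = \Gamma_j/\Gamma_k$ is surjective) and setting $\tilde g(t) \coloneqq \prod_{j=0}^{k-1} \gamma_j^{\binom{t/\delta}{j}}$ produces the desired lift $\tilde g \in \Poly(\delta\Z \to \Gamma)$. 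For part (ii), given $g$ in the intersection, its projection $\bar g$ lies in $\Poly(\lambda/a \Z \to \bar\Gamma) \cap \Poly(\lambda/b \Z \to \bar\Gamma)$, which by the inductive hypothesis equals $\Poly(\lambda/(ab)\Z \to \bar\Gamma)$; lifting to $\tilde g \in \Poly(\lambda/(ab)\Z \to \Gamma)$ and setting $h \coloneqq \tilde g^{-1} g$, one checks that $h$ takes values in $Z$ and that $h \in \Poly(\lambda/a\Z \to \Gamma_k) \cap \Poly(\lambda/b\Z \to \Gamma_k)$, using $\Poly(\lambda/(ab)\Z \to \Gamma) \subseteq \Poly(\lambda/a\Z \to \Gamma) \cap \Poly(\lambda/b \Z \to \Gamma)$. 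Applying the abelian Bezout identity (Lemma~\ref{bezout}) coordinatewise in $\Z^m$ gives $h \in \Poly(\lambda/(ab)\Z \to \Gamma_k)$, and hence $g = \tilde g h$ lies in $\Poly(\lambda/(ab)\Z \to \Gamma)$.

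For part (i), given $g \in \Poly(\lambda\Z \to \Gamma)$, the inductive hypothesis decomposes $\bar g = \bar g_a\, \bar g_b$ with $\bar g_a \in \Poly(\lambda/a\Z \to \bar\Gamma)$ and $\bar g_b \in \Poly(\lambda/b\Z \to \bar\Gamma)$; lift these to $\tilde g_a$ and $\tilde g_b$ respectively. The discrepancy $h \coloneqq (\tilde g_a \tilde g_b)^{-1} g$ lies in $\Poly(\R \to Z) \cap \Poly(\lambda\Z \to \Gamma_k)$, so Lemma~\ref{bezout} again factors $h = h_a h_b$ with $h_a \in \Poly(\lambda/a\Z \to \Gamma_k)$ and $h_b \in \Poly(\lambda/b\Z \to \Gamma_k)$. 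Setting $g_a \coloneqq \tilde g_a h_a$ and $g_b \coloneqq \tilde g_b h_b$, the centrality of $Z$ gives $h_a \tilde g_b = \tilde g_b h_a$, so $g_a g_b = \tilde g_a \tilde g_b h_a h_b = \tilde g_a \tilde g_b h = g$, with each factor in the required group. The only real obstacle in this scheme is invoking the non-abelian discrete Taylor expansion correctly---this is precisely where the ordered product structure and Baker--Campbell--Hausdorff calculations from Appendix~\ref{bch} are needed to guarantee that the lifted Taylor product genuinely belongs to $\Poly(\delta\Z \to \Gamma)$ rather than some larger group; once the lifting lemma is in hand the non-abelian statement reduces to the abelian one purely formally.
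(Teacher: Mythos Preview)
Your proof is correct and complete. It is organized differently from the paper's argument, so a short comparison is worthwhile.

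The paper inducts \emph{downward} on the filtration index $i$, working inside the fixed group $G$: given $\gamma\in\Poly(\cdot\to\Gamma_i)$, it passes to the abelian quotient $\Gamma_i/\Gamma_{i+1}$, uses the abelian Bezout (Lemma~\ref{bezout}) on the exponents in the Taylor expansion, and absorbs the remainder into $\Poly(\cdot\to\Gamma_{i+1})$. No quotient of $G$ and no lifting lemma are needed; the factorization $\gamma=\gamma'_a\sigma\gamma'_b$ with $\sigma$ in the middle avoids any appeal to centrality. Your approach instead inducts on the degree $k$ of the filtration by quotienting out the central subgroup $Z=G_k$, invoking the inductive hypothesis on $\bar G=G/Z$, and then lifting back via the non-abelian Taylor expansion (Lemma~\ref{na-dte}); the discrepancy lives in $Z\cong\R^m$ and is handled coordinatewise by Lemma~\ref{bezout}. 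Centrality of $Z$ is what lets you commute $h_a$ past $\tilde g_b$ in part~(i).

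Both routes reduce the non-abelian statement to the abelian one layer by layer along the central series, but from opposite ends: the paper peels off the top abelian layer $\Gamma_i/\Gamma_{i+1}$, you peel off the bottom central layer $G_k$. Your argument is somewhat more conceptual (a clean quotient--lift--discrepancy pattern), at the cost of needing the surjectivity of $\pi_*$ as an explicit lemma; the paper's argument is more hands-on with Taylor coefficients and sidesteps the lifting step entirely.
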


\begin{proof}  
See Appendix~\ref{app:b}.
\end{proof}

As a consequence, we can now establish the analogue of Proposition~\ref{chinese} for nilsequences (though with a slightly weaker version of part (ii)):

\begin{proposition}[Chinese remainder theorem]\label{chinese-nil} Let $I$ be an interval of some length $|I| \geq 1$, and let ${\mathcal P}$ be a finite collection of primes.
\begin{itemize}
\item[(i)]  Suppose that $\phi \in \Psi_I$, and for each $p \in {\mathcal P}$ there exists $\phi_p \in \Psi$ such that
$$ \phi_p \sim_{1} \phi.$$
Then there exists $\phi' \in \Psi_I$ such that
$$ \phi_p \sim_{\frac{1}{p}} \phi'$$
for all $p \in {\mathcal P}$, and furthermore $\langle f, \phi \rangle = \langle f, \phi' \rangle$ for all $f \colon \Z \to \C$.
\item[(ii)]  Suppose that $\phi \in \Psi_I$ and $\phi' \in \Psi$ are such that
$$ \phi \sim_{\frac{1}{p}} \phi'$$
for all $p \in {\mathcal P}$, and suppose $|I|$ is sufficiently large (depending on the implied constants in the $\sim_{\frac{1}{p}}$ notation). Then there is a subset ${\mathcal P}'$ of ${\mathcal P}$ with $\# {\mathcal P}' \gg \# {\mathcal P}$ such that
$$ \phi \sim_{\frac{1}{\prod {\mathcal P}'}} \phi'.$$
\end{itemize}
\end{proposition}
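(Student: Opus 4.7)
The plan is to lift the abelian Chinese remainder theorem (Proposition~\ref{chinese}) to the nilsequence setting by inducting on the filtration $G = G_1 \supset G_2 \supset \cdots \supset G_k \supset \{1\}$. The key structural input is that each quotient $G_m/G_{m+1}$ is abelian and central in $G/G_{m+1}$; combined with the inclusion $[G_i,G_j] \subset G_{i+j}$ and the Baker--Campbell--Hausdorff formula, this reduces each level of the argument to an abelian polynomial problem to which Proposition~\ref{chinese} applies directly.

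For part (i), write $\phi_p \sim_1 \phi$ as $g_p = \varepsilon_p\, g\, \gamma_p$ with $\varepsilon_p$ smooth on $I$ and $\gamma_p \in \Poly(\Z \to \Gamma)$. I would set $\phi' := (I,\, g\gamma_0)$ for a $\gamma_0 \in \Poly(\Z \to \Gamma)$ to be determined. Since $\gamma_0(n) \in \Gamma$ for integer $n$, the elements $g(n)\gamma_0(n)$ and $g(n)$ define the same coset in $G/\Gamma$, which automatically gives $\langle f, \phi'\rangle = \langle f, \phi\rangle$. The relation $\phi_p \sim_{1/p} \phi'$ then becomes the requirement $\gamma_0^{-1}\gamma_p \in \Poly(\tfrac{1}{p}\Z \to \Gamma)$ for each $p \in \mathcal{P}$. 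I would construct $\gamma_0$ inductively on the filtration step: fixing a Mal'cev basis adapted to the filtration and working modulo $G_{m+1}$ for $m=1,\dots,k$, the task reduces at each level to a polynomial CRT problem in the abelian quotient $G_m/G_{m+1}$, solvable coordinate-wise via Proposition~\ref{chinese}(i); the solutions at successive levels are stitched together using the non-abelian discrete Taylor expansion of Lemma~\ref{na-dte}.

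For part (ii), fix $p_0 \in \mathcal{P}$ and set $\gamma := \gamma_{p_0}$, which I aim to show is $\tfrac{1}{\prod \mathcal{P}'}$-integral for some subset $\mathcal{P}' \subset \mathcal{P}$ of cardinality $\gg \#\mathcal{P}$. Comparing the two factorizations $g = \varepsilon_{p_0} g' \gamma_{p_0} = \varepsilon_p g' \gamma_p$ yields
\[
\gamma\,\gamma_p^{-1} \;=\; (g')^{-1}\,\varepsilon_{p_0}^{-1}\varepsilon_p\, g' \;=\; \mathrm{Ad}_{(g')^{-1}}\!\bigl(\varepsilon_{p_0}^{-1}\varepsilon_p\bigr).
\]
Reducing modulo $G_2$, the adjoint action is trivial (since $G/G_2$ is abelian), so $\gamma\gamma_p^{-1} \equiv \varepsilon_{p_0}^{-1}\varepsilon_p \pmod{G_2}$. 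The right-hand side is smooth on $I$, while the left-hand side takes values in the lattice image of $\Gamma$ in $G/G_2$ at every integer point of $I$. Bernstein's inequality \eqref{ber-3} combined with Lemma~\ref{dte} applied coordinate-wise in $G/G_2$ forces $\gamma\gamma_p^{-1}$ to be constant modulo $G_2$ once $|I|$ is sufficiently large. Consequently $\gamma$ is $\tfrac{1}{p}$-integral modulo $G_2$ for every $p \in \mathcal{P}$, and Lemma~\ref{bezout-nil} upgrades this simultaneously across primes to $\tfrac{1}{\prod\mathcal{P}}$-integrality modulo $G_2$.

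The remaining filtration levels proceed by induction on $m$. Assuming $\gamma$ has already been arranged to be $\tfrac{1}{Q_m}$-integral modulo $G_{m+1}$ on a surviving subset $\mathcal{P}'_m$, I would project the identity above to the abelian quotient $G_{m+1}/G_{m+2}$. Because $G_{m+1}/G_{m+2}$ is central in $G/G_{m+2}$ and $[G_1, G_{m+1}] \subset G_{m+2}$, the conjugation action of $g'$ is trivial on this quotient, so the smooth-plus-integral dichotomy from the base case transfers essentially verbatim, and Bezout once again consolidates integrality across primes. The loss of a small subset of primes at each of the $k$ steps is what produces the positive-density subset $\mathcal{P}'$ in the conclusion rather than the full family. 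I expect the main technical obstacle to be the bookkeeping required to handle the non-central higher-order terms of $\mathrm{Ad}_{(g')^{-1}}(\varepsilon_{p_0}^{-1}\varepsilon_p)$ as the induction ascends the filtration: these pieces must be absorbed into either the smooth or the integral factor via the Baker--Campbell--Hausdorff formula without corrupting the smoothness estimates needed by the Bernstein step at subsequent levels, which is where the careful choice of surviving primes at each stage becomes delicate.
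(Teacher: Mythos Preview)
Your part (i) is correct and essentially the same as the paper's argument: both approaches reduce to finding $\gamma_0 \in \Poly(\Z \to \Gamma)$ with $\gamma_0^{-1}\gamma_p \in \Poly(\tfrac{1}{p}\Z \to \Gamma)$, and both accomplish this via the nonabelian Bezout identity (Lemma~\ref{bezout-nil}).

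Part (ii), however, has a genuine gap at the inductive step $m \ge 2$, and it is more than bookkeeping. Your claim that ``the conjugation action of $g'$ is trivial on $G_{m+1}/G_{m+2}$'' is true only for elements that already lie in $G_{m+1}$. But the element being conjugated is $\varepsilon_{p_0}^{-1}\varepsilon_p$, which is merely $O(1)$, not in $G_{m+1}$. Concretely, after step~1 you know $\varepsilon_{p_0}^{-1}\varepsilon_p \equiv c_p \pmod{G_2}$ with $c_p \in \Gamma$ a nonzero constant; writing $\varepsilon_{p_0}^{-1}\varepsilon_p = \tilde c_p \tau_p$ with $\tau_p \in G_2$ smooth, the conjugated expression picks up the commutator $[\tilde c_p^{-1},(g')^{-1}] \in G_2$, so that modulo $G_3$ one has $\sigma_p \equiv [\tilde c_p^{-1},(g')^{-1}] \cdot \tau_p$. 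This commutator depends on $g'$, which carries no smoothness control whatsoever, so your smooth-plus-lattice argument does not apply at level~2.

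The paper resolves this with a single pigeonhole at the outset: one passes to $\mathcal{P}'$ on which the values $\log \varepsilon_p(n_I)$ all lie within $O(\delta)$ of a common point, so that $\varepsilon_p^{-1}\varepsilon_{p'}$ is $O(\delta)$-close to the \emph{identity} (not merely $O(1)$) on a subinterval of $I$. Then for $p,p' \in \mathcal{P}'$ one shows inductively that $\varepsilon_p^{-1}\varepsilon_{p'}$ itself lies in $G_m$; since it is already in $G_m$, centrality of $G_m/G_{m+1}$ makes conjugation by $g'$ genuinely trivial modulo $G_{m+1}$, and the $O(\delta)$-closeness to identity combined with rationality of $\gamma_p\gamma_{p'}^{-1}$ forces both sides into $G_{m+1}$. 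The descent terminates with $\gamma_p = \gamma_{p'}$ identically, after which Lemma~\ref{bezout-nil} gives the result. Note in particular that the refinement to $\mathcal{P}'$ happens \emph{once}, via this $\delta$-pigeonhole, rather than by shedding primes at each filtration level as you suggest.
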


\begin{proof}  See Appendix~\ref{app:b}
\end{proof}

One can now conclude an analog of Proposition~\ref{prop32-analog}:

\begin{proposition}[Building a family of related local nilsequences]\label{prop32-analog-nil}  Let the hypotheses be as in Theorem~\ref{mult-nil}.  Let $\eps > 0$ be sufficiently small depending on $k,\theta,\eta$, and suppose that $X$ is sufficiently large depending on $\theta,\eta,\eps,k$.  Then there exist $P', P'' \in [X^{\eps^2/2},X^\eps]$, a large $(\frac{X}{P'P''}, \frac{H}{P'P''})$-family ${\mathcal I}''$, and a local nilsequence $\phi''_{I''} \in \Psi_{I''}$ for each $I'' \in {\mathcal I}''$ such that \eqref{falp} holds for all $I'' \in {\mathcal I}''$; also, each $I'' \in {\mathcal I}''$ obeys the conclusions (i), (ii) of Proposition~\ref{loc-decay} (with $P=P'P''$).  Furthermore, there exist a collection ${\mathcal Q}$ of $\gg \pi_0(P')^2 \frac{X}{H}$ quadruples $(I''_1,I''_2, p'_1,p'_2)$ with $I''_1, I''_2$ distinct intervals in ${\mathcal I}''$ and $p'_1,p'_2$ distinct primes in $[P',2P']$, such that $I''_1$ lies within $50 \frac{H}{P'P''}$ of $\frac{p'_2}{p'_1} I''_2$ (so in particular $I''_1 \sim \frac{p'_2}{p'_1} I''_2$), and such that \eqref{phip} holds for a large set of primes $p''$ in $[P''/2, P'']$.
\end{proposition}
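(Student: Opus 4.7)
The plan is to follow the proof of Proposition \ref{prop32-analog} essentially verbatim, with the polynomial-phase inputs replaced by their nilsequence analogues, and with Proposition \ref{loc-decay} invoked at each scaling step to guarantee that the exceptional-set conclusions (i),(ii) persist.

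First, I would apply Proposition \ref{Scaling-down-nil} to the initial large $(X,H)$-family $\mathcal{I}$ (obtained from \eqref{xi} by the pigeonholing argument of \cite[Lemma 2.1]{mrt-fourier}) with parameters $P=X^{\eps^2}$, $Q=X^{\eps}$, producing $P'\in[X^{\eps^2},X^\eps]$, a large $(X/P',H/P')$-family $\mathcal{I}'$, and local nilsequences $\phi'_{I'}\in\Psi_{I'}$ with $|\langle f,\phi'_{I'}\rangle|\gg 1$, linked to $\mathcal{I}$ by many pairs $(I,p')$ for which $(1/p')_*\phi_I\sim_1\phi'_{I'}$. Apply Proposition \ref{loc-decay} (with the given $P=P'$) to delete a small subfamily so that every remaining $I'\in\mathcal{I}'$ satisfies conclusions (i),(ii) of that proposition; this deletion does not affect largeness. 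Repeat the procedure on $\mathcal{I}'$ with parameters $P=(X/P')^{\eps^2}$, $Q=(X/P')^\eps$, obtaining $P''\in[X^{\eps^2/2},X^\eps]$, a large $(X/P'P'',H/P'P'')$-family $\mathcal{I}''$, nilsequences $\phi''_{I''}$ with \eqref{falp}, and many pairs $(I',p'')$ realizing $(1/p'')_*\phi'_{I'}\sim_1 \phi''_{I''}$; another application of Proposition \ref{loc-decay} prunes $\mathcal{I}''$ to a still-large subfamily on which conclusions (i),(ii) of that proposition hold (with $P=P'P''$).

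Next, since \eqref{falp} only depends on the restriction of $\phi''_{I''}$ to $\Z$, I invoke Proposition \ref{chinese-nil}(i) (applied with the set $\mathcal{P}$ consisting of the primes $p''$ attached to a given $I''$) to replace $\phi''_{I''}$ by an equivalent local nilsequence, still satisfying \eqref{falp} and all previously established comparability relations with implied constants enlarged by $O(1)$, but now satisfying the strengthened relation
\begin{equation*}
(1/p'')_*\phi'_{I'}\sim_{1/p''}\phi''_{I''}
\end{equation*}
for each of the primes $p''$ associated to $I''$. Then I run the Cauchy--Schwarz argument from the proof of \cite[Proposition 3.2]{mrt-fourier} to extract $\gg \pi_0(P')^2\pi_0(P'')X/H$ octuplets $(I,I'_1,I'_2,I''_1,I''_2,p'_1,p'_2,p'')$ with $p'_1\neq p'_2$, appropriate nearness conditions, and the four comparability relations $(1/p'_i)_*\phi_I\sim_1\phi'_{I'_i}$ and $(1/p'')_*\phi'_{I'_i}\sim_{1/p''}\phi''_{I''_i}$ for $i=1,2$.

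Finally, I chain these relations using Proposition \ref{basic-nil}. Applying dilation invariance (ii) with factor $1/p''$ to the first pair and combining via transitivity (i) with the second pair yields
\begin{equation*}
(1/p'_i p'')_*\phi_I\sim_{1/p''}\phi''_{I''_i}\qquad(i=1,2),
\end{equation*}
and then applying (ii) with factor $1/p'_{3-i}$ and (iii) to sparsify gives $(1/p'_1 p'_2 p'')_*\phi_I\sim_{1/p''}(1/p'_{3-i})_*\phi''_{I''_i}$. Transitivity between the $i=1$ and $i=2$ cases then yields \eqref{phip} for the chosen $p''$. Summing over the $\gg \pi_0(P'')$ primes $p''$ arising from the octuplets attached to a given quadruple $(I''_1,I''_2,p'_1,p'_2)$, and counting as in \cite[Proposition 3.2]{mrt-fourier}, produces the required collection $\mathcal{Q}$ of $\gg \pi_0(P')^2 X/H$ quadruples for which \eqref{phip} holds for a large set of primes $p''\in[P''/2,P'']$. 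The main obstacle is purely bookkeeping: ensuring that the exceptional-set deletions from Proposition \ref{loc-decay} applied at two different scales do not shrink the family of octuplets below the required threshold, which is automatic once one chooses the deletions small enough relative to the Cauchy--Schwarz gain (of order $\pi_0(P')^2\pi_0(P'')X/H$).
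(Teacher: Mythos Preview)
Your proposal is correct and follows essentially the same approach as the paper: repeat the proof of Proposition~\ref{prop32-analog} verbatim with Propositions~\ref{basic-nil}, \ref{Scaling-down-nil}, \ref{chinese-nil} in place of their polynomial-phase counterparts, and invoke Proposition~\ref{loc-decay} to delete the small exceptional set. The only cosmetic difference is that you apply Proposition~\ref{loc-decay} at both the $\mathcal{I}'$ and $\mathcal{I}''$ scales, whereas the paper (and the statement) only requires it at the $\mathcal{I}''$ scale with $P=P'P''$; your extra pruning at the intermediate scale is harmless but unnecessary.
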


\begin{proof}  One repeats the proof of Proposition~\ref{prop32-analog} verbatim, using Propositions~\ref{basic-nil}, \ref{Scaling-down-nil}, \ref{chinese-nil} in place of Propositions~\ref{basic}, \ref{Scaling-down}, \ref{chinese}.  To ensure the conclusions (i), (ii) of Proposition~\ref{loc-decay}, one simply removes the exceptional set produced by that proposition, which has only a negligible impact on the cardinality of ${\mathcal Q}$.
\end{proof}

For the rest of this section we introduce the quantities
$$ N \coloneqq \# {\mathcal I}'' \asymp \frac{X}{H}$$
and
$$ d \coloneqq \pi_0(P')^2$$
as in the previous section.  We now establish an analog of Proposition~\ref{prop41-analog}:

\begin{proposition}[Local structure of $\phi''$]\label{prop41-analog-nil}  Let the hypotheses be as in Theorem~\ref{mult-nil}, and let $\eps,X,P',P'',{\mathcal I}'', \phi''_{I''}$ be as in Proposition~\ref{prop32-analog-nil}.   
Let $\ell_1, \ell_2$ be bounded even integers obeying \eqref{dop}.  We allow implied constants to depend on $\eps,\ell_1, \ell_2$.  Then, for a subset ${\mathcal Q}'$ of the quadruples $e = (I''_1, I''_2, p'_1, p'_2)$ in ${\mathcal Q}$ of cardinality $\gg dN$, one can find a collection ${\mathcal A}_e$ of quadruples $\vec a = (a_1,a_2,b_1,b_2)$ of natural numbers of cardinality $\asymp d^{\ell_1+\ell_2} / N^2$, and a large collection ${\mathcal P}_{e,\vec a}$ of primes in $[P''/2,P'']$ associated to each $\vec a \in {\mathcal A}_e$, obeying the properties (i), (ii), (iii) of Proposition~\ref{prop41-analog}.  In particular, the implied constants in \eqref{phip-big} do not depend on $\ell_1,\ell_2$, and the implied constants in \eqref{adi} may depend on $\ell_i$ but do not depend on $\ell_{3-i}$.
\end{proposition}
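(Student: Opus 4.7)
The plan is to mimic the proof of Proposition~\ref{prop41-analog} line-by-line, substituting in the nilsequence-adapted tools we have built up: Proposition~\ref{basic-nil} in place of Proposition~\ref{basic}, Proposition~\ref{chinese-nil} in place of Proposition~\ref{chinese}, and Proposition~\ref{prop32-analog-nil} in place of Proposition~\ref{prop32-analog}. Concretely, I would start by feeding the collection ${\mathcal Q}$ of quadruples provided by Proposition~\ref{prop32-analog-nil} into exactly the same graph-theoretic (Sidorenko-type) argument used in~\cite[\S 4]{mrt-fourier}, extended to the case of cycles of possibly unequal lengths $\ell_1,\ell_2$ as in the proof of Proposition~\ref{prop41-analog}. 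This yields $\gg d^{\ell_1+\ell_2+1}/N$ tuples $\vec I'' = (I''_{j,i})$ consisting of two cycles of length $\ell_1,\ell_2$ joined by a distinguished edge $(I''_{0,1},I''_{0,2},p'_1,p'_2)\in {\mathcal Q}$, with distinct primes $p'_{1,j,i},p'_{2,j,i}\in[P',2P']$ implementing the cycle edges and distinct primes $p''\in{\mathcal P}(\vec I'')\subset[P''/2,P'']$ witnessing each link via the local nilsequence comparability relation.

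Next I would clean up the tuples exactly as in Proposition~\ref{prop41-analog}: eliminate collisions between the two prime families $(p'_{1,j,i})_j$ and $(p'_{2,j,i})_j$ using~\cite[Lemma 2.6]{mrt-fourier} to bound the number of affected tuples, and upgrade the crude non-vanishing \eqref{nondeg} to the quantitative lower bound \eqref{nondeg-better} by the same counting argument. Once this cleanup is done, iterating the cycle-edge relation \eqref{pyong} via Proposition~\ref{basic-nil}(ii) and chaining by Proposition~\ref{basic-nil}(i), (iii) around each $\ell_i$-cycle yields approximate dilation invariances of the form
\[
(\tfrac{1}{a_{0,i}})_{*}\phi''_{I''_{0,i}} \sim_{\frac{1}{Q_0}} (\tfrac{1}{a_{\ell_i,i}})_{*}\phi''_{I''_{0,i}},
\]
where $a_{0,i},a_{\ell_i,i}$ are products of $\ell_i$ distinct primes in $[P',2P']$ and $Q_0 = \prod {\mathcal P}(\vec I'')$. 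Setting $a_i$ to be the larger and $b_i$ to be the smaller of the two products, the bounds \eqref{ab0}, \eqref{ab3} follow from \eqref{aoli} and the two-sided comparison $|a_i - b_i| \asymp \tfrac{1}{N}(P')^{\ell_i}$ coming from $I''_{0,i} \sim \tfrac{a_{\ell_i,i}}{a_{0,i}}I''_{0,i}$ combined with \eqref{nondeg-better}.

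The genuinely new wrinkle compared with Proposition~\ref{prop41-analog} comes from Proposition~\ref{chinese-nil}(ii): the Chinese remainder upgrade for nilsequences only hands back a sub-collection ${\mathcal P}_{e,\vec a}\subset {\mathcal P}(\vec I'')$ of comparable cardinality, not the entire set. This does not affect the conclusion materially, because I only ever need $\prod {\mathcal P}_{e,\vec a}$ to be enormous (exponential in $P''$ by \eqref{exp-lower}), and a large subset of a large collection of primes in $[P''/2,P'']$ still satisfies this lower bound; so after applying Proposition~\ref{chinese-nil}(ii) to the linking edge to get \eqref{phip-big}, and separately to each of the two cycles to get \eqref{adi}, the required modulus $\prod {\mathcal P}_{e,\vec a}$ is obtained (possibly after shrinking ${\mathcal P}_{e,\vec a}$ a bounded number of times, which is harmless). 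Note the dependence claim in (iii): each cycle of length $\ell_i$ contributes $O(\ell_i)$ applications of the equivalence relation, so the implied constants in \eqref{adi} may depend on $\ell_i$ but not on $\ell_{3-i}$, while the linking relation \eqref{phip-big} involves no cycle iteration at all, hence its constants are independent of both $\ell_1$ and $\ell_2$.

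Finally, the counting to pass from tuples $\vec I''$ down to quadruples $e\in{\mathcal Q}'$ proceeds verbatim as in the proof of Proposition~\ref{prop41-analog}: each $e$ is associated to $O(d^{\ell_1+\ell_2}/N^2)$ tuples by \cite[Lemma 2.6]{mrt-fourier}, so by pigeonhole there is a subset ${\mathcal Q}'\subset {\mathcal Q}$ of cardinality $\gg dN$ for which each $e\in{\mathcal Q}'$ is associated to $\asymp d^{\ell_1+\ell_2}/N^2$ tuples, each in turn generating a quadruple $\vec a=(a_1,b_1,a_2,b_2)\in{\mathcal A}_e$ with the desired properties. The main potential obstacle is ensuring that the subset of primes lost to Proposition~\ref{chinese-nil}(ii) does not contaminate the uniformity of constants across different $\ell_i$; this is handled by the fact that the linking step and each of the two cycle steps are independent invocations of the Chinese remainder theorem, and that the final ${\mathcal P}_{e,\vec a}$ can be taken to be the common intersection of the three resulting sub-collections, which is still large.
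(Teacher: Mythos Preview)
Your proposal is correct and matches the paper's own proof, which simply reads: ``One repeats the proof of Proposition~\ref{prop41-analog}, using Propositions~\ref{basic-nil}, \ref{prop32-analog-nil}, \ref{chinese-nil} in place of Propositions~\ref{basic}, \ref{prop32-analog}, \ref{chinese}. Note that Proposition~\ref{chinese-nil}(ii) will force us to refine the set of primes ${\mathcal P}_{I''_1,I''_2}$ somewhat, but it will still remain large.'' You have correctly identified both the substitutions and the single new wrinkle (the prime-set refinement from Proposition~\ref{chinese-nil}(ii)), and your handling of the $\ell_i$-versus-$\ell_{3-i}$ dependence of constants is accurate.
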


\begin{proof} One repeats the proof of Proposition~\ref{prop41-analog}, using Propositions~\ref{basic-nil}, \ref{prop32-analog-nil}, \ref{chinese-nil} in place of Propositions~\ref{basic}, \ref{prop32-analog}, \ref{chinese}.  Note that Proposition~\ref{chinese-nil}(ii) will force us to refine the set of primes ${\mathcal P}_{I''_1,I''_2}$ somewhat, but it will still remain large. 
\end{proof}

In the previous section, the values of $\ell_1,\ell_2$ were not of particular significance.  In this section it will be convenient to choose $\ell_1$ to be significantly larger than $\ell_2$, because we will need to work with many quadruples simultaneously.

\subsection{Solving the approximate dilation invariance}
The next step is to solve the approximate dilation invariance equation \eqref{adi} for a given quadruple $e$.  In the previous section, we were able to obtain a satisfactory description of the solutions just by using a single choice of $\vec a=(a_1,b_1,a_2,b_2) \in {\mathcal A}_e$; see Proposition~\ref{sold}.  Here, however, the situation will be more complicated, because for each $\vec a$ there can be some unwanted ``exotic'' solutions $\phi''_{I''_i}$ to \eqref{adi} that do not pretend to behave like a character $t^{iT}$, and which therefore cannot be treated using the results from~\cite{mr}, \cite{MRT}.  For instance, consider the situation in which
\begin{equation}\label{far}
 \phi''_{I''_1} = ( I''_1, t \mapsto \gamma^{P(t)} )
\end{equation}
for some $\gamma = \gamma_{a_1,b_1} \in \Gamma$ of polynomial size $\gamma = X^{O(1)}$ and some polynomial $P(t)$ which is a partial Taylor expansion of the analytic function $t \mapsto \frac{\log t}{\log(a_1/b_1)}$ around the midpoint $x_{I''_1}$ of $I''_1$.  If the filtration $G_i$ is defined suitably, $t \mapsto \gamma^{P(t)}$ will be a polynomial map.  On the other hand, since
$$ \gamma^{\frac{\log(a_1 t)}{\log(a_1/b_1)}} = \gamma^{\frac{\log(b_1 t)}{\log(a_1/b_1)}} \gamma$$
one can verify that the approximate dilation invariance \eqref{adi} will be obeyed for $i=1$ if $P(t)$ is a sufficiently long partial Taylor expansion of
$t \mapsto \frac{\log t}{\log(a_1/b_1)}$.  If $\gamma$ is a central element of $G$, the local nilsequence \eqref{far} will then ``pretend'' to be like $t^{iT}$ for some $T$ depending on $\gamma$ and $\log(a_1/b_1)$, but if $\gamma$ is not central then one would not expect this to be the case in general.  As a consequence, merely having \eqref{adi} for a single tuple $\vec a$ will be insufficient for our arguments.  However, as we shall see, if have the approximate dilation invariance \eqref{adi} holds for a very ``dense'' collection of ratios $a_1/b_1$, then one cannot have a representation such as \eqref{far} for all of these $a_1/b_1$ simultaneously unless the bases $\gamma$ involved are essentially central, or if $\phi''_{I''_1}$ can be modeled by a lower dimensional nilsequence.  Actually the first case is contained in the second thanks to Proposition~\ref{nonab}, so we will be able to proceed via Proposition~\ref{major}.

We now begin the formal arguments.  The first step is to decouple the ``continuous'' (or ``Archimedean'') aspects of the equation \eqref{adi} (associated to the smooth polynomial maps $\eps$ in Definition~\ref{nil-comp} and the dilation structure in \eqref{adi}) from the ``rational'' (or ``non-Archimedean'') aspects (associated to the rational maps $\gamma$ in Definition~\ref{nil-comp}).  It will be possible to do this thanks to the exponentially large size of the modulus $\prod {\mathcal P}_{e,\vec a}$ occurring in \eqref{adi}, which enable a sort of ``Lefschetz principle'' to pass to the continuous setting.  To describe this more precisely we need some more notation.  As in the previous section, a quantity $a$ (which could be a number or an element of $G$ or $\log G$) is said to be of \emph{polynomial size} if $a = O(X^{O(1)})$.  We similarly say that a map $g \in \Poly(\R \to G)$ is of \emph{polynomial size} if the coefficients $g_0,\dots,g_k$ of the Taylor expansion
$$ g(t) = g_0 g_1^{\binom{t}{1}} \dots g_k^{\binom{t}{k}}$$
of $g$ around the origin are all of polynomial size.  Observe from many applications of the Baker--Campbell--Hausdorff formula (Appendix~\ref{bch}) that a polynomial map $g \in \Poly(\R \to G)$ is of polynomial size if and only if the polynomial map $\log g \colon \R \to \log G$ has all coefficients of its Taylor expansion around the origin of polynomial size.  In particular (from a further application of Baker--Campbell--Hausdorff) if $g,h \in \Poly(\R \to G)$ are of polynomial size then so are $g^{-1}$ and $gh$ (though with different implied constants in the $O()$ notation); also one has $g(t)$ of polynomial size whenever $g,t$ are.  Next, for any modulus $Q > 0$, we say that a map $\gamma \in \Poly(\R \to G)$ is \emph{$Q$-rational} if $\gamma \in \Poly(\frac{q}{Q} \Z \to \Gamma)$ for some natural number $q$ of polynomial size.  From Lemma~\ref{bezout-nil} (and a rescaling by $q$) we see that if $\gamma, \gamma' \in \Poly(\R \to G)$ are $Q$-rational, then so are $\gamma^{-1}$ and $\gamma \gamma'$, again with different implied constants in the $O()$ notation.  The key fact that allows us to decouple is the following ``transversality'' between the collection of maps of polynomial size and the collection of maps that are extremely rational.

\begin{lemma}[Transversality]\label{disj}  Let ${\mathcal P}$ be a large set of primes in $[P''/2,P'']$.  Suppose that $g \in \Poly(\R \to G)$ is both of polynomial size and $Q$-rational, where $Q=\prod \mathcal{P}$.  Then $g$ is equal to a constant $g(t) = \gamma$ for some $\gamma \in \Gamma$ of polynomial size.
\end{lemma}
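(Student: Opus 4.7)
The plan is to prove this by induction on $\dim G$, the base case $\dim G = 0$ being trivial. For the inductive step, assume the statement for all filtered nilmanifolds of smaller dimension. Since $G_k$ is normal in $G$ (being a filtration subgroup) and rational, we pass to the quotient $\bar G = G/G_k$ with induced degree $(k-1)$ filtration $\bar G_i = G_i/G_k$ and lattice $\bar\Gamma = \Gamma/\Gamma_k$, where $\Gamma_k = \Gamma \cap G_k$, so that $\dim \bar G < \dim G$. The pushforward $\bar g \in \Poly(\R \to \bar G)$ inherits both polynomial size and $Q$-rationality from $g$, so by the inductive hypothesis $\bar g$ is constant, equal to some $\bar\gamma \in \bar\Gamma$. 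In particular $h(t) := g(0)^{-1} g(t)$ takes values in $G_k$ for every $t$. Since $0 \in \frac{q}{Q}\Z$ we have $g(0) \in \Gamma$, and $g(0)$ is of polynomial size, so $h$ is itself a polynomial map that is both of polynomial size and $Q$-rational, with $h(0) = 1$ and image in $G_k$.

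The decisive observation is that $G_k$ is abelian: $[G_k,G_k] \subset G_{2k} = \{1\}$ for $k \geq 1$. Thus $\log \colon G_k \to \log G_k$ is a Lie group isomorphism onto an abelian Lie algebra, $\Gamma_k$ is cocompact in $G_k$, and $\log \Gamma_k$ is a full-rank lattice in $\log G_k$. Writing $\log h(t) = Y_1 t + \dots + Y_k t^k$ with $Y_i \in \log G_k$ (the constant term vanishes since $h(0)=1$; individual coefficients lie in $\log G_k$ because the polynomial $\log h$ takes values in the linear subspace $\log G_k$), polynomial size gives $|Y_i| \ll X^{O(1)}$. The $Q$-rationality says that the polynomial $p(n) := \log h(qn/Q) = \sum_i (q/Q)^i Y_i\, n^i$ takes values in the lattice $\log \Gamma_k$ for every $n \in \Z$, so the abelian version of Lemma~\ref{dte} applied to this lattice lets us expand $p(n) = \sum_j Z_j \binom{n}{j}$ with $Z_j \in \log \Gamma_k$. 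Comparing coefficients of $n^k$ yields the identity $Z_k = k!\,(q/Q)^k Y_k$.

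Because $Q = \prod {\mathcal P} \gg \exp(cP'')$ by~\eqref{exp-lower}, while $q$ and $Y_k$ are both of polynomial size, we see that $|Z_k|$ is much smaller than any fixed positive constant for large $X$; discreteness of $\log \Gamma_k$ then forces $Z_k = 0$, and hence $Y_k = 0$. Once $Z_k$ has been eliminated, the coefficient of $n^{k-1}$ in $\sum_j Z_j \binom{n}{j}$ becomes simply $Z_{k-1}/(k-1)!$, and matching against $Y_{k-1} (q/Q)^{k-1}$ gives $Z_{k-1} = (k-1)!(q/Q)^{k-1} Y_{k-1}$; the same argument forces $Y_{k-1} = 0$. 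Iterating down through all coefficients yields $Y_1 = \dots = Y_k = 0$, so $h \equiv 1$ and $g(t) \equiv g(0) =: \gamma \in \Gamma$ is the desired constant of polynomial size.

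The routine bookkeeping---that projection, inversion, and multiplication preserve polynomial size (via Baker--Campbell--Hausdorff) and $Q$-rationality (via Lemma~\ref{bezout-nil})---and the justification that each Taylor coefficient of $\log h$ individually lies in $\log G_k$ are straightforward. The one conceptual step is the coefficient identity $Z_k = k!(q/Q)^k Y_k$: it encodes precisely the ``transversality'' promised in the lemma, forcing extreme $Q$-rationality and polynomial size to coincide only at zero, and this is where the exponential largeness of $Q = \prod {\mathcal P}$ relative to any polynomial in $X$ is used in an essential way.
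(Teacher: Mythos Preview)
Your proof is correct and the core computation---comparing the top Taylor coefficient after rescaling by $q/Q$ and using that $Q$ is exponentially large while all other quantities are of polynomial size---is the same as the paper's. The organisation differs, however: the paper first proves the lemma outright for abelian $G$ and then, for general $G$, projects to the abelianisation $G/[G,G]$ and inducts along the derived series, whereas you induct on $\dim G$ by quotienting out the last filtration step $G_k$ and carrying out the abelian computation directly inside $G_k$. Both reductions land in the same abelian calculation; your route stays closer to the given filtration and avoids singling out the abelian case as a separate lemma, while the paper's route makes the abelian case reusable and keeps the induction on a coarser invariant (derived length rather than dimension).

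One small point to tidy: as written, your inductive step quotients by $G_k$, but if $G_k=\{1\}$ the dimension does not drop and the inductive hypothesis does not apply. The fix is immediate---quotient instead by the last nontrivial $G_j$ (which exists since $G=G_1\neq\{1\}$); this $G_j$ is still abelian because $[G_j,G_j]\subset G_{2j}\subset G_{j+1}=\{1\}$, and the rest of your argument goes through verbatim with $j$ in place of $k$.
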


\begin{proof}  The group element $g(0)$ lies in $\Gamma$ and is of polynomial size.  By dividing this out we may assume $g(0)=1$. We first prove the claim for abelian groups $G$. Since $g$ in a map in $ \Poly(\frac{q}{Q}\mathbb Z \to \Gamma)$ we have 
\[
g(\frac{q}{Q}t)  =\sum_{i=0}^k a_i \binom{t}{i}
\]
with $a_i \in \Z$.  So that 
\[
g(t)  =\sum_{i=0}^k a_i \binom{\frac{Q}{q}t}{i}.
\]
Since $g$ is polynomial size we conclude that $[\frac{Q}{q}]^k\frac{1}{k!}a_k$ must be of polynomial size; as $q$ is also of polynomial size, we therefore have
$$ a_{k} = O( X^{O(1)} Q^{-k} ).$$
On the other hand, as ${\mathcal P}$ is a large set of primes in $[P''/2,P'']$, we have from \eqref{exp-lower} that
$$ Q \gg \exp( X^{\eps^2/3} )$$
(say).  Since $a_k \in \Z$  we conclude that $a_k = 0$. Proceeding by induction we obtain that $a_i=0$ for all $i>0$. 

Now that if $g \in \Poly( \R\to G)$ is of polynomial size then $\bar g = g[G,G] \in \Poly( \R\to G/[G,G])$  is also of polynomial size,
 since if  $g(t)=g_0 g_1^{\binom{t}{1}} \dots g_k^{\binom{t}{k}}$ then  $\bar g(t)=   \bar g_0 \bar g_1^{\binom{t}{1}} \dots \bar g_k^{\binom{t}{k}}$,  where $\bar g_i =  g_i[G,G]$. 
Consider  $\bar g$ now as a polynomial map in $ \Poly(\frac{q}{Q}\mathbb Z \to \Gamma/[\Gamma, \Gamma])$, then by Lemma~\ref{na-dte} we have the Taylor expansion
$$ \bar g\left(\frac{q}{Q} t\right) =   \bar \gamma_0 \bar \gamma_1^{\binom{t}{1}} \dots \bar \gamma_k^{\binom{t}{k}}$$
where $\bar \gamma_i =  \gamma_i[\Gamma, \Gamma]$.
By the claim for abelian groups we have for each $i \geq 1$ that $\bar \gamma_i =1$, so that $\gamma_i \in [\Gamma, \Gamma]$.  The claim now follows by induction on the derived sequence. 
\end{proof}

Using this lemma, we obtain the following.

\begin{proposition}[Splitting]\label{split}  Let the notation and hypotheses be as in Proposition~\ref{prop41-analog-nil}, and assume $\ell_1 \geq \ell_2$.  Let  $e = (I''_1,I''_2,p'_1,p'_2) \in {\mathcal Q}'$ and $\vec a = (a_1,b_1,a_2,b_2)$ in ${\mathcal A}_e$, and write $\phi_{I''_i} = (I''_i, g_{I''_i})$ for $i=1,2$ and some $g_{I''_i} \in \Poly(\R \to G)$.  Then we may factor
\begin{equation}\label{factors}
 g_{I''_i} = \tilde g_{e,\vec a, i} \gamma_{e,\vec a,i}
\end{equation}
for $i=1,2$, where $\tilde g_{e,\vec a, i} \in \Poly(\R \to G)$ is of polynomial size (with exponents that can depend on $\ell_2$, but are independent of $\ell_1$) and $\gamma_{e,\vec a, i}$ is $\prod {\mathcal P}_{e,\vec a}$-rational.  Furthermore, we have the approximate dilation invariance
\begin{equation}\label{gip}
 \tilde g_{e,\vec a,i}\left(\frac{a_i}{b_i} \cdot\right) = \eps_i \tilde g_{e,\vec a,i} \gamma_i
\end{equation}
for some $\gamma_i = \gamma_{i, e,\vec a} \in \Gamma$ of polynomial size, and some $\eps_i = \eps_{i, e, \vec a} \in \Poly(\R \to G)$ that is smooth on $I''_i$.  In a similar vein we have
\begin{equation}\label{gip-2}
 \tilde g_{e,\vec a,1}(p'_2 \cdot) = \eps^\dagger \tilde g_{e,\vec a,2}(p_1' \cdot) 
\end{equation}
for some $\eps^\dagger = \eps^\dagger_{e,\vec a} \in \Poly(\R \to G)$ that is smooth on $\frac{1}{p'_2} I''_1$, and
\begin{equation}\label{egad}
\gamma_{e,\vec a,1}(p'_2 \cdot) = \gamma_{e,\vec a,2}(p'_1 \cdot) \gamma^{\dagger} 
\end{equation}
for some $\gamma^\dagger = \gamma^\dagger_{e,\vec a} \in \Poly( \frac{1}{\prod {\mathcal P}_{e,\vec a}} \Z \to \Gamma)$.
\end{proposition}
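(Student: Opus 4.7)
The plan is to construct the factorization \eqref{factors} via a Mal'cev-coordinate splitting of $g_{I''_i}$ that is compatible with the $Q$-rational structure implicit in the hypotheses \eqref{adi} and \eqref{phip-big} (where $Q := \prod {\mathcal P}_{e,\vec a}$), and then to derive the dilation invariance \eqref{gip}, the compatibility \eqref{gip-2}, and the rational identity \eqref{egad} by separating the Archimedean and non-Archimedean contributions via the transversality Lemma~\ref{disj}.

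I will build the $i=2$ factorization first, since the asymmetric size bound on $\tilde g$ (allowed to depend on $\ell_2$ but not $\ell_1$) signals that the intrinsic construction should be driven by the $i=2$ branch of \eqref{adi}. Expand $g_{I''_2}$ in a basic Taylor expansion around a convenient base point, writing it as an ordered product of terms in the groups $G_j$. In Mal'cev coordinates on each $\log G_j$, round every real entry of the $j$-th Taylor coefficient to the nearest rational of denominator dividing $q_j Q^j$ for some polynomial-size integer $q_j$. The rounded parts, reassembled inductively from the top of the filtration downward with Baker--Campbell--Hausdorff commutator corrections absorbed into higher-level factors, produce the $Q$-rational map $\gamma_{e,\vec a, 2}$; the residues produce the polynomial-size map $\tilde g_{e,\vec a, 2}$. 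The specific choice of rounding is guided by \eqref{adi} at $i=2$: the splitting is arranged so that $\gamma_{e,\vec a, 2}$ absorbs all non-constant $Q$-rational content of the approximate dilation invariance, leaving $\tilde g_{e,\vec a, 2}$ to satisfy \eqref{gip} at $i=2$ with a constant $\gamma_2 \in \Gamma$ supplied by Lemma~\ref{disj}. The polynomial-size exponents depend on $\ell_2$ through the scales $a_2, b_2 \asymp (P')^{\ell_2}$ entering \eqref{adi}, but not on $\ell_1$.

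Next I will build the $i=1$ factorization. Substitute $g_{I''_2} = \tilde g_{e,\vec a, 2}\, \gamma_{e,\vec a, 2}$ into the expansion $g_{I''_1}(p'_2 t) = \tilde\eps(p'_2 t)\, g_{I''_2}(p'_1 t)\, \tilde\gamma(t)$ of \eqref{phip-big} for suitable smooth $\tilde\eps$ and $Q$-rational $\tilde\gamma$. Rearranging via Baker--Campbell--Hausdorff to commute the $Q$-rational and polynomial-size factors through each other (absorbing the resulting commutators, which lie at higher filtration levels, into the appropriate factor), read off $\tilde g_{e,\vec a, 1}$ as the polynomial-size part and $\gamma_{e,\vec a, 1}$ as the $Q$-rational part of the resulting decomposition of $g_{I''_1}$. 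This gives \eqref{gip-2} and \eqref{egad} directly, and the polynomial-size bound on $\tilde g_{e,\vec a, 1}$ inherits from that of $\tilde g_{e,\vec a, 2}$, with no dependence on $\ell_1$. Finally, to establish \eqref{gip} at $i=1$, substitute the factorization into \eqref{adi} at $i=1$; the resulting identity, after applying Lemma~\ref{disj} to the ratio of the two candidate decompositions of $g_{I''_1}((a_1/b_1) \cdot)$, forces the discrepancy to be a constant $\gamma_1 \in \Gamma$ of polynomial size (with the size of $\gamma_1$ now allowed to depend on $\ell_1$, but with $\tilde g_{e,\vec a, 1}$ itself remaining uncontaminated).

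The main obstacle will be the non-commutative bookkeeping across the filtration. Each application of Baker--Campbell--Hausdorff to swap a polynomial-size element past a $Q$-rational element produces a commutator one level higher in the filtration with a mixed character, and these corrections must be re-absorbed into either $\tilde g$ or $\gamma$ without disrupting the structure already built at lower levels. The tool that closes the loop is Lemma~\ref{disj}: at each filtration level, any factor appearing to be simultaneously polynomial-size and $Q$-rational collapses to a polynomial-size constant in $\Gamma$, which can be freely re-attributed to either side of the factorization. This permits an induction on the filtration step $s = 1, \dots, k$ that propagates the desired splitting all the way to the top and, in tandem, pins down the constants $\gamma_i$, $\gamma^{\dagger}$ and the smooth corrections $\eps_i$, $\eps^{\dagger}$ appearing in \eqref{gip}--\eqref{egad}.
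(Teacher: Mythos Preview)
Your overall architecture is sound—the induction along the filtration, the central role of Lemma~\ref{disj}, and the asymmetric treatment (build the $i=2$ factorization first with $\ell_2$-dependent exponents, then transport to $i=1$ via \eqref{phip-big})—and in fact your strategy of defining $\tilde g_{e,\vec a,1}$ directly from \eqref{phip-big} and the $i=2$ split is a clean variant of what the paper does (the paper instead builds both factorizations independently, with exponents depending on $\ell_i$, and then corrects the $i=1$ one by a constant in $\Gamma$ via Lemma~\ref{disj}).

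However, the core step—constructing the factorization for $i=2$ by ``rounding each Taylor coefficient to the nearest rational of denominator dividing $q_j Q^j$''—does not work. A rational with denominator dividing $q_j Q^j$ lies in $\tfrac{1}{q_j Q^j}\Z$, whose spacing is tiny; assembling such rounded coefficients does \emph{not} produce a $Q$-rational map $\gamma$ (i.e., an element of $\Poly(\tfrac{q}{Q}\Z\to\Gamma)$), whose coefficients in Mal'cev coordinates must instead be \emph{large}: roughly integer multiples of $Q^j/q$. Conversely, if you round toward such large multiples, the residues are of size $\asymp Q^j$, which is super-polynomial. The point is that an arbitrary polynomial map $g_{I''_2}$ admits no a~priori decomposition into ``polynomial-size'' times ``$Q$-rational''; this structure must be \emph{forced} by the dynamics. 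The paper extracts it from \eqref{adi}: working in the abelian quotient $G_j/G_{j+1}$ and differentiating the relation $g(a_2\cdot)=\eps^* g(b_2\cdot)\gamma^*$ exactly $d$ times at $0$ yields
\[
(a_2^d - b_2^d)\,(\log g)^{(d)}(0) \;=\; (\text{polynomial size}) \;+\; \tfrac{Q}{q}\cdot(\text{lattice element}) \pmod{\log G_{j+1}},
\]
and since $a_2^d-b_2^d$ is a \emph{nonzero polynomial-size integer} (from \eqref{ab0}, \eqref{ab3}), one can divide through to split the coefficient itself. This is the mechanism your proposal is missing; ``guided by \eqref{adi}'' needs to mean precisely this differentiation-and-inversion step, not a rounding scheme.
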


The fact that the polynomial size bounds for $\tilde g_{e,\vec a, 1}$ depend only on the smaller exponent $\ell_2$ rather than the larger one $\ell_1$ will be crucial in our subsequent analysis.

\begin{proof}  Let $i=1,2$, and set $Q \coloneqq \prod {\mathcal P}_{e,\vec a}$. From \eqref{adi} and Definition~\ref{nil-comp} one has
\begin{equation}\label{ep}
 g_{I''_i}(a_i \cdot) = \eps^*_i g_{I''_i}(b_i \cdot) \gamma^*_i
\end{equation}
where $\gamma^*_i$ is $Q$-rational and $\eps^*_i$ is smooth on $\frac{1}{a_i} I''_i$.  Applying \eqref{ber-3} to the polynomial $\log \eps^*_i$ we conclude that $\eps^*_i$ is of polynomial size (with exponents that do not depend on $\ell_1,\ell_2$).  We now claim inductively for every $j=1,\dots,k+1$ that we can factor
\begin{equation}\label{gjsplit}
 g_{I''_i} = \tilde g_{e,\vec a,i,j} g_{e,\vec a,i,j} \gamma_{e,\vec a,i,j}
\end{equation}
where $\tilde g_{e,\vec a,i,j} \in \Poly(\R \to G)$ is of polynomial size (with exponents that may depend on $\ell_i$ but not on $\ell_{3-i}$), $\gamma_{e,\vec a,i,j} \in \Poly(\R \to G)$ is $Q$-rational, and $g_{e,\vec a,i,j} \in \Poly(\R \to G_j)$ takes values in $G_j$; setting $j=k+1$ then gives the desired claim \eqref{factors} for $i=2$ at least; for $i=1$ we will have the issue that the exponents depend on $\ell_1$ rather than $\ell_2$, but we will return to fix this issue later.  

The inductive claim is trivial for $j=1$ (set $g_{e,\vec a,i,1} = g_{I''_i}$ with $\tilde g_{e,\vec a,i,1}, \gamma_{e,\vec a,i,1}$ trivial); now suppose that the claim has been established for some $1 \leq j \leq k$.  In this argument all exponents are allowed to depend on $\ell_i$ but not on $\ell_{3-i}$. Then from \eqref{ep} we see that
$$ g_{e,\vec a,i,j}(a_i \cdot) = \eps_j g_{e,\vec a,i,j}(b_i \cdot) \gamma_j$$
for some $\eps_j$ of polynomial size and $Q$-rational $\gamma_j$ (we suppress the dependence of these maps on $e,\vec a,i$ for brevity).  Quotienting by $G_j$ we see that $\eps_j^{-1}$ and $\gamma_j$ agree modulo $G_j$, and hence by Lemma~\ref{disj} applied to $G/G_j$ are both equal modulo $G_j$ to a constant $\gamma \in \Gamma$ of polynomial size.  Thus we have
$$ g_{e,\vec a,i,j}(a_i \cdot) = \tilde \eps_j \gamma^{-1} g_{e,\vec a,i,j}(b_i \cdot) \gamma \tilde \gamma_j$$
for some $\tilde \eps_j$ of polynomial size taking values in $G_j$, and $Q$-rational $\tilde \gamma_j$ taking values in $G_j$.  In the abelian group $G_j/ G_{j+1}$, we thus have the identity
$$ g_{e,\vec a,i,j}(a_i \cdot) = \tilde \eps_j g_{e,\vec a,i,j}(b_i \cdot) \tilde \gamma_j \mod G_{j+1}$$
and thus on taking logarithms and working in the abelian Lie algebra $\log G_j/\log G_{j+1}$ (noting from Appendix~\ref{bch} that the logarithm map is a homomorphism from $G_j/G_{j+1}$ to $\log G_j/\log G_{j+1}$), we have from \eqref{gh} that
$$ \log g_{e,\vec a,i,j}(a_i \cdot) = \log \tilde \eps_j + \log g_{e,\vec a,i,j}(b_i \cdot) + \log \tilde \gamma_j \mod \log G_{j+1}.$$
For $d=0,\dots,j$, we may differentiate $d$ times at $0$ and rearrange to conclude that
$$ (a_i^d - b_i^d) (\log g_{e,\vec a,i,j})^{(d)}(0) = (\log \tilde \eps_j)^{(d)}(0) + (\log \tilde \gamma_j)^{(d)}(0) \mod \log G_{j+1}.$$
As $\tilde \eps_j$ is of polynomial size, we have
$$  (\log \tilde \eps_j)^{(d)}(0) = O( X^{O(1)} ).$$
Similarly, as $\tilde \gamma_j$ is $Q$-rational, $(\log \tilde \gamma_j)^{(d)}(0) \mod \log G_{j+1}$ takes values in $\frac{Q}{q} \log \Gamma_j \mod \log G_{j+1}$ for some positive integer $q$ of polynomial size.  Since $a_i^d-b_i^d$ is also a positive integer of polynomial size, we conclude that
$$ (\log g_{e,\vec a,i,j})^{(d)}(0) = O(X^{O(1)}) + \frac{Q}{q_d} \gamma_d \mod \log G_{j+1}$$
for some $\gamma_d \in \Gamma_j$ and positive integer $q_d$ of polynomial size.  By Taylor expansion (and clearing denominators with the $q_d$), we may then write
$$ \log g_{e,\vec a,i,j} = \log g^*_j + \log \gamma^*_j \mod \log G_{j+1}$$
where $g^*_j \in \Poly(\R \to G_j)$ is of polynomial size and $\gamma^*_j \in \Poly(\R \to G_j)$ is $Q$-rational.  Exponentiating (noting that $G_j/G_{j+1}$ is abelian), we conclude that
$$ g_{e,\vec a,i,j} = g^*_j g_{e,\vec a,i,j+1} \gamma^*_j$$
for some $g_{e,\vec a,i,j+1} \in \Poly(\R \to G_{j+1})$.  Inserting this into \eqref{gjsplit} we close the induction and establish \eqref{factors} (with the above caveat regarding the exponents depending on $\ell_i$ rather than $\ell_2$). 

From \eqref{phip-big} we have
$$ g_{I''_1}(p'_2 \cdot) = \eps^{\dagger} g_{I''_2}(p_1 \cdot) \gamma^{\dagger}$$
for some $\eps^{\dagger} \in \Poly(\R \to G)$ smooth on $\frac{1}{p'_2} I''_1$, and some $\gamma^{\dagger} \in \Poly(\frac{1}{Q}\Z \to \Gamma)$; in particular, $\gamma^\dagger$ is $Q$-rational with exponents that do not depend on $\ell_1$ or $\ell_2$.  Combining this with \eqref{factors} and rearranging, we see that
$$ \tilde g_{e,\vec a,2}(p'_1 \cdot)^{-1} (\eps^{\dagger})^{-1} \tilde g_{e,\vec a,1}(p'_2 \cdot) =  \gamma_{e,\vec a,2}(p'_1 \cdot) \gamma^{\dagger} \gamma_{e,\vec a,1}(p'_2 \cdot)^{-1}.$$
The left-hand side is of polynomial size and the right-hand side is $Q$-rational.  Here the exponents depend on both $\ell_1,\ell_2$; since $\ell_1 \geq \ell_2$, we can view these exponents as depending on $\ell_1$ only.  Applying Lemma~\ref{disj}, both sides are equal to a constant $\gamma \in \Gamma$ of polynomial size (with exponents depending on $\ell_1,\ell_2$).  By multiplying $\tilde g_{e,\vec a,1}$ on the right by $\gamma^{-1}$ (and $\gamma_{e,\vec a,1}$ on the left by $\gamma$), we can assume that $\gamma=1$, without significantly worsening any of the claimed properties of these objects, thus we may assume without loss of generality that $\gamma=1$.  Once one makes this normalization, one obtains the factorizations \eqref{gip-2}, \eqref{egad}.  Furthermore, since the right-hand side of \eqref{gip-2} is of polynomial size with exponents depending only on $\ell_2$, the left-hand side is also.  Hence we have now resolved the previously mentioned caveat in \eqref{factors} in that the exponents for the polynomial size nature of $\tilde g_{e,\vec a,1}$ were depending on $\ell_1$ rather than $\ell_2$.

Inserting \eqref{factors} back into \eqref{ep} and rearranging, we conclude that
$$
\tilde g_{e,\vec a,i}(b_i \cdot)^{-1} (\eps^*_i(t))^{-1} \tilde g_{e,\vec a,i}(a_i t) = \gamma_{e,\vec a,i}(b_i t) \gamma^*_i(t) \gamma_{e,\vec a,i}^{-1}(a_i t).$$
As the left-hand side is of polynomial size and the right-hand side is $Q$-rational, we conclude from Lemma~\ref{disj} that both sides are equal to a constant $\gamma_i \in \Gamma$ of polynomial size.  This rearranges to give 
$$ \tilde g_{e,\vec a,i}(a_i t) = \eps^*_i(t) \tilde g_{e,\vec a,i}(b_i t) \gamma_i$$
and therefore the claim \eqref{gip}  follows from reparameterizing $t$ and defining $\eps_i(t) \coloneqq \eps^*_i(a_i t)$. 
\end{proof}

At this point we encounter a minor technical complication due to the fact that the factors $\tilde g_{e,\vec a,i}, \gamma_{e,\vec a,i}$ generated by the above proposition depend on $\vec a$, so in particular as one varies $a_i,b_i$ the polynomial map $\tilde g_{e,\vec a,i}$ appearing in relations such as \eqref{gip} also varies.  Fortunately, using some arguments of a graph theoretic nature, and taking advantage of the ability to make the two parameters $\ell_1,\ell_2$ differ significantly from each other, we can eliminate this dependence:

\begin{proposition}[Approximate dilation invariance for a dense set of dilations]\label{adi-dense}  Let  $e = (I''_1,I''_2,p'_1,p'_2) \in {\mathcal Q}'$, and let $g_{I''_1}, g_{I''_2} \in \Poly(\R \to G)$ be the maps associated to $\phi_{I''_1}, \phi_{I''_2}$.  Assume that $\ell_1$ is sufficiently large depending on $\ell_2$.  Then there is a large set ${\mathcal P}_e$ of primes in $[P''/2,P'']$ and a factorization
\begin{equation}\label{factors-alt}
 g_{I''_i} = \tilde g_{e,i} \gamma_{e,i}
\end{equation}
for each $i=1,2$, where $\tilde g_{e,i} \in \Poly(\R \to G)$ is of polynomial size and $\gamma_{e,i}$ is $\prod {\mathcal P}_e$-rational, one has the relation 
\begin{equation}\label{gip-2-alt}
 \tilde g_{e,1}(p'_2 \cdot) = \eps^\dagger \tilde g_{e,2}(p_1 \cdot) 
\end{equation}
for some $\eps^\dagger \in \Poly(\R \to G)$ that is smooth on $\frac{1}{p'_2} I''_1$, and one has the relation 
\begin{equation}\label{egad-alt}
\gamma_{e,1}(p'_2 \cdot) = \gamma_{e,2}(p'_1 \cdot) \gamma^{\dagger} 
\end{equation}
for some $\gamma^\dagger \in \Poly( \frac{1}{\prod {\mathcal P}_{e}} \Z \to \Gamma)$.  (In all these cases we permit the exponents to depend on both $\ell_1$ and $\ell_2$.) Furthermore: 
\begin{itemize}
\item[(i)]  There exists a measurable subset $\Omega_e$ of the interval $[1 + \frac{1}{CN}, 1 + \frac{C}{N}]$ for some fixed constant $C>0$ of measure $\gg 1 / N$, such that for each $\alpha \in \Omega_e$ one has the approximate dilation invariance
\begin{equation}\label{gia}
  \tilde g_{e,1}(\alpha \cdot) = \eps_{\alpha} \tilde g_{e,1} \gamma_{\alpha}
	\end{equation}
for some $\gamma_{\alpha} \in \Gamma$ of polynomial size, and some $\eps_{\alpha} \in \Poly(\R \to G)$ that is smooth on $I''_1$. 
\item[(ii)]  We have $\tilde g_{e,1}(x_{I''_1}) = O(1)$.
\end{itemize}
\end{proposition}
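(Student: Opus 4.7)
The plan is to fix a single quadruple $\vec a^{0}\in \mathcal{A}_{e}$, extract the factorization $g_{I''_{i}}=\tilde g_{e,i}\gamma_{e,i}$ directly from Proposition~\ref{split} applied to $\vec a^{0}$, and then derive the approximate dilation invariance \eqref{gia} for every other $\vec a\in\mathcal{A}_{e}$ by re-using the \emph{same} fixed factorization. A routine Taylor expansion will then fatten the resulting discrete set of dilations $\alpha=a_{1}/b_{1}$ to a positive-measure $\Omega_{e}$. The feature of Proposition~\ref{split} that makes this approach work is that the polynomial-size exponent for $\tilde g_{e,\vec a^{0},i}$ depends on $\ell_{2}$ only, and not on $\ell_{1}$.

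Applying Proposition~\ref{split} to $\vec a^{0}$ with $\mathcal{P}_{e}:=\mathcal{P}_{e,\vec a^{0}}$ immediately yields \eqref{factors-alt}, \eqref{gip-2-alt}, and \eqref{egad-alt}. For any $\vec a=(a_{1},b_{1},a_{2},b_{2})\in\mathcal{A}_{e}$, equation \eqref{adi} together with Definition~\ref{nil-comp} gives
\[
g_{I''_{1}}(a_{1}t)=\varepsilon^{*}(t)\,g_{I''_{1}}(b_{1}t)\,\gamma^{*}(t)
\]
with $\varepsilon^{*}$ smooth on $\frac{1}{a_{1}}I''_{1}$ and $\gamma^{*}$ being $\prod\mathcal{P}_{e,\vec a}$-rational. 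Substituting $g_{I''_{1}}=\tilde g_{e,1}\gamma_{e,1}$ and rearranging produces
\[
\tilde g_{e,1}(b_{1}t)^{-1}\varepsilon^{*}(t)^{-1}\tilde g_{e,1}(a_{1}t)=\gamma_{e,1}(b_{1}t)\,\gamma^{*}(t)\,\gamma_{e,1}(a_{1}t)^{-1},
\]
whose left side is of polynomial size while the right side is rational with modulus dividing $a_{1}b_{1}\prod(\mathcal{P}_{e}\cup\mathcal{P}_{e,\vec a})$, hence super-polynomially large. An essentially direct extension of Lemma~\ref{disj}---whose proof uses only the super-polynomial size of the modulus, not the precise form $\prod\mathcal{P}$---forces both sides to equal a constant $\gamma_{\vec a}\in\Gamma$ of polynomial size. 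Reparametrizing $t\mapsto t/b_{1}$ then produces \eqref{gia} with $\alpha=\alpha_{\vec a}:=a_{1}/b_{1}\in[1+c/N,1+C/N]$, $\gamma_{\alpha}=\gamma_{\vec a}$, and $\varepsilon_{\alpha}(t)=\varepsilon^{*}(t/b_{1})$, which is smooth on the comparable interval $\frac{b_{1}}{a_{1}}I''_{1}\sim I''_{1}$.

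To upgrade this discrete collection of valid $\alpha$ to a set of positive measure, I would fix $C_{0}$---independent of $\ell_{1}$ thanks to the $\ell_{1}$-independence of the polynomial-size exponent of $\tilde g_{e,1}$---so that $\log\tilde g_{e,1}$ and its first derivative are $O(X^{C_{0}})$ on a polynomial-size interval containing $a_{1}I''_{1}$. For any $\alpha$ with $|\alpha-\alpha_{\vec a}|\ll X^{-2C_{0}}$, Taylor expansion together with the Baker--Campbell--Hausdorff formula yields $\tilde g_{e,1}(\alpha t)\tilde g_{e,1}(\alpha_{\vec a}t)^{-1}=O(1)$ uniformly on $I''_{1}$, so \eqref{gia} continues to hold at $\alpha$ with the same $\gamma_{\vec a}$ and an adjusted smooth $\varepsilon_{\alpha}$. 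Taking $\Omega_{e}$ to be the union of these neighborhoods over $\vec a\in\mathcal{A}_{e}$, its measure is $\gg d^{\ell_{1}+\ell_{2}}/(N^{2}X^{2C_{0}})$, which exceeds $1/N$ once $\ell_{1}$ is sufficiently large in terms of $\ell_{2},\varepsilon,\theta,C_{0}$. For claim (ii), cocompactness of $\Gamma$ in $G$ supplies a $\gamma_{0}\in\Gamma$ of polynomial size with $\tilde g_{e,1}(x_{I''_{1}})\gamma_{0}^{-1}=O(1)$, and replacing $(\tilde g_{e,1},\gamma_{e,1})$ by $(\tilde g_{e,1}\gamma_{0}^{-1},\gamma_{0}\gamma_{e,1})$ preserves every prior property after conjugating each $\gamma_{\alpha}$ by $\gamma_{0}$.

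The hard part will be the combined rationality and polynomial-size bookkeeping in the second step above: because we commit to one factorization coming from $\vec a^{0}$ but then invoke the dilation invariance \eqref{adi} for every other $\vec a\in\mathcal{A}_{e}$, one must verify that Lemma~\ref{disj} still forces a genuine $\Gamma$-constant even though the rational side now mixes two distinct prime products with the polynomial-size factor $a_{1}b_{1}$. This is ultimately fine because the combined modulus remains at least $\exp(cP'')$, and Lemma~\ref{disj}'s proof needs nothing beyond super-polynomial modulus size. A secondary delicate point is the density/fattening step, which succeeds only because Proposition~\ref{split} supplies polynomial-size exponents for $\tilde g_{e,i}$ independent of $\ell_{1}$---exactly the reason for the hypothesis ``$\ell_{1}$ sufficiently large depending on $\ell_{2}$''.
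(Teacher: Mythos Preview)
Your overall strategy---fix a factorization from one quadruple $\vec a^{0}$ and then transport the dilation invariance from every other $\vec a$ onto it---is the right idea, and it is close to what the paper does. However there is a genuine gap at the point where you invoke Lemma~\ref{disj}.

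You write that the right-hand side $\gamma_{e,1}(b_{1}t)\,\gamma^{*}(t)\,\gamma_{e,1}(a_{1}t)^{-1}$ is ``rational with modulus dividing $a_{1}b_{1}\prod(\mathcal{P}_{e}\cup\mathcal{P}_{e,\vec a})$, hence super-polynomially large''. This is the wrong direction. If $\gamma_{e,1}$ is $\prod\mathcal{P}_{e}$-rational (so $\gamma_{e,1}\in\Poly(\tfrac{q_{0}}{\prod\mathcal{P}_{e}}\Z\to\Gamma)$) and $\gamma^{*}$ is $\prod\mathcal{P}_{e,\vec a}$-rational, then the smallest common lattice $\delta\Z$ on which all three factors send integers to $\Gamma$ has $\delta=q_{0}/\prod(\mathcal{P}_{e}\cap\mathcal{P}_{e,\vec a})$: the product is only $\prod(\mathcal{P}_{e}\cap\mathcal{P}_{e,\vec a})$-rational, governed by the \emph{intersection}, not the union. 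For an arbitrary choice of $\vec a^{0}$ and an arbitrary $\vec a\in\mathcal{A}_{e}$, there is no reason whatsoever for $\mathcal{P}_{e,\vec a^{0}}\cap\mathcal{P}_{e,\vec a}$ to be large (two ``large'' subsets of $[P''/2,P'']$ can be disjoint), so the modulus you feed into Lemma~\ref{disj} may well be $1$, and the transversality argument collapses. The paper handles exactly this issue: before fixing the base quadruple it runs a Cauchy--Schwarz/pigeonhole argument over the bipartite graph $E_{e}\subset V_{1}\times V_{2}$ to locate a single $(v_{1},v_{2})$ such that $\mathcal{P}_{e,(v_{1},v_{2})}\cap\mathcal{P}_{e,(v'_{1},v_{2})}$ is large for $\gg\#V_{1}$ many $v'_{1}$; only then does it set $\mathcal{P}_{e}\coloneqq\mathcal{P}_{e,(v_{1},v_{2})}$ and apply Proposition~\ref{split} at both $(v_{1},v_{2})$ and $(v'_{1},v_{2})$, comparing the two \emph{factorizations} rather than the raw dilation relation~\eqref{adi}. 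You need some version of this preliminary selection step.

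A second, smaller gap: your lower bound on $|\Omega_{e}|$ is obtained by multiplying the number of quadruples by the neighborhood width, but that product bounds the measure from \emph{above}, not below. Many of the ratios $a_{1}/b_{1}$ coincide (there are only $\asymp d^{\ell_{1}}/N$ distinct values), and even the distinct ones can cluster. The paper controls the multiplicity via~\cite[Lemma~2.6]{mrt-fourier}, showing that any point of $[1+\tfrac{1}{CN},1+\tfrac{C}{N}]$ lies in at most $O(d^{9\ell_{1}/10})$ of the $d^{-\ell_{1}/10}$-neighborhoods, whence $|\Omega_{e}|\gg(d^{-\ell_{1}/10}\cdot d^{\ell_{1}}/N)/d^{9\ell_{1}/10}\gg 1/N$. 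You will need an analogous overlap bound.
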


\begin{proof} We first observe that we may drop the conclusion (ii) as follows.  Suppose we have already obtained all the conclusions of the proposition other than (ii).  Then $\tilde g_{e,1}(x_{I''_1})$ is already of polynomial size.  Since $G/\Gamma$ is compact, we may write
$$\tilde g_{e,1}(x_{I''_1}) = O(1) \gamma$$
for some $\gamma \in \Gamma$ of polynomial size.  If we then multiply $\tilde g_{e,1}$ on the right by $\gamma^{-1}$, multiply $\gamma_{e,1}$ and $\gamma^\dagger$ on the left by $\gamma$, and replace the lattice element $\gamma_{\alpha}$ appearing in \eqref{gia} by $\gamma \gamma_{\alpha} \gamma^{-1}$, we thus see that we may recover the claimed property (ii), without significantly impacting any of the other claims.

Henceforth we focus on establishing the remaining conclusions of the proposition. For $i=1,2$, let $V_i$ denote the set of ratios $\frac{a_i}{b_i}$ of coprime positive integers $a_i,b_i$ that are products of $\ell_i$ primes in $[P',2P']$ with 
$$ \frac{a_i}{b_i} - 1 \asymp \frac{1}{N} \asymp \frac{H}{X}.$$
By~\cite[Lemma 2.6]{mrt-fourier}, $V_i$ has cardinality $O( d^{\ell_i} / N )$.  From Proposition~\ref{prop41-analog-nil}, we see that for any $e \in {\mathcal Q}'$, the set
$$ E_e \coloneqq \left\{ (\frac{a_1}{b_1}, \frac{a_2}{b_2}): (a_1,b_1,a_2,b_2) \in {\mathcal A}_e \right\}$$
is a subset of $V_1 \times V_2$ of cardinality $\gg d^{\ell_1+\ell_2} / N^2$, thus $\# V_i \asymp d^{\ell_i} / N$ and $\# E_e \asymp (\# V_1) (\# V_2)$.  We view $E_e$ as a dense bipartite graph on $V_1, V_2$.  Each edge $\vec a = (\frac{a_1}{b_1}, \frac{a_2}{b_2})$ in $E_e$ is associated to a large set of primes ${\mathcal P}_{e,\vec a} \coloneqq {\mathcal P}_{e,(a_1,b_1,a_2,b_2)}$ in $[P''/2,P'']$.  In particular
$$ \sum_{\vec a \in E_e} \# {\mathcal P}_{e,\vec a} \gg (\# V_1) (\# V_2) \pi_0(P'')$$
which we rearrange as
$$ \sum_{p'' \in [P''/2,P'']} \sum_{v_2 \in V_2} \# \{ v_1 \in V_1: (v_1,v_2) \in E_e; p'' \in {\mathcal P}_{e,(v_1,v_2)} \} \gg \pi_0(P'') (\# V_1) (\# V_2).$$
By Cauchy--Schwarz, this implies that
$$ \sum_{p'' \in [P''/2,P'']} \sum_{v_2 \in V_2} \# \{ v_1 \in V_1: (v_1,v_2) \in E_e; p'' \in {\mathcal P}_{e,(v_1,v_2)} \}^2 \gg \pi_0(P'') (\# V_1)^2 (\# V_2),$$
which we rearrange as
$$ \sum_{(v_1,v_2) \in E_e} \sum_{v'_1 \in V_1: (v'_1,v_2) \in E_e} \# ({\mathcal P}_{e,(v_1,v_2)} \cap {\mathcal P}_{e,(v'_1,v_2)}) \gg \pi_0(P'') (\# V_1)^2 (\# V_2).$$
Hence by the pigeonhole principle there exists $(v_1,v_2) \in E_e$ for which
$$ \sum_{v'_1 \in V_1: (v'_1,v_2) \in E_e} \# ({\mathcal P}_{e,(v_1,v_2)} \cap {\mathcal P}_{e,(v'_1,v_2)}) \gg \pi_0(P'') \# V_1$$
which implies that 
$$ \# ({\mathcal P}_{e,(v_1,v_2)} \cap {\mathcal P}_{e,(v'_1,v_2)}) \gg \pi_0(P'') $$
and $(v'_1,v_2) \in E_e$ for all $v'_1$ in a subset $V_e$ of $V_1$ of cardinality $\gg \# V_1 \gg d^{\ell_1}/N$.  

Set ${\mathcal P}_e \coloneqq {\mathcal P}_{e,(v_1,v_2)}$.  From Proposition~\ref{split} applied to the quadruple $(v_1,v_2)$, we obtain factorizations 
\begin{equation}\label{factors-prime}
 g_{I''_i} = \tilde g_{e,i} \gamma_{e,i}
\end{equation}
for $i=1,2$, where $\tilde g_{e,i} = \tilde g_{e,(v_1,v_2),i} \in \Poly(\R \to G)$ is of polynomial size and $\gamma_{e,i} = \gamma_{e,(v_1,v_2),i}$ is $\prod {\mathcal P}_{e}$-rational, obeying 
\begin{equation}\label{gip-prime}
 \tilde g_{e,1}(p'_2 \cdot) = \eps^\dagger \tilde g_{e,2}(p_1 \cdot) 
\end{equation}
for some $\eps^\dagger = \eps^\dagger_{e,(v_1,v_2)} \in \Poly(\R \to G)$ that is smooth on $\frac{1}{p'_2} I''_1$.  For any $v'_1 \in V_e$, we also have a factorization
\begin{equation}\label{g2}
 g_{I''_1} = \tilde g_{e,(v'_1,v_2),1} \gamma_{e,(v'_1,v_2),1},
\end{equation}
where $\tilde g_{e,(v'_1,v_2),1}$ is of polynomial size and $\gamma_{e,(v'_1,v_2),1}$ is $\prod {\mathcal P}_{e,(v'_1,v_2)}$-rational, and 
\begin{equation}\label{ham}
 \tilde g_{e,(v'_1,v_2),1}(v'_1 \cdot) = \eps_{v'_1} \tilde g_{e,(v'_1,v_2),1} \gamma_{v'_1}
\end{equation}
for some $\eps_{v'_1}$ smooth on $I''_1$ and $\gamma_{v'_1} \in \Gamma$ of polynomial size.  From \eqref{factors-prime}, \eqref{g2} we have
$$ \tilde g_{e,1}^{-1} \tilde g_{e,(v'_1,v_2),1} = \gamma_{e,1} \gamma_{e,(v'_1,v_2),1}^{-1}.$$
The left-hand side is of polynomial size and the right-hand side is $\prod ({\mathcal P}_{e,(v_1,v_2)} \cap {\mathcal P}_{e,(v'_1,v_2)})$-rational.  By Lemma~\ref{disj}, both sides are then equal to a constant $\gamma^*_{v'_1} \in \Gamma$ of polynomial size, thus
$$ \tilde g_{e,(v'_1,v_2),1} = \tilde g_{e,1} \gamma^*_{v'_1}.$$
We conclude from \eqref{ham} that
$$
 \tilde g_{e,1}(v'_1 \cdot) = \eps_{v'_1} \tilde g_{e,1} \tilde \gamma_{v'_1}
$$
for all $t \in \R$, where $\tilde \gamma_{v'_1} \coloneqq \gamma^*_{v'_1} \gamma_{v'_1} (\gamma^*_{v'_1})^{-1}$ is an element of $\Gamma$ of polynomial size.  This gives the bound \eqref{gia} for all $\alpha$ in the discrete set $V_e$.  This is not yet what we need because $V_e$ has measure zero.  However we can use the hypothesis that $\ell_1$ is large compared to $\ell_2$ to remove the discretization as follows.  Recall from Proposition~\ref{split} that $\tilde g_{e,1}$ is of polynomial size, with exponents depending only on the smaller parameter $\ell_2$ and not on the larger parameter $\ell_1$.  As a consequence, if \eqref{gia} holds for some real number $\alpha = 1 + O( \frac{1}{N} )$, then one can perturb $\alpha$ by at most $d^{-\ell_1/10}$ (say) and still retain \eqref{gia} with only a negligible change in all the implied constants.  Hence we have \eqref{gia} for all $\alpha \in \Omega_e$, where $\Omega_e$ is the $d^{-\ell_1/10}$-neighborhood of $V_e$.  We have
$$
\int_{\Omega_e} \sum_{\alpha \in V_e} 1_{[\alpha - d^{-\ell_1/10}, \alpha+d^{-\ell_1/10}]}(\beta)\ d\beta = 2 d^{-\ell_1/10} \# V_e \gg
d^{9\ell_1/10}/N.$$
To obtain the desired lower bound of $\gg 1/N$ on the measure of $\Omega_e$, it suffices to establish the pointwise bound
$$ \sum_{\alpha \in V_e} 1_{[\alpha - d^{-\ell_1/10}, \alpha+d^{-\ell_1/10}]}(\beta) \ll d^{9\ell_1/10}$$
for any $\beta = 1 + O(1/N)$.  The left-hand side can be written as
$$ \# \{ \alpha \in V_e: |\alpha - \beta| \leq d^{-\ell_1/10} \}.$$
This in turn can be bounded by the number of pairs $(a,b) \in S^2$ with $\frac{a}{b} = \beta + O( d^{-\ell_1/10} )$, where $S$ is the collection of
 products of $\ell_1$ primes in $[P',2P']$.  This can then be bounded by
$$ d^{\ell_1/10} \int_0^\infty f(t) f(\beta t)\frac{dt}{t}$$
where
$$ f(t) \coloneqq  \# (S \cap [(1-C_1 d^{-\ell_1/10})t, (1+C_1d^{-\ell_1/10})t])$$
for some absolute constant $C_1>0$.  By Cauchy--Schwarz, the previous expression may be bounded by
$$ d^{\ell_1/10} \int_0^\infty f(t)^2 \frac{dt}{t}$$
which is in turn bounded by the number of pairs $(a,b) \in S^2$ with $\frac{a}{b} = 1 + O(d^{-\ell_1/10})$.  Applying~\cite[Lemma 2.6]{mrt-fourier}, this quantity is $O( d^{9\ell_1/10} )$, and the claim follows.
\end{proof}

Now that we have established an approximate dilation invariance \eqref{gia} for a large set of dilation parameters $\alpha$, we can begin solving this equation effectively.  The first step is as follows.

\begin{proposition}\label{prop1}  Let $e, I''_1, \tilde g_{e,1}, \Omega_e, \gamma_\alpha$ be as in Proposition~\ref{adi-dense}.  Then for any $\alpha \in \Omega_e$, we have the estimate
\begin{equation}\label{gas}
 \tilde g_{e,1}(t) = O(1) \gamma_{\alpha}^{\frac{\log(t / x_{I''_1})}{\log \alpha}} 
\end{equation}
for real $t$ with $\langle t \rangle_{I''_1} \ll 1$.	 As a consequence, for any $\alpha, \alpha' \in \Omega_e$, we have
\begin{equation}\label{gas-2}
\gamma_{\alpha'}^s = O(1) \gamma_\alpha^{s \frac{\log \alpha'}{\log \alpha}}.
\end{equation}
for all $s=O(1)$.
\end{proposition}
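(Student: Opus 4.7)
My plan is to iterate the approximate dilation invariance \eqref{gia} at integer steps, promote the resulting estimate from integer to real exponents using Lemma~\ref{ber-exp}, and then reparameterize. The consequence \eqref{gas-2} then follows by applying \eqref{gas} twice.

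First, fix $\alpha \in \Omega_e$ and take $t=x_{I''_1}$ in \eqref{gia}. A trivial induction on $n$ yields, for every integer $n$,
\[
 \tilde g_{e,1}(\alpha^n x_{I''_1}) \;=\; \eps_\alpha(\alpha^{n-1} x_{I''_1})\cdots \eps_\alpha(x_{I''_1})\,\tilde g_{e,1}(x_{I''_1})\,\gamma_\alpha^{n}.
\]
Since $\alpha \in \Omega_e \subset [1+\tfrac{1}{CN},1+\tfrac{C}{N}]$ and $|I''_1|/x_{I''_1} \asymp 1/N$, for any bounded integer $n$ each point $\alpha^j x_{I''_1}$ is comparable to $I''_1$, so each factor $\eps_\alpha(\alpha^j x_{I''_1}) = O(1)$ by smoothness of $\eps_\alpha$ on $I''_1$. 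Combining this with Proposition~\ref{adi-dense}(ii), we obtain $\tilde g_{e,1}(\alpha^n x_{I''_1})\gamma_\alpha^{-n} = O(1)$ uniformly for integer $n = O(1)$.

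Next, I interpolate to real $n$. Writing $\tilde g_{e,1}(s) = \exp(\sum_{i=0}^k X_i s^i)$ with $X_i \in \log G_i$, and $\gamma_\alpha^{-n} = \exp(-n \log \gamma_\alpha)$, and expanding the product $\tilde g_{e,1}(\alpha^n x_{I''_1})\gamma_\alpha^{-n}$ via the Baker--Campbell--Hausdorff formula, one sees that the coordinates of this product in a Mal'cev basis for $G$ are real linear combinations of exponential monomials $n^j \exp(\ell n \log \alpha)$ with $0 \le j,\ell \le C_k$ for a constant $C_k$ depending only on the nilpotency step $k$. The frequencies $\ell \log \alpha$ are $O(1/N)$, hence arbitrarily small once $X$ is large enough. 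Applying Lemma~\ref{ber-exp} to each coordinate, with the integer samples from the previous paragraph supplying the right-hand side of \eqref{pmt}, we conclude that $\tilde g_{e,1}(\alpha^n x_{I''_1}) = O(1)\gamma_\alpha^{n}$ for all real $n = O(1)$. Given any $t$ with $\langle t\rangle_{I''_1} \ll 1$, the exponent $n(t) \coloneqq \log(t/x_{I''_1})/\log\alpha$ satisfies $n(t) = O(1)$ because $\log(t/x_{I''_1}) = O(1/N)$ and $\log\alpha \asymp 1/N$; substituting this value of $n$ gives \eqref{gas}.

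Finally, for \eqref{gas-2} let $\alpha,\alpha' \in \Omega_e$ and $s = O(1)$, and set $t \coloneqq (\alpha')^s x_{I''_1}$, which satisfies $\langle t\rangle_{I''_1} \ll 1$. Applying \eqref{gas} with the parameter $\alpha'$ (and exponent $s$) gives $\tilde g_{e,1}(t) = O(1)\gamma_{\alpha'}^{s}$, while applying \eqref{gas} with the parameter $\alpha$ (and exponent $s\log\alpha'/\log\alpha = O(1)$) gives $\tilde g_{e,1}(t) = O(1)\gamma_\alpha^{s\log\alpha'/\log\alpha}$. Comparing the two expressions for $\tilde g_{e,1}(t)$ yields \eqref{gas-2}.

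The main technical obstacle is the Bernstein interpolation step: one must confirm that, after the noncommutative product $\tilde g_{e,1}(\alpha^n x_{I''_1})\gamma_\alpha^{-n}$ is expanded through Baker--Campbell--Hausdorff, every Mal'cev coordinate is genuinely an exponential polynomial in $n$ of the form permitted by Lemma~\ref{ber-exp}. The nilpotency of $G$ (of fixed step $k$) truncates the iterated commutators and bounds the degrees and the number of distinct frequencies, and the uniform smallness of the frequencies $\ell\log\alpha = O(1/N)$ places us squarely within the hypotheses of that lemma.
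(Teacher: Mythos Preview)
Your proof is correct and follows essentially the same route as the paper: iterate \eqref{gia} to obtain $\tilde g_{e,1}(\alpha^n x_{I''_1})\gamma_\alpha^{-n}=O(1)$ at integer $n$, interpolate to real $n$ via Lemma~\ref{ber-exp}, reparameterize to get \eqref{gas}, and then apply \eqref{gas} twice for \eqref{gas-2}. The only cosmetic difference is that the paper views $G$ as a matrix group and applies Lemma~\ref{ber-exp} directly to the matrix entries of the product (which are visibly exponential polynomials), whereas you pass through the Baker--Campbell--Hausdorff formula and Mal'cev coordinates; both are valid and lead to the same conclusion.
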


\begin{proof}  From iterating \eqref{gia} we see that for any fixed natural number $n$ and any $\alpha \in \Omega_e$ we have
$$
  \tilde g_{e,1}(\alpha^n x_{I''_1}) = O(1) \tilde g_{e,1}(x_{I''_1}) \gamma_{\alpha}^n
$$
which we rearrange as
\begin{equation}\label{gbang}
 \tilde g_{e,1}(\exp( n \log \alpha ) x_{I''_1}) \gamma_{\alpha}^{-n} = O(1).
\end{equation}
The left-hand side is a (matrix-valued) exponential polynomial in $n$, with the exponents in the exponentials being bounded multiples of $\log \alpha$ and thus of size $O(1/N)$.  Applying Lemma~\ref{ber-exp} to each component of this matrix-valued function, we conclude that \eqref{gbang} holds for all \emph{real} $n=O(1)$.  Rearranging using the fact that $\log \alpha \asymp \frac{1}{N}$, we conclude the estimate \eqref{gas}.  Applying this estimate twice we conclude that
$$ \tilde g_{e,1}( e^{s\log \alpha'} x_{I''_1} ) = O(1) \gamma_{\alpha'}^s$$
and
$$ \tilde g_{e,1}( e^{s\log \alpha'} x_{I''_1} ) = O(1) \gamma_{\alpha}^{s \frac{\log \alpha'}{\log \alpha}}$$
for $\alpha,\alpha' \in \Omega_e$ and $s = O(1)$, giving \eqref{gas-2}.
\end{proof}

Now we give some satisfactory control on $\tilde g_{e,1}$, which roughly speaking asserts that $\tilde g_{e,1}$ ``pretends to be like'' 
$t\mapsto T^{\log(t / x_{I''_1})}$ for some $T$ which is either nearly central, or nearly contained in a proper subgroup of $G$. Following~\cite{gt-nil}, we define a \emph{horizontal character} to be a continuous additive homomorphism $\eta: G \to \R/\Z$ that annihilates $\Gamma$; its derivative $d\eta: \log G \to \R$ at the identity is then a linear functional on $\log G$, and is related to $\eta$ by the formula
\begin{equation}\label{etag}
 \eta(g) = d\eta(\log g) \mod \Z,
\end{equation}
as can be seen by starting with the formula $\eta(g) = n \eta( \exp( \frac{1}{n} \log g ) )$ and taking limits as $n \to \infty$.  In particular, $\eta$ is the descent of the homomorphism $d\eta \circ \log: G \to \R$ to $\R/\Z$.

\begin{example} Let $G$ be the Heisenberg group from Example~\ref{heisen}, and let $\Gamma$ be the lattice
$$ \Gamma \coloneqq \begin{pmatrix} 1 & \Z & \Z \\ 0 & 1 & \Z \\ 0 & 0 & 1 \end{pmatrix}.$$
Then every horizontal character $\eta \colon G \to \R/\Z$ takes the form
$$ \eta \left( \begin{pmatrix} 1 & x & z \\ 0 & 1 & y \\ 0 & 0 & 1 \end{pmatrix} \right) = ax + by \hbox{ mod } 1$$
for some integers $a,b$, and the corresponding map $d\eta \colon \log G \to \R$ is given by
$$ d\eta \left( \begin{pmatrix} 0 & x & z \\ 0 & 1 & y \\ 0 & 0 & 0 \end{pmatrix} \right) = ax + by.$$
\end{example}

\begin{proposition}[Description of $\tilde g_{e,1}$]\label{desc} 
Let $e, \tilde g_{e,1}, I''_1$ be as in Proposition~\ref{adi-dense}.   
\begin{enumerate}
\item Then there exists $T = T_e \in G$ of polynomial size such that the map
\begin{equation}\label{sio}
 t \mapsto \log\left( \tilde g_{e,1}(t) T^{-\log(t / x_{I''_1})} \right)
\end{equation}
is bounded by $O(1)$ and has a Lipschitz norm of $O(|I''_1|^{-1})$ whenever $\langle t \rangle_{I''_1} \ll 1$.  
\item There is a non-trivial horizontal character $\eta = \eta_e: G \to \R/\Z$ such that $d\eta: \log G \to \R$ has operator norm $O(1)$, and such that 
\begin{equation}\label{deat}
d\eta( \log T ) = O( N ).
\end{equation}
\end{enumerate}
\end{proposition}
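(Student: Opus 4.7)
The plan is to define $T$ explicitly from Proposition~\ref{prop1}, derive the bounds in (1) using the Baker--Campbell--Hausdorff formula together with Bernstein-type estimates, and obtain the horizontal character in (2) by a contradiction argument using quantitative equidistribution on $G/\Gamma$.

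For part (1), I fix any $\alpha_0\in\Omega_e$ and define
$$ T \coloneqq \exp\Bigl(\tfrac{1}{\log \alpha_0}\log \gamma_{\alpha_0}\Bigr),$$
so that $T^{\log \alpha_0}=\gamma_{\alpha_0}$; since $\gamma_{\alpha_0}$ has polynomial size and $1/\log\alpha_0 \asymp N$, $T$ is of polynomial size. Writing $H(t) \coloneqq \tilde g_{e,1}(t) T^{-\log(t/x_{I''_1})}$ for the quantity in \eqref{sio}, Proposition~\ref{prop1} applied with $\alpha=\alpha_0$ gives $\tilde g_{e,1}(t)=O(1)\,T^{\log(t/x_{I''_1})}$ for $\langle t\rangle_{I''_1}\ll 1$, i.e.\ the pointwise bound $\log H(t)=O(1)$. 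For the Lipschitz estimate I introduce the rescaled variable $v\coloneqq N\log(t/x_{I''_1})$, which ranges over an interval of fixed size $O(1)$. Writing $t=x_{I''_1} e^{v/N}$, each coordinate of $\log H$ becomes an exponential polynomial in $v$: the polynomial $\log\tilde g_{e,1}(t)=\sum_{i=0}^k a_i t^i$ with $a_i\in\log G$ of polynomial size turns into $\sum_i (a_i x_{I''_1}^i) e^{(i/N)v}$, which has small exponents $i/N$ and polynomial-size coefficients, and combining this with $-(v/N)\log T$ via Baker--Campbell--Hausdorff (which truncates at step $k$ since $G$ is $k$-step nilpotent) still yields exponential polynomials in $v$ of bounded complexity. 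Since $|\log H(v)|=O(1)$ on our interval by the pointwise bound, Lemma~\ref{ber-exp} applied coordinate-wise gives $|\partial_v\log H|=O(1)$, which via $\partial v/\partial t=N/t\asymp 1/|I''_1|$ converts to $|\partial_t\log H|=O(|I''_1|^{-1})$, as required.

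For part (2), I argue by contradiction: assume that for every fixed $C>0$ there is no non-trivial horizontal character $\eta$ with $|d\eta|_{\mathrm{op}}\le C$ and $|d\eta(\log T)|\le CN$. Taylor expansion of $u\mapsto e^u$ shows that $t\mapsto T^{\log(t/x_{I''_1})}$ agrees on $I''_1$, up to a negligible error, with a polynomial sequence $g_T\in\Poly(\R\to G)$ of degree $\le k$ whose $j$-th Taylor coefficient at $x_{I''_1}$ is essentially $(\log T)^j/(j!\,x_{I''_1}^j)$. The contradiction hypothesis gives exactly the non-resonance condition of the quantitative equidistribution theorem \cite[Theorem 2.9]{gt-nil}, so $n\mapsto g_T(n)\Gamma$ equidistributes on $G/\Gamma$ to scale $1/S(C)$ for a rapidly growing $S$. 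Substituting $\tilde g_{e,1}(n)=H(n)g_T(n)$ from part~(1), subdividing $I''_1$ into $O(\sigma^{-1})$ sub-intervals of length $\sigma|I''_1|$ on which $H$ is $O(\sigma)$-close to constant, and decomposing $\gamma_{e,1}$ into cosets modulo a finite-index sublattice of $\Gamma$ as in Proposition~\ref{major}, the correlation $\langle f,\phi''_{I''_1}\rangle$ reduces on each sub-interval to a bounded combination of correlations of $f$ restricted to a fixed residue class against an equidistributed nilsequence. Proposition~\ref{loc-decay}(i)(ii) (applicable since $I''_1\in\mathcal I''$ avoids the exceptional set by Proposition~\ref{prop32-analog-nil}) then handles the rational and normal-invariant components of $\overline F$, and the remaining mean-zero component combined with the equidistribution forces the correlation on each piece to be $o(\sigma)$. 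Summing yields $\langle f,\phi''_{I''_1}\rangle=o(1)$, contradicting \eqref{falp}, so the required $\eta$ must exist.

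The chief obstacle is the clean execution of part~(2): the rational factor $\gamma_{e,1}(n)$ is $\prod\mathcal P_e$-rational with exponentially large denominator but is not constant on $I''_1\cap\Z$, so one must argue that its coset decomposition interacts cleanly with the equidistribution of the ``continuous'' factor $g_T(n)$. The transversality Lemma~\ref{disj} is the key tool here: it collapses $\gamma_{e,1}$ to a polynomial-size constant on each coset, after which the equidistribution argument together with Proposition~\ref{loc-decay} yields the desired $o(1)$ cancellation.
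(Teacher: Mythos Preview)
Your Part (1) is essentially the paper's argument: same choice $T=\gamma_{\alpha_0}^{1/\log\alpha_0}$, same substitution $u=N\log(t/x_{I''_1})$, same appeal to Lemma~\ref{ber-exp} on the resulting exponential polynomial.

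Your Part (2), however, takes a different route and has a genuine gap. The implication ``for every bounded $\eta$ one has $|d\eta(\log T)|>CN$'' $\Rightarrow$ ``$g_T(n)\Gamma$ equidistributes on $G/\Gamma$'' is not valid. The quantitative Leibman theorem is phrased in terms of $\|\eta\circ g_T\|_{C^\infty[|I''_1|]}$, which measures distances of Taylor coefficients to $\Z$, not their absolute size. Concretely, if $d\eta(\log T)=m\,x_{I''_1}+O(N)$ for a nonzero integer $m$, then $|d\eta(\log T)|\asymp x_{I''_1}\gg N$ (so your contradiction hypothesis holds), yet the linear Taylor coefficient of $\eta\circ g_T$ at $x_{I''_1}$ is $d\eta(\log T)/x_{I''_1}=m+O(1/|I''_1|)$, whose $\R/\Z$-distance is $O(1/|I''_1|)$; the $C^\infty$-norm is then $O(1)$ and $g_T$ is \emph{not} equidistributed in the $\eta$-direction. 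Moreover, even granting equidistribution, it only controls unweighted sums $\sum_n F'(g_T(n)\Gamma)$; it does not by itself force cancellation in $\sum_n f(n)F(\eps g_T(n)\gamma\Gamma)$ with an arbitrary bounded weight $f$, and Proposition~\ref{loc-decay}(i) requires the sequence to lie in a \emph{fixed} proper rational subgroup, which the one-parameter group $\{T^s\}$ need not be.

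The paper's proof of (2) is structurally different and crucially exploits the \emph{density} of $\Omega_e$, not just the single $\alpha_0$ you used in Part (1). From \eqref{gas-2} one sets $g_\alpha:=T^{\log\alpha}\gamma_\alpha^{-1}=O(1)$ and verifies $e^{t\,\mathrm{ad}_{\log T}}\log g_\alpha=O(1)$ for all $\alpha\in\Omega_e$ and $t=O(1/N)$. One then runs a dichotomy on the operator norm of $\tfrac{1}{N}\mathrm{ad}_{\log T}$: if small, $\log T$ is $O(N)$ from $\log Z(G)$, and since $G$ is non-abelian (Proposition~\ref{nonab}) a horizontal character annihilating $Z(G)$ works; if large, the constraint forces each $\log g_\alpha$ to lie near a fixed hyperplane, so $T^{\log\alpha}\Gamma=g_\alpha\Gamma$ is confined to a thin set for a positive-measure family of $\alpha$. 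Discretising, the one-parameter orbit $n\mapsto T^{X^{-A}n}\Gamma$ on a long interval fails $\delta$-equidistribution, and \cite[Theorem~1.16]{gt-nil} produces $\eta$ with $X^{-A}d\eta(\log T)=O(X^{-A}N)\bmod\Z$; for $A$ large the modulus drops and \eqref{deat} follows. The density of $\Omega_e$ is precisely what the paper set up in Proposition~\ref{adi-dense} (and motivated in the remark preceding it) to rule out the exotic solutions that defeat any single-$\alpha$ approach; your argument never uses it.
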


\begin{proof} Let $\Omega_e$ and $\gamma_\alpha$ be as in Proposition~\ref{adi-dense}.  Let $\alpha_0$ be an arbitrary element of $\Omega_e$, and let $T \in G$ be the quantity
$$ T \coloneqq \gamma_{\alpha_0}^{\frac{1}{\log \alpha_0}}.$$
Since $\gamma_{\alpha}$ is of polynomial size and $\alpha_0 - 1 \asymp \frac{1}{N}$, we see that $T$ is also of polynomial size. 
From \eqref{gas} one has
\begin{equation}\label{gast}
 \tilde g_{e,1}(t) = O(1) T^{\log(t / x_{I''_1})}
\end{equation}
whenever $\langle t \rangle_{I''_1} \ll 1$.  In particular, after making the substitution $u \coloneqq N \log(t/x_{I''_1})$, the function
$$u \mapsto \log\left( \tilde g_{e,1}(e^{u/N} x_{I''_1}) T^{-u/N} \right)$$ 
is bounded for $u=O(1)$.  By the Baker--Campbell--Hausdorff formula (see Appendix~\ref{bch}), this map is an exponential polynomial involving $O(1)$ terms with exponents of order $O(1/N)$.  (Note that the quantity $T^{-u/N} = \exp(-u \log T/N)$ is actually a polynomial in $u$, rather than an exponential polynomial, due to the nilpotent nature of $G$.) Applying Lemma~\ref{ber-exp}, we conclude that this map has a Lipschitz constant of $O(1)$.  Undoing the substitution, we obtain the claims regarding \eqref{sio}.

Applying \eqref{gas} again and combining with \eqref{gast}, we see that 
$$ \gamma_\alpha^s = O(1) T^{s \log \alpha}$$
for all $\alpha \in \Omega_e$ and $s = O(1)$.  If we then write
$$ g_\alpha \coloneqq T^{\log \alpha} \gamma_\alpha^{-1}$$
then $g_\alpha = T^{\log \alpha} \mod \Gamma$ and $g_\alpha = O(1)$.  Furthermore, for any $s=O(1)$ we have
\begin{align*}
 T^{s \log \alpha} g_\alpha T^{-s\log \alpha} &= T^{(s+1) \log \alpha} \gamma_\alpha^{-1} T^{-s \log \alpha} \\
&= O(1) \gamma_\alpha^{s+1} \gamma_\alpha^{-1} ( O(1) \gamma_\alpha^s )^{-1} \\
&= O(1),
\end{align*}
and thus
$$ T^t g_\alpha T^{-t} = O(1)$$
for all $\alpha \in \Omega_e$ and $t = O( \frac{1}{N} )$.  Taking logarithms and applying the Lie algebra identity \eqref{hgh}, we may rewrite this as
\begin{equation}\label{relation}
 e^{t \mathrm{ad}_{\log T}} \log g_\alpha = O(1)
\end{equation}
for all $t = O(\frac{1}{N} )$ and $\alpha \in \Omega_e$.

Let $C_0 > 0$ be a sufficiently large fixed quantity to be chosen later.  Suppose first that $\frac{1}{N} \mathrm{ad}_{\log T}$ has operator norm less than $C_0$.  The map $\mathrm{ad}: X \mapsto \mathrm{ad}_X$ is a fixed linear map from $\log G$ to the space $\mathrm{End}(\log G)$ of linear endomorphisms of $\log G$, and its kernel is $\log Z(G)$ where $Z(G)$ is the center of $G$.  The image of $\frac{1}{N} \log T$ under this map has size $O(C_0)$, hence $\frac{1}{N} \log T$ lies at a distance $O(C_0)$ from $\log Z(G)$.  On the other hand, from Proposition~\ref{nonab}, $\log Z(G)$ is a proper normal subalgebra of the Lie algebra $\log G$; using Mal'cev bases (for definition, see Appendix~\ref{bch}) it can also be seen to be rational.  By lifting a non-trivial horizontal character of $G/Z(G)$ (which can be in turn obtained by lifting a non-trivial character from the horizontal torus formed by quotienting out $G/Z(G)$ by both $\Gamma Z(G)/Z(G)$ and the commutator group $[G/Z(G),G/Z(G)]$), we may thus find a fixed non-trivial horizontal character $\eta$ that annihilates $\log Z(G)$ and such that  $d\eta$ has operator norm $O(1)$, so that \eqref{deat} holds, in which case we are done.

Henceforth we may assume that $\frac{1}{N} \mathrm{ad}_{\log T}$ has operator norm at least $C_0$.  As $\frac{1}{N} \mathrm{ad}_{\log T}$ is nilpotent, we conclude (on finite Taylor expansion of the logarithm map) that the linear map $e^{\frac{1}{N} \mathrm{ad}_{\log T}}$ has operator norm $\gg C_0^{c}$ for some constant $c>0$.  From this and the singular value decomposition, we conclude that the set
$$ \Omega = \{ x \in \log G: e^{\frac{1}{N} \mathrm{ad}_{\log T}} x = O(1) \}$$
lies in the $O(C_0^{-c})$-neighbourhood of a hyperplane $\Pi$ in $\log G$.  From \eqref{relation} we conclude that for $\alpha \in \Omega_e$, $\log g_\alpha$ lies within $O(C_0^{-c})$-neighbourhood of $\Pi$.  Since we have $g_\alpha = O(1)$ and $g_\alpha = T^{\log \alpha} \mod \Gamma$, we thus have
\begin{equation}\label{al}
T^{\log \alpha} \Gamma = g_\alpha \Gamma \in \{ \kappa \exp(h) \Gamma: \kappa \in G; h \in \Pi; \kappa = O(C_0^{-c}); h = O(1) \}.
\end{equation}
Thus, for $t=O(1/N)$ in a set of measure $\asymp 1/N$, $T^{t} \Gamma$  is contained in the $O(C_0^{-c})$-neighbourhood of the set
$$ \Sigma \coloneqq \{ \exp(h) \Gamma: h \in \Pi; h = O(1) \}.$$
Discretising this using the polynomial size of $T$, we conclude (for $A>0$ a large enough constant) that $T^t \Gamma$ lies in the $O(C_0^{-c})$ neighbourhood of $\Sigma$ for $\gg X^{A}/N$ values of $t = O(1/N)$ with $t \in X^{-A} \Z$.  If $C_0$ is large enough, this implies that the sequence $n \mapsto T^{X^{-A} n} \Gamma$  fails to be $C_0^{-C}$-equidistributed on the interval $[-CX^A/N, CX^A/N] \cap \Z$ for some fixed $C>0$, in the sense of~\cite[Definition 1.2]{gt-nil} (by testing this equidistribution hypothesis against a suitable cutoff function adapted to the $O(C_0^{-c})$-neighbourhood of $\Sigma$).  Applying~\cite[Theorem 1.16]{gt-nil}, this implies that there is a non-trivial horizontal character $\eta \colon G \to \R/\Z$ with $d\eta$ having operator norm\footnote{More precisely,~\cite[Theorem 1.16]{gt-nil} shows that $\eta$, when expressed in Mal'cev coordinates, is given by a linear functional with coefficients $O(C_0^{O(1)})$, from which it is easy to verify that $d\eta$ is also a linear functional with coefficients $O(C_0^{O(1)})$.} $O(C_0^{O(1)})$, such that
$$ \| \eta(T^{X^{-A} n}) - \eta(T^{X^{-A} (n-1)}) \|_{\R/\Z} \ll X^{-A} N$$
for $n \in $$[-CX^A/N, CX^A/N] \cap \Z$, which by \eqref{etag} implies that
$$ X^{-A} d\eta(\log T) = O( X^{-A} N ) \mod \Z.$$
For $A$ large enough, both sides here are less than $1/2$ in magnitude, so we may remove the $\mod \Z$ constraint.  
The claim follows.
\end{proof}

\begin{remark} Proposition~\ref{desc}(2) is the first place where the non-abelian nature of $G$ plays a role. Part (1) of Proposition~\ref{desc} is valid for abelian groups as well. However in part (2), if the group $G$ is abelian, then we can not find a character $\eta$ with the desired properties since the action of $\mathrm{ad}_{\log T}$ is trivial. 
\end{remark}

Having established satisfactory control on the ``continuous'' (or ``Archimedean'') component $\tilde g_{e, 1}$ on the factorization from Proposition~\ref{adi-dense}, we now need to control the ``rational'' (or ``non-Archimedean'') component $\gamma_{e, i}$, with the ultimate aim being to establish an analogue of Proposition~\ref{prop56-analog}.  We begin with a variant of Corollary~\ref{cor54-analog}.  We view $\Gamma$ as a subgroup of $\Poly(\R \to G)$, by identifying each element $\gamma$ of $\Gamma$ with the constant polynomial map $t \mapsto \gamma$.  In particular we may form the quotient space $\Gamma \backslash \Poly(\R \to G)$.

\begin{proposition}\label{cor54-nil} For each $I'' \in {\mathcal I}''$ there exists a set ${\mathcal F}(I'')$ of elements of the quotient space $\Gamma \backslash \Poly(\R \to G)$ of cardinality $O(1)$ such that for any quadruple $e = (I''_1,I''_2,p'_1,p'_2) \in {\mathcal Q}'$ and functions $\gamma_{e,i}$ as in Proposition~\ref{adi-dense}, one has
\begin{equation}\label{gam}
 \Gamma \gamma_{e,i} \in {\mathcal F}(I'')
\end{equation}
if $i=1,2$ and $I''_i = I''$.  Furthermore, each element in ${\mathcal F}(I'')$ is $1$-rational, that is to say it lies in $\Gamma \backslash \Poly( q\Z \to \Gamma)$ for some positive integer $q$ of polynomial size.
\end{proposition}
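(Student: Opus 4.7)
The plan is to adapt the greedy argument from Proposition~\ref{prop53-analog} to the nilpotent setting, with Lemma~\ref{disj} (transversality between polynomial-size and $\prod \mathcal{P}$-rational maps) playing the role that the divisibility arguments played in the abelian case. Fix $I'' \in {\mathcal I}''$ and consider the family of all pairs $(e, i)$ with $e \in \mathcal{Q}'$, $i \in \{1, 2\}$, and $I''_i = I''$. To each such pair, Proposition~\ref{adi-dense} associates a factorization $g_{I''} = \tilde g_{e,i} \gamma_{e,i}$, a large set of primes $\mathcal{P}_e \subset [P''/2, P'']$, and a polynomial-size natural number $q_{e,i}$ such that $\gamma_{e,i} \in \Poly(\frac{q_{e,i}}{\prod \mathcal{P}_e} \Z \to \Gamma)$. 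I will take $\mathcal{F}(I'')$ to be the collection of all cosets $\Gamma \gamma_{e,i}$ that arise and show it is finite of uniformly bounded cardinality.

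The heart of the argument is the following bound. Suppose that $(e_j, i_j)$ for $j = 1, \dots, K$ yield pairwise distinct cosets $\Gamma \gamma_{e_j, i_j}$. Since each $\mathcal{P}_{e_j}$ has $\gg \pi_0(P'')$ elements, Cauchy--Schwarz applied to the double count $\sum_{p \in [P''/2, P'']} \#\{j : p \in \mathcal{P}_{e_j}\}$ (as at the start of the proof of Proposition~\ref{prop53-analog}) shows that, once $K$ exceeds a fixed constant, there must exist distinct indices $j \neq j'$ such that $\mathcal{P}_{e_j} \cap \mathcal{P}_{e_{j'}}$ is itself a large set of primes in $[P''/2, P'']$. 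For such indices, equating the two factorizations gives
\[
 \tilde g_{e_{j'}, i_{j'}}^{-1} \tilde g_{e_j, i_j} = \gamma_{e_{j'}, i_{j'}} \gamma_{e_j, i_j}^{-1}.
\]
The left-hand side is of polynomial size, while the right-hand side is $\prod(\mathcal{P}_{e_j} \cap \mathcal{P}_{e_{j'}})$-rational, since each $\gamma_{e_l, i_l}$ remains rational with the same polynomial-size numerator $q_{e_l,i_l}$ on the coarser grid $\frac{q_{e_l, i_l}}{\prod(\mathcal{P}_{e_j} \cap \mathcal{P}_{e_{j'}})} \Z \subset \frac{q_{e_l, i_l}}{\prod \mathcal{P}_{e_l}} \Z$, and rationality is closed under products and inverses. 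Lemma~\ref{disj} then forces both sides to equal a single constant $\gamma_0 \in \Gamma$, so $\Gamma \gamma_{e_j, i_j} = \Gamma \gamma_{e_{j'}, i_{j'}}$, contradicting distinctness. Hence $K \ll 1$ and $\mathcal{F}(I'')$ has bounded cardinality.

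The $1$-rationality assertion requires no additional work: since $\prod \mathcal{P}_e$ is a positive integer we have $q_{e,i} \Z \subset \frac{q_{e,i}}{\prod \mathcal{P}_e}\Z$, so $\gamma_{e,i}(q_{e,i}\Z) \subset \Gamma$, and the iterated differences of $\gamma_{e,i}$ on this grid lie in $G_i \cap \Gamma = \Gamma_i$ automatically. Thus $\gamma_{e,i} \in \Poly(q_{e,i} \Z \to \Gamma)$ with $q_{e,i}$ of polynomial size, so each coset in $\mathcal{F}(I'')$ lies in $\Gamma \backslash \Poly(q\Z \to \Gamma)$ for some polynomial-size $q$.

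The principal delicacy is that the moduli $\prod \mathcal{P}_e$ vary with $e$ and are exponentially large, so one cannot simply ``clear denominators'' when comparing two factorizations. The Cauchy--Schwarz step guarantees that enough common primes survive the intersection so that Lemma~\ref{disj} (which demands a \emph{large} set of common primes, not merely a nonempty one) still applies; this is the key point that makes the greedy procedure terminate in bounded time.
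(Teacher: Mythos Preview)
Your proposal is correct and follows essentially the same argument as the paper: define $\mathcal{F}(I'')$ as the set of all cosets $\Gamma\gamma_{e,i}$ that arise, use Cauchy--Schwarz on the prime sets $\mathcal{P}_e$ to find two factorizations sharing a large common set of primes, equate the two factorizations of $g_{I''}$, and invoke Lemma~\ref{disj} on the resulting identity between a polynomial-size map and a $\prod(\mathcal{P}_{e_j}\cap\mathcal{P}_{e_{j'}})$-rational map to force a coset collision. The only cosmetic difference is that the paper treats $i=1$ first and then takes a union over $i\in\{1,2\}$, whereas you handle both values of $i$ simultaneously; your slightly more detailed justification of $1$-rationality is also fine.
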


\begin{proof}  We just prove the claim for $i=1$, as the $i=2$ case is similar, and then we can obtain the joint case $i=1,2$ by taking the union of the two sets ${\mathcal F}(I'')$ thus produced.  We let ${\mathcal F}(I'')$ be the collection of all cosets $\Gamma \gamma_{e,1}$ whenever $e = (I''_1,I''_2,p'_1,p'_2) \in {\mathcal Q}'$ with $I''_1 = I''$.  Since $\gamma_{e,i}$ is $Q$-rational for some natural number $Q$, it is also $1$-rational.

Clearly we have the property \eqref{gam} by definition for $i=1$.  To complete the proof of the proposition, we need to show that ${\mathcal F}(I'')$ has cardinality $O(1)$.  Suppose for contradiction that ${\mathcal F}(I'')$ has cardinality at least $K$ for some large fixed $K$ to be chosen later.  By construction, we can then find $K$ quadruples $e_j = (I'', I''_{2,j}, p'_{1,j}, p'_{2,j}) \in {\mathcal Q}'$ for $j=1,\dots,K$ and associated factorizations
\begin{equation}\label{jfac}
 g_{I''} = \tilde g_{e_j,1} \gamma_{e_j,1}
\end{equation}
for $j=1,\dots,K$, with $\tilde g_{e_j,1} \in \Poly(\R \to G)$ of polynomial size and $\gamma_{e_j,1}$ $\prod {\mathcal P}_{e_j}$-rational for some large set ${\mathcal P}_{e_j}$ of primes in $[P''/2,P'']$, and such that the cosets $\Gamma \gamma_{e_j,1}$ are all distinct.  As each ${\mathcal P}_j$ is large, we have
$$ \sum_{j=1}^K \# {\mathcal P}_{e_j} \gg K \pi_0(P'').$$
The left-hand side can be written as $\sum_{p \in [P''/2,P'']} \# \{ 1 \leq j \leq K: p \in {\mathcal P}_j\}$.  By Cauchy--Schwarz we then have
$$ \sum_{p \in [P''/2,P'']} \# \{ 1 \leq j \leq K: p \in {\mathcal P}_{e_j}\}^2 \gg K^2 \pi_0(P'').$$  
The left-hand side may be written as
$$ \sum_{1 \leq j,j' \leq K} \# ({\mathcal P}_{e_j} \cap {\mathcal P}_{e_{j'}}).$$
For $K$ large enough, we may delete the diagonal contribution $j=j'$ and then use the pigeonhole principle to conclude that there exists $1 \leq j < j' \leq K$ for which ${\mathcal P}_{e_j} \cap {\mathcal P}_{e_{j'}}$ is large.  For this $j,j'$, we use \eqref{jfac} to conclude that
$$ \tilde g_{e_{j'},1}^{-1} \tilde g_{e_j,1} = \gamma_{e_{j'},1} \gamma_{e_j,1}^{-1}.$$
The left-hand side is of polynomial size, and the right-hand side is $\prod ({\mathcal P}_{e_j} \cap {\mathcal P}_{e_{j'}})$-rational.  By Lemma~\ref{disj}, we conclude that $\gamma_{e_{j'},1} \gamma_{e_j,1}^{-1} \in \Gamma$, thus $\Gamma \gamma_{e_j,1} = \Gamma \gamma_{e_{j'},1}$, contradicting the construction of the $e_j$.  The claim follows.
\end{proof}

We can now establish an analogue of Proposition~\ref{prop56-analog} that dramatically improves the bound on $q$.

\begin{proposition}\label{qbound}  There exists a subset ${\mathcal Q}''$ of ${\mathcal Q}'$ of cardinality $\gg dN$, such that for each 
$e \in {\mathcal Q}''$ and functions $\gamma_{e,i}$ as in Proposition~\ref{adi-dense}, one has $\gamma_{e,1} \in \Poly(q\Z \to \Gamma)$ for some $q=O(1)$.
\end{proposition}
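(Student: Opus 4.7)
The plan is to transport the argument of Proposition~\ref{prop56-analog} to the non-abelian nilpotent setting. By Proposition~\ref{cor54-nil}, for each $I''$ the coset $\Gamma \gamma_{e,1}$ lies in a family $\mathcal{F}(I'')$ of cardinality $O(1)$, and each class has a representative in $\Poly(q\Z \to \Gamma)$ for some natural number $q$ of polynomial size. Pigeonholing on the (finitely many) cosets and on the dyadic size of $q$, we may pass to a subset $\mathcal{Q}'' \subset \mathcal{Q}'$ of cardinality $\gg dN$ on which $\gamma_{e,1}$ admits a representative $\gamma_{e,1}^* \in \Gamma \gamma_{e,1} \cap \Poly(q_0 \Z \to \Gamma)$ for a common $q_0 \leq X^{O(1)}$; the goal is then to show that $q_0 \ll 1$ (so that $\gamma_{e,1}$ itself lies in $\Poly(q \Z \to \Gamma)$ for $q = O(1)$, after possibly enlarging $q$ to absorb the $\Gamma$-factor).

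Next, imitating the cycle-construction from the proof of Proposition~\ref{prop41-analog}, choose $\ell$ to be the smallest even natural number with $d^\ell \geq N^{2+\eps}$, so that $\ell = O_\eps(1)$. This produces $\gg d^\ell$ ordered $\ell$-tuples of elements of $\mathcal{Q}''$ lying on $\ell$-cycles in the graph whose edges come from the relation \eqref{egad-alt}; the primes $p'_{j,1}, p'_{j,2} \in [P',2P']$ linking consecutive vertices satisfy the near-equality
\[
\Bigl|\prod_{j=0}^{\ell-1} p'_{j,2} - \prod_{j=0}^{\ell-1} p'_{j,1}\Bigr| \ll (P')^\ell / N
\]
since the starting interval must be comparable to its image under the cycle of rescalings. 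Iterating \eqref{egad-alt} around the cycle yields
\[
\gamma_{e_0,1}^*\Bigl(\prod_{j=0}^{\ell-1} p'_{j,2} \,\cdot\Bigr) = \gamma_{e_0,1}^*\Bigl(\prod_{j=0}^{\ell-1} p'_{j,1} \,\cdot\Bigr) \cdot \Delta,
\]
with $\Delta \in \Poly(\frac{1}{Q}\Z \to \Gamma)$ and $Q = \prod \mathcal{P}$ for a large set $\mathcal{P}$ of primes in $[P''/2, P'']$.

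To separate the rational and polynomial-size parts, rearrange so that $\Delta$ equals a product of two evaluations of $\gamma_{e_0,1}^*$. Working in Mal'cev coordinates and descending inductively through the lower central series quotients $\log G_i / \log G_{i+1}$, at each level the left-hand side is $Q$-rational while the right-hand side can be arranged (using the $q_0$-integrality of $\gamma_{e_0,1}^*$ and the polynomial-size control on the correction already achieved at previous levels) to be of polynomial size in that quotient; Lemma~\ref{disj} then forces $\Delta$ to equal a constant $\gamma^* \in \Gamma$ of polynomial size. Expanding the resulting identity
\[
\gamma_{e_0,1}^*\Bigl(\prod_j p'_{j,2}\,\cdot\Bigr) = \gamma_{e_0,1}^*\Bigl(\prod_j p'_{j,1}\,\cdot\Bigr) \cdot \gamma^*
\]
via Lemma~\ref{na-dte} and extracting the $t^m$-coefficient at the smallest filtration level where $\gamma_{e_0,1}^*$ has a non-trivial Taylor coefficient of denominator $\geq q_0^{1/k}$ yields, for some $1 \leq m \leq k$, a modular constraint
\[
\Bigl(\prod_j p'_{j,2}\Bigr)^m \equiv \Bigl(\prod_j p'_{j,1}\Bigr)^m \pmod{q''}
\]
with $q'' \gg q_0^{c}$ for some constant $c>0$ depending only on the nilpotency class of $G$. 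Applying Lemma~\ref{lem26-analog} together with the near-equality above bounds the number of prime tuples by $\ll \frac{d^\ell}{N}(q_0^{-c/2} + (\log X)^{-1})$; combined with the $O(1)$ count of coset choices and the $\asymp N$ choices of $I''_0$, this gives a total upper bound of $\ll d^\ell(q_0^{-c/2} + (\log X)^{-1})$ on the number of cycles. Comparing with the lower bound $\gg d^\ell$ forces $q_0 \ll 1$ after discarding a negligible fraction of $\mathcal{Q}''$.

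The main obstacle is the coefficient extraction described in the previous paragraph. In the abelian setting of Proposition~\ref{prop56-analog}, the expansion $\gamma_0(t) = \sum_m (a_m/b_m) t^m$ can be dissected directly and the denominators read off immediately; here the Baker--Campbell--Hausdorff formula mixes contributions from different levels of the lower central series, so one must proceed level by level, at each stage using the polynomial-size control established inductively at lower levels to strip off corrections at higher levels. The careful interplay between Lemma~\ref{disj} (used to convert the huge rational factor $\Delta$ into a $\Gamma$-constant $\gamma^*$) and Lemma~\ref{na-dte} (used to surface a modular constraint whose denominator is at least a fixed power of $q_0$) is the technical heart of the argument, but no genuinely new conceptual ingredient beyond those already developed in Sections~\ref{sec: polyphases} and the preceding part of Section~\ref{nilseq} is required.
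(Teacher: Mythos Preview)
Your overall strategy—cycle iteration of \eqref{egad-alt}, extraction of a modular constraint on the products $\prod_j p'_{j,1}$ and $\prod_j p'_{j,2}$, and application of Lemma~\ref{lem26-analog}—matches the paper's. However, the step where you invoke Lemma~\ref{disj} to force $\Delta$ to be a constant in $\Gamma$ does not work. Lemma~\ref{disj} requires its input to be simultaneously $Q$-rational \emph{and of polynomial size}. You argue for the latter by writing $\Delta$ as a product of two dilates of $\gamma_{e_0,1}^*$, but $\gamma_{e_0,1}^* \in \Poly(q_0\Z \to \Gamma)$ only tells you that its rescaled Taylor coefficients lie in $\Gamma$, and elements of $\Gamma$ can be arbitrarily large. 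Already in the abelian toy case $G=\R$, $\Gamma=\Z$, $\gamma^*(t) = c_1 t$ with $c_1 \in \tfrac{1}{q_0}\Z$, the difference $\gamma^*(at)-\gamma^*(bt) = c_1(a-b)t$ carries no polynomial-size bound since $c_1$ is unbounded; no inductive descent through the filtration repairs this, because at every level the Taylor coefficients are merely lattice elements, not bounded ones.

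The paper's argument avoids this entirely and does not use Lemma~\ref{disj} here. After concatenating around the cycle to $\gamma_{0,1}(a\cdot) = \gamma\,\gamma_{0,1}(b\cdot)\,\gamma^\dagger$ with $\gamma \in \Gamma$ and $\gamma^\dagger \in \Poly(\Z \to \Gamma)$, it first pigeonholes on the filtration to locate a level $l$ at which the minimal period of $\gamma_{0,1} \bmod G_l$ jumps by a factor at least $q_0^{1/k}$, factors $\gamma_{0,1} = \gamma'_{0,1}\gamma''_{0,1}$ with $\gamma''_{0,1} \in \Poly(q_l\Z \to \Gamma)$ and $\gamma'_{0,1}$ taking values in $G_l$, and then projects the resulting relation for $\gamma'_{0,1}$ to the torus $G_l/(\Gamma_l G_{l+1})$. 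In that quotient both the conjugation by $\gamma$ and the residual factor $\tilde\gamma^\dagger \in \Poly(q_l\Z \to \Gamma_l)$ become trivial, yielding $\gamma'_{0,1}(aq_l n) \equiv \gamma'_{0,1}(bq_l n) \bmod \Gamma_l G_{l+1}$ directly; the congruence $a^m \equiv b^m \pmod{n_m}$ with $n_m \gg q_0^{1/k^2}$ then drops out of the Taylor expansion. Two further points your sketch omits: the modulus $n_m$ must be fixed \emph{before} counting cycles (the paper achieves this by first pigeonholing to a single base interval $I''_{0,1}$, so that $n_m$ depends only on the chosen coset representative $\gamma_{0,1}$ and not on the tuple), and one must remove the negligible set of tuples in which some $p'_{j,i}$ divides $n_m$, since Lemma~\ref{lem26-analog} requires the primes to be coprime to the modulus.
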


\begin{proof}  Let $q_0$ be a sufficiently large fixed quantity to be chosen later.  Suppose for contradiction that the proposition fails, then we can find a subset ${\mathcal Q}''$ of ${\mathcal Q}'$ of cardinality at least $\frac{1}{2} \# {\mathcal Q}' \gg dN$ such that $\gamma_{e,1} \not \in \Poly(q\Z \to \Gamma)$ for any $1 \leq q \leq q_0$ and $e = (I''_1,I''_2,p'_1,p'_2) \in {\mathcal Q}''$.  By Proposition~\ref{cor54-nil}, $\Gamma \gamma_{e,1} \in {\mathcal F}(I''_1)$.  By randomly selecting one element $F_{I''_1}$ from each ${\mathcal F}(I''_1)$ and using the probabilistic method, we conclude that for at least one such choice of elements $F_{I''_1}$, there is a subset ${\mathcal Q}'''$ of ${\mathcal Q}''$ of cardinality $\gg dN$ such that
$$\Gamma \gamma_{e,i} = F_{I''_i}$$ 
for all $e = (I''_1,I''_2,p'_1,p'_2) \in {\mathcal Q}'''$ and $i=1,2$.  In particular, we have
$$ F_{I''_1} \not \in \Gamma \backslash \Poly(q\Z \to \Gamma)$$
whenever $e = (I''_1,I''_2,p'_1,p'_2) \in {\mathcal Q}'''$ and $1 \leq q \leq q_0$.

Let $\ell$ be a bounded integer, large enough so that $d^\ell \geq N d^{10}$.  Viewing ${\mathcal Q}'''$ as a (directed) graph with vertex set ${\mathcal I}''$ and applying the Blakley--Roy inequality~\cite{blakley} (see also~\cite{mulholland-smith}) and Cauchy--Schwarz to count cycles of length $2\ell$ in this graph, we conclude that there exist $\gg d^{2\ell}$ $2\ell$-tuples
\begin{equation}\label{itchy}
 (I''_{j,i})_{0 \leq j\leq \ell-1; i=1,2} \in ({\mathcal I}'')^{2\ell}
\end{equation}
with the property that for each $0 \leq j \leq \ell-1$, there exists primes $p'_{j,1}, p'_{j,2}, p'_{j,3}, p'_{j,4} \in [P',2P']$ such that
$$ (I''_{j,1}, I''_{j,2}, p'_{j,1}, p'_{j,2}), (I''_{j+1,1}, I''_{j,2}, p'_{j,3}, p'_{j,4}) \in {\mathcal Q}'''$$
for $j=0,\dots,\ell-1$, with the periodic convention $I''_{\ell,1} = I''_{0,1}$. In particular, $I''_{j,1}$ lies within $O( \frac{H}{P'P''})$ of $\frac{p'_{j,2}}{p'_{j,1}} I''_{j,2}$, and similarly $I''_{j+1,1}$ lies within $O( \frac{H}{P'P''})$ of $\frac{p'_{j,4}}{p'_{j,3}} I''_{j,2}$.  Iterating this we conclude that $I''_{0,1}$ lies within $O( \frac{H}{P'P''})$ of $\prod_{j=0}^{\ell-1} \frac{p'_{j,4} p'_{j,1}}{p'_{j,3} p'_{j,2}} I''_{0,1}$, which implies that
\begin{equation}\label{ston}
|a-b| \ll \frac{1}{N} (P')^{2\ell}.
\end{equation}
where
$$a \coloneqq  \prod_{j=0}^{\ell-1} p'_{j,4} p'_{j,1} $$
and
$$ b \coloneqq \prod_{j=0}^{\ell-1} p'_{j,3} p'_{j,2}.$$
By the pigeonhole principle, we may find an $I''_{0,1} \in {\mathcal I}''$ which is associated to a family ${\mathcal T}$ of tuples \eqref{itchy} of cardinality
\begin{equation}\label{Scratch}
\# {\mathcal T} \gg d^{2\ell}/N.
\end{equation}
We now fix this interval $I''_{0,1}$ and the family ${\mathcal T}$.

Let $q$ be the least positive integer for which
$$ F_{I''_{0,1}} \in \Gamma \backslash \Poly(q\Z \to \Gamma).$$
By construction we have
$$ q_0 < q \ll X^{O(1)}.$$
Intuitively, the lower bound $q > q_0$ means that polynomials in the coset $F_{I''_{0,1}}$ have at least one coefficient with some ``large denominator'' $n_m$.  The strategy is to locate this coefficient and this denominator, and then to study the equation \eqref{egad-alt} to obtain some non-trivial congruence conditions relating $a$ and $b$ modulo $n_m$ which will restrict the size of ${\mathcal T}$ enough to obtain a contradiction.

We turn to the details.  We arbitrarily select a coset representative $\gamma_{0,1} \in \Poly(q\Z \to \Gamma)$ of $F_{I''_{0,1}}$.  For any $l=1,\dots,k+1$, we let $\gamma_{0,1} \mod G_l$ be the projection to $\Poly(q\Z \to \Gamma G_l/G_l) \subset \Poly(\R \to G/G_l)$, and let $q_l$ be the least positive integer for which $\gamma_{0,1} \mod G_l \in \Poly(q_l\Z \to \Gamma G_l/G_l)$.  Then $q_1=1$, $q_{k+1}=q\geq q_0$, and from Lemma~\ref{bezout-nil} we have $q_i | q_{i+1}$ for $i=1,\dots,k$.  In particular, by the pigeonhole principle we can find $l\in \{1,\dots,k\}$ such that
$$ q_l \leq q_0^{\frac{i-1}{k}}$$
and
\begin{equation}\label{qo}
 q_{l+1} > q_0^{\frac{i}{k}} \geq q_0^{\frac{1}{k}} q_l.
\end{equation}
We now fix this $l$.  By lifting the Taylor coefficients of $\gamma_{0,1} \mod G_l$ from $G/G_l$ back to $G$, we can factor
\begin{equation}\label{tilt}
 \gamma_{0,1} = \gamma'_{0,1} \gamma''_{0,1} 
\end{equation}
where $\gamma''_{0,1} \in \Poly(q_l\Z \to \Gamma)$ and $\gamma'_{0,1} \in \Poly(\R \to G_l)$, hence also $\gamma'_{0,1} \in \Poly(q\Z \to \Gamma_l)$. We then see that $q_{l+1}$ is the least multiple of $q_l$ for which $\gamma'_{0,1} \mod G_{l+1} \in \Poly( q_{l+1} \Z \to \Gamma_l G_{l+1}/G_{l+1})$.  If we perform the Taylor expansion
\begin{equation}\label{dorm}
 \gamma'_{0,1}(t) = g_0 g_1^{t} \dots g_k^{t^k/k!} 
\end{equation}
for $g_0,\dots,g_k \in G_l$, then on setting $t=0$ we conclude that $g_0 \in \Gamma_l$; also, by taking repeated differences with spacing $q_{l+1}$, we see that for each $m=1,\dots,k$ we have $g_m^{a_m} \in \Gamma_l G_{l+1}$ for some positive integer $a_m$ of polynomial size.  Note that $g_1,\dots,g_m$ do not depend on the choice of coset representative $\gamma_{0,1}$. If we let $n_m$ be the least positive integer such that $g_m^{n_m q_l^m/m!} \in \Gamma G_{l+1}$, we see that each $n_m$ is of polynomial size and
$$ \gamma'_{0,1} \mod G_{l+1} \in \Poly( k! n_1 \dots n_k q_{l} \Z \to \Gamma_l G_{l+1}/G_{l+1})$$
and thus
$$ n_1 \dots n_k q_l \gg q_{l+1}$$
so by \eqref{qo} and the pigeonhole principle we can find $m=1,\dots,k$ such that
\begin{equation}\label{nm-big}
 n_m \gg q_0^{1/k^2}.
\end{equation}
Henceforth we fix this $m$.  We will shortly use this large integer $n_m$ as a modulus to which one can apply Lemma~\ref{lem26-analog}.  A key technical point is that this modulus does not depend on the tuples in ${\mathcal T}$.  

Next, we claim that after removing a negligible fraction of tuples from the family ${\mathcal T}$, we may assume that none of the $p'_{j,i}$ divide $n_m$.  For sake of notation let us just remove the contribution where $p'_{0,1}$ divides $n_m$.  There are $O(N)$ choices for $I''_{0,1}$.  As $n_m$ is of polynomial size and $p'_{0,1} \in [P',2P']$, we see that there are only $O(1)$ choices for $p'_{0,1}$.  After fixing this choice, there are at most $O(\pi_0(P')^{2\ell-1}) = O( d^{\ell-1/2} )$ choices for the remaining choices of $p'_{j,4}, p'_{j,1}$, $j=0,\dots,\ell-1$. Then we see from \eqref{ston} and the fundamental theorem of arithmetic that there are $O( \frac{1}{N} (P')^{2\ell} ) = O( d^{\ell+o(1)} / N )$ choices for the $p'_{j,3}, p'_{j,2}$.
After making all these choices, the tuple \eqref{itchy} is fixed, so the total number of tuples generated in this fashion is $O( d^{2\ell-1/2+o(1)} / N)$, which is negligible.  Similarly for the cases when some other prime $p'_{j,i}$ divides $n_m$.

For each $0 \leq j \leq \ell$ and $i=1,2$, let $\gamma_{j,i} \in \Poly(\R \to G)$ be a representative of the coset $f_{I''_{j,i}} \in \Gamma \backslash \Poly(\R \to G)$, thus $f_{I''_{j,i}} =\Gamma \gamma_{j,i}$; for $(j,i)=(0,1)$ we use the same choice $\gamma_{0,1}$ of coset representative that was made earlier.  From \eqref{egad-alt} we have for all $0 \leq j \leq \ell-1$ that
$$
\gamma_{j,1}(p'_{j,2} \cdot) = \gamma_j \gamma_{j,2}(p'_{j,1} \cdot) \gamma^{\dagger}_j 
$$
for some $\gamma_j \in \Gamma$, and some $\gamma^\dagger_j \in \Poly(\Z \to \Gamma)$, and similarly
$$
\gamma_{j+1,1}(p'_{j,4} \cdot) = \tilde \gamma_j \gamma_{j,2}(p'_{j,3} \cdot) \tilde \gamma^{\dagger}_j 
$$
for all $t \in \R$ and some $\tilde \gamma_j \in \Gamma$, and some $\tilde \gamma^\dagger_j \in \Poly(\Z \to \Gamma)$.  Concatenating these estimates, we conclude that
$$ \gamma_{0,1}( a \cdot) = \gamma \gamma_{0,1}( b \cdot ) \gamma^\dagger$$
for some $\gamma \in \Gamma$ and $\gamma^\dagger \in \Poly(\Z \to \Gamma)$.  By \eqref{tilt}, this implies that
$$ \gamma'_{0,1}( a \cdot) = \gamma \gamma'_{0,1}( b \cdot ) \gamma^{-1} \tilde \gamma^\dagger$$
where $\tilde \gamma^\dagger \in \Poly(q_l \Z \to \Gamma)$.  Since $\gamma'_{0,1}( a \cdot)$ and
$\gamma \gamma'_{0,1}( b \cdot ) \gamma^{-1}$ both take values in $G_l$, $\tilde \gamma^\dagger$ does also, thus $\tilde \gamma^\dagger \in \Poly(q_l \Z \to \Gamma_l)$.  If we now project to the torus $G_l / (\Gamma_l G_{l+1})$, we see that
$$ \gamma'_{0,1}( a q_l n) = \gamma'_{0,1}( b q_l n ) \mod \Gamma_l G_{l+1}$$
for all integers $n$.  Using the Taylor expansion \eqref{dorm}, we conclude on taking $m$ divided differences with spacing $q_l$ at the origin that
$$ g_m^{(a q_l)^m} = g_m^{(b q_l)^m} \mod \Gamma_l G_{l+1}$$
and hence by definition of $n_m$
$$ a^m = b^m \mod n_m.$$
Applying Lemma~\ref{lem26-analog}, we can then bound the number $\# {\mathcal T}$ of tuples as
$$ \# {\mathcal T} \ll \frac{d^{2\ell}}{N} \left( \frac{k^{\omega(n_m)}}{\phi(n_m)} + \frac{1}{\log N} \right) $$
which by the divisor bound and \eqref{nm-big} gives
$$ \# {\mathcal T} \ll q_0^{-\frac{1}{2k^2}} \frac{d^{2\ell}}{N} $$
which contradicts the lower bound \eqref{Scratch} if $q_0$ is large enough.
\end{proof}

Note that each $I''_1$ appears in at most $O(d)$ quadruples $e = (I''_1, I''_2, p'_1, p'_2) \in {\mathcal Q}''$.  Combining this observation with Propositions~\ref{qbound}, \ref{desc}, we conclude

\begin{corollary}\label{finally} For all $I''$ in a large subcollection ${\mathcal I}'''$ of ${\mathcal I}''$, we can find a representation
\begin{equation}\label{rep}
 F(g_{I''} \Gamma) = F( \tilde g_{I''} \gamma_{I''} \Gamma )
\end{equation}
for some $\tilde g_{I''}, \gamma_{I''} \in \Poly(\R \to G)$, and $T_{I''} \in G$ of polynomial size such that
\begin{itemize}
\item[(i)]  The map
\begin{equation}\label{sio-2}
 t \mapsto \log( \tilde g_{I''}(t) T_{I''}^{-\log(t / x_{I''})} )
\end{equation}
is bounded by $O(1)$ and has a Lipschitz norm of $O(|I''|^{-1})$ whenever $\langle t \rangle_{I''} \ll 1$.
\item[(ii)] There is a non-trivial horizontal character $\eta_{I''} \colon G \to \R/\Z$ such that the derivative $d\eta_{I''} \colon \log G \to \R$ has operator norm $O(1)$, and such that 
\begin{equation}\label{deat-2}
d\eta_{I''}( \log T_{I''} ) = O( N ).
\end{equation}
\item[(iii)] $\gamma_{I''} \in \Poly(q_{I''}\Z \to \Gamma)$ for some $q_{I''}=O(1)$.
\end{itemize}
\end{corollary}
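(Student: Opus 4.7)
The plan is to chase the definitions: for each interval $I''$ appearing sufficiently often as $I''_1$ in the refined quadruple set ${\mathcal Q}''$ from Proposition~\ref{qbound}, we can simply inherit the data produced there together with Proposition~\ref{desc}.

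First I would pin down the large subcollection. By Proposition~\ref{qbound}, $\#{\mathcal Q}'' \gg dN$. On the other hand, if $I''_1 \in {\mathcal I}''$ is fixed, then any quadruple $e = (I''_1,I''_2,p'_1,p'_2) \in {\mathcal Q}'' \subset {\mathcal Q}'$ is determined by the choice of the two primes $p'_1, p'_2 \in [P',2P']$, because these primes together with $I''_1$ force $I''_2$ to lie within $O(H/P'P'')$ of $\frac{p'_1}{p'_2} I''_1$, and by the separation hypothesis on ${\mathcal I}''$ this pins down $I''_2$. Hence each $I''_1$ appears in at most $O(\pi_0(P')^2) = O(d)$ such quadruples, and therefore the set ${\mathcal I}'''$ of intervals $I''_1$ that appear in at least one $e \in {\mathcal Q}''$ has cardinality $\gg N$, making it a large subcollection of ${\mathcal I}''$.

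Next, for each $I'' \in {\mathcal I}'''$ I would fix an arbitrary quadruple $e = e(I'') \in {\mathcal Q}''$ with $I''_1 = I''$ and simply set
\[
\tilde g_{I''} \coloneqq \tilde g_{e,1}, \qquad \gamma_{I''} \coloneqq \gamma_{e,1}, \qquad T_{I''} \coloneqq T_e, \qquad \eta_{I''} \coloneqq \eta_e,
\]
where $\tilde g_{e,1}, \gamma_{e,1}$ come from the factorization \eqref{factors-alt} in Proposition~\ref{adi-dense} and $T_e, \eta_e$ come from Proposition~\ref{desc}. The identity \eqref{rep} is then exactly \eqref{factors-alt} rewritten as an equality of values of $F$; property (i) is the conclusion of part (1) of Proposition~\ref{desc} applied to $\tilde g_{e,1}$; property (ii) is part (2) of Proposition~\ref{desc}, which supplies a non-trivial horizontal character $\eta_e$ with $d\eta_e$ of operator norm $O(1)$ satisfying \eqref{deat}; and property (iii) is precisely the conclusion of Proposition~\ref{qbound} that $\gamma_{e,1} \in \Poly(q\Z \to \Gamma)$ for some $q = O(1)$.

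There is no real obstacle here, since every ingredient has already been proved; the only mildly delicate point is the bookkeeping on how many $e \in {\mathcal Q}''$ each interval $I''$ can participate in, which is what upgrades the $\gg dN$ count of quadruples into a $\gg N$ count of intervals and thus makes ${\mathcal I}'''$ a genuinely large subcollection. With that counting in hand, the corollary is a direct package of the two preceding propositions.
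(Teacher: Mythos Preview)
Your proposal is correct and matches the paper's own argument essentially verbatim: the paper observes that each $I''_1$ appears in at most $O(d)$ quadruples in ${\mathcal Q}''$, and then simply combines Propositions~\ref{qbound} and~\ref{desc} to obtain the corollary. Your write-up spells out the bookkeeping more explicitly, but the approach is identical.
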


Let $I'', \tilde g_{I''}, \gamma_{I''}, T_{I''}, \eta_{I''}, q_{I''}$ be as in the above corollary.  Observe that as the number of possible $q_{I''}$ is bounded, we may refine the family ${\mathcal I}'''$ of intervals in the above corollary by a bounded factor to assume that
$$ q_{I''} = q$$
is independent of $I''$ (one could also simply clear denominators here).  In a similar spirit, as $\eta_{I''}$ takes values in the lattice of horizontal characters (which one can identify with the Pontryagin dual of the torus $G / \Gamma[G,G]$) and is a bounded distance away from the origin, there are only finitely many choices for $\eta_{I''}$, so we may assume that
$$ \eta_{I''} = \eta$$
is independent of $I''$.

Now we will be able to descend from $G$ to the lower-dimensional nilpotent group $\mathrm{ker}(\eta)$ as follows.  Since $\eta\colon G \to \R/\Z$ is a homomorphism to the abelian group $\R/\Z$, it annihilates the commutator group $[G,G]$, and hence (by \eqref{lim}) the derivative map $d\eta\colon \log G \to \R$ annihilates the commutator algebra $[\log G, \log G]$.  In particular, from the Baker--Campbell--Hausdorff formula, we have
$$ d\eta \left(\log\left( \tilde g_{I''}(t) T_{I''}^{-\log(t / x_{I''})}\right) \right)
= d\eta( \log \tilde g_{I''}(t) ) - \log(t/x_{I''}) d\eta(\log T_{I''}).$$
If we then apply $d\eta$ to \eqref{sio-2}, we conclude that the map
$$ t \mapsto d\eta( \log \tilde g_{I''}(t) ) - \log(t/x_{I''}) d\eta(\log T_{I''}) $$
has a Lipschitz norm of $O(|I''|^{-1})$ whenever $\langle t \rangle_{I''} \ll 1$.  Combining this with \eqref{deat-2}, we see that the map
$$ t \mapsto d\eta( \log \tilde g_{I''}(t) ) $$
also has a Lipschitz norm of $O(|I''|^{-1})$ in this region.  From the definition of $\Poly(\R \to G)$, this map is also a polynomial of degree $k$, with the $t^j$ coefficient lying in $d\eta(\log G_j)$ for each $j \geq 0$.  By the Bernstein inequality \eqref{ber-2}, we may thus write
$$ d\eta(\log \tilde g_{I''}(t) ) = \sum_{j=0}^k \theta_j (t - x_{I''})^j $$
where the $\theta_j$ are real numbers with $\theta_j \in d\eta(\log G_j)$ and $\theta_j = O( |I''|^{-j} )$.  Lifting this polynomial back to $G$, we may thus write
$$ \log \tilde g_{I''}(t) = \sum_{j=0}^k X_j (t - x_{I''})^j \mod \mathrm{ker}(d \eta)$$
for some $X_j \in \log G_j$ with $X_j = O(|I''|^{-j})$.  If we set
$$ \eps_{I''}(t) \coloneqq \exp( \sum_{j=0}^k X_j (t - x_{I''})^j )$$
then $\eps_{I''} \in \Poly(\R \to G)$ is smooth on $I''$, and if we then define $g^*_{I''} \colon \R \to G$ to be the map for which
$$ \tilde g_{I''}(t) = \eps_{I''}(t) g^*_{I''}(t)$$
then from the Baker--Campbell--Hausdorff formula \eqref{poly} we see that $g^*_{I''} \in \Poly(\R \to \mathrm{ker}(\eta))$ takes values in the kernel $\mathrm{ker}(\eta) = \exp( \mathrm{ker}(d\eta))$ of $G$, which is a proper rational normal subgroup of $G$.  By \eqref{rep}, \eqref{falp} we then have
$$ \left| \sum_{n \in I''} f(n) \overline{F}( \eps_{I''}(n) g^*_{I''}(n) \gamma_{I''}(n) \Gamma) \right| \gg |I''|.$$
Let $H^* \coloneqq c \frac{H}{P'P''}$ for a sufficiently small absolute constant $c>0$.  Then we have
$$ \int_{I''}\left| \sum_{n \in [x,x+H^*]} f(n) \overline{F}( \eps_{I''}(n) g^*_{I''}(n) \gamma_{I''}(n) \Gamma) \right|\ dx \gg |I''| H^*.$$
As $\eps_{I''}$ is smooth on $I''$, $\eps_{I''}(n)$ is $O(1)$ and varies by at most $O(c)$ on $[x,x+H^*]$, hence by the Lipschitz nature of $F$
$$ \int_{I''}\left| \sum_{n \in [x,x+H^*]} f(n) \overline{F}( \eps_{I''}(x) g^*_{I''}(n) \gamma_{I''}(n) \Gamma) \right|\ dx \gg |I''| H^*.$$
Summing over $I'' \in {\mathcal I}'''$, we conclude that
$$
\int_X^{2X} \sup_{\eps \in E; \tilde g \in \Poly( \Z \to \mathrm{ker}(\eta) ); \gamma \in \Poly(q\Z \to \Gamma)} \left| \sum_{n \in [x,x+H]} f(n) \overline{F}(\eps \tilde g(n) \gamma(n) \Gamma)\right| \ dx \gg HX$$
for some compact set $E \subset G$.
But this contradicts Proposition~\ref{major}.  This contradiction (finally!) concludes the proof of Theorem~\ref{mult-pret}.

\begin{remark}\label{rem: loweringH} It seems plausible that the proof of Theorem~\ref{mult-pret}, combined with the quantitative work in Section~\ref{sec: lowering} for lowering the value of $H$, would allow lowering the length of the intervals to $H\geq \exp((\log X)^{1-\delta})$ for some $\delta>0$. We do not pursue this further here, however, as that would further lengthen this paper. Let us note, however, that at least the convenient notion of polynomially large elements in Lie groups used in this section would have to be replaced with a more cumbersome notation in the case where $H$ is no longer polynomially large in terms of $X$. 
\end{remark}

\section{Sign patterns}\label{sec: signpatterns}

\subsection{The Liouville case}

Our main goal in this section is to use Theorem~\ref{mult-pret} to prove Theorem~\ref{superpolynomial}, which asserts a superpolynomial lower bound on the number $s(k)$ of sign patterns of the Liouville function, defined in \eqref{sk-def}. We will also prove a generalization of Theorem~\ref{superpolynomial} to sign patterns of other multiplicative functions (Theorem~\ref{theo-multsigns}), and prove Proposition~\ref{entropy}.  

Regarding Theorem~\ref{superpolynomial}, we will in fact prove a more general implication, which gives a lower bound on $s(k)$ whenever one has local Gowers uniformity of the Liouville function on short intervals:

\begin{theorem}[From local Gowers uniformity to lower bounds on sign patterns]\label{theo-signs}
Let $0 < \kappa < 1/2$. Let $\Psi:\R_{\geq 1}\to \R$ be a strictly increasing function with $X\leq \Psi(X)\leq \exp(X^{1/2-\kappa})$ for all large enough $X$. Suppose that \eqref{fox} holds for $H(X)=\Psi^{-1}(X^{\eta})$ for every fixed $\eta>0$. Then $s(k)\geq \Psi(k)$ for all large enough $k$.
\end{theorem}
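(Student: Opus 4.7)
The plan is to argue by contradiction, combining Fourier analysis on the hypercube with a structure-and-randomness dichotomy in the spirit of Subsection~\ref{sss:pfSignPatSkecth}. Suppose $s(k) < \Psi(k)$ for all $k$ in an infinite set $\mathcal{K}$. For each such $k$, let $d^{(k)}$ be any weak-$*$ limit (as $X \to \infty$) of the logarithmic sign-pattern distribution $\epsilon \mapsto \mathbb{E}^{\log}_{n \leq X} \mathbf{1}_{(\lambda(n+1),\dots,\lambda(n+k)) = \epsilon}$ on $\{-1,+1\}^k$. Then $d^{(k)}$ is a probability measure supported on at most $s(k) < \Psi(k)$ atoms, so Cauchy--Schwarz gives $\|d^{(k)}\|_{\ell^2}^2 \geq 1/\Psi(k)$. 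Expanding $d^{(k)}$ in the Walsh basis $\chi_S(\epsilon) := \prod_{i \in S} \epsilon_i$ on the hypercube and applying Parseval's identity, we obtain a non-empty subset $S = S_k \subseteq \{1,\dots,k\}$ for which
\begin{equation}\label{pf-par}
\limsup_{X \to \infty} \left| \mathbb{E}^{\log}_{n \leq X} \prod_{i \in S} \lambda(n+i) \right| \gg \frac{1}{\sqrt{\Psi(k)}}.
\end{equation}

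Fix $k \in \mathcal{K}$; since the hypothesis \eqref{fox} is formulated for each fixed $k$, we may henceforth treat $k$, $S$, and $|S|$ as fixed and let $X \to \infty$. The next step is to produce an identity of the form
\begin{equation}\label{pf-double}
\mathbb{E}^{\log}_{n \leq X} \prod_{i \in S} \lambda(n+i) = (-1)^{|S|}\, \mathbb{E}_{P \leq p \leq 2P} \mathbb{E}^{\log}_{n \leq X} \prod_{i \in S} \lambda(n+pi) + o(1),
\end{equation}
valid for a range of scales $P = P(X)$ strictly larger than the $(\log X)^{o(1)}$ regime delivered by Tao's entropy decrement argument. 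The mechanism, outlined in Subsection~\ref{sss:pfSignPatSkecth}, is to replace the entropy decrement with a direct second-moment (method of moments) computation that exploits the hypothesis $s(k) < \Psi(k)$: since the conditional joint distributions of the field $n \mapsto (\lambda(n+1),\dots,\lambda(n+k))$ can take only polynomially many values in $k$, an $L^2$ comparison of this field under prime dilations $n \mapsto pn$ versus $n$ should upgrade \eqref{pf-double} so that it holds along a suitable subsequence of $X$ depending on $\mathcal{K}$, for $P = P(X)$ large enough that $Pk \leq H := \Psi^{-1}(X^{\eta})$ for some fixed $\eta > 0$.

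Once \eqref{pf-double} is established at such a scale, the shifts $(pi)_{i \in S}$ lie in a window of length $\asymp Pk \leq H$. The logarithmically averaged generalized von Neumann inequality then bounds the inner correlation in \eqref{pf-double} by the logarithmic average of $\|\lambda\|_{U^{|S|+1}([x,x+H])}$ over $x \in [1,X]$, which is $o(1)$ by the hypothesis applied at order $|S|+1 \leq k+1$ and scale $H$. Hence the right-hand, and thus the left-hand, side of \eqref{pf-double} is $o(1)$ as $X \to \infty$ with $k$ fixed, contradicting \eqref{pf-par} and proving $s(k) \geq \Psi(k)$ for all sufficiently large $k$. The main obstacle is precisely the moment-based enhancement of \eqref{pf-double} beyond the entropy decrement range: it is the only step that depends non-trivially on the sign-pattern hypothesis, and carrying it out requires quantitative second-moment control of the sign-pattern field under prime dilation that is not available from the existing unconditional entropy decrement techniques of~\cite{Tao} and~\cite{TaoTeravainenGeneral}.
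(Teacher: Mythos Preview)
Your outline correctly identifies the overall architecture and the central obstacle, but there is a scale mismatch that prevents the moment step from going through as you have set it up. You fix a single $k\in\mathcal{K}$ and try to use the bound $s(k)<\Psi(k)$ both to extract the correlation \eqref{pf-par} \emph{and} to power the moment computation behind \eqref{pf-double}. But the quantity one has to control in order to replace $p1_{p\mid n}$ by $1$ is
\[
\E_{P\le p<2P}\,\E_{j\le P}\Bigl(\prod_{i\in S}\lambda(n+j+pi)\Bigr)(p1_{p\mid n+j}-1),
\]
and the sign pattern that must be frozen before taking moments in $n$ is $(\lambda(n+1),\dots,\lambda(n+m))$ with $m:=3kP$, since that is the window containing all shifts $j+pi$. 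A union bound over those patterns therefore costs $s(m)$, not $s(k)$; with only $s(k)<\Psi(k)$ available you are forced to $P=O(1)$, which yields nothing. Your description of the mechanism as an ``$L^2$ comparison of the field $n\mapsto(\lambda(n+1),\dots,\lambda(n+k))$ under prime dilations'' reflects this conflation of scales.

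The paper decouples the two roles. One shows that ``$s(m)<\Psi(m)$ for infinitely many $m$'' forces $s(k)=2^k$ for \emph{every} fixed $k$, which is absurd since $\Psi(k)<2^k$. For the fixed $k$, the assumption $s(k)<2^k$ supplies a missing pattern and hence a correlation $|C|\gg_k 1$ (your Parseval extraction gives the sharper $|C|\gg\Psi(k)^{-1/2}$, which is fine but unnecessary). The moment step then invokes a \emph{separate}, arbitrarily large $m$ with $s(m)<\Psi(m)$: set $P:=m/(3k)$ and $x:=\Psi(m)^{\eps^{-3}}$, so that $s(m)\le x^{\eps^3}$. The computation is a \emph{high} moment, not a second moment: one raises to the $2r$-th power with $r\asymp\eps^2\log x/\log P$, expands, and uses the Chinese remainder theorem to show that for any fixed signs $(a_1,\dots,a_m)$ the $2r$-th moment over $n$ is $\ll (r^2/\pi_0(P))^r\ll x^{-c\eps^2}$. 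This beats the union bound over the $s(m)\le x^{\eps^3}$ patterns; the hypothesis $\Psi(X)\le\exp(X^{1/2-\kappa})$ is exactly what guarantees $r^2=o(\pi_0(P))$ so the moment bound is nontrivial. The Gowers uniformity hypothesis is then applied at scale $H=m=\Psi^{-1}(x^{\eps^3})$, after a $W$-trick replaces the prime average by a uniform one.
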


Taking $\Psi(X)=X^{A}$ and applying Theorem~\ref{mult-pret}, we see that Theorem~\ref{superpolynomial} follows directly from the above theorem. Furthermore, we have the following conditional corollary.

\begin{corollary}
Let $\varepsilon > 0$. Assuming that \eqref{fox} holds with $H(X)=\exp((\log X)^{1-\delta})$ for some $\delta\in (0,1)$, we have $s(k)\gg_{\varepsilon} k^{(\log k)^{\delta/(1-\delta)-\varepsilon}}$. Further, assuming \eqref{fox} with $H(X)=(\log X)^{C}$ for some $C>2$, we have $s(k)\gg_{\varepsilon} \exp(k^{1/C-\varepsilon})$.
\end{corollary}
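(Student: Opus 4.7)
The plan is to derive both conclusions directly from Theorem~\ref{theo-signs} by choosing the function $\Psi$ appropriately in each case. In both cases the heuristic is the same: if \eqref{fox} holds for some fixed $H_0(X)$, then since increasing $H$ only makes \eqref{fox} easier (as discussed in the paragraph after Corollary~\ref{cor: mult-pret}), we may apply Theorem~\ref{theo-signs} with any $\Psi$ such that $\Psi^{-1}(X^\eta)\geq H_0(X)$ for every fixed $\eta>0$ (for $X$ sufficiently large depending on $\eta$). Our task is then to pick $\Psi$ that grows as fast as possible while still satisfying this constraint and the technical bound $X\leq \Psi(X)\leq \exp(X^{1/2-\kappa})$ from Theorem~\ref{theo-signs}.

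For the first assertion, take $H_0(X)=\exp((\log X)^{1-\delta})$ and set
\[
\Psi(Y):=\exp\bigl((\log Y)^{1/(1-\delta)-\varepsilon}\bigr).
\]
The inequality $\Psi(H_0(X))\leq X^\eta$ becomes $(\log X)^{1-\varepsilon(1-\delta)}\leq \eta\log X$, which holds for $X$ large depending on $\eta$ because $1-\varepsilon(1-\delta)<1$. The monotonicity and the upper bound $\Psi(Y)\leq \exp(Y^{1/2-\kappa})$ follow from the fact that $(\log Y)^{1/(1-\delta)-\varepsilon}$ is polylogarithmic in $Y$, and the lower bound $Y\leq \Psi(Y)$ is the statement $\log Y\leq (\log Y)^{1/(1-\delta)-\varepsilon}$, valid for large $Y$ because $1/(1-\delta)-\varepsilon>1$ once $\varepsilon<\delta/(1-\delta)$. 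Theorem~\ref{theo-signs} then gives $s(k)\geq \Psi(k)$, and using $1+\delta/(1-\delta)=1/(1-\delta)$ we rewrite
\[
\Psi(k)=\exp\bigl((\log k)^{1/(1-\delta)-\varepsilon}\bigr)=\exp\bigl((\log k)\cdot (\log k)^{\delta/(1-\delta)-\varepsilon}\bigr)=k^{(\log k)^{\delta/(1-\delta)-\varepsilon}},
\]
which is the claimed bound.

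For the second assertion, take $H_0(X)=(\log X)^{C}$ and set
\[
\Psi(Y):=\exp\bigl(Y^{1/C-\varepsilon}\bigr).
\]
The constraint $\Psi(H_0(X))\leq X^\eta$ reads $(\log X)^{1-C\varepsilon}\leq \eta\log X$, true for large $X$. The condition $\Psi(Y)\leq \exp(Y^{1/2-\kappa})$ holds because $C>2$ implies $1/C<1/2$, so we may choose $\kappa>0$ with $1/C-\varepsilon<1/2-\kappa$; and $Y\leq \Psi(Y)$ holds for large $Y$ since $1/C-\varepsilon>0$. Applying Theorem~\ref{theo-signs} yields $s(k)\gg_\varepsilon \exp(k^{1/C-\varepsilon})$.

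There is no genuinely hard step here; the only thing to verify carefully is that \eqref{fox} for a given $H_0(X)$ implies \eqref{fox} for any $H(X)\geq H_0(X)$ in the relevant range, which justifies replacing $H_0$ by $\Psi^{-1}(X^\eta)$ before invoking Theorem~\ref{theo-signs}. This is handled exactly as in the paragraph following Corollary~\ref{cor: mult-pret}: by the inverse theorem for Gowers norms, \eqref{fox} is equivalent to a correlation estimate with degree-$k$ nilsequences, and for the larger $H=\Psi^{-1}(X^\eta)$ one partitions $[x,x+H]$ into subintervals of length $H_0$ and applies the triangle inequality.
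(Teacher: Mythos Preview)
Your proof is correct and follows exactly the route the paper intends: the corollary is stated immediately after Theorem~\ref{theo-signs} with no separate proof, and is meant to be obtained precisely by plugging in the choices $\Psi(Y)=\exp((\log Y)^{1/(1-\delta)-\varepsilon})$ and $\Psi(Y)=\exp(Y^{1/C-\varepsilon})$ respectively, just as you do. Your verification of the constraints $X\le\Psi(X)\le\exp(X^{1/2-\kappa})$ and $\Psi(H_0(X))\le X^\eta$, and your final observation that increasing $H$ preserves \eqref{fox} via the nilsequence-splitting argument referenced after Corollary~\ref{cor: mult-pret}, are all accurate.
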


\begin{remark}
In the proof of Theorem~\ref{theo-signs} below, one may on first reading want to assume that $\Psi(X)=X^{A}$, which corresponds to $H(X)=X^{o(1)}$, in which case we wish to show that $s(k)\gg_{A}k^{A}$ for all $A$. This simplifies various expressions involved; in particular expressions involving $\Psi$ are just large powers of the argument and expressions involving $\Psi^{-1}$ are small powers of the argument.
\end{remark}

We now begin the proof of Theorem~\ref{theo-signs}.  Fix $\kappa > 0$; we allow all implied constants to depend on $\kappa$.  Suppose for the sake of contradiction that $s(m)< \Psi(m)$ for infinitely many $m$.  We will use this to show that $s(k) = 2^k$ for all $k$.  Since $\Psi(k) < 2^k$ for all sufficiently large $k$, this will give the required contradiction.

Fix $k$; we now allow all implied constants to depend on $k$.  We now select additional parameters $\eps, w, m, R, x$, arranged so that
$$ k \ll \frac{1}{\eps} \ll w \ll m \ll R \ll x,$$
by the following scheme.
\begin{itemize}
\item First, we choose $\eps > 0$ to be a sufficiently small quantity depending on $k,\kappa$.
\item Then we choose a quantity $w > 1$ to be sufficiently large depending on $\eps,k,\kappa$.
\item Next, we choose $m$ to be a natural number with $s(m) < \Psi(m)$ that is sufficiently large (depending on $w,\eps,k,\kappa$).  Such an $m$ always exists by hypothesis.
\item One then sets $R \coloneqq \Psi(m)^{\eps^{-2}}$ and $x \coloneqq \Psi(m)^{\eps^{-3}}$.
\end{itemize}
By construction and the hypothesis $X \leq \Psi(X) \leq \exp(X^{1/2-\kappa})$, we have $R = x^\eps$,
\begin{equation}\label{m-bound}
 (\log x)^{2+\kappa} \leq m \leq x^{\eps^3},
\end{equation}
and
\begin{equation}\label{smo}
 s(m) < x^{\eps^3}.
\end{equation}

Now suppose for contradiction that $s(k) < 2^k$. Then by \eqref{sk-def} there exists a sign pattern $(\eps_1,\ldots, \eps_k)\in \{-1,+1\}^k$ which never occurs in the Liouville sequence, so in particular
\begin{align}\label{eq1}
\E_{n\leq x}^{\log}1_{\lambda(n+1)=\eps_1}\cdots 1_{\lambda(n+k)=\eps_k}=0.
\end{align}
Writing $1_{\lambda(n+j)=\eps_j} = \frac{1 + \eps_j \lambda(n+j)}{2}$, we may expand the left-hand side of \eqref{eq1} as the sum of the $2^k$ quantities of the form
\begin{align*}
(\prod_{l=1}^i \eps_{\ell_l}) 2^{-k} \E_{n\leq x}^{\log} \lambda(n+\ell_1)\cdots \lambda(n+\ell_i),\quad \textnormal{where}\quad \{\ell_1,\ldots, \ell_i\}\subset \{1,2,\ldots, k\}.
\end{align*}
The $i=0$ term is equal to $2^{-k}$.  Thus by the pigeonhole principle, there must exist $1 \leq i \leq k$ and $1 \leq \ell_1 < \ldots < \ell_i\leq k$ for which the correlation
\begin{align}\label{eq11}
C\coloneqq \E_{n\leq x}^{\log} \lambda(n+\ell_1)\cdots \lambda(n+\ell_i)
\end{align}
is such that 
\begin{equation}\label{c-bound}
|C| \gg 1.
\end{equation}
The precise choice of $i,\ell_1,\dots,\ell_i$ may depend on $x$, but this will not concern us. Henceforth let $i,\ell_1,\dots,\ell_i$ be chosen so that \eqref{c-bound} holds.

Set $P \coloneqq \frac{m}{3k}$. By using the multiplicativity relation $\lambda(pn)=-\lambda(n)$ and the fact that the correlation $C$ in \eqref{eq11} involves a logarithmic average, for all primes $p\leq 2P$ we deduce 
\begin{align*}
C&=(-1)^i \E_{n\leq x}^{\log} \lambda(pn+p\ell_1)\cdots \lambda(pn+p\ell_i)\\
&=(-1)^i \E_{n'\leq px}^{\log} \lambda(n'+p\ell_1)\cdots \lambda(n'+p\ell_i)p1_{p\mid n'}+O(\eps^3)\\
&=(-1)^i \E_{n'\leq x}^{\log} \lambda(n'+p\ell_1)\cdots \lambda(n'+p\ell_i)p1_{p\mid n'}+O(\eps^3),
\end{align*}
where the final estimate follows from \eqref{m-bound}.  Hence, by averaging over $p$,
\begin{align*}
C=(-1)^i\E_{P\leq p < 2P} \E_{n\leq x}^{\log} \lambda(n+p\ell_1)\cdots \lambda(n+p\ell_i)p1_{p\mid n}+O(\eps^3).   
\end{align*}
The contribution of $n\leq R = x^{\eps}$ to the average is trivially $\ll \eps$, so
\begin{equation}\label{coo}
C=(-1)^i\E_{P\leq p <2P} \E_{R\leq n\leq x}^{\log} \lambda(n+p\ell_1)\cdots \lambda(n+p\ell_i)p1_{p\mid n}+O(\eps),   
\end{equation}
We will shortly exploit the sign pattern bound \eqref{smo} to obtain the bound
\begin{equation}\label{c-bound-3}
 \E_{P\leq p < 2P} \E_{R\leq n\leq x}^{\log} \lambda(n+p\ell_1)\cdots \lambda(n+p\ell_i) (p1_{p\mid n}-1) \ll \eps.
\end{equation}
Assuming this bound for the moment, we may then simplify \eqref{coo} to
$$ C=(-1)^i\E_{P\leq p < 2P} \E_{R\leq n\leq x}^{\log} \lambda(n+p\ell_1)\cdots \lambda(n+p\ell_i) +O(\eps).$$

For $d \in [P,2P]$, the von Mangoldt function $\Lambda(d)$ is equal to $(1 + O(\eps)) \log P$ when $d$ is prime and is only nonzero (and of size $O(\log P)$) for $O( P^{1/2+\eps})$ other values of $d$.  Since $P$ is large compared to $\eps$, we easily conclude that
$$ C=(-1)^i\E_{P\leq d < 2P} \Lambda(d) \E_{R\leq n\leq x}^{\log} \lambda(n+d\ell_1)\cdots \lambda(n+d\ell_i) +O(\eps)$$
We now apply the ``$W$-trick''. If we set $W \coloneqq \prod_{p \leq w} p$ and split $d$ into residue classes $b \mod W$, then the contribution of the non-primitive classes $(b,W) > 1$ is negligible, and we have
\begin{equation}\label{rhs}
 C=(-1)^i\E_{\substack{1\leq b\leq W\\(b,W)=1}} \E_{P/W\leq d < 2P/W} \Lambda_{W,b}(d) \E_{R\leq n\leq x}^{\log} \lambda(n+(Wd+b)\ell_1)\cdots \lambda(n+(Wd+b)\ell_i) +O(\eps)
\end{equation}
where $\Lambda_{W,b}(d) \coloneqq \frac{\phi(W)}{W} \Lambda(Wd+b)$, and $\phi$ is the Euler totient function.
By splitting the average over $n$ into intervals of length $P/W$ and applying the Gowers uniformity of $\Lambda_{W,b}(d)-1$ (established in~\cite{green-tao}, \cite{gt-mobius}, \cite{gtz}) as in~\cite[Proposition 3.3]{TaoEq}, we find
$$ \E_{P/W\leq d < 2P/W} (\Lambda_{W,b}(d)-1) \E_{R\leq n\leq x}^{\log} \lambda(n+(Wd+b)\ell_1)\cdots \lambda(n+(Wd+b)\ell_i) \ll \eps$$
for any $b \in (\Z/W\Z)^\times$ (here we use the fact that $P$ is large compared to $W,\eps$).  We conclude that
$$ C=(-1)^i\E_{\substack{1\leq b\leq W\\(b,W)=1}} \E_{P/W\leq d < 2P/W} \E_{R\leq n\leq x}^{\log} \lambda(n+(Wd+b)\ell_1)\cdots \lambda(n+(Wd+b)\ell_i) +O(\eps),$$
or equivalently
$$ C=(-1)^i \frac{W}{\phi(W)} \E_{P\leq d < 2P} 1_{(d,W)=1} \E_{R\leq n\leq x}^{\log} \lambda(n+d\ell_1)\cdots \lambda(n+d\ell_i) +O(\eps).$$

Splitting the $n$ sum into intervals of length $m = 3kP$ and using the triangle inequality, we obtain
$$ C \ll \frac{W}{\phi(W)} \E_{P \leq d < 2P} \E_{n \leq x}^{\log} |\E_{n \leq n' \leq n+m} \lambda(n'+d\ell_1)\cdots \lambda(n'+d\ell_i)| + \eps.$$
Embedding $[n,n+m]$ into a cyclic group of prime order, and applying the generalized von Neumann theorem in the form of~\cite[Proposition 7.1]{green-tao}, we have
$$ \frac{W}{\phi(W)} \E_{P \leq d < 2P}|\E_{n \leq n' \leq n+m} \lambda(n'+d\ell_1)\cdots \lambda(n'+d\ell_i)| \ll O_W(\kappa(\|\lambda\|_{U^{k}[n,n+m]}))+\varepsilon$$
for some bounded function $\kappa(x)$ tending to $0$ as $x\to 0$, and so we conclude that
\begin{align}\label{eq5}
C\ll O_W(\E_{n\leq x}^{\log}\kappa(\|\lambda\|_{U^{k}[n,n+m]}))+ \eps.     
\end{align}
Since $m = \Psi^{-1}(x^{\eps^3})$, we conclude from the assumption of the theorem (and the fact that $x$ is sufficiently large depending on $w,k,\eps$) that
$$ C \ll \eps,$$
but this contradicts \eqref{c-bound} for $\eps$ small enough.

To conclude the proof of Theorem~\ref{theo-signs}, it remains to establish the bound \eqref{c-bound-3}.  This is reminiscent of the bounds one can establish by entropy decrement arguments as seen for instance in~\cite{TaoEq}; however the size of $P$ compared to $x$ is too large here for such methods to apply (and furthermore these methods need to exclude an exceptional set of bad scales $P$).  The key observation is that one can instead exploit the small number \eqref{smo} of sign patterns of length $m = 3kP$ to obtain a strong estimate via the moment method.  Firstly, by approximate translation invariance we can write
$$\E_{P\leq p < 2P} \E_{R\leq n\leq x}^{\log} \lambda(n+p\ell_1)\cdots \lambda(n+p\ell_i) (p1_{p\mid n}-1)$$
as 
$$ \E_{P\leq p < 2P} \E_{R\leq n\leq x}^{\log} \lambda(n+j+p\ell_1)\cdots \lambda(n+j+p\ell_i) (p1_{p\mid n+j}-1) + O(\eps)$$
for any $1 \leq j \leq P$, thus on averaging we may also write it as
$$\E_{R\leq n\leq x}^{\log} \E_{P\leq p < 2P} \E_{j \leq P} \lambda(n+j+p\ell_1)\cdots \lambda(n+j+p\ell_i) (p1_{p\mid n+j}-1) + O(\eps).$$
Thus by the triangle inequality, it suffices to show that
$$ \E_{R\leq n\leq x}^{\log}  \left| \E_{P\leq p < 2P} \E_{j \leq P} \lambda(n+j+p\ell_1)\cdots \lambda(n+j+p\ell_i) (p1_{p\mid n+j}-1)  \right| \ll \eps.$$
By the triangle inequality, the quantity inside the absolute values is bounded by $O(1)$.  Thus it will suffice to establish the probability bound
$$ \P_{R\leq n\leq x}^{\log} \left( \left| \E_{P\leq p < 2P} \E_{j \leq P} \lambda(n+j+p\ell_1)\cdots \lambda(n+j+p\ell_i) (p1_{p\mid n+j}-1)  \right| \geq \eps \right) \ll \eps$$
where $\P_{R \leq n \leq x}^{\log}(A) \coloneqq \E_{R \leq n \leq x}^{\log} 1_A(n)$ is the probability measure associated to the averaging operator $\E_{R \leq n \leq x}^{\log}$.

Observe that the numbers $n+j+p\ell_l$ that appear in this expression all lie in the interval $\{n+1,\dots,n+m\}$.  By \eqref{smo}, there are at most $x^{\eps^3}$ possible choices for the sign pattern $(\lambda(n+1),\dots,\lambda(n+m))$.  Thus, by the union bound, it will suffice to show that
\begin{equation}\label{pnx}
 \P_{R\leq n\leq x}^{\log} \left( \left| \E_{P\leq p < 2P} \E_{j \leq P} a_{j+p\ell_1} \cdots a_{j+p\ell_i} (p1_{p\mid n+j}-1)  \right| \geq \eps \right) \ll \eps x^{-\eps^3}
\end{equation}
for each choice of sign pattern $(a_1,\dots,a_m) \in \{-1,+1\}^m$.

Fix $a_1,\dots,a_m$.  Let $2r$ be the largest even integer such that $P^{2r} \leq x^{\eps^2}$.  From \eqref{m-bound} and the definition $P = m/(3k)$ we observe the estimates
\begin{equation}\label{r-bound}
\frac{1}{\eps} \ll r \asymp \eps^2 \frac{\log x}{\log P} \ll \eps^2 \frac{\log x}{\log\log x}.
\end{equation}
From Markov's inequality we may bound the left-hand side of \eqref{pnx} by
$$ \eps^{-2r} \E_{R\leq n\leq x}^{\log} \left| \E_{P\leq p < 2P} \E_{j \leq P} a_{j+p\ell_1} \cdots a_{j+p\ell_i} (p1_{p\mid n+j}-1)  \right|^{2r}$$
which by expanding out the $2r^{\mathrm{th}}$ power and applying the triangle inequality is bounded by
$$ \eps^{-2r} \E_{P \leq p_1,\dots,p_{2r} < 2P} \E_{j_1,\dots,j_{2r} \leq P} |\E_{R\leq n\leq x}^{\log} \xi_{p_1}(n+j_1) \dots \xi_{p_{2r}}(n+j_{2r})|$$
where $\xi_p(n) \coloneqq p 1_{p \mid n} - 1$.  From \eqref{r-bound} we have $\eps^{2r+1} \gg x^{-\eps^3}$, so it will thus suffice to establish the estimate
\begin{equation}\label{target}
\E_{P \leq p_1,\dots,p_{2r} < 2P} \E_{j_1,\dots,j_{2r} \leq P} |\E_{R\leq n\leq x}^{\log} \xi_{p_1}(n+j_1) \dots \xi_{p_{2r}}(n+j_{2r})|
\ll x^{-2\eps^3}.
\end{equation}
For any given $p_1,\dots,p_{2r},j_1,\dots,j_{2r}$, the function $n \mapsto \xi_{p_1}(n+j_1) \dots \xi_{p_{2r}}(n+j_{2r})$ is periodic of period $Q \coloneqq p_1 \dots p_{2r}$ and has magnitude at most $Q$.  
We have
$$ \E_{R\leq n\leq x}^{\log} \xi_{p_1}(n+j_1) \dots \xi_{p_{2r}}(n+j_{2r}) = \E_{R\leq n\leq x}^{\log} \xi_{p_1}(n+h+j_1) \dots \xi_{p_{2r}}(n+h+j_{2r}) + O\left( \frac{Q^2}{R\log x} \right)$$
for any $1 \leq h \leq Q$.  Averaging in $h$ and using the periodicity, we conclude that
$$ \E_{R\leq n\leq x}^{\log} \xi_{p_1}(n+j_1) \dots \xi_{p_{2r}}(n+j_{2r}) = \E_{n \in \Z/Q\Z} \xi_{p_1}(n+j_1) \dots \xi_{p_{2r}}(n+j_{2r}) + O\left( \frac{Q^2}{R\log x} \right)$$
where we view $\xi_{p_1},\dots,\xi_{p_{2r}}$ as functions on $\Z/Q\Z$ in the obvious fashion.  Since
$$ Q^2 \leq (2P)^{4r} \leq 2^{4r} x^{2\eps^2} \ll x^{3\eps^2}$$
(by \eqref{r-bound}) and $R = x^\eps$, we see that the $Q^2/(R\log x)$ error is negligible.  Thus it will suffice to show that
\begin{equation}\label{strong}
\E_{P \leq p_1,\dots,p_{2r} < 2P} \E_{j_1,\dots,j_{2r} \leq P} |\E_{n \in \Z/Q\Z} \xi_{p_1}(n+j_1) \dots \xi_{p_{2r}}(n+j_{2r})|
\ll x^{-2\eps^3}.
\end{equation}
If one of the primes $p_i$ is distinct from all the others, then the inner average $\E_{n \in \Z/Q\Z} \xi_{p_1}(n+j_1) \dots \xi_{p_{2r}}(n+j_{2r})$ vanishes from the Chinese remainder theorem, since $\xi_{p_i}(n+j_i)$ is periodic with mean zero with period $p_i$, and all other factors have period coprime to $p_i$.  Thus we may restrict attention to those tuples $(p_1,\dots,p_{2r})$ in which each prime $p_i$ appears at least twice, hence there are at most $r$ distinct primes in this tuple.  The number of such tuples can then be bounded crudely by $O( r^2 \pi_0(P))^r$, by first selecting $r$ primes in $[P,2P]$ (for which there are $O(\pi_0(P))^r$ choices), and then assigning each $p_1,\dots,p_{2r}$ to one of these primes (for which there are $r^{2r}$ choices).  Thus the proportion of such tuples amongst all primes $P \leq p_1,\dots,p_{2r} < 2P$ is $O( r^2 \pi_0(P)^{-1})^r$. If $(p_1,\dots,p_{2r})$ is such a tuple, then from the triangle inequality one has
\begin{align*}
&\E_{j_1,\dots,j_{2r} \leq P} |\E_{n \in \Z/Q\Z} \xi_{p_1}(n+j_1) \dots \xi_{p_{2r}}(n+j_{2r})|\\
&\leq \E_{n \in \Z/Q\Z} \E_{j_1,\dots,j_{2r} \leq P} |\xi_{p_1}(n+j_1)| \dots |\xi_{p_{2r}}(n+j_{2r})| \\
&= \E_{n \in \Z/Q\Z} \prod_{i=1}^{2r} \E_{j \leq P} |\xi_{p_i}(n+j)|\\
&\leq O(1)^r
\end{align*}
since $\E_{j \leq P} |\xi_{p_i}(n+j)| \ll 1$ for any $i$.  Thus we can bound the left-hand side of \eqref{strong} by $O( r^2 \pi_0(P)^{-1})^r$.  But from \eqref{r-bound}, \eqref{m-bound} we have $r^2 \pi_0(P)^{-1} \ll P^{-c}$ for some $c>0$ depending only on $\kappa$, hence by \eqref{r-bound} the left-hand side of \eqref{strong} is $O( x^{-c' \eps^2} )$ for some $c'>0$ depending on $\kappa$, and the claim follows.
This concludes the proof of Theorem~\ref{theo-signs}.

\subsection{Generalization to other multiplicative functions}

The above proof can be generalized to produce a result about patterns in more general multiplicative functions. 

\begin{theorem}\label{theo-multsigns}Let $g:\mathbb{N}\to \mu_{\ell}$ be a multiplicative function taking values in the roots of unity of order $\ell\geq 2$, and suppose that $\mathbb{D}(g^j,\chi;X)\xrightarrow{X\to \infty}\infty$ for all Dirichlet characters $\chi$ and for all $1\leq j\leq \ell-1$. Then the number
\begin{align*}
s_g(k)\coloneqq \{v\in \mu_{\ell}^k:\,\, v=(g(n+1),\ldots, g(n+k))\,\, \textnormal{for some}\,\, n\in \mathbb{N}\}
\end{align*}
of value patterns of $g$ of length $k$ satisfies $s_{g}(k)\gg_{A}k^{A}$.
\end{theorem}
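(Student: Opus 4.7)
The plan is to adapt the proof of Theorem~\ref{theo-signs} to the multiplicative function $g \colon \mathbb{N} \to \mu_\ell$. Suppose for contradiction that $s_g(k) < k^A$ for some fixed $A \geq 1$ and infinitely many $k$, and choose parameters $\eps, w, m, P, R, x$ exactly as in that proof, so that in particular $s_g(m) < x^{\eps^3}$. First I would replace the $\pm 1$-valued Fourier expansion by its $\mu_\ell$-analog
\[
\mathbf{1}_{g(n+j) = \zeta_j} = \frac{1}{\ell} \sum_{r=0}^{\ell-1} \overline{\zeta_j}^{\,r}\, g(n+j)^r.
\]
If some value pattern $(\zeta_1, \ldots, \zeta_k) \in \mu_\ell^k$ is missing, expanding the corresponding logarithmic density and pigeonholing (the all-zero exponent contributes $\ell^{-k}$) yields nonzero exponents $r_1, \ldots, r_i \in \{1, \ldots, \ell-1\}$ and indices $1 \leq \ell_1 < \cdots < \ell_i \leq k$ with
\[
|C| \coloneqq \left| \mathbb{E}_{n \leq x}^{\log} h_1(n+\ell_1) \cdots h_i(n+\ell_i) \right| \gg 1, \qquad h_l \coloneqq g^{r_l};
\]
by hypothesis each $h_l$, as well as $g^{-J}$ with $J \coloneqq r_1 + \cdots + r_i$, is non-pretentious.

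Next I would reduce to the case $g$ completely multiplicative via the Dirichlet convolution argument at the start of Section~\ref{nilseq}; this is permissible because $\mathbb{D}$ depends only on the values of $g$ at primes, so non-pretentiousness is preserved. Complete multiplicativity then gives $\prod_l h_l(n+\ell_l) = g(p)^{-J} \prod_l h_l(pn+p\ell_l)$ for a prime $p$ and generic $n$, and substituting $n' = pn$ and averaging over $p \in [P, 2P]$ yields the analog of \eqref{coo},
\[
C = \mathbb{E}_{P \leq p \leq 2P} g(p)^{-J}\, \mathbb{E}_{R \leq n' \leq x}^{\log} h_1(n'+p\ell_1) \cdots h_i(n'+p\ell_i)\, p \mathbf{1}_{p \mid n'} + O(\eps^3).
\]
The weight $p \mathbf{1}_{p \mid n'}$ is then removed by the sign-pattern moment method of Theorem~\ref{theo-signs}: conditioning on the value pattern $(g(n+1), \ldots, g(n+m)) \in \mu_\ell^m$ fixes each $h_l(n+j+p\ell_l)$ to an explicit $\mu_\ell$-value, and in the $2r$-th moment expansion all of the $g(p_s)^{\pm J}$ phases and conditioned $\mu_\ell$-values have modulus $1$ and are independent of $n$, so they factor out of the $n$-expectation in absolute value; what remains is exactly $|\mathbb{E}_n \prod_s \xi_{p_s}(n+j_s)|$ as treated in \eqref{strong}, and the bound \eqref{pnx} goes through unchanged.

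It then remains to show
\[
\mathbb{E}_{P \leq p \leq 2P} g(p)^{-J}\, \mathbb{E}_{R \leq n' \leq x}^{\log} \prod_l h_l(n'+p\ell_l) = o(1),
\]
contradicting $|C| \gg 1$; this is the main obstacle, as the $p$-dependent unit-modulus phase $g(p)^{-J}$ (which was the constant $(-1)^J$ in the Liouville case) is the only genuinely new feature. I would handle it by splitting the $n'$-sum into windows of length $\asymp m$, applying the $W$-trick in $p$, and invoking the generalized von Neumann theorem (\cite[Proposition 7.1]{green-tao}) on the pattern $(d,\,n'+d\ell_1,\ldots,n'+d\ell_i)$ in the variables $(n',d)$. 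The resulting correlation is controlled by the Gowers $U^{k+1}$-norm on short intervals of length $\asymp m$ of the non-pretentious function $h_l = g^{r_l}$ at one of the ``$n'+d\ell_l$'' coordinates, which is $o(1)$ on logarithmic average in $n'$ by Theorem~\ref{mult-pret}, since $m$ lies comfortably in its admissible range $H \geq X^\theta$. Alternatively, the combined weight $(\Lambda(d)-1) g(d)^{-J}$ at the $d$-coordinate has small Gowers norm by combining Gowers uniformity of $\Lambda$ from \cite{gt-mobius, gtz} with Gowers uniformity of the non-pretentious function $g^{-J}$ from \cite{FH-Fourier}. Either route gives $|C| \ll \eps$, contradicting $|C| \gg 1$.
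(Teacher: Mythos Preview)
Your approach is essentially the paper's: the $\mu_\ell$-Fourier expansion, the scaling by primes picking up the phase $g(p)^{-J}$, the moment-method removal of $p\mathbf{1}_{p\mid n}$ (you correctly note the conditioned $\mu_\ell$-values have modulus $1$ and factor out of the absolute $n$-expectation), and then the generalized von Neumann theorem with $\Lambda_{W,b}$ as pseudorandom majorant, reducing to short-interval Gowers uniformity of $g^{r_l}$ via Theorem~\ref{mult-pret}. Two points deserve attention.

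First, the Dirichlet-convolution reduction from Section~\ref{nilseq} is not the right device here: that argument reduces a single sum against a nilsequence, whereas you are working with sign patterns and multilinear correlations of $g$. The paper instead handles multiplicativity directly by noting that $g(pn)=g(p)g(n)$ whenever $p\nmid n$, and since $p\in[P,2P]$ with $P$ moderately large, the contribution of $n$ with $p\mid n$ is negligible.

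Second, and more substantively, invoking Theorem~\ref{mult-pret} for $g^{r_l}$ requires $M(g^{r_l};CX^{k+1}/H^{k+1},Q)\to\infty$, which involves an infimum over twists $\chi(n)n^{it}$ with $|t|$ polynomially large. The hypothesis of Theorem~\ref{theo-multsigns} only gives $\mathbb{D}(g^j,\chi;X)\to\infty$ with no $n^{it}$. The paper bridges this via a pretentious-triangle-inequality argument (\cite[Lemma~3.1]{klurman-mangerel-ANT}), ultimately using that $g$ takes values in $\ell$th roots of unity to rule out pretending to be $n^{it}$ for large $t$; you should not skip this step.

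Your second alternative---showing $(\Lambda(d)-1)g(d)^{-J}$ has small Gowers norm by ``combining'' uniformity of $\Lambda$ and of $g^{-J}$---is unnecessary and not justified as written: Gowers-norm bounds do not multiply, $\Lambda$ is unbounded, and $g^{-J}$ may even be identically $1$ when $\ell\mid J$. The first route, which is the paper's, suffices.
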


We remark that a similar result holds (with essentially the same proof) for any $1$-bounded multiplicative function $g:\mathbb{N}\to \mathbb{C}$ such that $\inf_{|t|\leq X^{k+1}}\mathbb{D}(g^j,\chi(n)n^{it};X)\xrightarrow{X\to \infty}\infty$ for all $j\geq 1$. In this case, the ``sign patterns'' would be defined as occurrences of a pattern $(g(n+1),\ldots, g(n+k))\in I_1\times \cdots \times I_k$, where $I_i$ are arcs of the unit circle of the form $[e(m_i/\ell),e((m_i+1)/\ell)]$ with $0\leq m_i\leq \ell-1$. We leave the details of this generalization to the interested reader.

\begin{proof} (Sketch)
The proof follows along similar lines as that of Theorem~\ref{theo-signs}. We assume for the sake of contradiction that $s_g(m)\leq m^{A}$ for infinitely many $m$ and aim to deduce that
\begin{align}\label{eqqn1}
C\coloneqq \E_{n\leq x}^{\log} g^{a_1}(n+\ell_1)\cdots g^{a_j}(n+\ell_j)=o(1)
\end{align}
for any nonempty set $\{\ell_1,\ldots, \ell_j\}\subset \{1,2,\ldots, k\}$ with the $\ell_i$ distinct, and for any integers $a_1,\ldots, a_j\in [1,\ell-1]$. Once we have proved \eqref{eqqn1}, we use the expansion
\begin{align*}
1_{g(n)=e(a/\ell)}=\frac{1}{\ell}\sum_{j=0}^{\ell-1}g(n)^je(-\frac{aj}{\ell})    
\end{align*}
for the indicator functions of the level sets to obtain $s_g(k)=\ell^{k}$ for any $k$, which gives the desired contradiction.

The main difficulty\footnote{A much more minor difficulty is that $g$ is now only assumed to be multiplicative rather than completely multiplicative, so that the identity $g(n) = g(p)^{-1} g(pn)$ only holds when $n$ is not divisible by $p$. However, as we will be working with moderately large primes $p$, the contribution of those $n$ which are divisible by $p$ can easily be seen to be negligible.} is that the factor $(-1)^i$ that appeared in the proof of Theorem~\ref{theo-signs} must now be replaced by $g(p)^{-a_1-\dots-a_j}$.  One can still repeat the proof of Theorem~\ref{theo-signs} with obvious modifications down to \eqref{rhs}, where the right-hand side is now up to $O(\varepsilon)$ equal to
$$
\E_{\substack{1\leq b\leq W\\(b,W)=1}} \E_{P/W\leq d < 2P/W} g(d)^{-a_1-\dots-a_j} \Lambda_{W,b}(d) \E_{R\leq n\leq x}^{\log} g^{a_1}(n+(Wd+b)\ell_1)\cdots g^{a_j}(n+(Wd+b)\ell_j).$$
The weight $g(d)^{-a_1-\dots-a_j}$ now prevents one from applying the Gowers uniformity theory for the von Mangoldt function~\cite{green-tao}, \cite{gt-mobius}, \cite{gtz}.  However, the function $g(d)^{-a_1-\dots-a_j} \Lambda_{W,b}(d)$ is still dominated pointwise by $\Lambda_{W,b}(d)$, which is a pseudorandom majorant in the sense of~\cite{green-tao}.  One can then apply the generalized von Neumann theorem (essentially in the form of~\cite[Proposition 7.1]{green-tao}), and reduce matters to showing that
\begin{align*}
\E_{n\leq x}^{\log}\|g^j\|_{U^{k-1}[n,n+m]}=o(1)    
\end{align*}
whenever $1\leq j\leq \ell-1$ and $m\gg x^{\theta}$ for some $\theta>0$. This Gowers norm bound then follows from Theorem~\ref{mult-pret}, once we show that $M(f;x^{k+1},Q)\to \infty$ as $x\to \infty$ for any given $k$ and $Q$. By~\cite[Lemma 3.1]{klurman-mangerel-ANT} (which is a pretentious triangle inequality argument), and the fact that $\mathbb{D}(f,g;x)=\mathbb{D}(f,g;x^{k+1})+O_k(1)$, we have
\begin{align*}
M(g;x^{k+1},Q)\geq\inf_{\substack{\chi \mod q\\q\leq Q\\|t|\leq x^{k+1}}}\mathbb{D}(g\overline{\chi},n\mapsto n^{it};x)\geq \frac{1}{2kQ}\min\{(\log \log x)^{1/2},\mathbb{D}(g\overline{\chi},1;x)\}-O_{k,Q}(1), \end{align*}
and the right-hand side is tending to infinity with $x$ by assumption. This completes the proof.
\end{proof}

\subsection{Uniformity at very small scales}\label{uss}

We now give a proof of Proposition~\ref{entropy} that states that the estimate \eqref{log-liouville} at scale $H=(\log x)^{\eta}$ is enough to deduce the logarithmic Chowla conjecture (and hence in fact \eqref{log-liouville} for \emph{any} $H=H(X)$ tending to infinity, thanks to the results in~\cite{TaoEq}).

\begin{proof}[Proof of Proposition~\ref{entropy}] Let $k$ be a natural number, and let be  $h_1,\ldots, h_k$ given shifts. Let $x$ be large enough, and denote the correlation along these shifts by
\begin{align*}
C\coloneqq \E_{n\leq x}^{\log} \lambda(n+h_1)\cdots \lambda(n+h_k).
\end{align*}
For any fixed $\eps>0$, we wish to show that $|C|\ll \eps$. We begin by applying the entropy decrement argument in the slightly refined form given in~\cite[Theorem 3.1]{tt-chowla} (the original argument from~\cite{Tao} is able to locate a good scale on any interval $I$ with $\sum_{m\in I}\frac{1}{m\log m}\gg \eps^{-10}$, whereas the refined one is able to locate a good scale on any interval with $\sum_{m\in I}\frac{1}{m}\gg \eps^{-10}$).

By~\cite[Theorem 3.1]{tt-chowla}, we deduce that
\begin{align}\label{eq10a}
C=(-1)^k\E_{2^m\leq p\leq 2^{m+1}}\E_{n\leq x}^{\log}\lambda(n+ph_1)\cdots \lambda(n+ph_k)+O(\eps)    
\end{align}
for all $m \leq \log\log X$ outside of an exceptional set $\mathcal{M}\subset [1,\log \log x]\cap \mathbb{N}$ with
\begin{align*}
\sum_{m\in \mathcal{M}}\frac{1}{m}\ll \eps^{-3}.
\end{align*}
In particular, we can locate some $m$ with the property \eqref{eq10a} belonging to the range $m\in [\eps'\log \log x,\frac{1}{10}\log \log x]$ with $\eps'\coloneqq \exp(-\eps^{-10})$. Let $P=2^m\geq (\log x)^{\eps'/2}$, where $m$ has this value. Then, by introducing the von Mangoldt weight, we have
\begin{align*}
 C=(-1)^k\E_{P\leq d\leq 2P}\Lambda(d)\E_{n\leq x}^{\log}\lambda(n+dh_1)\cdots \lambda(n+dh_k)+O(\eps)    
\end{align*}
As in the proof of Theorem~\ref{theo-signs}, we may split $d$ into residue classes $\pmod W$ with $W=\prod_{p\leq w}p$ and $w=w(x)$ tending to infinity slowly enough, and then apply the Gowers uniformity of the $W$-tricked von Mangoldt function and the generalized von Neumann theorem (as in~\cite[Section 5]{tt-chowla}) to conclude that
\begin{align*}
C=(-1)^k\frac{W}{\phi(W)}\E_{P\leq d\leq 2P}1_{(d,W)=1}\E_{n\leq x}^{\log} \lambda(n+dh_1)\cdots \lambda(n+dh_k)+O(\eps).  
\end{align*}
Arguing as in the proof of \eqref{eq5}, we have
$$ C \ll O_W( \E_{n\leq x}^{\log}\kappa(\|\lambda\|_{U^{k}[n,n+3kP]} )) + \eps.$$
Since $P\geq (\log x)^{\eta}$ where $\eta=\eps'/2$, the hypothesis of the theorem will then give $C = O(\eps)$ if we assume $x$ sufficiently large depending on $w$.
\end{proof}

\section{Reducing the length of the intervals}\label{sec: lowering}

In this section we indicate the changes needed to the proof of Theorem~\ref{lam-poly} to obtain Theorem~\ref{lower}. Up to Proposition~\ref{prop41-analog} (corresponding to the work up to~\cite[Section 4]{mrt-fourier}), everything works for smaller $H$ as well, except in the statement of Proposition~\ref{prop32-analog} the range for $P', P''$ is now $[H^{\eps^2/2}, H^\eps]$.

To proceed, we will need the following variant of Lemma~\ref{lem26-analog} in which the implied constants do not depend on the number of primes in the product. Crude bounds suffice here and stronger bounds would not be useful as we in any case lose factors like $\ell!$ in our arguments.

\begin{lemma}[Counting nearby products of primes]\label{le:countNPLowH} Let $m, \ell, q \in \mathbb{N}$ and $P', N \geq 3$. Then the number of $2\ell$-tuples $(p'_{1,1},\dots,p'_{1,\ell},p'_{2,1},\dots,p'_{2,\ell})$ of primes in $[P', 2P']$ obeying the conditions
\[ 
\left | \prod_{j=1}^\ell p'_{2,j} - \prod_{j=1}^\ell p'_{1,j} \right | \leq C\cdot \frac{(2P')^\ell}{N}
\]
and
\[
\prod_{j=1}^{\ell} (p'_{2,j})^m = \prod_{j=1}^{\ell} (p'_{1,j})^m \mod q
\]
for some $C \geq 1$ is bounded by
$$ \ll C \ell !^2 (2P')^{\ell}m^{\omega(q)} \left(\frac{(2P')^\ell}{Nq} + 1 \right).$$
\end{lemma}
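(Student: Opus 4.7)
The plan is to replace the character sum / large-sieve argument of Lemma~\ref{lem26-analog} with a completely elementary counting that is much cruder but has $\ell$-independent implied constants. Let $a \coloneqq \prod_{j=1}^\ell p'_{1,j}$ and $b \coloneqq \prod_{j=1}^\ell p'_{2,j}$. Since any integer admits at most $\ell!$ orderings of $\ell$ prime factors (with multiplicity), the number of ordered $2\ell$-tuples $(p'_{1,j},p'_{2,j})_{j=1,\dots,\ell}$ giving rise to a fixed unordered pair of products is at most $\ell!^2$. Hence it suffices to show that the number of ordered pairs $(a,b)$ of positive integers, each a product of $\ell$ primes in $[P', 2P']$ (and hence lying in $[(P')^\ell, (2P')^\ell]$), satisfying
\[
|a-b|\leq C\,\frac{(2P')^{\ell}}{N} \qquad \text{and}\qquad a^m\equiv b^m \pmod{q}
\]
is bounded by $O\!\bigl(C(2P')^{\ell} m^{\omega(q)}\bigl((2P')^{\ell}/(Nq)+1\bigr)\bigr)$.

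First I would fix $a$ and bound the number of admissible $b$. For this I would count solutions of $b^m\equiv a^m \pmod{q}$ in short intervals. Writing $q=\prod_{p\mid q} p^{v_p(q)}$ and applying the Chinese remainder theorem, the solutions of $b^m \equiv a^m \pmod q$ form a union of residue classes, one product over each prime power $p^{v_p(q)}\Vert q$. On each such prime power, if $p\nmid a$ then the solutions form a coset of the image of the $m$-th power map inside $(\Z/p^{v_p(q)}\Z)^\times$, a set of size at most $\gcd(m,\phi(p^{v_p(q)}))\leq m$; the remaining case $p\mid a$ can be absorbed into the same bound (up to harmless constants, by first extracting the common $p$-power from both sides and applying the coprime case, or more directly by noting that it is a union of at most $m$ cosets of a larger subgroup). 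Multiplying across primes dividing $q$ gives that the set
\[
S_a \coloneqq \{\,b\bmod q:\ b^m\equiv a^m \pmod q\,\}
\]
has cardinality at most $m^{\omega(q)}$. The integers $b$ in the interval $[a-C(2P')^\ell/N,\,a+C(2P')^\ell/N]$ that lie in any single residue class modulo $q$ number at most $C(2P')^\ell/(Nq)+1$; summing over the $|S_a|\leq m^{\omega(q)}$ classes gives
\[
\#\{b:\ |a-b|\leq C(2P')^\ell/N,\ b^m\equiv a^m \pmod q\}\ \ll\ C\,m^{\omega(q)}\!\left(\frac{(2P')^{\ell}}{Nq}+1\right).
\]

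Finally I would sum over $a$: since $a$ is an integer in $[(P')^\ell, (2P')^\ell]$, there are at most $(2P')^\ell$ admissible values, yielding a bound of $O(C(2P')^\ell m^{\omega(q)}((2P')^\ell/(Nq)+1))$ on the number of pairs $(a,b)$. Reinserting the $\ell!^2$ factor from the opening reduction gives the stated bound.

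The main obstacle in executing this plan is the bound $|S_a|\leq m^{\omega(q)}$ when $\gcd(a,q)>1$, which requires mild care at each prime power dividing both $a$ and $q$; however, since our bound is quite crude (note the $\ell!^2$ factor and the absence of any $\phi(q)$ denominator), one can afford to either (i) separately handle the negligible contribution of tuples containing primes that divide $q$ and restrict to the coprime case (where the clean character-theoretic identity $\#\{\chi\bmod q:\chi^m=\chi_0\}\leq m^{\omega(q)}$ applies directly), or (ii) do a direct $p$-adic case analysis absorbed into the implicit constant. Either route completes the proof; the remainder of the argument is straightforward elementary counting.
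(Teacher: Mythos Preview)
Your proposal is correct and matches the paper's proof essentially step for step: reduce from ordered prime tuples to pairs of integers $n_1,n_2\leq (2P')^\ell$ at the cost of $\ell!^2$, fix $n_1$, and count $n_2$ in the short interval lying in at most $m^{\omega(q)}$ residue classes modulo $q$. The paper is in fact terser and simply asserts the $m^{\omega(q)}$ bound without addressing the case $\gcd(n_1,q)>1$; your explicit discussion of this point is a useful addition, though note that your option~(ii) does not literally work (for $q=p^k$ and $p^k\mid a$ the number of $b\bmod p^k$ with $b^m\equiv 0$ is $p^{k-\lceil k/m\rceil}$, not $O(m)$), whereas option~(i) is clean and is effectively how the issue is handled in the paper's applications of the lemma.
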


\begin{proof}
Since every integer has at most $\ell!$ representations as a product of $\ell$ primes, the number of prime tuples we need to count is at most $\ell !^2$ times the number of integers $n_1, n_2 \leq (2P')^\ell$ for which
\[
|n_1 - n_2| \leq C\cdot \frac{(2P')^\ell}{N} \qquad \text{and} \qquad n_1^m = n_2^m \mod{q}.
\]
The claim follows by noticing that there are $(2P')^\ell$ choices for $n_1$, and after fixing it, there are at most $m^{\omega(q)}$ choices for $n_2 \mod{q}$.
\end{proof}

Let us now get back to Proposition~\ref{prop41-analog} corresponding to~\cite[Proposition 4.1]{mrt-fourier}. In our setting we obtain the following variant, where we for simplicity restrict to the case $\ell_1 = \ell_2 = \ell$ and a single quadruple $\vec a$ corresponding to each $e \in \mathcal{Q}$ as this is sufficient for the polynomial phase case.

\begin{proposition}[Local structure of $\phi''$]\label{prop41-analogLowerH} Let the hypotheses be as in Theorem~\ref{lower}, and let $\eps,X,P',P'',{\mathcal I}'', \phi''_{I''}, {\mathcal Q}$ be as in Proposition~\ref{prop32-analog} (except now $P', P'' \in [H^{\eps^2/2}, H^\eps]$). 
Let $\ell$ be an even integer such that
\begin{equation}
N^2 d^{10} \leq d^{\ell} = O(N^{O(1)}).
\end{equation}
We allow implied constants to depend on $\eps, \eta$ and $\theta$. There exists a constant $c = c(\eps, \eta, \theta)$ such that, for a subset ${\mathcal Q}'$ of the quadruples $e = (I''_1, I''_2, p'_1, p'_2)$ in ${\mathcal Q}$ of cardinality $\gg c^\ell dN$, one can find a quadruple $\vec a = (a_1,a_2,b_1,b_2)$ of natural numbers, and a collection ${\mathcal P}_{e}$ of primes in $[P''/2,P'']$ with $|{\mathcal P}_e| \gg (\log X)^{-10\ell} \pi_0(P')$, with the following properties:
\begin{itemize}
\item[(i)] One has
\begin{equation}
\frac{1}{p'_2} \circ \phi''_{I''_1} \sim_{\frac{1}{\prod {\mathcal P}_{e}}} \frac{1}{p'_1} \circ \phi''_{I''_2}.
\end{equation}
\item[(ii)]  For $i=1,2$, $a_i,b_i$ are products of $\ell_i$ primes in $[P',2P']$; in particular
\begin{equation}
(P')^{\ell} \leq a_i, b_i \leq (2 P')^\ell. 
\end{equation}
Furthermore we have
\begin{equation}
0 \neq a_i - b_i \ll \frac{C^\ell}{N} a_i,
\end{equation}
where $C$ is an absolute constant.
\item[(iii)]  For $i=1,2$, we have the approximate dilation invariance
\begin{equation}
\frac{1}{a_i} \circ \phi''_{I''_i} \sim_{\frac{1}{\prod {\mathcal P}_{e,\vec a}}} \frac{1}{b_i} \circ \phi''_{I''_i}.
\end{equation}
Here (abusing the notation) the implied constants depend linearly on $\ell$.
\end{itemize}
\end{proposition}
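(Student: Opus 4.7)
The plan is to adapt the proof of Proposition~\ref{prop41-analog} to this shorter-interval regime by specializing to the symmetric case $\ell_1 = \ell_2 = \ell$, extracting only one quadruple $\vec a$ per bridging edge $e \in \mathcal{Q}'$, and systematically tracking the $\ell$-dependence of all constants, replacing every appeal to~\cite[Lemma 2.6]{mrt-fourier} by the crude estimate of Lemma~\ref{le:countNPLowH}. The first step is to run the Sidorenko-type graph-theoretic argument of the original proof on $\mathcal{Q}$ viewed as a multigraph on $\mathcal{I}''$ with $N \asymp X/H$ vertices and $\sim dN$ edges: this produces $\gg d^{2\ell+1}/N$ good tuples $\vec I'' = (I''_{j,i})_{i=1,2;\ 0 \leq j < \ell}$ together with prime pairs $(p'_{1,j,i}, p'_{2,j,i})$ realizing the $\sim_{\frac{1}{p''}}$ dilation relations \eqref{pyong} along two $\ell$-cycles, plus a bridging edge $(I''_{0,1}, I''_{0,2}, p'_1, p'_2)$ realizing \eqref{p21}.

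For each such $\vec I''$, the set $\mathcal{P}(\vec I'') \subset [P''/2, P'']$ of ``compatibility primes'' is obtained by intersecting the $O(\ell)$ large factor sets from Proposition~\ref{prop32-analog}, one per edge. Each factor set has density bounded below by some positive constant $\delta = \delta(\eps, \eta, \theta)$, and after discarding tuples whose intersection falls below a $(\log X)^{-10\ell}$-fraction of $\pi_0(P'')$ via a Markov-type argument, the surviving tuples still account for a positive fraction of the original $\gg d^{2\ell+1}/N$. Applying the upper bound $T(e) \ll \ell!^{O(1)} (\log P')^{O(\ell)} d^{2\ell}/N^2$ on the number of $\vec I''$ supporting a fixed bridging edge $e$ (furnished by Lemma~\ref{le:countNPLowH} with $m = 1$, $q = 1$), and pigeonholing on $e$ against the total count, I would then obtain a subset $\mathcal{Q}' \subset \mathcal{Q}$ of cardinality $\gg c^\ell dN$ for a sufficiently small $c = c(\eps, \eta, \theta) > 0$, absorbing the factorial-in-$\ell$ and logarithm-in-$\ell$ losses into $c^\ell$.

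For each surviving $e \in \mathcal{Q}'$ I would fix one compatible tuple $\vec I''$, set $a_i \coloneqq \prod_j p'_{2,j,i}$ and $b_i \coloneqq \prod_j p'_{1,j,i}$ (swapping if necessary to ensure $a_i > b_i$), and take $\mathcal{P}_e \coloneqq \mathcal{P}(\vec I'')$. Property (ii) then follows directly from $(P')^\ell \leq a_i, b_i \leq (2P')^\ell$ together with the graph-closure estimate $|a_i - b_i| \ll (2P')^\ell/N \ll (C^\ell/N) a_i$ with $C = 2$, while the non-vanishing $a_i \neq b_i$ is ensured by discarding tuples with prime collisions via another application of Lemma~\ref{le:countNPLowH} (with a suitable congruence constraint). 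Properties (i) and (iii) would follow exactly as in Proposition~\ref{prop41-analog} by iterating \eqref{pyong} $\ell$ times using Proposition~\ref{basic}, with the implied constants in the $\sim_{\frac{1}{\prod \mathcal{P}_e}}$ relations accumulating linearly in $\ell$.

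The main obstacle is managing the $\ell$-dependence of all constants: Lemma~\ref{le:countNPLowH} is far cruder than its counterpart~\cite[Lemma 2.6]{mrt-fourier}, losing both an $\ell!^{O(1)}$ factor (from counting integers rather than prime multisets) and a $(\log P')^{O(\ell)}$ factor (from converting between $(2P')^{2\ell}$ and $d^\ell$). These losses are precisely what force the weaker conclusions $|\mathcal{Q}'| \gg c^\ell dN$ and $|\mathcal{P}_e| \gg (\log X)^{-10\ell} \pi_0(P')$ in place of the $\gg dN$ and $\gg \pi_0(P')$ of the original proposition. They remain tolerable downstream only because the hypothesis $H \geq \exp((\log X)^\theta)$ with $\theta > 5/8$, combined with the constraint $d^\ell = O(N^{O(1)})$, forces $\ell = O(\log N / \log P') = O((\log X)^{1-\theta})$ to be subpolynomial in $\log X$, so that both $c^{-\ell}$ and $(\log X)^{10\ell}$ remain of size $X^{o(1)}$ in the relevant range.
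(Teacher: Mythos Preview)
Your proposal follows essentially the same approach as the paper's (sketched) proof: specialize Proposition~\ref{prop41-analog} to $\ell_1=\ell_2=\ell$, track the $\ell$-dependence through the Sidorenko/Blakley--Roy count, replace \cite[Lemma~2.6]{mrt-fourier} by Lemma~\ref{le:countNPLowH} for handling degenerate tuples and fiber sizes, and accept the losses $c^\ell$ in $\#\mathcal{Q}'$ and $(\log X)^{-10\ell}$ in $\#\mathcal{P}_e$.

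There is one genuine gap, in your treatment of the prime-intersection bound $\#\mathcal{P}(\vec I'')\gg(\log X)^{-10\ell}\pi_0(P'')$. Running Sidorenko once on the full graph $\mathcal{Q}$ and then invoking a ``Markov-type argument'' does not suffice: the intersection of $2\ell+1$ sets each of density $\geq\delta_0$ can in principle be empty, and Markov's inequality bounds upper tails, not lower tails. What the paper actually does (following \cite[below~(52)]{mrt-fourier}) is a probabilistic first-moment argument: for a randomly chosen prime $p''\in[P''/2,P'']$, the subgraph $G_{p''}$ of edges $e$ with $p''\in\mathcal{P}_e$ retains $\gg dN$ edges with positive probability, and one runs the Sidorenko count on each $G_{p''}$ separately. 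Summing over $p''$ yields the lower bound $\sum_{\vec I''}\#\mathcal{P}(\vec I'')\gg c^\ell(d^{2\ell+1}/N)\,\pi_0(P'')$. Combining this with the upper bound $\#\{\text{candidate tuples}\}\ll\ell!^4 P'^{4\ell+1}/N$ from Lemma~\ref{le:countNPLowH}, the choice of threshold $\delta=(\log X)^{-10\ell}$ makes the bad-tuple contribution at most $\delta\cdot\ell!^4 P'^{4\ell+1}/N\ll(\log X)^{-\ell}d^{2\ell+1}/N$, which is negligible against $c^\ell d^{2\ell+1}/N$; this forces $\gg c^\ell d^{2\ell+1}/N$ very good tuples, after which your pigeonhole on the bridging edge proceeds as you describe. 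Note also that the initial Sidorenko lower bound itself already carries a $c^\ell$ factor (the edge density of $\mathcal{Q}$ is only $\gg c_0$ for some $c_0<1$, and the pattern has $2\ell+1$ edges), not the clean $d^{2\ell+1}/N$ you wrote; this is exactly what the paper flags as ``the number of such constellations gets reduced by a factor $c'^\ell$''.
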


\begin{proof}[Sketch of proof]
The proof is very similar to the proofs of Propositions~\ref{prop41-analog} and~\cite[Proposition 4.1]{mrt-fourier}: 
One makes two cycles of length $\ell$ joined by a "middle edge". The main difference is that now $\ell \asymp (\log x)^{1-\theta}$, so $\ell$ is no longer a constant. 

Since the number of edges in the graph is $\gg \frac{X}{H} P'^2/(\log P')^2$, the number of such constellations gets reduced by a factor $c'^\ell$ (with certain constant $c' \in (0, 1)$). Hence the Cauchy-Schwarz argument at the end naturally only gives us $\gg c'^\ell X/H \cdot \pi_0(P')^2$ middle edges.\footnote{This might be fixable through arguing more carefully removing some edges before running the argument but this would be of no importance.} Since $P'$ is larger than $(c \ell \log P')^{O(\ell)}$, Lemma~\ref{le:countNPLowH} is sufficient to show that degenerate cases involving repeating primes or products are negligible as before.

Since the constellation involves $2\ell +1$ edges, the intersection 
\[
\mathcal{P}(\overrightarrow{I''}) := \mathcal{P}(\{I''_{0, 1}, I''_{0, 2}\}) \cap \bigcap_{j = 1}^k \bigcap_{i = 1, 2} \mathcal{P}(\{I''_{j, i}, I''_{j+1, i}\})
\]
that appears in~\cite[(52)]{mrt-fourier} is now expected to be only of size $c^\ell \pi_0(P')$ for some constant $c > 0$, so $\delta$ in~\cite[(52))]{mrt-fourier} cannot anymore be taken to be a constant but can be at most $c^\ell$. In fact to compensate for losses in Lemma~\ref{le:countNPLowH} we choose $\delta$ in~\cite[(52))]{mrt-fourier} to be $(\log X)^{-10\ell}$. Then in the argument below~\cite[(52))]{mrt-fourier} the number of candidate tuples is at most $\ell!^4 P'^{4\ell+1}/N$ and so the expected number of good tuples obeying~\cite[(52))]{mrt-fourier} is $\ll (\log X)^{-10\ell} \ell!^4 P'^{4\ell+1}/N \ll (\log X)^{-\ell} d^{2\ell+1}/N$ whereas with probability $\gg 1$, there are $\gg c^\ell d^{2\ell+1}/N$ non-degenerate good tuples. Hence one can indeed find a deterministic choice of $\mathbf{p}$ such that there are $\gg c^\ell d^{2\ell+1}/N$ very good tuples, i.e. tuples for which
\[
\# \mathcal{P}(\overrightarrow{I''}) > (\log X)^{-10\ell} \pi_0(P')
\]
as desired.
\end{proof}

Lowering $H$ does not affect solving the approximate dilation invariance in Proposition~\ref{sold}, except that the bounds for $T$ and the smoothness of $\eps^{(j)}_i(t)$ get worsened by $C^\ell$ for a constant $C$. Since $\mathcal{P}(\overrightarrow{I''})$ now of size $\gg (\log X)^{-10\ell} \pi_0(P')$, we now need to take $K \gg (\log X)^{10 \ell}$ in Proposition~\ref{prop53-analog}, so in Corollary~\ref{cor54-analog} we now have $\# \mathcal{F}(I'') \ll (\log X)^{10\ell}$. Proposition~\ref{prop55-analog} works without changes but now it provides only $\gg c^\ell X/H \pi_0(P')^2$ pairs $(I_1'', I_2'')$.

To proceed, we need an adequate version of the mixing lemma:

\begin{lemma}[Mixing lemma]\label{le:mixlemShH} Let $X, V \geq 3, 2\leq P \leq H$.  Let $\mathcal{A}_1, \mathcal{A}_2$ be two $(X, H)$-families of intervals. Write
\[
\mathcal{V} = \left\{\xi \in [-X/H, X/H] \colon \big|\sum_{P\leq p\leq 2P} p^{2 \pi i \xi}\big| \geq PV^{-1} \right\}.
\]

Then the number of quadruplets $(J_1, J_2, p_1, p_2)$ with $J_1 \in \mathcal{A}_1, J_2 \in \mathcal{A}_2$, $p_1, p_2$ primes in $[P, 2P]$, and $I_1$ lying within $100 H$ of  $\frac{p_2}{p_1} I_2$ is
\begin{equation}\label{mixbound}
 \ll |\mathcal{V}| (\# \mathcal{A}_1) (\# \mathcal{A}_2) \frac{H}{X} \left (\frac{P}{\log P} \right )^2 +(\# \mathcal{A}_1)^{1/2} (\# \mathcal{A}_2)^{1/2} P^2 V^{-2}.
\end{equation}
\end{lemma}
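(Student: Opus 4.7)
The plan is to cast the count as a Fourier-analytic estimate built around the Dirichlet polynomial $D(\xi) := \sum_{P \le p \le 2P} p^{2\pi i \xi}$, so that the hypothesis defining $\mathcal{V}$ can be used directly. Writing $x_{J}$ for the midpoint of an interval $J$ of length $H$ (so $x_J \asymp X$), the condition that $J_1$ lies within $100H$ of $\tfrac{p_2}{p_1} J_2$ is equivalent to $|p_1 x_{J_1} - p_2 x_{J_2}| \ll HP$, which after taking logarithms becomes
\[
\bigl|\log(p_1 x_{J_1}) - \log(p_2 x_{J_2})\bigr| \ll H/X.
\]
This is the crucial first step, because the primes now enter only through $\log p$, exactly as they do in $D(\xi)$.

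Next I would fix once and for all an even Schwartz function $\phi\colon\R\to\R$ with $\hat\phi \ge 0$, $\hat\phi$ compactly supported in $[-A,A]$, and $\phi \ge 1$ on $[-1,1]$ (such a kernel is easily constructed by convolving a non-negative bump with itself and rescaling). With $T := CH/X$ for a suitable absolute constant $C \ge A$, the count is majorized by
$N \le \sum_{J_1,J_2,p_1,p_2} \phi\bigl((\log(p_1 x_{J_1}) - \log(p_2 x_{J_2}))/T\bigr)$, and the Fourier inversion $\phi(u/T) = T \int \hat\phi(T\xi)\, e(u\xi)\,d\xi$ factors the multiple sum into a product, giving
\[
N \le T \int \hat\phi(T\xi)\, |D(\xi)|^2\, B_1(\xi)\overline{B_2(\xi)}\, d\xi,
\qquad B_i(\xi) := \sum_{J \in \mathcal{A}_i} x_J^{2\pi i \xi}.
\]
The support of $\hat\phi(T\cdot)$ sits inside $|\xi| \le A/T \le X/H$, which is the ambient range of $\mathcal{V}$, so I may split the integral into its pieces over $\xi \in \mathcal{V}$ and $\xi \notin \mathcal{V}$.

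On $\mathcal{V}$ I use the trivial bounds $|D(\xi)| \ll P/\log P$ (the total number of primes in $[P,2P]$) and $|B_i(\xi)| \le \#\mathcal{A}_i$, yielding a contribution $\ll (H/X)(P/\log P)^2 (\#\mathcal{A}_1)(\#\mathcal{A}_2)\,|\mathcal{V}|$, which matches the first term of \eqref{mixbound}. Off $\mathcal{V}$ I use the defining bound $|D(\xi)| < PV^{-1}$ together with Cauchy--Schwarz
\[
\int \hat\phi(T\xi) |B_1 B_2|\, d\xi \le \prod_{i=1,2}\Bigl(\int \hat\phi(T\xi)|B_i(\xi)|^2\,d\xi\Bigr)^{1/2},
\]
and the identity $\int \hat\phi(T\xi) e(\xi \alpha)\,d\xi = T^{-1}\phi(\alpha/T)$ to evaluate
\[
\int \hat\phi(T\xi)|B_i(\xi)|^2\,d\xi = T^{-1} \sum_{J,J' \in \mathcal{A}_i} \phi\!\left(\frac{\log(x_J/x_{J'})}{T}\right) \ll T^{-1}\,\#\mathcal{A}_i,
\]
where the diagonal $J = J'$ contributes $T^{-1}\phi(0)\,\#\mathcal{A}_i$ and the off-diagonal is handled by the $500H$ separation of the intervals---which, after indexing the intervals by position, forces $|\log(x_J/x_{J'})|/T$ to be bounded below by a positive multiple of the index difference, so that the Schwartz decay of $\phi$ makes the sum over $J \neq J'$ at most a constant times $\#\mathcal{A}_i$. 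Combining these gives the second term $\ll P^2 V^{-2} (\#\mathcal{A}_1)^{1/2}(\#\mathcal{A}_2)^{1/2}$, completing the bound.

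This is essentially a standard large-sieve manipulation and I do not foresee a serious obstacle. The main point requiring care is the off-diagonal $L^2$ estimate for $B_i$: the $500H$ spacing in the definition of an $(X,H)$-family is precisely what is needed to avoid an extraneous logarithmic factor in $\#\mathcal{A}_i$, and is also what permits the clean appearance of $\pi_0(P)^2 = (P/\log P)^2$ rather than $P^2$ in the first term of \eqref{mixbound}. All remaining work is routine bookkeeping and constant-chasing in the Fourier kernel.
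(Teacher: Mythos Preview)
Your proposal is correct and is essentially the same argument as the paper's: the paper also bounds the count by $\frac{H}{X}\int_{|\xi|\le X/H}|S_1(\xi)||S_2(\xi)||T(\xi)|^2\,d\xi$ with $S_i(\xi)=\sum_{J\in\mathcal{A}_i}e(\xi\log x_J)$ and $T(\xi)=\sum_{P\le p\le 2P}p^{2\pi i\xi}$, then splits into $\xi\in\mathcal{V}$ and $\xi\notin\mathcal{V}$, using trivial bounds on $\mathcal{V}$ and Cauchy--Schwarz plus an $L^2$ bound on $S_i$ off $\mathcal{V}$. The only cosmetic differences are that the paper obtains the initial integral representation by citing \cite[Lemma~5.1]{mrt-fourier} (effectively a Fej\'er-type kernel rather than your smooth Schwartz kernel) and quotes the $L^2$ bound $\int_{|\xi|\le X/H}|S_i|^2\ll (\#\mathcal{A}_i)\,X/H$ as a large sieve inequality \cite[Lemma~2.3]{mrt-fourier}, whereas you compute it directly from the $500H$ separation and Schwartz decay.
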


\begin{proof} As in~\cite[Proof of Lemma 5.1]{mrt-fourier}, the number of quadruplets in question is bounded by
\begin{equation}
\label{eq:mixintLowH}
\ll \frac{H}{X} \int_{|\xi| \leq \frac{X}{H}} |S_1(\xi)| |S_2(\xi)| |T(\xi)|^2\ d\xi 
\end{equation}
where
$$ S_i(\xi) := \sum_{I \in \mathcal{A}_i} e( \xi \log x_{I} ) $$
for $i=1,2$ and
\begin{align}\label{eq: Tchi}
 T(\xi) := \sum_{P\leq p\leq 2P} p^{2\pi i \xi}.
\end{align}

Splitting the integral in~\eqref{eq:mixintLowH} according to whether $\xi \in \mathcal{V}$, we obtain that~\eqref{eq:mixintLowH} is at most
\[
\begin{split}
&\frac{H}{X} |\mathcal{V}| \sup_{|\xi| \in \mathcal{V}} |S_1(\xi) S_2(\xi) T(\xi)^2| + \frac{H}{X} P^2V^{-2} \int_{|\xi| \leq \frac{X}{H}} |S_1(\xi)| |S_2(\xi)|\ d\xi \\
&\ll \frac{H}{X} |\mathcal{V}| (\# \mathcal{A}_1) (\# \mathcal{A}_2) \left (\frac{P}{\log P} \right )^2 + \frac{H}{X} P^2V^{-2} \left(\int_{|\xi| \leq \frac{X}{H}} |S_1(\xi)|^2 \ d\xi \int_{|\xi| \leq \frac{X}{H}} |S_2(\xi)|^2\ d\xi\right)^{1/2}.
\end{split}
\]
From the large sieve inequality (see e.g.~\cite[Lemma 2.3]{mrt-fourier}) we have
\begin{equation}
\label{eq:mixLS}
 \int_{|\xi| \leq \frac{X}{H}} |S_i(\xi)|^2 \ll \# {\mathcal A}_i \frac{X}{H},
 \end{equation}
and the claim follows.
\end{proof}

Note that the size of $\mathcal{V}$ above is at most twice the size of the maximal one-spaced subset of $\mathcal{V}$ (meaning a set where any two points are at least one apart). The needed bound for $|\mathcal{V}|$ in our situation is provided by the following lemma. The requirement $\theta > 5/8$ comes from it as for smaller $\theta$ we do not know how to obtain $|\mathcal{V}| = P^{o(1)}$.

\begin{lemma}\label{le: 5/8}
Let $\theta \in (5/8, 1)$ be fixed, $H = \exp((\log X)^\theta)$ and $P = \exp(\varepsilon (\log X)^\theta)$ for some $\eps > 0$, and let $V = (\log X)^{100\ell}$, where $\ell \asymp (\log X)^{1-\theta}$. Let $\mathcal{U}$ be a set of one-spaced points $\xi \in [-X/H, X/H]$ for which 
\[
\big|\sum_{p \sim P} p^{2 \pi i \xi}\big| \geq PV^{-1}.
\]
Then, for some $\eps' > 0$, we have
\[
\# \mathcal{U} \ll \exp((\log X)^{\theta-\eps'}) = P^{o(1)}.
\]
\end{lemma}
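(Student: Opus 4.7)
The plan is to reduce the problem to a large-values question for a bilinear Dirichlet polynomial and then apply standard Huxley-type estimates together with the Vinogradov--Korobov zero-free region. First, by partial summation and the trivial bound $\sum_{p^j \sim P,\, j \geq 2} 1 \ll P^{1/2}$ for prime powers, the hypothesis $|\sum_{p \sim P} p^{2\pi i\xi}| \geq P/V$ transfers (for $X$ sufficiently large) into $|\sum_{n \sim P} \Lambda(n) n^{2\pi i\xi}| \gg P \log P/V$. Applying Heath--Brown's identity (with any fixed parameter $K$) to $\Lambda(n)\mathbf{1}_{n \sim P}$ and dyadically decomposing each variable then expresses this sum as a linear combination of $O((\log P)^{O(1)})$ bilinear Dirichlet polynomials
\[
B_{M,N}(\xi) = \sum_{\substack{m \sim M,\, n \sim N\\ mn \sim P}} \alpha_m \beta_n (mn)^{2\pi i \xi}, \qquad MN \asymp P,
\]
with $\|\alpha\|_\infty, \|\beta\|_\infty \ll (\log P)^{O(1)}$. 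Writing $T := X/H$, it then suffices to bound, for each such $(M, N)$, the one-spaced count of $\xi \in [-T, T]$ where $|B_{M,N}(\xi)| \geq V_0 := P(\log P)/(V (\log P)^{O(1)})$.

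I split by the size of $\min(M, N)$. In the Type I range, $\min(M, N) \leq P^{1 - \theta - \delta}$ for a small $\delta > 0$: here the inner sum over the long variable is essentially $\sum_{n \sim N} n^{2\pi i\xi}$, and a Perron contour shift combined with the Vinogradov--Korobov zero-free region for $\zeta$ gives a pointwise bound that is adequate except when $\xi$ is very close to the imaginary part of a nontrivial zero of $\zeta$; this exceptional set has one-spaced size $\ll \exp((\log X)^{\theta - \varepsilon'})$ by standard zero-density estimates together with the Vinogradov--Korobov bound. In the Type II range, both $M$ and $N$ are at least $P^{1 - \theta - \delta}$: here I apply the Huxley large-values estimate for bilinear Dirichlet polynomials, obtained by combining the fourth-moment bound for one of the factors $\sum_m \alpha_m m^{2\pi i\xi}$ or $\sum_n \beta_n n^{2\pi i\xi}$ with the Halász--Montgomery inequality on the other, yielding a bound of the form $(\log PT)^{O(1)}$ times a polynomial in $T$, $M$, $N$, and $V_0$.

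Substituting $V_0 \asymp P \cdot (\log X)^{-O(\ell)}$, $T = X \exp(-(\log X)^\theta)$, and $P = \exp(\varepsilon (\log X)^\theta)$, and optimizing over the factorization $MN \asymp P$, the Huxley bound yields a one-spaced count of size at most $\exp((\log X)^{\theta - \varepsilon'})$ precisely when $\theta > 5/8$; this threshold is the standard Huxley exponent arising from the balance of the fourth-moment exponent against the Halász--Montgomery exponent in the bilinear setting. The main obstacle is the sharp execution of the Type II analysis: tracking exponents carefully through the Huxley machinery and absorbing the subpolynomial loss of the $V = \exp(O((\log X)^{1-\theta}\log\log X))$ factors. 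The latter is automatic in our range because $V$ is subpolynomial in $P$ whenever $1 - \theta < \theta$, i.e.\ $\theta > 1/2$, and this holds throughout the range $\theta > 5/8$.
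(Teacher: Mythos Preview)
Your approach has a fundamental gap stemming from the extreme disparity between the length of the Dirichlet polynomial and the range of $\xi$. Here $P = \exp(\varepsilon(\log X)^\theta)$ while $T := X/H$ satisfies $\log T \asymp \log X$, so $P = T^{o(1)}$. Any large-values estimate of Huxley type for a polynomial of length $N \asymp P$ (or even $N \asymp P^2$ after a fourth moment) contains a term of the shape $T N^a V_0^{-b}$ with fixed exponents $a,b$; since $V_0 \asymp P/V$ and $V = P^{o(1)}$, such a term is of size $\exp((1-o(1))\log X)$ and is therefore hopeless against the target bound $\exp((\log X)^{\theta-\varepsilon'})$. The substitution you describe in the final paragraph does not produce a count of size $\exp((\log X)^{\theta-\varepsilon'})$; the $T$-term swamps everything. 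The coincidence of the number $5/8$ with the Huxley exponent is misleading: in the paper the threshold $\theta>5/8$ arises from the inequality $5/2-3\theta<\theta$, which comes from balancing the Vinogradov exponent $3/2$ against the moment parameter $k$, not from the Huxley density theorem.

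Your Type~I branch is also problematic. With the Heath--Brown parameter fixed, the condition $\min(M,N)\le P^{1-\theta-\delta}$ is vacuous for $\theta$ close to~$1$; and the pointwise Vinogradov--Korobov bound for $\sum_{n\sim N} n^{it}$ with $|t|\ll X$ saves only a factor of $N^{c/(\log X)^{2/3}}$, which does not beat $V^{-1}=\exp(-c'(\log X)^{1-\theta}\log\log X)$ unless $\theta>5/6$. The appeal to zero-density estimates does not rescue this: zeros of $\zeta$ up to height $T\asymp X$ number far more than $\exp((\log X)^{\theta-\varepsilon'})$, and the connection you sketch between large values of $\sum_n n^{it}$ and proximity to zeros is not tight enough to give such a bound.

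The paper instead takes a high moment with $k\asymp(\log X)^{3/2-2\theta}$ growing with $X$: the $2k$-th moment of $\sum_{p\sim P}p^{2\pi i\xi}$ is bounded via the Hal\'asz--Montgomery inequality combined with the Vinogradov estimate $|\sum_{n\sim N}n^{it}|\ll N^{1-\eta}|t|^{O(\eta^{3/2})}$ (this is Lemma~\ref{lem:primeshalasz} with $q=1$, equivalently \cite[Lemma~4.4]{mrPart2}). Choosing $\eta=(\log X)^{1-2\theta+\delta}$ makes the $T$-loss $X^{O(\eta^{3/2})}=\exp(O((\log X)^{5/2-3\theta+O(\delta)}))$, which is beaten by the gain $P^{k\eta}$ from the moment. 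The resulting bound $\#\mathcal U\ll(kV)^{2k}\ll\exp((\log X)^{5/2-3\theta+O(\delta)})$ is acceptable precisely when $5/2-3\theta<\theta$. The key idea you are missing is that one must let the moment exponent $k$ depend on $X$ to compensate for $P$ being subpolynomial in $T$.
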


\begin{remark}\label{rem: largevalues}
From the proof of Lemma~\ref{le: 5/8}, it will be clear that the larger $\theta>5/8$ is, the better the bound we can obtain on $\#\mathcal{U}$. In fact, for $\theta=2/3+\varepsilon$ the Vinogradov--Korobov bound  (see~\cite[Lemma 2]{mr-p}) directly gives $\mathcal{U}\subset [-V^2,V^2]$, so that $\# \mathcal{U}\ll V^2\ll \exp((\log X)^{1-\theta+\varepsilon^2})$, say. Nevertheless, here the main interest is in the smallest value of $\theta$ for which $\#\mathcal{U}\ll \exp((\log X)^{\theta-\varepsilon'})$ holds, so this aspect is not optimized.  
\end{remark}

\begin{proof}
Let $T(\chi)$ be as in \eqref{eq: Tchi}. We apply~\cite[Lemma 4.4]{mrPart2}, which is a variant of the  Hal\'asz--Montgomery estimate that uses Vinogradov's bound on $\sum_{P\leq n\leq 2P}n^{it}$ as an input (see also Lemma~\ref{lem:primeshalasz} below with $q = 1$). This gives that uniformly for $\eta \in (0, 1)$ and integers $k \geq 0$ we have
\begin{equation} \label{eq:Hmtapp}
\# \mathcal{U} \cdot \Big ( \frac{P}{V} \Big )^{2k} \ll \sum_{t \in \mathcal{U}} |T(\xi)|^{2k} \ll \Big ( (2P)^{k} + \# \mathcal{U} \cdot X^{5\eta^{3/2}} (\log X)^{2/3} \cdot (2P)^{k (1 - \eta / 4)} \Big ) k! \cdot (2P)^{k}.
\end{equation}
This means that we have the bound
\begin{align*}
\# \mathcal{U} \ll (4kV)^{2k}
\end{align*}
whenever $X^{5\eta^{3/2}}(\log X)^{3/2}k^{2k}P^{k(2-\eta/4)}=o((P/V)^{2k})$. The latter holds whenever
\[
X^{5\eta^{3/2}} k^{2k} \cdot \exp(k (\log X)^{1-\theta}(\log \log X)^2) = o (\exp(\tfrac{\varepsilon}{4} \eta k (\log X)^\theta)),
\]
which in turn follows from
\[ 
5\eta^{3/2} \log X + 2k \log k + k (\log X)^{1-\theta}(\log \log X)^2 <\tfrac{\varepsilon}{5} \eta k (\log X)^\theta.
\]
This holds (assuming already $k = (\log X)^{O(1)}$ and letting $\delta$ be a small positive constant) if 
\[
\begin{cases}
k \geq \eta^{1/2} (\log X)^{1-\theta + \delta} & \\
\eta \geq (\log X)^{-\theta + \delta} &\\ 
\eta \geq (\log X)^{1-2\theta+\delta}.
\end{cases}
\]
For $\theta < 1$, the third condition is more demanding than the second and thus we can set $\eta = (\log X)^{1-2\theta+\delta}$ and $k = (\log X)^{3/2 - 2\theta + 2\delta}$. With these choices the first term dominates in~\eqref{eq:Hmtapp} and we obtain the upper bound
\[
\# \mathcal{U} \ll (4kV)^{2k} \ll (\log X)^{300 \ell k} \ll \exp((\log X)^{5/2-3\theta+3\delta})
\] 
The claim follows as $5/2-3\theta < \theta$ since $\theta > 5/8$.
\end{proof}

Now this leads to approximate ergodicity~\cite[Corollary 5.2]{mrt-fourier} except that now we have either 
\[
\frac{MK^3}{\delta} \gg (\log X)^{100\ell}
\]
or a collection $\mathcal{T}$ as in~\cite[Corollary 5.2]{mrt-fourier} but with
\begin{align}\label{eq: T}
\# \mathcal{T} \gg \exp(-(\log X)^{\theta-\varepsilon}) \frac{\delta}{MK^3} \frac{X}{H}.
\end{align}
We can apply this with $\delta = c^\ell$, $K \asymp (\log X)^{10\ell}$, $M= 100$ and $r = 1/10$ to get conclusions between Proposition~\ref{prop55-analog} and Lemma~\ref{lem26-analog}, except that now have the weaker lower bound $\# \mathcal{T} \gg \exp(-(\log X)^{\theta-\varepsilon}) X/H$. 

As for the analogue of Proposition~\ref{prop56-analog}, we can use the same argument as in its proof to obtain upper and lower bounds for the number of certain tuples $(Q_0, \dotsc, Q_{\ell -1}) \in \mathcal{T}^\ell$: The lower bound we get is $\gg c^\ell d^\ell$ (with $d:=(P'/\log P')^2$) and the upper bound (from Lemma~\ref{le:countNPLowH}) is
\[
\ll \ell!^2 (2P')^\ell k^{\omega(q)} \left(\frac{(2P')^\ell}{q_0^{1/k} N} + 1\right)
\]
Combining the lower and upper bounds, we obtain $q_0 \ll (\log X)^{O(\ell)}$.

Now, repeating the arguments after Proposition~\ref{prop56-analog}, we see that there are at least $\gg \exp(-(\log X)^{\theta-\varepsilon}) X/(P'P'')$ integers $X/(2P'P'')\leq x \leq X/(P' P'')$ for which
\begin{align}\label{eq: Hast}
\big|\sum_{n \in [x, x+H^\ast]} f(n)n^{-iT} e(-\gamma(n))\big| \gg H^\ast 
\end{align}
with $H^{\ast}:=C^{-\ell}H/(P'P'')$ and $\gamma(t) = \sum_{j = 0}^k c_j \binom{t/q_0}{j}$, where $c_j$ are integers. 

Now we will obtain a contradiction as in Section~\ref{sec: polyphases}, except due to worse bounds for $\mathcal{T}^\ast$ and $q_0$ we need to use results from~\cite{mrPart2} where one obtains a polynomial saving in the exceptional set for averages of multiplicative functions in short intervals (in the special case $f = \lambda$ and $\theta = 2/3+\eps$ arguments of~\cite{mr} actually suffice --- see Remark~\ref{rem:specShorter} below). Also since $q_0$ is not bounded, we need to treat the $q$-aspect non-trivially. 

As in~\cite{mr, mrPart2} we first restrict $n$ to a set of numbers with factors of convenient sizes. For this, let $\delta$ be small in terms of the implied constant above and define $\mathcal{S}$ as in~\cite[Proof of Theorem 1.7 in Section 11]{mrPart2}, i.e. choose in~\cite[Section 9]{mrPart2} the parameters $\eta =1/150, \nu_1 = \delta^2/4000, \nu_2 = 1/10, Q_1 = H^\ast$ and $P_1 = Q_1^{\delta/4}$, so that $J = 1$, $P_2 = X^{\nu_1}, Q_2 = P_3 = X^{\sqrt{\nu_1 \nu_2}}$ and $Q_3 = X^{\nu_2}$ and $\mathcal{S}$ consists of numbers with a prime factor on each interval $(P_j, Q_j]$ with $j=1, 2, 3$.

Using the linear sieve (cf.~\cite[Proof of Theorem 1.7 in Section 11]{mrPart2}), we see that $n \not \in \mathcal{S}$ make a negligible contribution of 
\begin{align*}
H^{\ast}\sum_{1\leq i\leq 3}\frac{\log P_i}{\log Q_i}\ll \delta H^{\ast},
\end{align*}
to \eqref{eq: Hast} and so we have $\gg \exp(-(\log X)^{\theta-\varepsilon}) X/(P'P'')$ integers $X/(2P'P'')\leq x \leq X/(P' P'')$ for which
\[
\big|\sum_{\substack{n \in [x, x+H^\ast] \\ n \in \mathcal{S}}} f(n)n^{-iT} e(-\gamma(n))\big| \gg H^\ast.
\]

Splitting into residue classes $a \mod{q_0}$ and then splitting according to $q_2 = \gcd(a, q_0)$, we see that 
\[
\sum_{q_2:\,\, q_0 = q_1 q_2} \big|\sum_{\substack{b \mod{q_1} \\ (b, q_1) = 1}}  e(-\gamma(bq_2))\sum_{\substack{n \in \mathcal{S} \\ n \in [x/q_2, (x+H^\ast)/q_2] \\ n = b \mod{q_1}}} f(n)n^{-iT}\big| \gg H^\ast
\]
for $\gg \exp(-(\log X)^{\theta-\varepsilon}) X/(P'P'')$ integers $X/(2P'P'')\leq x \leq X/(P' P'')$. This implies that for some choice of $q_0 = q_1 q_2$, we have
\[
\big|\sum_{\substack{b \mod{q_1} \\ (b, q_1) = 1}}  e(-\gamma(bq_2))\sum_{\substack{n \in \mathcal{S} \\ n \in [x, x+H^\ast/q_2] \\ n = b \mod{q_1}}} f(n)n^{-iT}\big| \gg \frac{\phi(q_1)}{q_1q_2} H^\ast
\]
for $\gg \exp(-(\log X)^{\theta-\varepsilon}) X/(q_2 P'P'')$ integers $X/(2q_2P'P'')\leq x \leq X/(q_2P' P'')$.
Moving into characters, the left-hand side is at most
\[
\frac{1}{\phi(q_1)} \sum_{\chi \mod{q_1}} \big|\sum_{\substack{b \mod{q_1} \\ (b, q_1) = 1}}  e(-\gamma(bq_2)) \overline{\chi(b)}\big| \cdot \big| \sum_{\substack{n \in \mathcal{S} \\ n \in [x, x+H^\ast/q_2]}} f(n) \chi(n) n^{-iT}\big|.
\]
Recall that $\gamma$ is a polynomial phase of degree $k$. By~\cite[Corollary 1.1]{CZ99} and the Chinese reminder theorem we have, for every $\chi$,
\begin{equation}
\big|\sum_{\substack{b \mod{q_1} \\ (a, q_1) = 1}}  e(-\gamma(bq_2)) \overline{\chi(b)}\big| = O(q_1^{1-1/(k+1)}), 
\end{equation}
so that
\begin{equation}
\label{eq:chilowbound}
\sum_{\chi \mod{q_1}} \big| \sum_{\substack{n \in \mathcal{S} \\ n \in [x, x+H^\ast/q_2]}} f(n) \chi(n) n^{-iT}\big| \gg q_1^{1/(k+2)} H^\ast/q_2
\end{equation}
for $\gg \exp(-(\log X)^{\theta-\varepsilon}) X/(q_2 P'P'')$ integers $X/(2q_2 P' P'')\leq x \leq X/(q_2 P' P'')$. 

Now, if $q_1 \leq Q$ for a constant $Q \ll_{k, \eta, \theta, \rho} 1$ to be determined later, we have, for some $\chi \pmod{q_1}$,
\[
\sum_{\chi \mod{q_1}} \big| \sum_{\substack{n \in \mathcal{S} \\ n \in [x, x+H^\ast/q_2]}} f(n) \chi(n) n^{-iT}\big| \gg_{k, \eta, \theta, \rho} H^\ast/q_2
\]
for $\gg \exp(-(\log X)^{\theta-\varepsilon}) X/(q_2 P'P'')$ integers $X/(2q_2 P' P'')\leq x \leq X/(q_2 P' P'')$. 
By~\cite[Theorem 9.2(i)]{mrPart2} this implies that 
\[
\big| \sum_{\substack{n \in \mathcal{S} \\ X < n \leq 2X}} f(n) \chi(n) n^{-iT+it_0}\big| \gg_{k, \eta, \theta, \rho} X,
\]
for some $|t_0| \leq X$, which in turn by inclusion-exclusion and Hal\'asz's theorem implies \eqref{eq:lowerMClaim} since $|T| \leq C^\ell (X/H)^{k+1} \leq X^{k+1}/H^{k+1-\rho/2}$.

Let us now turn to the case $q_1 \geq Q$. The proof of~\cite[Proposition 8.3]{mrPart2} (taking $\mathcal{V}_1 = \emptyset$ in the proof of~\cite[Proposition 8.3]{mrPart2} and bounding $R_C(1+it)$ trivially) gives
\begin{equation}
\label{eq:sumProp83deco}
\begin{split}
&\frac{1}{H^\ast/q_2} \sum_{\substack{n \in \mathcal{S} \\ n \in [x, x+H^\ast/q_2]}} f(n) \chi(n) n^{-iT} = A(x, H^\ast/q_2, \mathcal{U}) + O\left(\frac{1}{H^\ast/q_2}\right) \\
 & \qquad + O\Bigl(\Bigl(\sum_{\substack{A = 2^j \\ P_3/2 \leq A \leq Q_3}} \sum_{t \in \mathcal{W}^\ast(\chi)} |Q_{3,A}(\chi, 1 + it)|^2 \sum_{\substack{B = 2^j \\ P_2/2 \leq B \leq Q_2}} \sum_{t \in \mathcal{W}^\ast(\chi)} |Q_{2,B}(\chi, 1 + it)|^2 \Bigr)^{1/2}\Bigr),
\end{split}
\end{equation}
where
\[
\mathcal{W}^\ast(\chi) \subset \{|t| \leq X \colon \max_B |Q_{2,B}(\chi, 1+it)| \geq X^{-\nu_1^3/320} \}
\]
is one-spaced,
\[
Q_{j,D}(\chi, s) := \sum_{\substack{D < p \leq 2D \\ P_{j} < p \leq Q_{j}}} \frac{\chi(p)}{p^s},
\]
and $A(x, H^\ast/q_2, \mathcal{U})$ satisfies~\cite[(46)]{mrPart2}. 

As in~\cite[Proof of Theorem 9.2(ii)]{mrPart2} with same choices of $\mathcal{U}$ and $d_n$, we have $|A(x, H^\ast/q_2, \mathcal{U})|$ $\ll {H^\ast}^{-\delta/5000}$ except for $\ll X{H^\ast}^{-\delta/5000}$ values $X/(2q_1P'P'')\leq x \leq X/(q_1P' P'')$. Summing over $\chi \mod{q_1}$ and taking the union bound, the contribution from $A(x, H^\ast/q_2,  \mathcal{U})$ is acceptable.

Given all this, \eqref{eq:chilowbound} implies that
\begin{equation}
\label{eq:QAQBbound}
\sum_{\chi \mod{q_1}} \sum_A \sum_{t \in \mathcal{W}^\ast(\chi)} |Q_{3,A}(\chi, 1 + it)|^2 \sum_{\chi \mod{q_1}} \sum_B \sum_{t \in \mathcal{W}^\ast(\chi)} |Q_{2,B}(\chi, 1 + it)|^2 \gg Q^{2/(k+2)}
\end{equation}
In~\cite{mrPart2} this sort of term with $q_1 = 1$ is dealt with using~\cite[Lemma 4.4]{mrPart2} which is a large values result of Hal\'asz --Montgomery type that uses Ford's bound (see~\cite[Theorem 1]{FordZeta})
\[
|\zeta(\sigma + it)| \ll 1 + |t|^{\tfrac{9}{2} (1 - \sigma)^{3/2}} (\log (|t|+2))^{2/3} \quad \text{for $1/2 \leq \sigma \leq 1$}.
\]
for $\zeta(s)$. As pointed out by Ford, $L(s, \chi) = q^{-s} \sum_{m = 1}^q \chi(m) \zeta(s, m/q)$, where $\zeta(s, u) = \sum_{n=0}^\infty (n+u)^{-s}$ is the Hurwitz zeta function, so that~\cite[Theorem 1]{FordZeta} also gives 
\[
|L(\sigma + it, \chi)| \ll q^{1-\sigma} |t|^{\tfrac{9}{2} (1 - \sigma)^{3/2}} (\log (|t|+2))^{2/3} + \frac{q^{1-\sigma}}{1-\sigma} \quad \text{for $1/2 \leq \sigma < 1$}.
\]
Using this in the proof of~\cite[Lemma 4.4]{mrPart2}, we get the following variant.
\begin{lemma} \label{lem:primeshalasz}
Let $T \geq 3$ $q \geq 1$ and let $\mathcal{T}$ be a set of pairs $(\chi, t)$, where $\chi$ is a Dirichlet character $\mod q$ and $t \in [-T, T]$ such that if $(\chi, t_1), (\chi, t_2) \in \mathcal{T}$, then $|t_1-t_2| \geq 1$. Let $P(s, \chi) = \sum_{N < p \leq 2N} a(p)\chi(p) p^{it}$ be a Dirichlet polynomial of length $N \leq T^2$ whose coefficients are supported on primes. Then, for any $\varepsilon', \eta \in (0, 1/2)$,
\[
\sum_{(\chi, t) \in \mathcal{T}} |P(\chi, it)|^2 \ll_{\varepsilon'} \Big ( \frac{N}{\log N} + |\mathcal{T}| \cdot (q^\eta T^{\frac{9}{2} \eta^{3/2}} (\log T)^{2/3} + q^\eta/\eta) \cdot N^{1 - \eta(1-\varepsilon')} \Big ) \sum_{N < p \leq 2N} |a(p)|^2.
\]
\end{lemma}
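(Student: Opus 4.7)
The strategy is to mimic the argument used to prove~\cite[Lemma 4.4]{mrPart2}, carrying along a Dirichlet character $\chi \pmod q$ throughout, and to invoke at the end the Hurwitz zeta representation
\[
L(s,\chi) = q^{-s}\sum_{m=1}^{q}\chi(m)\zeta(s, m/q)
\]
pointed out by Ford, which allows us to transfer Ford's convexity-type bound for $\zeta$ to each $L(s,\chi)$ at the price of a factor $q^{1-\sigma}$.

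The first step is the standard Hal\'asz--Montgomery reduction for prime-supported Dirichlet polynomials: by the Cauchy--Schwarz/duality trick combined with a smooth weight (see e.g.\ the proof of~\cite[Lemma 4.4]{mrPart2}), one bounds $\sum_{(\chi,t)\in\mathcal T}|P(\chi,it)|^2$ in terms of
\[
\Bigl(\sum_{N<p\leq 2N}|a(p)|^2\Bigr)\cdot\Bigl(\frac{N}{\log N} + \sup_{(\chi_0,t_0)\in\mathcal T}\sum_{\substack{(\chi,t)\in\mathcal T \\ (\chi,t)\ne(\chi_0,t_0)}}|L(1+it-it_0,\chi\overline{\chi_0})|_{\mathrm{res}} \Bigr),
\]
where $|L(\,\cdot\,)|_{\mathrm{res}}$ denotes a suitable short zeta/$L$-sum over primes in $[N,2N]$; passing from the prime sum back to an $L$-value is done via Mertens/PNT manipulations as in~\cite[Lemma 4.4]{mrPart2}. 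The characters $\chi\overline{\chi_0}$ are again characters $\pmod q$, and the arguments $1+it-it_0$ lie in a strip with $|t-t_0|\leq 2T$.

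The second step is to control each such $L(\sigma+iu,\psi)$ with $\sigma$ slightly less than $1$. Using the identity displayed above and Ford's bound~\cite[Theorem 1]{FordZeta}, one obtains
\[
|L(\sigma+iu,\psi)|\ll q^{1-\sigma}|u|^{\tfrac{9}{2}(1-\sigma)^{3/2}}(\log(|u|+2))^{2/3}+\frac{q^{1-\sigma}}{1-\sigma}
\]
for $1/2\leq\sigma<1$ and any $\psi\pmod q$, as stated in the text preceding the lemma. Inserting this estimate, with $\sigma=1-\eta(1-\varepsilon')/2$ say, into the Hal\'asz--Montgomery bound and using a zero-detection / contour shift (exactly as in the proof of~\cite[Lemma 4.4]{mrPart2}, where $N\leq T^2$ is used to ensure the length condition for the shift), converts the $L$-bound into the factor
\[
q^{\eta}T^{\tfrac{9}{2}\eta^{3/2}}(\log T)^{2/3}+q^{\eta}/\eta,
\]
multiplied by $N^{1-\eta(1-\varepsilon')}$ (this power of $N$ reflects the loss in going from $\sigma=1$ down to $\sigma=1-\eta(1-\varepsilon')$).

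The main point requiring care is the $q$-aspect: wherever the proof of~\cite[Lemma 4.4]{mrPart2} uses Ford's bound on $\zeta$, we now plug in its $L$-function analogue, and the factor $q^{1-\sigma}=q^{\eta(1-\varepsilon')/2}$ appears; absorbing the $(1-\varepsilon')$ into the $\varepsilon'$-loss already present in the exponent of $N$ gives the clean exponent $q^{\eta}$ claimed in the lemma. Summing over the at most $qT$-many candidate points $(\chi,t)$ with $|t|\leq T$ but using only the one-spacing hypothesis on $\mathcal T$ (which is what allows the Hal\'asz--Montgomery duality to be applied uniformly in $\chi$, treating the pair $(\chi,t)$ as a point in the group $\widehat{(\Z/q\Z)^\times}\times\R$), produces the bound stated. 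The only real obstacle is checking that the various integrations and contour shifts in the original argument remain valid uniformly in $\chi$; this is routine since the $L$-function analogue of Ford's bound is uniform in $q$ with the explicit factor $q^{1-\sigma}$.
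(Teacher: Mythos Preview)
Your proposal is correct and follows essentially the same approach as the paper: the paper's entire proof is the sentence ``Using this [the $L$-function bound derived from the Hurwitz zeta representation] in the proof of~\cite[Lemma 4.4]{mrPart2}, we get the following variant,'' and your sketch is simply a more detailed unpacking of exactly that substitution into the Hal\'asz--Montgomery argument.
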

Using this and arguing as in~\cite[Proof of Proposition 8.3]{mrPart2}, we obtain
\[
\sum_{\chi \mod{q_1}} \sum_A \sum_{t \in \mathcal{W}^\ast(\chi)} |Q_{3,A}(\chi, 1 + it)|^2 \sum_{\chi \mod{q_1}} \sum_B \sum_{t \in \mathcal{W}^\ast(\chi)} |Q_{2,B}(\chi, 1 + it)|^2 \ll 1
\]
which contradicts~\eqref{eq:QAQBbound} once the constant $Q$ is large enough. Hence Theorem~\ref{lower} follows.

\begin{remark}
\label{rem:specShorter}
We remark that the special case $f = \lambda$  of Theorem~\ref{lower} with the weaker value $\theta=2/3+\varepsilon$  can be proved more simply by relying only on~\cite{mr-p} as follows. Firstly note that, by Remark~\ref{rem: largevalues}, we can replace $\exp(-(\log X)^{\theta-\varepsilon})$ with $\exp(-(\log X)^{1-\theta-\varepsilon^2})$ in \eqref{eq: T} and on later occurrences. Note also that in this case $q_1,q_2 \ll \exp((\log X)^{1/3-\varepsilon/2})$.

We must then show that \eqref{eq:chilowbound} with $f = \lambda$ cannot hold for $\gg \exp(-(\log X)^{1-\theta-\varepsilon^2}) X/(q_2 P'P'')$ integers $X/(2q_2 P' P'')\leq x \leq X/(q_2 P' P'')$. We have the Vinogradov--Korobov zero-free region for $L$-functions of the form
\begin{align}\label{eq: VinKor}
L(s,\chi)\neq 0,\quad \sigma\geq 1-\frac{c_0}{\log q_1+(\log(|t|+3))^{2/3}(\log \log(|t|+3))^{1/3}}
\end{align}
for all $\chi\pmod {q_1}$, apart from possibly one real zero corresponding to one real character. In case an exceptional character exists, $q_1 \gg_A (\log X)^A$. The contribution of an exceptional character to \eqref{eq:chilowbound} is trivially negligible, so we may assume that in \eqref{eq:chilowbound} we only sum over characters $\chi\pmod{q_1}$ satisfying \eqref{eq: VinKor}. Moreover, we may assume that the set $\mathcal{S}$ in \eqref{eq:chilowbound} is instead simply defined as the set of $n$ having a prime factor from $[P,Q]$, with $Q=H^{*}$, $P=Q^{\delta/4}$. We again claim that \eqref{eq:chilowbound} fails, which will then provide the desired contradiction.

To show this claim, we apply the proof method of~\cite{mr-p} to the multiplicative function $\lambda(n)\chi(n)n^{-iT}$, summed over $n\in \mathcal{S}$. Reducing matters from short sums to Dirichlet polynomials by Parseval-type arguments, as in~\cite[Section 4]{mr-p}, we can reduce the claim to 
\begin{align}\label{eq: PQ}
\int_{[-T_1,T_1]\setminus[T-T_0,T+T_0]}|P(1+it)|^2|Q(1+it)|^2\, dt\ll \exp(-(\log X)^{1/3-\varepsilon/10}),
\end{align}
where $T_0=\exp((\log X)^{1/3-\varepsilon/10})$ and $T_1=X\exp((\log X)^{1/3-\varepsilon/10})/H^{*}$, and we have $P(s)=\sum_{p\in I}\chi(p)p^{s-iT}$ for some interval $I\subset [P,Q]$ and $Q(s)=\sum_{X/Q\leq n\leq X}a_nn^{s}$ for some $|a_n|\leq 1$. As in~\cite{mr-p}, applying the pointwise Vinogradov--Korobov bound to  $P(s)$ and the mean value theorem to $Q(s)$, \eqref{eq: PQ} follows.
\end{remark}

\section{Polynomial averages of the Liouville function}\label{sec:pattern}

In this section, we prove Theorems~\ref{poly1} and~\ref{poly2}. Note that Corollary~\ref{cor_chowla} is a special case\footnote{\label{foot:polypattern}This special case could in fact be proved more directly without considering polynomial progressions, instead combining the generalized von Neumann theorem with Corollary~\ref{cor: mult-pret}.} of Theorem~\ref{poly1} where we take $P_i(m)=a_im$.

\begin{proof}[Proof of Theorems~\ref{poly1} and~\ref{poly2}]

We borrow notation from~\cite{tao-ziegler2}. Note that the claim of Theorem~\ref{poly1} follows from
\begin{align}\label{eq105}
\mathbb{E}_{\m\in [X^{\varepsilon}]^r}\mathbb{E}_{n\leq X}c_X(\m) \lambda(n+P_1(\m)) \lambda(n+P_2(\m)) \dotsm \lambda(n+P_k(\m))=o(1)   
\end{align}
for an arbitrary unimodular sequence $c_X(\m)$. Denoting $W=\prod_{p\leq w}p$, where $w$ tends to infinity very slowly in terms of $X$, and splitting $n$ and $m$ into residue classes $\pmod W$ in the statement of Theorem~\ref{poly2}, that theorem in turn reduces to 
\begin{align}\label{eq106}
\mathbb{E}_{\m\in [L]^r}\mathbb{E}_{n\leq X/W}c_X(\m) \lambda_{b_1,W}(n+P_1'(\m)) \Lambda_{b_2,W}(n+P_2'(\m)) \dotsm \Lambda_{b_k,W}(n+P_k'(\m))=o(1)
\end{align}
uniformly for unimodular sequences $c_X(\m)$,  for $X^{\varepsilon}/W \ll L\ll X^{\varepsilon}$, for $1\leq b_1,\ldots, b_k\leq W$ coprime to $W$, and for $P_1',\ldots, P_k'$ polynomials in $\mathbb{Z}[x_1,\ldots, x_r]$ with $P_i'-P_j'$ non-constant for $i\neq j$, and $\deg{P_i'}\leq d$, and the coefficients of $P_i'$ bounded by $W^{1/\kappa}$ in absolute value for some constant $\kappa>0$ (cf.~\cite[Section 5]{tao-ziegler2} for this reduction). Here we have denoted $\lambda_{b,W}(n):=\lambda(Wn+b)$, and recall that $\Lambda_{b,W}(n):=\phi(W)/W\cdot \Lambda(Wn+b)$. We now see that in fact both Theorem~\ref{poly1} and~\ref{poly2} will follow once we prove \eqref{eq106} in a form where some copies of $\Lambda$ are allowed to be replaced with $\lambda$.

Let $A=W^{1/\kappa}$, so that the absolute values of the coefficients of $P_i'$ are bounded by $A$.  Recall $d=\max_i \deg{P_i'}$. We set $N=\lfloor X/W\rfloor$, so that $L=o(N^{1/d})$. Consider functions $f_1,\ldots, f_k:[N]\to \mathbb{C}$ with $|f_i|\ll \Lambda_{b_i,W}+1$ and $|f_1| \leq 1$. Extend the $f_i$ to functions $\tilde{f_i}:\mathbb{Z}/N\mathbb{Z}\to \mathbb{C}$ by making them $N$-periodic. Observe that 
\begin{align}\label{eq:PolAv1}
\mathbb{E}_{\m \in [L]^r}\mathbb{E}_{n\leq N}c_X(\m)f_1(n+P_1'(\m))\cdots f_k(n+P_k'(\m))    
\end{align}
is up to $o(1)$ error equal to
\begin{align}\label{eq:PolAv6}
 \mathbb{E}_{\m\in [L]^r}c_X(\m)\mathbb{E}_{n\in \mathbb{Z}/N\mathbb{Z}}\tilde{f_1}(n+P_1'(\m))\cdots \tilde{f_k}(n+P_k'(\m)),   
\end{align}
since the components of the $\m$ variable in \eqref{eq:PolAv1} are bounded by $\eta X^{1/d}$ for some $\eta>0$ small enough in terms of $A$, so that wraparound issues are negligible.

This latter expression is in turn bounded using van der Corput's inequality (see e.g.~\cite[Formula (4.1)]{gt-U3}) by
\begin{align*}
\ll (\mathbb{E}_{h\in \mathbb{Z}/N\mathbb{Z}}|\mathbb{E}_{\m\in [L]^r}\mathbb{E}_{n\in \mathbb{Z}/N\mathbb{Z}}\Delta_{h}\tilde{f_1}(n+P_1'(\m))\cdots \Delta_h\tilde{f_k}(n+P_k'(\m))|)^{1/2},  
\end{align*}
where $\Delta_hf(x):=f(x+h)\overline{f(x)}$.

By~\cite[Theorem 13]{tao-ziegler2}, for any polynomials $P_i'$ as in Theorems~\ref{poly1}, \ref{poly2}, we have 
\begin{align*}
|\mathbb{E}_{n\in \mathbb{Z}/N\mathbb{Z}}\mathbb{E}_{\m\in [L]^r}\Delta_h\tilde{f_1}(n+P_1'(\m))\cdots \Delta_h\tilde{f_k}(n+P_k'(\m))|=o(1),
\end{align*}
provided that    
\begin{align}\label{eq:PolAv3}
\mathbb{E}_{\mathbf{t}\in [A^{-1}L]^r}\|\Delta_h\tilde{f_1}\|_{\square^{D'}_{Q_1(\mathbf{t})[-A^{-1}L,A^{-1}L],\ldots, Q_{D'}(\mathbf{t})[-A^{-1}L,A^{-1}L]}}=o(1)
\end{align}
for any fixed $D'\geq 1$ and any polynomials $Q_1,\ldots, Q_{D'}\in \mathbb{Z}[t_1,\ldots, t_r]$ not identically zero and with coefficients of size $O(A^{O(1)})$,
where 
\begin{align*}
\|f\|_{\square^{d}_{C_1,\ldots, C_{d}}}:=\left(\mathbb{E}_{x\in \mathbb{Z}/N\mathbb{Z}}\mathbb{E}_{h_1\in C_1-C_1}\cdots \mathbb{E}_{h_d\in C_d-C_d}\prod_{\omega\in \{0,1\}^d}\mathcal{C}^{|\omega|}f(x+\omega\cdot \mathbf{h})\right)^{1/2^d}    
\end{align*}
is a Gowers box norm of order $d$ and $\mathcal{C}$ is the complex conjugation operator, and we used the notation $q[-N,N]:=[-qN,qN]\cap q\mathbb{Z}$. Thus we may control polynomial averages with averaged Gowers box norms. Further, by a concatenation theorem, namely~\cite[Theorem 9]{tao-ziegler2} (with $d_0=1$ there), we have \eqref{eq:PolAv3} provided that
\begin{align}\label{eq:PolAv4}
\|\Delta_h\tilde{f_1}\|_{U^{D''}_{q[1,A^{-2D''}L]}}=o(1)
\end{align}
holds for all fixed $D''\geq 1$ and all $1\leq q\leq A^{D''}$, where $\|f\|_{U^{d}_C}:=\|f\|_{\square^{d}_{C,\ldots, C}}$.

Averaging this over $h$, we now conclude that the desired bound for \eqref{eq:PolAv6} follows from
\begin{align*}
\mathbb{E}_{h\in \mathbb{Z}/N\mathbb{Z}}\|\Delta_h\tilde{f_1}\|_{U^{D''}_{q[1,A^{-2D''}L]}}^{2^{D''}}=o(1).    
\end{align*}
Expanding out the Gowers norm above, we see that this claim in turn reduces to
\begin{align}\label{eq:PolAv7}
\|\tilde{f_1}\|_{U^{D''+1}_{\mathbb{Z}/N\mathbb{Z},q[1,A^{-2D''}L],\ldots, q[1,A^{-2D''}L]}}=o(1).   
\end{align}
Since wraparound issues are again negligible, we can split the average over $\mathbb{Z}/N\mathbb{Z}$ implicit in \eqref{eq:PolAv7} into intervals of length $\asymp L$ and apply the generalized von Neumann theorem, thus reducing the proof of \eqref{eq:PolAv7} to
\begin{align}\label{eq:PolAv8}
\sup_{A^{-c}L\leq M\leq A^{c}L}\mathbb{E}_{n\leq N-M} \|f_1\|_{U^{D''+1}[n,n+M]}=o(1)   
\end{align}
for any constant $c\geq 1$.

Now specialize to the case where $f_1$ is the ($W$-tricked) Liouville function $\lambda_{b,W}(n)1_{[N]}(n)$ (and $N=\lfloor X/W\rfloor$ as before). By making a change of variables, and extending the range of the supremum in $W1_{m\equiv b\pmod W}$, we reduce \eqref{eq:PolAv8} to
\begin{align}\label{eq:PolAv9}
\sup_{N^{\varepsilon/2}\leq M\leq N^{2\varepsilon}}\mathbb{E}_{n\leq W(N-M)} \|\lambda \cdot W 1_{\cdot\equiv b\pmod W}1_{[WN]}\|_{U^{D''+1}[n,n+M]}=o(1).   
\end{align}
The factor $1_{[WN]}$ can be removed, since the contribution to the $n$ average from the range $WN-O(M)\leq n\leq W(N-M)$ is negligible. By Fourier expanding $1_{\cdot\equiv b\pmod W}$ in terms of additive characters, and applying the triangle inequality (and recalling that $w$ tends to infinity arbitrarily slowly) we reduce\footnote{Note that even though the Fourier expansion of $1_{\cdot\equiv b\pmod W}$ followed by the triangle inequality loses a multiplicative factor of $W$, this loss is harmless, since $w$, and hence $W$, can be assumed to tend to infinity much slower than the decay rate of \eqref{eq:PolAv9} without the $W1_{\cdot\equiv b\pmod W}1_{[WN]}$ factor.} to proving \eqref{eq:PolAv8} also without the factor $W1_{\cdot\equiv b\pmod W}$.

By our main theorem, Theorem~\ref{mult-pret}, we have \eqref{eq:PolAv9} without the term $W1_{m\equiv b\pmod W}1_{[WN]}$, and therefore taking above $f_i\in \{\lambda_{b_i,W},\Lambda_{b_i,W}\}$ for $1\leq i\leq k$, both Theorem~\ref{poly1} and Theorem~\ref{poly2} follow.\end{proof}

\appendix
\section{Bernstein inequality for exponential polynomials}

\label{app:a}

In this appendix we establish the Bernstein inequality for exponential polynomials, Lemma~\ref{ber-exp}.  We begin with a bound for the number of zeroes of such polynomials:

\begin{lemma}\label{lom} Let $\alpha_1,\dots,\alpha_k$ be real numbers, let $d_1,\dots,d_k$ be non-negative integers, and let $P: \R \to \R$ be a real linear combination of the exponential monomials $t \mapsto t^j \exp(\alpha_i t)$ for $i=1,\dots,k$ and $0 \leq j \leq d_i$.  Then if $P$ is not identically zero, it has at most $k+\sum_{i=1}^k d_i$ zeroes.
\end{lemma}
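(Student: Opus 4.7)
The plan is to prove the lemma by strong induction on the total dimension $n \coloneqq k + \sum_{i=1}^k d_i$ of the space of exponential polynomials in question, combining Rolle's theorem with a differentiation trick that eliminates one frequency at a time.

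First I would make some standard reductions. By collecting terms with equal frequencies, I may assume the $\alpha_i$ are pairwise distinct. Writing $P(t) = \sum_{i=1}^k Q_i(t) e^{\alpha_i t}$ with $Q_i$ a real polynomial of degree $\leq d_i$, I may discard any index $i$ with $Q_i \equiv 0$ (this only decreases $n$ so the bound would follow a fortiori) and hence assume $\deg Q_i = d_i$ throughout. Finally, since multiplying $P$ by the nowhere-vanishing function $e^{-\alpha_1 t}$ does not change its zero set, I may relabel so that $\alpha_1 = 0$.

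The crux is the identity
\[
P^{(m)}(t) = \sum_{i=1}^k \bigl((D + \alpha_i)^m Q_i\bigr)(t)\, e^{\alpha_i t},
\]
where $D = \frac{d}{dt}$, which follows by induction from $\frac{d}{dt}(Q_i(t) e^{\alpha_i t}) = ((D+\alpha_i) Q_i)(t)\, e^{\alpha_i t}$. Taking $m = d_1 + 1$: the $i=1$ contribution vanishes because $D^{d_1+1} Q_1 = 0$, while for each $i \geq 2$ the operator $D + \alpha_i$ acts on the space of polynomials of degree $\leq d_i$ as an upper-triangular linear map with the nonzero constant $\alpha_i$ on the diagonal, hence is invertible; consequently $(D+\alpha_i)^{d_1+1} Q_i$ is again a polynomial of degree exactly $d_i$. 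Therefore $P^{(d_1+1)}$ is a nonzero exponential polynomial of the same form, now with $k-1$ distinct frequencies and total dimension $n - 1 - d_1$. By $d_1+1$ iterations of Rolle's theorem, if $P$ has $N$ real zeros then $P^{(d_1+1)}$ has at least $N - (d_1+1)$ real zeros, and the inductive hypothesis applied to $P^{(d_1+1)}$ yields $N - (d_1+1) \leq n - 1 - d_1$, i.e.\ $N \leq n$, as required.

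The base case $k = 1$ is immediate: after the normalization $\alpha_1 = 0$, $P = Q_1$ is a polynomial of degree $d_1$, which has at most $d_1 \leq n$ real zeros. The main obstacle to watch out for is the degree-preservation step: if one failed to first translate so that $\alpha_1 = 0$, then differentiation could simultaneously lower the degrees of several $Q_i$ at once, breaking the bookkeeping and derailing the induction. This is exactly why the preliminary normalization $\alpha_1 = 0$ (which forces $\alpha_i \neq 0$ for $i \geq 2$, making $D + \alpha_i$ invertible) is essential to the argument.
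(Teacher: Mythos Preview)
Your proof is correct and follows essentially the same approach as the paper's: normalize so that $\alpha_1=0$, differentiate to eliminate the $\alpha_1$ frequency, apply Rolle's theorem, and induct. The only difference is organizational: the paper uses a nested induction (first on $k$, then on $\sum d_i$) and differentiates once per step, whereas you collapse this into a single induction on $n=k+\sum d_i$ and differentiate $d_1+1$ times at once, using the observation that $D+\alpha_i$ preserves the exact degree of $Q_i$ when $\alpha_i\neq 0$ to guarantee $P^{(d_1+1)}\not\equiv 0$ directly.
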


\begin{proof} The claim is trivial for $k=0$, so suppose that $k \geq 1$ and that the claim has already been proven for $k-1$.  We now fix $k$ and induct on $\sum_{i=1}^k d_i$. By reordering we may assume that $d_1\leq d_2\leq \cdots \leq d_k$.  By multiplying $P$ by $t \mapsto \exp(-\alpha_1 t)$ we may assume that $\alpha_1=0$. If $d_1$ vanishes, then the derivative $P'$ is a linear combination of the exponential monomials $t\mapsto t^j\exp(\alpha_it)$ with $2\leq i\leq k$ and $0\leq j\leq d_i$, so the claim follows from the outer induction hypothesis on $k$ and Rolle's theorem. If instead $d_1$ does not vanish, then $P'$ is of the same form as $P$ but with $d_1$ replaced by $d_1-1$, thus by the induction hypothesis it either vanishes identically or has at most $k+(\sum_{i=1}^k d_i) - 1$ zeros.  The claim now follows from Rolle's theorem. 
\end{proof}

\begin{proof}[Proof of Lemma~\ref{ber-exp}] We allow all implied constants to depend on $k,d_1,\dots,d_k,m,I$. Let $N_0$ be large enough in terms of $k,d_1,\ldots, d_k$.  We may normalize $\sup_{n=1,\dots,N_0} |P(n)| = 1$.  The claim is trivial if $P$ is constant, so we may assume that $P$ is non-constant.  By Lemma~\ref{lom} the exponential polynomial $P(t)$ then attains the values $\pm 1$ at most $O(1)$ times, so the set $\{ t \in \R: |P(t)| \leq 1 \}$ is the union of $O(1)$ intervals (possibly of infinite or zero length).  As this set contains $\{1,\dots,N_0\}$, we conclude from the pigeonhole principle (for $N_0$ large enough in terms of $d_1, \dotsc, d_k$) that this set also contains an interval $[n,n+1]$ for some $n=1,\dots,N_0-1$.  

Now observe that $P$ solves the ordinary differential equation
$$ \prod_{i=1}^k \left( \frac{d}{dt} - \alpha_i \right)^{d_i+1} P(t) = 0.$$
Writing $D \coloneqq \sum_{i=1}^k (d_i+1) = O(1)$ and $\eps \coloneqq \sup_{1 \leq i \leq k} |\alpha_i|$ (where, by assumption, $\varepsilon$ is small enough in terms of $k,d_1,\ldots, d_k,N_0$), we can write this equation as
\begin{equation}\label{pdt}
 P^{(D)}(t) + c_{D-1} P^{(D-1)}(t) + \dots + c_0 P(t) = 0
\end{equation}
where the coefficients $c_0,\dots,c_{D-1}$ are of size $O(\eps)$.  In terms of the $D$-dimensional vector
$$ v(t) \coloneqq \begin{pmatrix} P(t) \\ \vdots \\ P^{(D-1)}(t) \end{pmatrix}$$
one can write this differential equation as a first-order system
$$ \frac{d}{dt} v(t) = (U+E) v(t)$$
where $U$ is the shift matrix
$$ U \coloneqq \begin{pmatrix} 0 & 1 & \dots & 0 \\ 0 & 0 & \dots & 0 \\ \vdots & \vdots & \ddots & \vdots \\ 0 & 0 & \dots & 1 \\ 0 & 0 & \dots & 0 \end{pmatrix}$$
and $E$ is a $t$-independent matrix of dimension $D$ with all entries being of size $O(\eps)$.
The solution of this equation is $$v(t) =  \exp( (t-n) (U+E )) v(n).$$ 
 By the continuity of the matrix exponential we then have 
\begin{equation}\label{vt}
 v(t) = \exp( (t-n) U ) v(n) + O(\eps \|v(n)\| )
\end{equation}
whenever $|t-n|=O(1)$ (here $\|\cdot\|$ denotes the Euclidean norm of a vector).  In particular, we have the approximate Taylor expansion
$$ P(t) = \sum_{j=0}^{D-1} \frac{(t-n)^j}{j!} P^{(j)}(n) + O( \eps \|v(n)\| ).$$
Since $|P(t)| \leq 1$ for $t \in [n,n+1]$, we conclude that
$$ \sum_{j=0}^{D-1} \frac{(t-n)^j}{j!} P^{(j)}(n) \ll 1 + \eps \|v(n)\|$$
for $t \in [n,n+1]$.  From \eqref{ber-2} applied to the polynomial in $t$ on the left-hand side we have that
$$ |P^{(j)}(n)| \ll 1 + \eps \|v(n)\|.$$
We conclude that
$$ \|v(n)\| \ll 1 + \eps \|v(n)\|$$ 
and hence for $\eps$ small enough we see that all the components of $v(n)$ are $O(1)$.  Inserting this back into \eqref{vt} we conclude that \eqref{pmt} holds for all $m \leq D-1$; the remaining cases then follow by differentiating the equation \eqref{pdt} $m-D$ times and using induction on $m$.
\end{proof}
\section{The Baker--Campbell--Hausdorff formula and its consequences}\label{bch}

In this section, we review some standard facts about connected, simply connected nilpotent Lie groups $G$ and their Lie algebras $\log G$.  As mentioned in Section~\ref{nilseq}, all connected, simply connected nilpotent Lie groups are isomorphic to matrix algebras, so we shall abuse notation in this appendix by viewing elements of $G$ and $\log G$ as matrices (in particular we identify the Lie group exponential with the matrix exponential).

If $G$ is a simply connected nilpotent Lie group with some filtration $(G_i)_{i \geq 0}$ with $G_i=0$ for $i>k$, we can define the operation $\ast: \log G \times \log G \to \log G$ by the formula
\begin{equation}\label{ast-def}
 X \ast Y \coloneqq \log( \exp(X) \exp(Y) )
\end{equation}
for all $X,Y \in \log G$, or equivalently
$$ \log(gh) = \log g \ast \log h$$
for all $g,h \in G$.  For instance, in the Heisenberg group example from Example~\ref{heisen}, we have
$$ \begin{pmatrix} 0 & x_1 & z_1 \\ 0 & 0 & y_1 \\ 0 & 0 & 0 \end{pmatrix} \ast \begin{pmatrix} 0 & x_2 & z_2 \\ 0 & 0 & y_2 \\ 0 & 0 & 0 \end{pmatrix} = \begin{pmatrix} 0 & x_1+x_2 & z_1 + z_2+\frac{x_1y_2-x_2y_1}{2} \\ 0 & 0 & y_1+y_2 \\ 0 & 0 & 0 \end{pmatrix}.$$

The operation $\ast$ is clearly a group operation on $\log G$ (with identity $0$ and inverse map $X \mapsto -X$).  The \emph{Baker--Campbell--Hausdorff formula} gives an explicit description of this operation.  As is well known, $\log G$ is a nilpotent Lie algebra, using the usual matrix commutator $[X,Y] \coloneqq XY - YX$ as the Lie bracket; see~\cite[Corollary 11.2.7]{hilgert}.  For any $X \in \log G$, we can then define the adjoint representation $\mathrm{ad}_X\colon \log G \to \log G$ to be linear map
$$ \mathrm{ad}_X(Y) \coloneqq [X,Y].$$
As $\log G$ is a nilpotent Lie algebra, $\mathrm{ad}_X$ is a nilpotent linear transformation, thus $\mathrm{ad}_X^m = 0$ for some natural number $m$; more generally, for any $X,Y \in \log G$, any word in $\mathrm{ad}_X, \mathrm{ad}_Y$ of length greater than or equal to some threshold $m$ will vanish (in fact, by the inclusion \eqref{gij-inc} established below, one can take $m$ to equal the degree $k$ of the filtration).  The Baker--Campbell--Hausdorff formula then states
$$ X \ast Y = X + \int_0^1 \psi( e^{\mathrm{ad}_X} e^{t\mathrm{ad}_Y} ) Y\ dt,$$
where $e^{\mathrm{ad}_X} = \sum_{n=0}^\infty \frac{1}{n!} \mathrm{ad}_X^n$ is the matrix exponential of $\mathrm{ad}_X$, and $\psi$ is the function
$$ \psi(x) \coloneqq \frac{x \log x}{x-1} = 1 + \frac{x-1}{2} - \frac{(x-1)^2}{6} + \dots;$$
see for instance~\cite[Theorem 3.3]{hall} or~\cite[Proposition 3.4.4]{hilgert}.  Note that from the nilpotent nature of $\log G$ that we can truncate the Taylor series for the matrix exponential and the function $\psi$ to some finite threshold $m$, so that
\begin{equation}\label{poly}
X \ast Y = X + Y + P( \mathrm{ad}_X, \mathrm{ad}_Y ) Y
\end{equation}
for some (non-commutative) polynomial $P$ of two variables of bounded degree and coefficients that are rational numbers of bounded height, where the constant term of $P$ vanishes and the linear term is equal to $\frac{1}{2} \mathrm{ad}_X$ (the contribution of $\mathrm{ad}_Y$ can be deleted from the linear term since $\mathrm{ad}_Y Y = 0$).  The first few terms of this formula are
\begin{align*}
 X \ast Y &= X + Y + \frac{1}{2} \mathrm{ad}_X Y + \frac{1}{12} (\mathrm{ad}_X^2 - \mathrm{ad}_Y \mathrm{ad}_X) Y + \dots \\
&= X + Y + \frac{1}{2} [X,Y] + \frac{1}{12} ([X,[X,Y]] - [Y,[X,Y]]) + \dots,
\end{align*}
although we will not need the explicit form of these terms beyond the quadratic case.  From \eqref{poly} we conclude in particular that $X \ast Y$ is a polynomial combination of $X, Y$, with bounded degree and coefficients. As one particular consequence of this formula, we see that
$$ (tX) \ast (tY) \ast (-tX) \ast (-tY) = t^2 [X,Y] + O(t^3)$$
as $t \to 0$ for any $X,Y \in \log G$, so the Lie bracket can be recovered from $\ast$ by the limiting formula
\begin{equation}\label{lim}
[X,Y] = \lim_{t \to 0} \frac{(tX) \ast (tY) \ast (-tX) \ast (-tY) }{t^2},
\end{equation}
which can also be established directly from \eqref{ast-def} and Taylor expansion of the matrix exponential (this is also~\cite[(3.14)]{hilgert}).

Another closely related identity to the Baker--Campbell--Hausdorff formula is
$$ e^{\mathrm{ad}_X} Y = \exp(X) Y \exp(-X)$$
for any $X,Y \in \log G$; see~\cite[Proposition 2.25]{hall}.  
As $\exp (C^{-1}YC) = C^{-1} \exp (Y)C$ for any invertible $C$, we have 
$$ \exp( \exp(X) Y \exp(-X) ) = \exp(X) \exp(Y) \exp(-X)$$
and thus
$$ \exp( e^{\mathrm{ad}_X} Y ) = \exp(X) \exp(Y) \exp(-X)$$
for all $X,Y \in \log G$, or equivalently
\begin{equation}\label{hgh}
 \log( h g h^{-1} ) = e^{\mathrm{ad}_{\log h}} \log g
\end{equation}
for all $g,h \in G$.

By definition, the groups $G_i$ in the filtration $(G_i)$ are closed subgroups of $G$, and thus are themselves Lie groups with a Lie algebra $\log G_i$ that are subalgebras of $\log G$; see~\cite[Proposition 9.3.9]{hilgert}, \cite[Proposition 3.14]{hall}.  In particular, the exponential map $\exp: \log G \to G$ descends to a diffeomorphism $\exp: \log G_i \to G_i$, so $G_i$ is simply connected. 
The group $G_i$ is nilpotent simply connected, and $G_{i+1}$ is a closed simply connected nilpotent subgroup, thus $G_i/G_{i+1}$ is simply connected. 
  If $X \in \log G_i$ and $Y \in \log G_j$, then from the filtration property $[G_i,G_j] \subset G_{i+j}$ and \eqref{ast-def} we see that $(tX) \ast (tY) \ast (-tX) \ast (-tY) \in \log G_{i+j}$ for any $t>0$; inserting this into \eqref{lim} we conclude that $[X,Y] \in \log G_{i+j}$, thus we have the Lie algebra filtration property
\begin{equation}\label{gij-inc}
[\log G_i, \log G_j] \subset \log G_{i+j}.
\end{equation}
In particular, each of the $\log G_i$ are normal Lie subalgebras of $\log G$.  From the Baker--Campbell--Hausdorff formula \eqref{poly} and \eqref{gij-inc} we then also have
$$ X \ast Y = X + Y \mod \log G_{i+1}$$
whenever $i \geq 1$ and $X,Y \in \log G_i$, or equivalently
\begin{equation}\label{gh}
 \log(gh) = \log(g) + \log(h) \mod \log G_{i+1}
\end{equation}
whenever $i \geq 1$ and $g,h \in G_i$.  Thus $\log G_i / \log G_{i+1}$ is an abelian Lie algebra for any $i \geq 1$, and the logarithm map descends to a homomorphism from the multiplicative group $G_i/G_{i+1}$ to the additive group $\log G_i / \log G_{i+1}$.

\begin{lemma}[Taylor expansion]\label{taylo}  Let $d \geq 1$ be a natural number, and let $g \in \Poly(\Z \to G)$.  Then there exist unique \emph{Taylor coefficients} $g_j \in G_{j}$ such that
$$ g(n) = \prod_{j} g_j^{\binom{n}{j}}.$$
\end{lemma}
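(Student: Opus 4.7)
The plan is to dispose of uniqueness by evaluation and then establish existence by induction on the degree $k$ of the filtration, using the centrality of $G_k$. For uniqueness, if $g(n) = g_0 g_1^{\binom{n}{1}} \cdots g_k^{\binom{n}{k}}$ then setting $n=0$ forces $g_0 = g(0)$, and inductively setting $n=j$ (using $\binom{j}{i}=0$ for $i>j$) determines $g_j = \bigl(g_0 g_1^{\binom{j}{1}} \cdots g_{j-1}^{\binom{j}{j-1}}\bigr)^{-1} g(j)$ uniquely as an element of $G$; no use is made of the filtration condition here.

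For existence, I induct on $k$. The base case $k=1$ is abelian (since $[G_1,G_1] \subseteq G_2 = \{1\}$), so $\log g \colon \Z \to \log G$ is a polynomial of degree $\leq 1$ by the filtration hypothesis, and the Lie algebra version of Lemma~\ref{dte} gives the expansion with $g_j = \exp X_j$. For the inductive step, observe that $G_k$ is central in $G$ (as $[G,G_k] \subseteq G_{k+1} = \{1\}$) and hence normal, so the quotient $G/G_k$ is a filtered nilpotent Lie group of degree $k-1$ with filtration $(G/G_k)_j = G_j/G_k$. The projection $\bar g \colon n \mapsto g(n) G_k$ lies in $\Poly(\Z \to G/G_k)$, and the induction hypothesis produces coefficients $\bar g_j \in G_j/G_k$ for $0 \leq j \leq k-1$ realising its Taylor expansion. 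Lifting each $\bar g_j$ to some $g_j \in G_j$, I form the partial product $h(n) \coloneqq g_0 g_1^{\binom{n}{1}} \cdots g_{k-1}^{\binom{n}{k-1}}$, and then the residual $r(n) \coloneqq g(n) h(n)^{-1}$ takes values in $G_k$ by construction.

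At this point I need two standard facts: that $\Poly(\Z \to G)$ is closed under products and inverses (so $r \in \Poly(\Z \to G)$), and that each monomial map $n \mapsto g_j^{\binom{n}{j}}$ lies in $\Poly(\Z \to G)$ for $g_j \in G_j$ (so $h \in \Poly(\Z \to G)$). Both follow from the Baker--Campbell--Hausdorff formula \eqref{poly} together with the filtration property \eqref{gij-inc}: for the monomial one observes that the cyclic subgroup generated by $g_j$ lies inside $G_j \subseteq G_i$ for $i \leq j$, while the $i$-th discrete derivative produces an exponent of the form $\binom{n}{j-i}$ (times quantities depending on the shifts), which vanishes identically once $i > j$. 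Granted these facts, $r$ is a polynomial map valued in the abelian central subgroup $G_k$ satisfying $\partial_{h_1}\cdots\partial_{h_{k+1}} r \in G_{k+1} = \{1\}$, so $\log r \colon \Z \to \log G_k$ is a polynomial of degree $\leq k$ and Lemma~\ref{dte} yields $r(n) = c_0 c_1^{\binom{n}{1}} \cdots c_k^{\binom{n}{k}}$ with $c_j \in G_k$. Since each $c_j$ is central, the factors $c_j^{\binom{n}{j}}$ commute with every $g_i^m$ and with each other; expanding $g(n) = r(n) h(n)$ and absorbing $c_j$ into $g_j$ for $j < k$ gives the desired expansion $g(n) = (g_0 c_0)(g_1 c_1)^{\binom{n}{1}} \cdots (g_{k-1} c_{k-1})^{\binom{n}{k-1}} c_k^{\binom{n}{k}}$, with $(g_j c_j) \in G_j$ and $c_k \in G_k$. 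The principal technical obstacle is the careful verification of the two facts about $\Poly(\Z \to G)$ invoked above; although classical (going back to Leibman), they require a non-abelian computation with iterated discrete derivatives controlled by \eqref{poly} and \eqref{gij-inc}, and getting the order of factors right is where most of the bookkeeping lives.
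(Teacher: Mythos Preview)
Your proof is correct. The paper itself does not give an argument but simply cites \cite[Lemma~B.9]{gtz}, so your self-contained treatment supplies strictly more than the paper does here. Your induction on the degree $k$ via the central subgroup $G_k$, lifting from $G/G_k$ and absorbing the abelian residual, is the standard route and is essentially how the cited result in \cite{gtz} is proved. Note that your argument (like the paper's) still rests on the Lazard--Leibman theorem that $\Poly(\Z\to G)$ is a group, which the paper invokes separately in the proof of Lemma~\ref{uniq}; you correctly flag this and also verify the monomial case directly. One cosmetic point: in the uniqueness argument you determine $g_j$ as an element of $G$, which is all that is needed, but you might remark that the conclusion $g_j\in G_j$ then follows a posteriori from existence since the $G$-valued solution is unique.
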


\begin{proof} This is a special case of~\cite[Lemma B.9]{gtz}.
\end{proof}

Now we can prove Lemma~\ref{uniq}.

\begin{proof}[Proof of Lemma~\ref{uniq}]  We may rescale $\delta=1$.  The fact that $\Poly(\Z \to G)$ forms a group is the Leibman--Lazard theorem; see e.g.,~\cite[Corollary B.4]{gtz}. Now suppose that $\tilde g \in \Poly(\R \to G)$, thus we have a Taylor expansion
$$ \log \tilde g(t) = \sum_{i=0}^k X_i t^i$$
for some $X_i \in \log G_i$.  For any $j \geq 0$, let $V_j$ denote the vector space of polynomial maps $p\colon \R \to \log G$ of the form
$$ p(t) = \sum_{0 \leq i \leq k-j} Y_i t^i$$
where $Y_i \in \log G_{i+j}$ for all $i$, thus $\log \tilde g \in V_0$.  One can check that the $V_j$ are decreasing with 
\begin{equation}\label{vij}
[V_i,V_j] \subset V_{i+j}
\end{equation}
and $V_i=0$ for $i > k$; in particular, the $V_i$ are each Lie algebras.  We now claim by induction that
$$ \log \partial_{h_1} \dots \partial_{h_j} \tilde g \in V_j$$
for all $j \geq 0$ and $h_1,\dots,h_j \in \R$.  This claim is already established for $j=0$.  If it holds for some $j$, and $h_{j+1} \in \R$, then by using the fact that $(t+h_{j+1})^i$ differs from $t^i$ by a polynomial of degree at most $i-1$ in $t$, we see that
$$ \log \partial_{h_1} \dots \partial_{h_j} \tilde g(\cdot + h_{j+1}) = \log \partial_{h_1} \dots \partial_{h_j} \tilde g \mod V_{j+1}$$
and hence by the Baker--Campbell--Hausdorff formula \eqref{poly}
$$ \log \partial_{h_1} \dots \partial_{h_j} \tilde g(\cdot + h_{j+1}) \ast (-\log \partial_{h_1} \dots \partial_{h_j} \tilde g) \in V_{j+1}.$$
But by \eqref{ast-def} the left-hand side is equal to $\log \partial_{h_1} \dots \partial_{h_{j+1}} \tilde g$, closing the induction.
Applying this with $j=k$ and $h_1,\dots,h_j,t \in \Z$, we conclude that the restriction of $\tilde g$ to $\Z$ lies in $\Poly(\Z \to G)$.

Now suppose that $g \in \Poly(\Z \to G)$.  Any such element can be expressed uniquely as a Taylor expansion
$$ g(n) = g_0 g_1^{\binom{n}{1}} \dots g_k^{\binom{n}{k}}$$
for all $n \in \Z$ and some $g_j \in G_j$; see~\cite[Lemma B.9]{gtz}.  Using the real exponentiation \eqref{realexp}, we can extend $g$ to the map
\begin{equation}\label{tg}
 \tilde g(t) = g_0 g_1^{\binom{t}{1}} \dots g_k^{\binom{t}{k}}
\end{equation}
and from many applications of the Baker--Campbell--Hausdorff formula \eqref{ast-def}, \eqref{poly}, \eqref{gij-inc} one sees that $\tilde g$ is now an element of $\Poly(\R \to G)$.  This establishes existence.  To show uniqueness, it suffices by the group property to check the case $g=1$.  Then any extension $\tilde g$ is such that $\log \tilde g(n) = 0$ for every integer $n$; since $\log \tilde g$ is also a polynomial, $\log \tilde g$ vanishes identically, hence $\tilde g$ must be $1$, giving uniqueness.
\end{proof}

As a corollary we obtain: 
\begin{lemma}[Non-abelian Discrete Taylor expansion]\label{na-dte}  For any $\delta > 0$, the space $\Poly(\delta \Z \to G)$ consists precisely of those functions $\gamma: \R \to G$ of the form
$$ \gamma(t) \coloneqq \prod_{j = 0}^k g_j^{\binom{t/\delta}{j}}$$
for some $g_j \in G_j$, where $\binom{x}{j} \coloneqq \frac{x(x-1)\dots(x-j+1)}{j!}$.
\end{lemma}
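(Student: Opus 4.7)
The plan is to bootstrap this lemma from the integer-valued Taylor expansion supplied by Lemma~\ref{taylo}, together with the $\Poly(\delta\Z\to G)\leftrightarrow\Poly(\R\to G)$ identification established in Lemma~\ref{uniq}. First, by replacing $\gamma$ with $\gamma(\delta\,\cdot)$ one reduces to the case $\delta=1$, so the task becomes showing that a map $\gamma\colon \R\to G$ lies in $\Poly(\Z\to G)$ if and only if it is of the form $\gamma(t)=\prod_{j=0}^k g_j^{\binom{t}{j}}$ for some $g_j\in G_j$.

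For the forward direction, suppose $\gamma\in\Poly(\Z\to G)$. Applying Lemma~\ref{taylo} to the restriction $\gamma|_{\Z}$ produces unique $g_j\in G_j$ with $\gamma(n)=\prod_j g_j^{\binom{n}{j}}$ for every $n\in\Z$. I would then define $\tilde\gamma(t)\coloneqq\prod_j g_j^{\binom{t}{j}}$ via the real-exponentiation convention \eqref{realexp}. As established in the construction around equation \eqref{tg} inside the proof of Lemma~\ref{uniq}, iterated use of the Baker--Campbell--Hausdorff formula \eqref{poly} together with the filtration inclusion \eqref{gij-inc} shows that $\tilde\gamma\in\Poly(\R\to G)$. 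Since $\tilde\gamma$ and (the canonical extension of) $\gamma$ are both elements of $\Poly(\R\to G)$ that agree on $\Z$, the uniqueness clause of Lemma~\ref{uniq} forces $\gamma(t)=\tilde\gamma(t)$ for every real $t$. For the converse, starting from any $g_j\in G_j$, the very same construction yields $\tilde\gamma\in\Poly(\R\to G)$, whose restriction to $\Z$ lies in $\Poly(\Z\to G)$ again by Lemma~\ref{uniq}.

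The only step requiring genuine care is verifying that $\tilde\gamma(t)=\prod_j g_j^{\binom{t}{j}}$ is actually a polynomial map in the filtered sense, i.e.\ that $\partial_{h_1}\cdots\partial_{h_i}\tilde\gamma(t)\in G_i$ for all $h_1,\dots,h_i,t\in\R$. This is where the hard work sits, but it is already performed inside the proof of Lemma~\ref{uniq} through the inductive claim that $\log\partial_{h_1}\cdots\partial_{h_j}\tilde g\in V_j$, and ultimately reduces to the observation that $\binom{t+h}{j}-\binom{t}{j}$ is a polynomial in $t$ of degree at most $j-1$, so each single derivative deepens the filtration level by one once combined with \eqref{poly} and \eqref{gij-inc}. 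Uniqueness of the coefficients $g_j$ is inherited directly from Lemma~\ref{taylo} (equivalently, one may extract them one at a time by successive right-quotients in the telescoping identity $\gamma(n+1)\gamma(n)^{-1}$ modulo deeper subgroups $G_{j+1}$).
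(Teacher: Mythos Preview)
Your proposal is correct and matches the paper's approach: the paper simply states Lemma~\ref{na-dte} as a corollary of the preceding Lemma~\ref{uniq} (and Lemma~\ref{taylo}), and you have accurately unpacked why, namely by rescaling to $\delta=1$, invoking Lemma~\ref{taylo} for the discrete Taylor coefficients, and then citing the construction \eqref{tg} and the uniqueness clause from the proof of Lemma~\ref{uniq} to pass between $\Z$ and $\R$.
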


If $\Gamma$ is a cocompact discrete subgroup of $G$ with each $\Gamma_i \coloneqq \Gamma \cap G_i$ cocompact in $G_i$, then there exists a \emph{Mal'cev basis} for $\Gamma$, by which we mean a linear basis $X_1,\dots,X_{\dim G}$ for $\log G$ with the property that $X_{\dim G - \dim G_i + 1},\dots,X_{\dim G}$ form a basis for $\log G_i$ for each $i$ (so in particular $[X_i,X_j]$ lies in the span of $X_{\max(i,j)+1},\dots,X_{\dim G}$ for any $1 \leq i,j \leq \dim G$), and
$$ \Gamma = \{ \exp( n_1 X_1 ) \dotsm \exp( n_{\dim G} X_{\dim G} ): n_1,\dots,n_{\dim G} \in \Z \}.$$
See~\cite[\S 2]{gt-nil} for details.  From this and many applications of the Baker--Campbell--Hausdorff formula, we see that for any $1 \leq i,j \leq \dim G$, the coefficients of $[X_i,X_j]$ in the basis $X_{\max(i,j)+1},\dots,X_{\dim G}$ are rational numbers with denominator $O(1)$, and thus every  element of $\Gamma$ can be written in the form
\begin{equation}\label{gamma-rat} \exp( \frac{1}{Q_1} (n_1 X_1 + \dots + n_{\dim G} X_{\dim G}) )\end{equation}
for some integers $n_1,\dots,n_{\dim G}$ and some natural number $Q_1=O(1)$ depending only on $G$ and the Mal'cev basis; conversely, there exists a natural number $Q_2 = O(1)$ such that every expression of the form
$$ \exp( Q_2 (n_1 X_1 + \dots + n_{\dim G} X_{\dim G}) )$$
with $n_1,\dots,n_{\dim G} \in \Z$ lies in $\Gamma$.  One consequence of this and the Baker--Campbell--Hausdorff formula is that, for any fixed natural number $q = O(1)$, the set $\{ \gamma \in G: \gamma^q \in \Gamma \}$ generates a group, all of whose elements are of the form
$$ \exp( \frac{1}{Q} (n_1 X_1 + \dots + n_{\dim G} X_{\dim G}) )$$
for some $Q$ depending on $G$, $q$, and the Mal'cev basis; in particular, this group contains only finitely many cosets of $\Gamma$, so that $\Gamma$ is a finite index subgroup of it.  As one particular corollary of this, we see that if $\gamma_1, \gamma_2 \in G$ are such that $\gamma_1^{q_1}, \gamma_2^{q_2} \in \Gamma$ for some natural numbers $q_1,q_2 = O(1)$, then one has $(\gamma_1 \gamma_2)^q \in \Gamma$ for some $q = O(1)$.\\

\section{Bezout's identity and the Chinese remainder theorem for polynomial spaces}

\label{app:b}

In this section, we prove various versions of Bezout's identity and the Chinese remainder theorem for polynomial maps, either into the circle $\R/\Z$ or into more general filtered nilpotent Lie groups.

\subsection{Bezout-type identities}

\begin{proof}[Proof of Lemma~\ref{bezout}] We may normalize $\lambda=1$. We begin with the first claim.  It suffices to establish the inclusion
$$ \Poly_{\leq k}\left(\frac{1}{a} \Z \to \Z\right) + \Poly_{\leq k}\left(\frac{1}{b} \Z \to \Z\right) \supset \Poly_{\leq k}(\Z \to \Z)$$
as the opposite inclusion is trivial.  That is, it suffices to show that every $\gamma \in \Poly_{\leq k}(\Z \to \Z)$ may be split as $\gamma = \gamma_a + \gamma_b$ where $\gamma_a \in \Poly_{\leq k}\left(\frac{1}{a} \Z \to \Z\right)$ and $\gamma_b \in \Poly_{\leq k}\left(\frac{1}{b}\Z \to \Z\right)$.

We prove this by induction on $k$.  The claim is trivial for $k=0$, so suppose that $k \geq 1$ and that the claim has already been proven for $k-1$.  From Lemma~\ref{dte} we can write $\gamma(t) = c \binom{t}{k} + \gamma^*(t)$ for some integer $c$ and $\gamma^* \in \Poly_{\leq k-1}(\R \to \R)$.  By Bezout's identity we may write $c = qa^k + rb^k$ for some integers $q,r$, thus
$$ \gamma(t) = q \binom{at}{k} + r \binom{bt}{k} + \gamma^{**}(t)$$
for some $\gamma^{**} \in \Poly_{\leq k-1}(\R \to \R)$.  As $\gamma(\Z) \subset \Z$, also $\gamma^{**}(\Z) \subset \Z$; so by the induction hypothesis we may write $\gamma^{**}(t) = \gamma^{**}_a(t) + \gamma^{**}_b(t)$ where $\gamma^{**}_a \in \Poly_{\leq k-1}\left(\frac{1}{a}\Z \to \Z\right)$ and $\gamma^{**}_b \in \Poly_{\leq k-1}\left(\frac{1}{b}\Z \to \Z\right)$.  Setting $\gamma_a(t) \coloneqq q \binom{at}{k} + \gamma^{**}_a(t)$ and $\gamma_b(t) \coloneqq r \binom{bt}{k} + \gamma^{**}_b(t)$ closes the induction.

Now we prove the second claim.  Again it suffices to prove the inclusion
$$ \Poly_{\leq k}\left(\frac{1}{a} \Z \to \Z\right) \cap \Poly_{\leq k}\left(\frac{1}{b} \Z \to \Z\right) \subset \Poly_{\leq k}\left(\frac{1}{ab} \Z \to \Z\right)$$
as the opposite inclusion is trivial, and we may again inductively assume that $k \geq 1$ and that the claim has already been proven for $k-1$.

If $\gamma \in \Poly_{\leq k}\left(\frac{1}{a} \Z \to \Z\right) \cap \Poly_{\leq k}\left(\frac{1}{b} \Z \to \Z\right)$, then from Lemma~\ref{dte} we see that the derivative $\gamma^{(k)}$ (which is a constant) is an integer multiple of both $a^k$ and $b^k$, hence can be written as $c(ab)^k$ for some integer $c$.  Thus we may write $\gamma(t) = c \binom{abt}{k} + \gamma^*(t)$ for some integer $c$ and $\gamma^* \in \Poly_{\leq k-1}(\R \to \R)$.  One then easily checks that
$$\gamma^* \in \Poly_{\leq k-1}\left(\frac{1}{a} \Z \to \Z\right) \cap \Poly_{\leq k-1}\left(\frac{1}{b} \Z \to \Z\right) $$
and the claim now follows from the induction hypothesis and Lemma~\ref{dte}.
\end{proof}

\begin{proof}[Proof of Lemma~\ref{bezout-nil}]
We again normalize $\lambda=1$. We begin with the first claim.  As $\Poly(\Z \to \Gamma)$ is a group that contains\footnote{We remind here that, by Lemma~\ref{uniq}, the group $\Poly(\delta \Z \to \Gamma)$ can be (by an abuse of notation) interpreted as a subgroup of $\Poly(\R \to \Gamma)$.} $\Poly(\frac{1}{a} \Z \to \Gamma), \Poly(\frac{1}{b} \Z \to \Gamma)$, we clearly have the inclusion
$$ \Poly(\frac{1}{a} \Z \to \Gamma) \cdot \Poly(\frac{1}{b} \Z \to \Gamma) \subset \Poly(\Z \to \Gamma)$$
and it now suffices to show that any $\gamma \in \Poly(\Z \to \Gamma)$ can be factored as $\gamma = \gamma_a \gamma_b$, where $\gamma_a \in \Poly(\frac{1}{a} \Z \to \Gamma)$ and $\gamma_b \in \Poly(\frac{1}{b}\Z \to \Gamma)$.

Set $\Gamma_i \coloneqq G_i \cap \Gamma$ for all $i$.  If $\gamma$ lies in $\Poly(\Z \to \Gamma_{k+1})$ then the claim is trivial since $\Gamma_{k+1}=\{1\}$, so now suppose by downward induction that $\gamma$ lies in $\Poly(\Z \to \Gamma_i)$ for some $1 \leq i \leq k$, and that the claim has already been proven for $\gamma$ in $\Poly(\Z \to \Gamma_{i+1})$.  By Lemma~\ref{na-dte} we have a Taylor expansion of the form
$$ \gamma(t) = \prod_{j}\gamma_j^{\binom{t}{j}}.$$
Since for $t \in \Z$ we have  $\gamma(t)  \in \Gamma_i$ we get by induction on $n$ that $\gamma_j \in \Gamma_i$. If we let $\pi_i \colon \Gamma_i \to \Gamma_i / \Gamma_{i+1}$ be the quotient map, then since  $\Gamma_i/\Gamma_{i+1}$ is abelian we get  for $t \in \Z$
$$ \pi_i(\gamma(t)) = \prod_{j=0}^i \pi_i(\gamma_j)^{\binom{t}{j}}.$$

By Lemma~\ref{bezout}, we can split each $\binom{t}{j}$ as $P_{a,j}(t) + P_{b,j}(t)$ for $t \in \R$ and some $P_{a,j} \in \Poly_{\leq j}(\frac{1}{a}\Z \to \Z)$ and $P_{b,j} \in \Poly_{\leq j}(\frac{1}{b}\Z \to \Z)$.  Setting
$$ \gamma'_a(t) \coloneqq \prod_{j=0}^i \gamma_j^{P_{a,j}(t)}; \quad 
\gamma'_b(t) \coloneqq \prod_{j=0}^i \gamma_j^{P_{b,j}(t)}$$
for all $t \in \R$, we see that $\gamma'_a \in \Poly(\frac{1}{a} \Z \to \Gamma)$, $\gamma'_b \in \Poly(\frac{1}{b} \Z \to \Gamma)$, and
$$ \gamma = \gamma'_a \sigma \gamma'_b$$
for some $\sigma \in \Poly(\Z \to \Gamma_{i+1})$.  The claim now follows from the induction hypothesis.

Now we prove the second claim.  We show by downwards induction on $k$ that for each $1 \leq i \leq k+1$ and $\gamma \in \Poly(\frac{1}{a} \Z \to \Gamma_i) \cap \Poly(\frac{1}{b} \Z \to \Gamma_i)$ one has $\gamma \in \Poly(\frac{1}{ab}\Z \to \Gamma_i)$.  The claim is trivially true for $i=k+1$, so suppose that $1 \leq i \leq k$ and that the claim has already been proven for $i+1$.  From two applications of Lemma~\ref{na-dte} and with $\pi_i$ as above, we have
\begin{equation}\label{pig}
 \pi_i(\gamma(t)) = \prod_{j=0}^i \pi_i(\gamma_{j,a})^{\binom{at}{j}}
\end{equation}
for all $t \in \frac{1}{a} \Z$ and some $\gamma_{j,a} \in \Gamma_i$, and
\begin{equation}\label{pig-2}
 \pi_i(\gamma(t)) = \prod_{j=0}^i \pi_i(\gamma_{j,b})^{\binom{bt}{j}}
\end{equation}
for all $t \in \frac{1}{b} \Z$ and some $\gamma_{j,b} \in \Gamma_i$.  Specializing to $t \in \Z$ and comparing the top order coefficients of these polynomials (using the uniqueness of the Taylor expansion) in the abelian group $\Gamma_i/\Gamma_{i+1}$, we conclude that
$$\pi_i(\gamma_{i,a})^{a^i} = \pi_i(\gamma_{i,b})^{b^i}.$$
As $a^i,b^i$ are coprime, the Bezout identity allows one to express $1$ as an integer combination of $a^i,b^i$.  We conclude that there exists $\gamma_i \in \Gamma_i$ such that $\pi_i(\gamma_{i,a}) = \pi(\gamma_i)^{b^i}$ and $\pi_i(\gamma_{i,b}) = \pi(\gamma_i)^{a^i}$.  If one then divides out the polynomial $t \mapsto \gamma_i^{\binom{abt}{i}}$ (which lies in $\Poly(\frac{1}{ab}\Z \to \Gamma_i)$) from $\gamma$ (either on the right or left), one ends up with a polynomial in $\gamma \in \Poly(\frac{1}{a} \Z \to \Gamma_i) \cap \Poly(\frac{1}{b} \Z \to \Gamma_i)$ which has an expansion similar to that of \eqref{pig}, \eqref{pig-2} but with the $j=i$ term absent.  Repeating this argument we may eliminate all the other factors in \eqref{pig}, \eqref{pig-2} by dividing out appropriate sequences in $\Poly(\frac{1}{ab}\Z \to \Gamma_i)$, until $\pi_i(\gamma(n))$ is identically equal to $1$ on both $\frac{1}{a}\Z$ and $\frac{1}{b}\Z$, so that $\gamma$ now lies in $\Poly(\frac{1}{a} \Z \to \Gamma_{i+1}) \cap \Poly(\frac{1}{b} \Z \to \Gamma_{i+1})$, and the claim now follows from the induction hypothesis.
\end{proof}

\subsection{Chinese remainder theorems}

\begin{proof}[Proof of Proposition~\ref{chinese}]
We begin by proving an auxiliary claim, namely that if $a_1,\dots,a_m$ are coprime natural numbers, and $\gamma_1,\dots,\gamma_m \in \Poly_{\leq k}(\Z \to \Z)$, then there exists $\gamma \in \Poly_{\leq k}(\Z \to \Z)$ such that $\gamma_i - \gamma \in \Poly_{\leq k}(\frac{1}{a_i} \Z \to \Z)$ for $i=1,\dots,m$.  It suffices to verify this when $m=2$, as this also implies the $m=1$ case, and the higher $m$ cases also follow from induction.  From the first claim of Lemma~\ref{bezout} we can write $\gamma_1 - \gamma_2 = \gamma^*_1 - \gamma^*_2$ where $\gamma^*_1 \in \Poly_{\leq k}(\frac{1}{a_1}\Z \to \Z)$ and $\gamma^*_2 \in \Poly_{\leq k}(\frac{1}{a_2}\Z \to \Z)$.  The claim now follows by setting $\gamma \coloneqq \gamma_1 - \gamma^*_1 = \gamma_2 - \gamma^*_2$.

Now we prove (i).  Write $\phi = (I,P)$ and $\phi_p = (I_p,P_p)$.  From Definition~\ref{poly-comp}, we have
$$ P_p = \eps_p + P + \gamma_p$$
where $\eps_p \in \Poly_{\leq k}(\R \to \R)$ obeys the smoothness bounds in Definition~\ref{poly-comp}(i), and $\gamma_p \in \Poly_{\leq k}( \Z \to \Z)$.  From the previous claim, there exists $\gamma \in \Poly_{\leq k}(\Z \to \Z)$ such that $\gamma_p - \gamma \in \Poly_{\leq k}(\frac{1}{p} \Z \to \Z)$ for each $p$.  If one then sets $\tilde \phi \coloneqq (I, P + \gamma)$, one obtains the claim (i).

Now we prove (ii).  Write $\phi = (I,P)$ and $\phi' = (I', P')$. From hypothesis we may write
$$ P(t) = \epsilon_p(t) + P'(t) + \gamma_p(t)$$
for all $p \in {\mathcal P}$ and some $\epsilon_p, \gamma_p \in \Poly_{\leq k}(\R \to \R)$ obeying the properties in Definition~\ref{poly-comp}.  In particular, we see that $\epsilon_p(t) + \gamma_p(t)$ is independent of $p$.  Setting $n_I$ to be an integer point in $I$, we then have that $\epsilon_p(n_I) \mod 1$ is independent of $p$.  Since also $\epsilon_p(n_I) = O(1)$, we may subtract a bounded integer from each $\epsilon_p$ and add it to $\gamma_p$ to assume without loss of generality that $\epsilon_p(n_I)$ is independent of $p$.  Since $\epsilon_p(n+1) =\epsilon_p(n) + O(1/|I|)$ for all $n \in I \cap \Z$, and $\epsilon_p(n) \mod 1$ is independent of $p$, we conclude from induction (for $|I|$ large enough) that $\epsilon_p(n)$ is independent of $p$ for all $n \in I \cap \Z$, which by Lagrange interpolation (or Lemma~\ref{ber-exp}) implies that $\epsilon_p = \epsilon$ is independent of $p$.  This implies that $\gamma_p = \gamma$ is also independent of $p$.  Since $\gamma \in \Poly_{\leq k}(\frac{1}{p}\Z \to \Z)$ for all $p \in {\mathcal P}$, we see from iterating the second claim of Lemma~\ref{bezout} that $\gamma \in \Poly_{\leq k}(\frac{1}{\prod {\mathcal P}}\Z \to \Z)$, and the claim follows.
\end{proof}

\begin{proof}[Proof of Proposition~\ref{chinese-nil}]
As with the proof of Proposition~\ref{chinese}, we begin by proving an auxiliary claim, namely that if $a_1,\dots,a_m$ are coprime natural numbers, and $\gamma_1,\dots,\gamma_m \in \Poly(\Z \to \Gamma)$, then there exists $\gamma \in \Poly(\Z \to \Gamma)$ such that $\gamma^{-1} \gamma_i \in \Poly(\frac{1}{a_i} \Z \to \Gamma)$ for $i=1,\dots,m$.  As before it suffices from induction to verify the $m=2$ case. From the first claim of Lemma~\ref{bezout-nil} we can write $\gamma_1^{-1} \gamma_2 = (\gamma^*_1)^{-1} \gamma^*_2$ where $\gamma^*_1 \in \Poly(\frac{1}{a_1}\Z \to \Gamma)$ and $\gamma^*_2 \in \Poly(\frac{1}{a_2}\Z \to \Gamma)$.  The claim now follows by setting $\gamma \coloneqq \gamma_1 (\gamma^*_1)^{-1}= \gamma_2 (\gamma^*_2)^{-1}$.

Now we prove (i).  From Definition~\ref{nil-comp}, if we write $\phi = (I,g)$ and $\phi_p = (I_p,g_p)$, we have
$$ g_p = \eps_p g \gamma_p$$
where $\eps_p \in \Poly(\R \to G)$ obeys the smoothness bounds in Definition~\ref{poly-comp}(i), and $\gamma_p \in \Poly( \Z \to \Gamma)$.  From the previous claim, there exists $\gamma \in \Poly(\Z \to \Gamma)$ such that $\gamma^{-1} \gamma_p \in \Poly(\frac{1}{p} \Z \to \Z)$ for each $p$.  If one then sets $\phi' \coloneqq (I, g \gamma)$, one obtains the claim (i).

Now we prove (ii).  Write $\phi = (I,g)$ and $\phi' = (I',g')$.  From hypothesis we may write
\begin{equation}\label{ga}
 g = \epsilon_p g' \gamma_p
\end{equation}
for all $p \in {\mathcal P}$ and some $\epsilon_p, \gamma_p \in \Poly(\R \to G)$ obeying the properties in Definition~\ref{nil-comp}.  Let $n_I$ be an integer point in $I$.  The points $\log \epsilon_p(n_I)$ take values in a ball of size $O(1)$ around the origin in $\log G$.  Let $\delta > 0$ be a small, fixed constant (depending on $k,\eps,\theta,G/\Gamma,F$).  By the pigeonhole principle, one can find a subcollection ${\mathcal P}'$ of ${\mathcal P}$ with $\# {\mathcal P}' \gg_\delta \# {\mathcal P}$ such that $\log \epsilon_p(n_I) = \epsilon_0 + O(\delta)$ for some $\epsilon_0 = O(1)$.  From Bernstein's inequality \eqref{ber} (applied to the function that expresses the distance between $\log \varepsilon_p(t)$ and $\varepsilon_0$) we also have $\log \epsilon_p(t) = \epsilon_0 + O(\delta)$ whenever $t = n_I + O(\delta |I|)$.  From \eqref{ga} one has
\begin{equation}\label{proj}
 (g')^{-1} \epsilon_p^{-1} \epsilon_{p'} g' = \gamma_p \gamma_{p'}^{-1}.
\end{equation}
Now suppose that $t$ is an integer with $t = n_I + O(\delta |I|)$.  By the Baker--Campbell--Hausdorff formula \eqref{poly}, the quantity
$$\epsilon_p(t)^{-1} \epsilon_{p'}(t) = \exp( (-\log \epsilon_p(t)) \ast \log \epsilon_{p'}(t) ) = \exp( (-\epsilon_0+O(\delta)) \ast (\epsilon_0 + O(\delta) ) )$$ 
lies within $O(\delta)$ of the identity, hence the conjugate $g'(t)^{-1} \epsilon_p(t)^{-1} \epsilon_{p'}(t) g'(t)$ lies within $O(\delta)$ of the identity when projected to the abelian group $G/G_2$.  On the other hand by \eqref{gamma-rat}, the projection of $\gamma_p(t) \gamma_{p'}(t)^{-1}$ to $G/G_2$ is rational in the sense that it lies in the image of $\Gamma$ when raised to some power $q=O(1)$.  For $\delta$ small enough, these facts are only compatible if the projection of both sides of \eqref{proj} to $G/G_2$ is trivial, that is to say both sides of \eqref{proj} lie in $G_2$, so $\epsilon_p(t)^{-1} \epsilon_{p'}(t)$ also lies in $G_2$.  Now one can project to the abelian group $G_2/G_3$ and repeat the above arguments to show that both sides of \eqref{proj} lie in $G_3$ (for $\delta$ small enough).  Continuing this argument we conclude that both sides of \eqref{proj} are in fact trivial for all integers $t = n_I + O(\delta |I|)$, and hence by Lagrange interpolation (for $|I|$ large enough) for all real $t$ also.  In particular, $\gamma_p = \gamma$ is independent of $p$.  From the second part of Lemma~\ref{bezout-nil} we conclude that $\gamma \in \Poly(\frac{1}{\prod {\mathcal P}'}\Z \to \Z)$, and the claim follows.
\end{proof}

\bibliography{refs-higher}
\bibliographystyle{plain}

\end{document}